\newcommandx{\Set}[2][2=]{
    \ifthenelse{\isempty{#2}}
        {\left\{ {#1} \right\}}
        {\left\{  {#1}  \, \middle| \, {#2} \right\}}
}
\newtheorem{theorem}{Theorem}[section]
\newtheorem*{theorem*}{Theorem}
\newtheorem{corollary}{Corollary}[section]
\newtheorem{lemma}{Lemma}[section]
\newtheorem{proposition}{Proposition}[section]
\newtheorem*{proposition*}{Proposition}
\newtheorem{notation}{Notation}[section]
\theoremstyle{definition}
\newtheorem{definition}{Definition}[section]
\theoremstyle{remark}
\newtheorem{remark}{Remark}[section]
\numberwithin{equation}{section}
\newtheorem{question}{Question}[section]
\newcommand{\e}{\varepsilon}
\newcommand{\N}{\mathbb N}
\newcommand{\R}{\mathbb{R}}
\newcommand{\er}{\eqref}
 \DeclareMathOperator{\dist}{dist}
\DeclareMathOperator{\supp}{supp}
\DeclareMathOperator*{\esssup}{ess\,sup}
\DeclareMathOperator*{\essinf}{ess\,inf}
\DeclareMathOperator*{\esslimsup}{ess\,limsup}
\DeclareMathOperator*{\essliminf}{ess\,liminf}
\newcommandx{\norm}[1][1=\cdot]{\left|{#1}\right|}
\newcommandx{\supnorm}[2][1=\cdot,2=]{\nnorm[#1]_{\infty,#2}}
\newcommandx{\nnorm}[1][1=\cdot]{\left|\left|{#1}\right|\right|}
\newcommand{\Haus}{\mathcal H}
\newcommand{\Leb}{\mathcal L}
\date{\today}
\begin{document}
\title{Approximations in Besov Spaces and Jump Detection of Besov Functions with Bounded Variation}
\maketitle
\begin{center}
{\large Paz Hashash, Arkady Poliakovsky}
\end{center}

\begin{abstract}
In this paper, we provide a proof that functions belonging to Besov spaces $B^{r}_{q,\infty}(\mathbb{R}^N,\mathbb{R}^d)$, $q\in [1,\infty)$, $r\in(0,1)$, satisfy the following formula under a certain condition:

\begin{equation}
\label{eq:main result in abstract}
\lim_{{\epsilon}\to 0^+}\frac{1}{|\ln{\epsilon}|}\left[u_{\epsilon}\right]^q_{W^{r,q}(\mathbb{R}^N,\mathbb{R}^d)}=N\lim_{{\epsilon}\to 0^+}\int_{\mathbb{R}^N}\frac{1}{{\epsilon}^N}\int_{B_{\epsilon}(x)}\frac{|u(x)-u(y)|^q}{|x-y|^{rq}}dydx.
\end{equation}
Here, $\left[\cdot\right]_{W^{r,q}}$ represents the Gagliardo seminorm, and $u_{\epsilon}$ denotes the convolution of $u$ with a mollifier $\eta_{(\epsilon)}(x):=\frac{1}{\epsilon^N}\eta\left(\frac{x}{\epsilon}\right)$, $\eta\in W^{1,1}(\R^N),\int_{\R^N}\eta(z)dz=1$. Furthermore, we prove that every function $u$ in $BV(\mathbb{R}^N,\mathbb{R}^d)\cap B^{1/p}_{p,\infty}(\mathbb{R}^N,\mathbb{R}^d),p\in(1,\infty),$ satisfies
\begin{multline}
\lim_{\epsilon\to
0^+}\frac{1}{|\ln{\epsilon}|}\left[u_{\epsilon}\right]^q_{W^{1/q,q}(\R^N,\R^d)}=N\lim_{{\epsilon}\to 0^+}\int_{\mathbb{R}^N}\frac{1}{{\epsilon}^N}\int_{B_{\epsilon}(x)}\frac{|u(x)-u(y)|^q}{|x-y|}dydx
\\
=\left(\int_{S^{N-1}}|z_1|~d\Haus^{N-1}(z)\right)\int_{\mathcal{J}_u}
\Big|u^+(x)-u^-(x)\Big|^q d\mathcal{H}^{N-1}(x),
\end{multline}
for every $1<q<p$. Here $u^+,u^-$ are the one-sided approximate limits of $u$ along the jump set $\mathcal{J}_u$.
\tableofcontents
\end{abstract}

\section{Introduction}
The so-called 'BBM formula', as presented by Bourgain, Brezis, and
Mironescu in \cite{BBM}, provides a characterization of Sobolev
functions $W^{1,q}(\Omega)$ for $1<q<\infty$ and of functions of
bounded variation $BV(\Omega)$ using double integrals and
mollifiers, where $\Omega\subset\mathbb{R}^N$ is an open and bounded
set with a Lipschitz boundary. The full characterization for
$BV(\Omega)$ functions is attributed to D{\'a}vila \cite{Davila}.
Before describing it, let's recall some definitions.

\begin{definition}(Decreasing Support Property)
\label{def:decreasing support property}

Let $a\in(0,\infty]$ and $\rho_\e:(0,\infty)\to [0,\infty),\e\in
(0,a),$ be a family of $\mathcal{L}^1$-measurable functions. We say
that the family $\{\rho_\e\}_{\e\in (0,a)}$ has the {\it
$N$-dimensional decreasing support property} if for every $\delta\in
(0,\infty)$
\begin{equation}
\label{eq:defining property of decreasing support property}
\lim_{\e\to 0^+}\int_{\delta}^\infty\rho_\e(r)r^{N-1}dr=0.
\end{equation}
\end{definition}

Note that by using polar coordinates (see Proposition
\ref{prop:polar coordinates}), we obtain an alternative form for
\eqref{eq:defining property of decreasing support property}:
\begin{equation}
\label{eq:defining property of decreasing support property 1}
\lim_{\e\to 0^+}\int_{\R^N\setminus B_\delta(0)}\rho_\e(|z|)dz=0.
\end{equation}

\begin{definition}(Kernel)
\label{def:kernel}

Let $a\in(0,\infty]$. Let $\rho_\e:(0,\infty)\to
[0,\infty),\e\in (0,a),$ be a family of $\mathcal{L}^1$-measurable
functions. We say that the family $\{\rho_\e\}_{\e\in (0,a)}$ is a
{\it kernel} if $\int_{\R^N}\rho_{\e}(|z|)dz=1,\forall \e\in (0,a)$,
and it has the decreasing support property as defined in Definition
\ref{def:decreasing support property}.
\end{definition}

The BBM formula states that for an open and bounded set $\Omega \subset \R^N$ with a Lipschitz boundary, $1<q<\infty$, and $u\in W^{1,q}(\Omega)$, for every kernel $\{\rho_\e\}_{\e\in (0,a)}$ (as defined in Definition \ref{def:kernel}), we have
\begin{equation}
\label{jjjkjk}
\lim_{\e\to
0^+}\int_{\Omega}\left(\int_{\Omega}\rho_{\e}(|x-y|)\frac{|u(x)-u(y)|^q}{|x-y|^q}dy\right)dx=\hat
C_{q,N}\|\nabla u\|^q_{L^q(\Omega)}.
\end{equation}
Similarly, for $u\in BV(\Omega)$, we have
\begin{equation}
\label{eq:ineqality61}
\lim_{\e\to
0^+}\int_{\Omega}\left(\int_{\Omega}\rho_{\e}(|x-y|)\frac{|u(x)-u(y)|}{|x-y|}dy\right)dx=\hat
C_{1,N}\|D u\|(\Omega),
\end{equation}
where $\hat C_{q,N}:=\fint_{S^{N-1}}|z_1|^qd\mathcal{H}^{N-1}(z)$ for every $q\geq 1$.

In \cite{P}, the following question was investigated: 
\begin{question}
What does happen if we replace the left-hand side of equation \eqref{jjjkjk}, where $q>1$, by the following expression:
\begin{equation}
\label{GJGGGGJ}
\lim_{\e\to
0^+}\int_{\Omega}\left(\int_{\Omega}\rho_{\e}(|x-y|)\frac{|u(x)-u(y)|^q}{|x-y|}dy\right)dx\,
\end{equation}
? 

Here the limit $\eqref{GJGGGGJ}$ is obtained by replacing
$\frac{|u(x)-u(y)|^q}{|x-y|^q}$ in \er{jjjkjk} by
$\frac{|u(x)-u(y)|^q}{|x-y|}$.
\end{question}
Then, the following limit was studied
\begin{equation}
\label{eq:inequality7} \lim_{\e\to
0^+}\int_{\Omega}\left(\int_{\Omega\cap
B_{\e}(x)}\frac{1}{\mathcal{L}^{N}(B_1(0))\e^N}\frac{|u(x)-u(y)|^q}{|x-y|}dy\right)dx,
\end{equation}
for $1<q<\infty$, $\Omega \subset \R^N$ is an open set with a bounded Lipschitz boundary, and $u\in BV(\Omega,\R^d)\cap L^\infty(\Omega,\mathbb{R}^d)$. This is a particular case of the expression \eqref{GJGGGGJ} with the specific choice of the kernel $\tilde{\rho}_\e(r)$ given by
\begin{equation}
\tilde{\rho}_\e(r):=
\begin{cases}
\frac{1}{\e^N \Leb^{N}\left(B_1(0)\right)}& if \quad 0<r<\e\\
0 & if \quad r\geq \e
\end{cases},\quad \e\in (0,\infty).
\end{equation}

Here, we refer to such a specific kernel as the 'trivial kernel' (see Definition \ref{def:definition of trivial kernel}). The space $BV^q(\Omega,\mathbb{R}^d)$ was also considered in \cite{P}: we define $u\in BV^q(\Omega,\mathbb{R}^d)$ if and only if $u\in L^q(\Omega,\mathbb{R}^d)$ and
\begin{equation}
\limsup_{\e\to 0^+}\int_{\Omega}\left(\int_{\Omega\cap
B_{\e}(x)}\frac{1}{\e^N}\frac{|u(x)-u(y)|^q}{|x-y|}dy\right)dx<\infty
\end{equation}
holds. In \cite{P}, it was proved that the limit in \eqref{eq:inequality7} is determined solely by the jump part of the distributional derivative of $u$, without involving the absolutely continuous and Cantor parts:
\begin{theorem*}(Theorem 1.1 in \cite{P})

Let $\Omega\subset\R^N$
be an open set with bounded Lipschitz boundary and let $u\in
BV(\Omega,\R^d)\cap L^\infty(\Omega,\R^d).$ Then for every
$1<q<\infty$ we have $u\in BV^q(\Omega,\R^d)$ and
\begin{equation}
\label{eq:equality8}
C_N\int_{\mathcal{J}_u}|u^+(x)-u^-(x)|^q~d\Haus^{N-1}(x)=\lim_{\e\to
0^+}\int_{\Omega}\left(\int_{\Omega\cap
B_{\e}(x)}\frac{1}{\e^N}\frac{|u(x)-u(y)|^q}{|x-y|}dy\right)dx,
\end{equation}
where
\begin{equation}
\label{eq:definition of dimensional constant C_N}
C_N:=\frac{1}{N}\int_{S^{N-1}}|z_1|~d\Haus^{N-1}(z),\quad
z:=(z_1,...,z_N).
\end{equation}
Here $\mathcal{J}_u$ is the jump set of the function $u$, and $u^+$
and $u^-$ are the one-sided approximate limits of $u$ on $\mathcal{J}_u$.
\end{theorem*}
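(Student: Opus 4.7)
My strategy is to use a change of variables to reduce the double integral to a one-parameter family of translation quotients, and then exploit the $BV$ decomposition $Du = D^a u + D^c u + D^j u$ to isolate the jump contribution. Setting $y = x+\e z$ yields
\begin{equation*}
I_\e := \int_\O \int_{\O \cap B_\e(x)} \frac{1}{\e^N}\frac{|u(x)-u(y)|^q}{|x-y|}\,dy\,dx = \int_{B_1(0)} \frac{1}{|z|}\cdot\frac{1}{\e}\int_{\O^z_\e}|u(x+\e z)-u(x)|^q\, dx\,dz,
\end{equation*}
where $\O^z_\e := \{x\in\O: x+\e z\in\O\}$. The problem reduces to computing the inner limit for each fixed $z$ and justifying the interchange of $\lim_{\e\to 0^+}$ with $\int_{B_1(0)}dz$.

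The core claim is that for every $z\in B_1(0)\setminus\{0\}$,
\begin{equation*}
\lim_{\e\to 0^+}\frac{1}{\e}\int_{\O^z_\e}|u(x+\e z)-u(x)|^q\,dx = \int_{\mathcal{J}_u}|z\cdot\nu_u(x)|\,|u^+(x)-u^-(x)|^q\,d\HN(x).
\end{equation*}
This is established by handling the three parts of $Du$ separately. For the absolutely continuous part, approximation by smooth $u_\delta$ gives $|u_\delta(x+\e z)-u_\delta(x)|^q\leq C\e^q\int_0^1|\nabla u_\delta(x+t\e z)|^q\,dt$, producing an $O(\e^{q-1})$ bound that vanishes precisely because $q>1$; the error $u-u_\delta$ is controlled by the $L^\infty$ inequality $|a-b|^q\leq(2\|u\|_\infty)^{q-1}|a-b|$ together with the linear bound $\int|u(x+\e z)-u(x)|\,dx\leq\e|z||Du|(\O)$ and a standard $BV$ density argument. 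The jump part gives exactly the desired expression: at $\HN$-a.e.\ $x_0\in\mathcal{J}_u$, a blow-up shows that $u$ is asymptotically a two-valued step across the tangent hyperplane $\{(\cdot-x_0)\cdot\nu_u(x_0)=0\}$, and the set of $x$ in the tubular $\e$-neighborhood for which $x$ and $x+\e z$ lie on opposite sides of this jump has normal thickness $\e|z\cdot\nu_u(x_0)|$, which after covering $\mathcal{J}_u$ by rectifiable charts yields the formula. The Cantor part is the most delicate piece and I expect this to be the main obstacle: one must show that $|D^c u|$ contributes nothing to the limit despite being comparable in total variation to the jump part. The argument exploits that the Cantor-part component is approximately continuous, so that $|u^c(x+\e z)-u^c(x)|\to 0$ pointwise a.e., combined with the strict inequality $q>1$ and the $L^\infty$ bound to split the integral into a small-oscillation region on which the factor $|u^c(x+\e z)-u^c(x)|^{q-1}$ yields extra smallness, and a small-measure region where Chebyshev-type estimates together with the non-atomicity of $|D^c u|$ control the leftover contribution.

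Once the per-direction limit is in hand, the uniform bound $\frac{1}{\e}\int|u(x+\e z)-u(x)|^q\,dx\leq(2\|u\|_\infty)^{q-1}|z||Du|(\O)$ provides an integrable majorant against $1/|z|$ on $B_1(0)$, allowing dominated convergence to interchange limit and $z$-integration. Fubini and polar coordinates then give
\begin{equation*}
\int_{B_1(0)}\frac{|z\cdot\nu|}{|z|}\,dz = \int_0^1 r^{N-1}\,dr\int_{S^{N-1}}|\omega\cdot\nu|\,d\HN(\omega) = \frac{1}{N}\int_{S^{N-1}}|z_1|\,d\HN(z) = C_N
\end{equation*}
independently of $\nu\in S^{N-1}$ by rotational invariance, which completes \eqref{eq:equality8}.
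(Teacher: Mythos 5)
Your global strategy is correct and, in outline, coincides with the one used in \cite{P} and with how the present paper deploys the result: change variables $y=x+\e z$, reduce to the per-direction limit
\[
\lim_{\e\to 0^+}\frac{1}{\e}\int_{\O_\e^z}|u(x+\e z)-u(x)|^q\,dx
=\int_{\mathcal{J}_u}|z\cdot\nu_u(x)|\,|u^+(x)-u^-(x)|^q\,d\mathcal{H}^{N-1}(x),
\]
then use the uniform majorant $(2\|u\|_\infty)^{q-1}|z|\,\|Du\|(\O)$, dominated convergence, and polar coordinates to assemble $C_N$. Note that the paper you are reading does not reprove this theorem: it quotes it from \cite{P}, and it also quotes precisely the per-direction limit (as Proposition 2.4 of \cite{P}, reproduced here as Theorem \ref{thm:convergence of $Ie(h;K)$ to the pre jump variation}) as a black box. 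So your proposal amounts to a sketch of the proof of that key lemma, and that is where the gaps are.

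The main gap is that you argue as if $u$ itself decomposed as $u=u^a+u^j+u^c$ in parallel with $Du=D^au+D^ju+D^cu$. No such decomposition of the \emph{function} exists. Concretely, your treatment of the absolutely continuous part appeals to smooth approximants $u_\delta$ with the error $u-u_\delta$ controlled by ``a standard $BV$ density argument''; but for a general $u\in BV\cap L^\infty$ one cannot choose smooth $u_\delta$ with $\|D(u-u_\delta)\|(\O)\to 0$ (that is equivalent to $u\in W^{1,1}$), only with $\|Du_\delta\|\to\|Du\|$ strictly, so the error term $(2\|u\|_\infty)^{q-1}|z|\,\|D(u-u_\delta)\|(\O)$ does not tend to zero. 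Your Cantor-part argument has the same flaw compounded: there is no ``Cantor-part component $u^c$'' to be approximately continuous. Reading $u^c$ charitably as $u$ itself, the split into a small-oscillation region $\{|u(x+\e z)-u(x)|<\delta\}$ and its complement does give the bound $\delta^{q-1}|z|\,\|Du\|(\O)$ on the first piece, but on the complement the Chebyshev bound only yields $\mathcal{L}^N$-measure $O(\e/\delta)$, and dividing by $\e$ this gives a contribution of size $O(1)$, not $o(1)$. That leftover is exactly the jump contribution when there are jumps and must also vanish when there are none; ``non-atomicity of $D^cu$'' as invoked does not yet close the argument and, taken literally, it becomes circular (you are re-asserting the per-direction limit on the complement region).

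The standard non-circular route is one-dimensional slicing in the direction $z$: for $\mathcal{L}^{N-1}$-a.e.\ line $\ell$ parallel to $z$, the restriction $u|_\ell$ is $BV$ on $\ell$ and its jump set is $\mathcal{J}_u\cap\ell$; if $u$ has no jumps on $\ell$ then $u|_\ell$ is continuous (hence uniformly continuous on compacts, since it is also bounded), so
$\sup_t|u|_\ell(t+\e)-u|_\ell(t)|^{q-1}\to 0$ and consequently
$\frac{1}{\e}\int|u|_\ell(t+\e)-u|_\ell(t)|^q\,dt\le o(1)\cdot\|Du|_\ell\|$.
Integrating over lines (Fubini in slicing coordinates) and then combining with a blow-up at $\mathcal{H}^{N-1}$-a.e.\ jump point gives both the vanishing of the diffuse (a.c.\ plus Cantor) contribution and the exact jump density $|z\cdot\nu_u|\,|u^+-u^-|^q$, without ever attempting to decompose $u$ itself. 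With that lemma in hand, your dominated convergence step (the majorant is the constant $(2\|u\|_\infty)^{q-1}\|Du\|(\O)$ after dividing by $|z|$), the polar-coordinate evaluation of $\int_{B_1(0)}\frac{|z\cdot\nu|}{|z|}\,dz=\frac{1}{N}\int_{S^{N-1}}|z_1|\,d\mathcal{H}^{N-1}$, and the finiteness needed for $u\in BV^q$ are all fine. One small additional detail you should spell out: the translation bound $\int_{\O_\e^z}|u(x+\e z)-u(x)|\,dx\le\e|z|\,\|Du\|$ requires either that the segment $[x,x+\e z]$ stay inside $\O$ (compare Lemma \ref{lem:variation inequality}, which imposes a distance-to-boundary hypothesis) or an extension of $u$ to $BV(\R^N)$, which is where the bounded Lipschitz boundary hypothesis is used.
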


Recall the definition of Besov space $B^r_{q,\infty}(\mathbb{R}^N,\mathbb{R}^d)$:
\begin{definition}(Besov spaces)\\
\label{def:definition of Besov space}
Let $1\leq q<\infty$ and $r\in (0,1)$. Define
\begin{equation}
B^r_{q,\infty}(\R^N,\R^d):=\bigg\{u\in L^q(\R^N,\R^d)
:\sup_{h\in\R^N\setminus\{0\}}\int_{\R^N}\frac{|u(x+h)-u(x)|^q}{|h|^{rq}}dx<\infty\bigg\}.
\end{equation}
For an open set $\Omega\subset\R^N$, the local space $\left(B^{r}_{q,\infty}\right)_{\text{loc}}(\Omega,\R^d)$ is defined to be the set of all functions $u\in L^q_{\text{loc}}(\Omega,\R^d)$ such that for every compact $K\subset\Omega$ there exists a function $u_K\in B^r_{q,\infty}(\R^N,\R^d)$ such that $u_K(x)=u(x)$ for $\mathcal{L}^N$-almost every $x\in K$.
\end{definition}

The following proposition gives us a connection between Besov
functions in $B^{1/q}_{q,\infty}$ and $BV^q-$functions.
\begin{proposition*}(Proposition 1.1 in \cite{P})
For $1<q<\infty$ we have:
\begin{equation}
BV^q(\R^N,\R^d)=B^{1/q}_{q,\infty}(\R^N,\R^d).
\end{equation}
Moreover, for every open set $\Omega\subset\R^N$ we have
\begin{equation}
BV^q_{\text{loc}}(\Omega,\R^d)=\left(B^{1/q}_{q,\infty}\right)_{\text{loc}}(\Omega,\R^d),
\end{equation}
where the local space $BV^q_{\text{loc}}(\Omega,\R^d)$ is defined in a
usual way.
\end{proposition*}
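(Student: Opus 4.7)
The plan is to establish both set-theoretic inclusions by comparing two ways of measuring the translation quantity $f(h):=\|u(\cdot+h)-u\|_{L^q(\R^N,\R^d)}^q$. By Fubini and the substitution $y=x+h$,
\begin{equation*}
\int_{\R^N}\int_{B_\e(x)}\frac{1}{\e^N}\frac{|u(x)-u(y)|^q}{|x-y|}\,dy\,dx=\frac{1}{\e^N}\int_{B_\e(0)}\frac{f(h)}{|h|}\,dh,
\end{equation*}
so $u\in BV^q(\R^N,\R^d)$ is equivalent to the right-hand side staying bounded as $\e\to 0^+$, while $u\in B^{1/q}_{q,\infty}(\R^N,\R^d)$ is equivalent to the pointwise bound $\sup_{h\neq 0}f(h)/|h|<\infty$. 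The inclusion $B^{1/q}_{q,\infty}\subset BV^q$ is then immediate: a pointwise bound $f(h)\leq C|h|$ yields $\frac{1}{\e^N}\int_{B_\e(0)}\frac{f(h)}{|h|}\,dh\leq C\Leb^N(B_1(0))$.

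For the converse, the key is to upgrade the averaged bound on $f(h)/|h|$ to a pointwise one. Fix $h_0\neq 0$ small, and starting from the triangle inequality $|u(x+h_0)-u(x)|\leq|u(x+h_0)-u(x+h)|+|u(x+h)-u(x)|$, raise to the $q$-th power and integrate in $x$ to obtain
\begin{equation*}
f(h_0)\leq 2^{q-1}\bigl(f(h_0-h)+f(h)\bigr).
\end{equation*}
Now average this inequality over $h$ in the ball $B:=B_{|h_0|/4}(h_0/2)$. By the symmetry of $B$ about $h_0/2$, the map $h\mapsto h_0-h$ preserves $B$, so the two resulting integrals coincide. Since $B\subset B_{3|h_0|/4}(0)\setminus B_{|h_0|/4}(0)$, we can bound
\begin{equation*}
\frac{1}{|B|}\int_B f(h)\,dh\leq \frac{3|h_0|/4}{|B|}\int_{B_{3|h_0|/4}(0)}\frac{f(h)}{|h|}\,dh\lesssim M|h_0|,
\end{equation*}
by the $BV^q$ bound at scale $\e=3|h_0|/4$, where $M$ is the $\limsup$ appearing in the definition of $BV^q$. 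Combining, $f(h_0)\lesssim M|h_0|$, which is precisely the Besov condition.

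The main obstacle is this passage from an averaged to a pointwise statement; the triangle-inequality/averaging trick is the crucial idea, and one has to choose the averaging domain so that both translations produced by $h\mapsto h_0-h$ fall in an annulus where the $BV^q$ integrability is available at a scale comparable to $|h_0|$. For the localized identity $BV^q_{\text{loc}}(\O,\R^d)=(B^{1/q}_{q,\infty})_{\text{loc}}(\O,\R^d)$, the argument reduces to the global case by multiplying $u$ by a cutoff $\varphi\in C_c^\infty(\O)$: both spaces are stable under such multiplication on a fixed compact subset of $\O$, and the relevant seminorms for $\varphi u$ on $\R^N$ control, and are controlled by, the corresponding local seminorms of $u$ on a neighborhood of $\supp\varphi$, so one transfers the global equivalence to the local setting.
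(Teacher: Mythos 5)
Your argument is correct and essentially reproduces, at $r=1/q$, the paper's proof of Theorem~\ref{thm:characterization of Besov functions via double integral}, which in effect establishes this Proposition (the statement itself is quoted from \cite{P} without proof here). The central step --- averaging the triangle-inequality splitting $h_0=h+(h_0-h)$ over a ball of translations centered at $h_0/2$ --- is precisely Steps~1--2 of that proof; your smaller ball $B_{|h_0|/4}(h_0/2)$ lies in an annulus away from the origin so that $f(h)\lesssim|h_0|\,f(h)/|h|$ holds directly, neatly avoiding the weighted form of the triangle inequality the paper uses to handle general $r$, while your remarks on large $|h_0|$ and on cutoff localization correspond to the paper's Step~3 and its H\"older-cutoff argument for the local spaces.
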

A more general result than the proposition above was independently obtained by Brasseur in \cite{Br}.
For a comprehensive introduction to Besov spaces, a recommended reference is \cite{Giovanni}.

Next, recall the notion of Gagliardo seminorm:
\begin{definition}
(Gagliardo Seminorm)
\label{def:Gagliardo seminorm}

Let $1\leq
q<\infty$, $E\subset \R^N$ be an $\mathcal{L}^N$-measurable set,
$u\in L^q(E,\R^d)$ and $r\in (0,1)$. The {\it Gagliardo seminorm} of
$u$ in $E$ is defined by
\begin{equation}
\left[u\right]_{W^{r,q}(E,\R^d)}:=\left(\int_{E}\int_{E}\frac{|u(x)-u(y)|^q}{|x-y|^{N+rq}}dxdy\right)^{\frac{1}{q}}.
\end{equation}
\end{definition}

In \cite{PAsymptotic}, the following result was proved: for a
Lipschitz domain $\Omega$, $q\in (1,\infty)$, $u\in
BV(\Omega,\R^d)\cap L^\infty(\Omega,\R^d)$, and $\eta\in
W^{1,1}(\R^N)$ such that $\int_{\R^N}\eta(z)dz=1$, if we mollify $u$
by setting for every $\e>0$ and $x\in\R^N$
\begin{equation}
\label{eq:mollified u}
u_\e(x):=\frac{1}{\e^N}\int_{\R^N}\eta\Big(\frac{y-x}{\e}\Big)u(y)dy,
\end{equation}
then
\begin{equation}
\label{eq:connection between Gagliardo constant and jump variation in bounded and BV case}
\lim_{\e\to
0^+}\frac{1}{|\ln{\e}|}\left[u_\e\right]^q_{W^{1/q,q}(\Omega,\R^d)}=
\tilde C_N
\int_{\mathcal{J}_u} \Big|u^+(x)-u^-(x)\Big|^q d\mathcal{H}^{N-1}(x),
\end{equation}
with an appropriate dimensional constant $\tilde C_N>0$ (where $u$ in \eqref{eq:mollified u} is assumed to be continued from $\Omega$ to $\R^N$ such that $u\in BV(\R^N,\R^d)\cap L^\infty(\R^N,\R^d)$ and $\|Du\|(\partial\Omega)=0$). It is worth noting that the particular case of \eqref{eq:connection between Gagliardo constant and jump variation in bounded and BV case} with $\eta$ as the Gaussian, $q=2$, and $\Omega=\R^N$ was previously proved by Figalli and Jerison in \cite{AFDJ} for the characteristic function of a set, and by Hern\'{a}ndez in \cite{FHHalp} for a general function $u$. Combining \eqref{eq:equality8} and \eqref{eq:connection between Gagliardo constant and jump variation in bounded and BV case}, we deduce that
\begin{equation}
\label{eq:connection between Gagliardo constant and Besov constant in bounded and BV case}
\lim_{\e\to
0^+}\frac{1}{|\ln{\e}|}\left[u_\e\right]^q_{W^{1/q,q}(\Omega,\R^d)}=
\frac{\tilde C_N}{C_N}
\lim_{\e\to 0^+}\int_{\Omega}\left(\int_{\Omega\cap
B_{\e}(x)}\frac{1}{\e^N}\frac{|u(x)-u(y)|^q}{|x-y|}dy\right)dx.
\end{equation}
This naturally leads us to pose the following interesting question: does \eqref{eq:connection between Gagliardo constant and Besov constant in bounded and BV case} hold also for $u\in BV^q\setminus(BV\cap L^\infty)$?

Our first two main results are related to this question:
\begin{theorem}
\label{thm:Sandwich inequality for Gagliardo constants, intro}
Let $q \in [1,\infty)$ and $r \in (0,1)$. Suppose $u \in B^r_{q,\infty}(\mathbb{R}^N,\mathbb{R}^d)$, $E \subset \mathbb{R}^N$ be an $\mathcal{L}^N$-measurable set and $\eta \in W^{1,1}(\mathbb{R}^N)$. For each $\e\in(0,\infty)$ we denote
\begin{equation}
\label{eq:definition of convolution}
u_\e(x):=\int_{\R^N}\eta(z)u(x-\e z)dz.
\end{equation}
Then,
\begin{multline}
\label{eq:Sandwich inequality for Gagliardo constants,intro}
\left|\int_{\R^N}\eta(z)dz\right|^q\liminf_{\e\to 0^+}\int_{S^{N-1}}\int_E\chi_{E}(x+\e n)\frac{|u(x+\e n)-u(x)|^q}{\e^{rq}}dxd\mathcal{H}^{N-1}(n)
\\
\leq\liminf_{\e\to
0^+}\frac{1}{|\ln{\e}|}\left[u_\e\right]^q_{W^{r,q}(E,\R^d)}
\leq\limsup_{\e\to
0^+}\frac{1}{|\ln{\e}|}\left[u_\e\right]^q_{W^{r,q}(E,\R^d)}
\\
\leq  \left|\int_{\R^N}\eta(z)dz\right|^q\limsup_{\e\to 0^+}\int_{S^{N-1}}\int_E\chi_{E}(x+\e n)\frac{|u(x+\e n)-u(x)|^q}{\e^{rq}}dxd\mathcal{H}^{N-1}(n).
\end{multline}
\end{theorem}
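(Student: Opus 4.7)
The strategy is to pass to polar coordinates and exploit the convolution identity
\[
u_\e(x+h)-u_\e(x)=(v_h*\eta_{(\e)})(x),\qquad v_h(y):=u(y+h)-u(y),
\]
which turns the problem into a question about when the convolution $v_h*\eta_{(\e)}$ is close to $(\int\eta)v_h$ in $L^q$. Writing the Gagliardo seminorm in polar coordinates,
\[
[u_\e]_{W^{r,q}(E,\R^d)}^q=\int_0^\infty F_\e(\rho)\,\frac{d\rho}{\rho},\qquad F_\e(\rho):=\int_{S^{N-1}}\int_E\chi_E(x+\rho n)\frac{|u_\e(x+\rho n)-u_\e(x)|^q}{\rho^{rq}}\,dx\,d\Haus^{N-1}(n),
\]
I would split the radial integral at $K\e$ and $B$, with $B$ a fixed large constant and $K=K(\delta)$ a large parameter depending on a small auxiliary $\delta>0$.

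The principal analytic step is a quantitative mid-range estimate. For any $\delta>0$, choose $R=R(\delta)$ with $\int_{|z|>R}|\eta|<\delta$ and $K=K(\delta,R)$ large. Then for every $\rho\ge K\e$ and $n\in S^{N-1}$,
\[
\Bigl|\,\|v_{\rho n}*\eta_{(\e)}\|_{L^q(\R^N)}-\Bigl|\int\eta\Bigr|\,\|v_{\rho n}\|_{L^q(\R^N)}\,\Bigr|\le C\,\delta\,\rho^r.
\]
This follows from the exact identity
\[
v_{\rho n}*\eta_{(\e)}-\Bigl(\int\eta\Bigr)v_{\rho n}=\int\eta(z)\bigl[v_{\rho n}(\cdot-\e z)-v_{\rho n}\bigr]\,dz,
\]
after truncating $\eta$ at $|z|=R$: the inner part is controlled by the Besov bound $\|v_{\rho n}(\cdot-\e z)-v_{\rho n}\|_{L^q}\le 2C(\e|z|)^r$, which forces $\rho\gg\e R$ for the error to be absorbed, while the outer part is controlled by Young's inequality together with $\|v_{\rho n}\|_{L^q}\le C\rho^r$. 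Raising to the $q$-th power and integrating in $n\in S^{N-1}$ sandwiches $F_\e(\rho)$ between $(|\int\eta|^q\pm O(\delta))\Psi(\rho)$ on $\rho\ge K\e$, where $\Psi(\rho)$ denotes the quantity from the right-hand sides of the theorem.

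For the tail $\rho>B$, Young's inequality and $\|v_{\rho n}\|_{L^q}\le 2\|u\|_{L^q}$ give $F_\e(\rho)\le C\rho^{-rq}$, whose $d\rho/\rho$-integral over $[B,\infty)$ is bounded, hence $o(|\ln\e|)$. The small range $\rho<K\e$ requires more care: combining the smoothness bound $\|u_\e(\cdot+\rho n)-u_\e\|_{L^q}\le \rho\|\nabla u_\e\|_{L^q}$ with the Besov estimate $\|u_\e(\cdot+\rho n)-u_\e\|_{L^q}\le C\|\eta\|_{L^1}\rho^r$ and the Besov-aware bound $\e\|\nabla u_\e\|_{L^q}\to 0$ as $\e\to 0^+$ (obtained by the same truncation of $\nabla\eta$ combined with $\|u(\cdot-y)-u\|_{L^q}\le C|y|^r$), taking the minimum of the two pointwise in $\rho$ and integrating shows that the contribution is $o(|\ln\e|)$. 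Finally, inserting the mid-range sandwich into $\int_{K\e}^B F_\e\,d\rho/\rho$, dividing by $|\ln\e|$, and noting that $\frac{1}{|\ln\e|}\int_{K\e}^B\Psi(\rho)\,d\rho/\rho$ is trapped between $\liminf_{\e\to 0^+}\Psi(\e)$ and $\limsup_{\e\to 0^+}\Psi(\e)$ modulo $o(1)$, the theorem's sandwich follows on letting $\delta\to 0^+$.

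The main analytical difficulty is obtaining the sharp constant $|\int\eta|^q$ rather than the crude $\|\eta\|_{L^1}^q$ that a direct Young's inequality would produce; this forces the delicate truncation argument above, which uses the Besov regularity of $u$ in an essential way and works with only $\eta\in W^{1,1}(\R^N)$ (no decay of $\eta$ or moment of $\nabla\eta$ is assumed). A related subtlety is controlling the small-$\rho$ integral, which is the most technical part of the argument and where the two available moduli of continuity of $u_\e$ must be combined with the Besov regularity to cover the scale $\rho\sim\e$.
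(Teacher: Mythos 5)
Your plan is a direct polar-coordinate decomposition of $[u_\e]_{W^{r,q}(E)}^q$ with a radial split at $K\e$ and at a fixed $B$; the paper instead first matches the Gagliardo constant with a Besov constant built from the logarithmic kernel $\rho_{\e,\omega}$ (Lemma~\ref{lem:J equals to Besov approximation with mollified u up to a small error}), removes the mollification through the $\eta$-separating Lemma~\ref{lem:eta-separating lemma}, and then invokes the general Sandwich Lemma~\ref{lem:sandwich lemma} for kernels. Both are radial-splitting strategies, and your mid-range truncation argument and the tail estimate on $\rho>B$ are both sound.

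The gap is in the small range $\rho<K\e$, where the direct argument you sketch does not give $o(|\ln\e|)$ under the stated hypotheses. Interpolating the two bounds $\|u_\e(\cdot+\rho n)-u_\e\|_{L^q}\le\rho\|\nabla u_\e\|_{L^q}$ and $\le C\rho^r$ at the crossover $\rho_* = \|\nabla u_\e\|_{L^q}^{-1/(1-r)}$ gives
\[
\int_0^{K\e} F_\e(\rho)\,\frac{d\rho}{\rho}\le C + C\ln\Bigl(\frac{K\e}{\rho_*}\Bigr) = C+C\ln K + \frac{Cr}{1-r}\,|\ln\e| + \frac{C}{1-r}\,\ln\bigl(\e\|\nabla u_\e\|_{L^q}\bigr),
\]
and while $\e\|\nabla u_\e\|_{L^q}\to 0$, for a general $\eta\in W^{1,1}(\R^N)$ this decay can be slower than any power of $\e$, in which case the right-hand side remains of order $|\ln\e|$ and the error does not vanish after dividing by $|\ln\e|$ and letting $\delta\to 0^+$. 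The rescue bound $\e\|\nabla u_\e\|_{L^q}\lesssim\e^r$ would do the job, but it requires a moment condition such as $\int_{\R^N}|\nabla\eta(v)|\,|v|^r\,dv<\infty$, which $\eta\in W^{1,1}$ alone does not provide. The paper encounters exactly the same obstruction: its small-range estimate \eqref{eq:equation72} in Lemma~\ref{lem:Estimates for Gagliardo seminorm of mollified Besov functions - part 1} is only finite under a moment condition on $\nabla\eta$, and Lemma~\ref{lem:J equals to Besov approximation with mollified u up to a small error} assumes it explicitly. To pass to general $\eta\in W^{1,1}$, the paper first proves the identity for $\eta\in C^1_c(\R^N)$ and then uses density together with the continuity of the upper and lower $G$-functionals in $\eta$ (Lemma~\ref{lem:continuity of upper and lower $G$-functionals}, itself a consequence of the uniform bound in Corollary~\ref{cor:Estimates for Gagliardo seminorm of mollified Besov functions - part 2}); compare the proof of Corollary~\ref{cor: the upper(lower) limit of J equals to the upper(lower) limit of Besov approximation of u}. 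Your proposal omits this density step and claims the direct argument works "with only $\eta\in W^{1,1}(\R^N)$"; that claim is where the proof breaks, and the fix is to add the approximation-in-$\eta$ step.
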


\begin{theorem}
\label{them:connection between Gagliardo constant and Besov constant dependent on arbitrary kernel, introduction}
Let $q\in[1,\infty)$, $r\in(0,1)$. Let $u\in B^r_{q,\infty}(\R^N,\R^d)$, $E\subset\R^N$ be an $\mathcal{L}^N$-measurable set and $\eta\in  W^{1,1}\left(\R^N\right)$. For each $\e\in(0,\infty)$ we denote $u_\e(x):=\int_{\R^N}\eta(z)u(x-\e z)dz$. Assume that the following limit exists:
\begin{equation}
\label{eq:equation200000}
\lim_{\e\to 0^+}\int_{S^{N-1}}\int_E\chi_{E}(x+\e n)\frac{|u(x+\e n)-u(x)|^q}{\e^{rq}}dxd\mathcal{H}^{N-1}(n).
\end{equation}
Then, for every kernel $\rho_\e$ we get
\begin{multline}
\lim_{\e\to
0^+}\frac{1}{|\ln{\e}|}\left[u_\e\right]^q_{W^{r,q}(E,\R^d)}
\\
=\left|\int_{\R^N}\eta(z)dz\right|^q\mathcal{H}^{N-1}\left(S^{N-1}\right)\lim_{\e\to 0^+}\int_{E}\int_{E}\rho_{\e}(|x-y|)\frac{|u(x)-u(y)|^q}{|x-y|^{rq}}dydx
\\
=\left|\int_{\R^N}\eta(z)dz\right|^q\lim_{\e\to 0^+}\int_{S^{N-1}}\int_E\chi_{E}(x+\e n)\frac{|u(x+\e n)-u(x)|^q}{\e^{rq}}dxd\mathcal{H}^{N-1}(n).
\end{multline}
\end{theorem}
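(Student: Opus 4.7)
The plan is as follows. The outer equality, between the log-normalized Gagliardo quantity and the spherical integral, is essentially automatic from Theorem \ref{thm:Sandwich inequality for Gagliardo constants, intro}: once the limit in \eqref{eq:equation200000} is assumed to exist, the $\liminf$ and $\limsup$ in the sandwich \eqref{eq:Sandwich inequality for Gagliardo constants,intro} collapse, and $\frac{1}{|\ln\e|}[u_\e]^q_{W^{r,q}(E,\R^d)}$ converges to $\left|\int_{\R^N}\eta(z)\,dz\right|^q$ times the spherical limit. Hence the only new task is to prove that the kernel-weighted double integral, multiplied by $\HN(S^{N-1})$, shares this same limit.

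For this, I would change variables $y=x+z$ and pass to polar coordinates $z=tn$ with $t>0$ and $n\in S^{N-1}$. A short computation gives the identity
\begin{equation*}
\int_E\int_E \rho_\e(|x-y|)\,\frac{|u(x)-u(y)|^q}{|x-y|^{rq}}\,dy\,dx \;=\; \int_0^\infty \rho_\e(t)\,t^{N-1}\,G(t)\,dt,
\end{equation*}
where
\begin{equation*}
G(t):=\int_{S^{N-1}}\int_E \chi_E(x+tn)\,\frac{|u(x+tn)-u(x)|^q}{t^{rq}}\,dx\,d\HN(n).
\end{equation*}
Since $\int_{\R^N}\rho_\e(|z|)\,dz=1$, the measure $d\mu_\e(t):=\HN(S^{N-1})\,\rho_\e(t)\,t^{N-1}\,dt$ is a probability measure on $(0,\infty)$, and $\HN(S^{N-1})$ times the kernel integral equals $\int_0^\infty G(t)\,d\mu_\e(t)$. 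Moreover, by the decreasing support property, $\mu_\e$ concentrates at $0$ in the sense that $\mu_\e((\delta,\infty))\to 0$ for every $\delta>0$.

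Now two ingredients combine to pass to the limit. By assumption, $L:=\lim_{t\to 0^+}G(t)$ exists, this being precisely \eqref{eq:equation200000}. By the Besov hypothesis $u\in B^r_{q,\infty}(\R^N,\R^d)$, the function $G$ is uniformly bounded on $(0,\infty)$, since
\begin{equation*}
G(t)\leq \int_{S^{N-1}}\int_{\R^N}\frac{|u(x+tn)-u(x)|^q}{|tn|^{rq}}\,dx\,d\HN(n)\leq \HN(S^{N-1})\,[u]^q_{B^r_{q,\infty}(\R^N,\R^d)}.
\end{equation*}
A standard splitting $(0,\infty)=(0,\delta]\cup(\delta,\infty)$---with $\delta$ first chosen so that $|G(t)-L|$ is small on $(0,\delta]$, then $\e$ taken small enough to make the tail contribution $\int_{(\delta,\infty)}G\,d\mu_\e$ negligible---yields $\int_0^\infty G(t)\,d\mu_\e(t)\to L$, which combined with the first paragraph gives the full chain of equalities. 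The only delicate point is this passage to the limit against a concentrating probability measure, and the Besov hypothesis enters in exactly one place: to supply the uniform bound on $G$ needed to control the tail. Everything else is bookkeeping (polar coordinates) or direct appeal to Theorem \ref{thm:Sandwich inequality for Gagliardo constants, intro}.
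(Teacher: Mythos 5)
Your proof is correct and takes essentially the same approach as the paper: the outer equality is read off from Theorem~\ref{thm:Sandwich inequality for Gagliardo constants, intro}, and the polar-coordinate rewriting of the kernel integral as $\int_0^\infty G(t)\,d\mu_\e(t)$ against a probability measure concentrating at $0$ is precisely the mechanism of the paper's Sandwich Lemma (Lemma~\ref{lem:sandwich lemma}), which the paper simply cites. You rederive the needed special case inline — using the Besov seminorm to supply the uniform bound on $G$ that controls the tail (this corresponds to hypothesis 1 of the Sandwich Lemma) — but the underlying argument is identical.
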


Our next result refers to jumps of functions in Besov spaces
$B^{1/p}_{p,\infty}$, which are also functions of bounded variation.
This result generalizes \er{eq:equality8} (The main improvement is
that we don't assume that $u\in L^\infty$):
\begin{theorem}
\label{lem: limit for Besov integral in terms of jumps,introduction}
Let $1<p<\infty$, $u\in BV(\R^N,\R^d)\cap B^{1/p}_{p,\infty}(\R^N,\R^d)$ and $1<q<p$. Then, for every $n\in \R^N$ and every Borel set $B\subset\R^N$ such that $\mathcal{H}^{N-1}(\partial B\cap \mathcal{J}_u)=0$, we have
\begin{equation}
\label{eq:equation277}
\lim_{\e\to 0^+}\int_B\chi_{B}(x+\e n)\frac{|u(x+\e n)-u(x)|^q}{\e}dx
=\int_{B\cap \mathcal{J}_u}\left|u^+(x)-u^-(x)\right|^q|\nu_u(x)\cdot n|d\mathcal{H}^{N-1}(x),
\end{equation}
and for every kernel $\rho_\e$, we have

\begin{multline}
\lim_{\e\to 0^+}\int_{B}\int_{B}\rho_{\e}(|x-y|)\frac{|u(x)-u(y)|^q}{|x-y|}dydx
\\
=\lim_{\e\to 0^+}\fint_{S^{N-1}}\int_B\chi_{B}(x+\e n)\frac{|u(x+\e n)-u(x)|^q}{\e}dxd\mathcal{H}^{N-1}(n)
\\
=\left(\fint_{S^{N-1}}|z_1|~d\Haus^{N-1}(z)\right)\int_{\mathcal{J}_u\cap B}
\Big|u^+(x)-u^-(x)\Big|^q d\mathcal{H}^{N-1}(x).
\end{multline}
Here $u^+,u^-$ are the one-sided approximate limits of $u$, $\nu_u$ is a unit normal and $\mathcal{J}_u$ is the jump set of $u$ (see Definition \ref{def:approximate jump points}).
\end{theorem}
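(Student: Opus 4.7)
I would establish Part 1 (the directional limit \eqref{eq:equation277}) by a one-dimensional slicing argument in direction $n$, then deduce Part 2 via spherical averaging and polar coordinates. For $\Haus^{N-1}$-a.e.\ $x'\in n^{\perp}$ the slice $g_{x'}(t):=u(x'+tn)$ belongs to $BV(\R;\R^d)\cap B^{1/p}_{p,\infty}(\R;\R^d)$ (the BV part by the slicing theorem; the Besov part by Fubini applied to $\int_{\R^N}|u(x+hn)-u(x)|^p\,dx\le C|h|$). Decomposing $g_{x'}=g_{x'}^c+g_{x'}^j$ into its continuous and pure-jump parts, the continuous contribution vanishes for $q>1$ because $g_{x'}^c$ admits a continuous representative and
\[
\int_{\R}\frac{|g_{x'}^c(t+\e)-g_{x'}^c(t)|^q}{\e}\,dt\le \omega_{g_{x'}^c}(\e)^{q-1}\,\|Dg_{x'}^c\|(\R)\to 0,
\]
where $\omega$ denotes the modulus of continuity, while the jump-part contribution converges to $\sum_{t_i\in B_{x'}}|[g_{x'}](t_i)|^q$, with $B_{x'}:=\{t:x'+tn\in B\}$. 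This sum is finite: Besov forces each jump to be bounded (for a single jump of size $J$ one has $|h|\,J^p\le \int|g_{x'}(t+h)-g_{x'}(t)|^p\,dt\le C|h|$), while BV gives $\sum_i|[g_{x'}](t_i)|<\infty$, so $\sum_i|[g_{x'}](t_i)|^q\le (\sup_i|[g_{x'}](t_i)|)^{q-1}\sum_i|[g_{x'}](t_i)|<\infty$. The hypothesis $\Haus^{N-1}(\partial B\cap\mathcal{J}_u)=0$ ensures that for a.e.\ $x'$ no jump of $g_{x'}$ lies at $\partial B_{x'}$.

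The main obstacle is passing from this a.e.\ 1D convergence of
\[
\Phi_\e(x'):=\int_{\R}\chi_{B_{x'}}(t)\chi_{B_{x'}}(t+\e)\frac{|g_{x'}(t+\e)-g_{x'}(t)|^q}{\e}\,dt
\]
to convergence of its integral over $x'$. The BV-Besov interpolation
\[
\Phi_\e(x')\le F_\e(x')^{(q-1)/(p-1)}\,\|Dg_{x'}\|(\R)^{(p-q)/(p-1)},\qquad F_\e(x'):=\int_{\R}\frac{|g_{x'}(t+\e)-g_{x'}(t)|^p}{\e}\,dt,
\]
together with Fubini-H\"older bounds $\int_{n^\perp}\Phi_\e\,d\Haus^{N-1}\le C$ uniformly but does not furnish uniform integrability on its own. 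To resolve this, I would truncate: for $T>0$ set $u_T:=(u\wedge T)\vee(-T)\in BV\cap L^\infty$, apply the bounded sliced version of the result (a directional refinement of the argument underlying \cite{P}, Theorem~1.1) to obtain the formula for $u_T$, and control the error via
\[
\bigl||u(x+\e n)-u(x)|^q-|u_T(x+\e n)-u_T(x)|^q\bigr|\le q\,|A|^{q-1}\,|w_T(x+\e n)-w_T(x)|,
\]
with $A=u(x+\e n)-u(x)$ and $w_T=u-u_T$. H\"older in $x$ and BV-Besov interpolation on $w_T$ then reduce the task to showing that the $w_T$-contribution vanishes uniformly in $\e$ as $T\to\infty$; this uses $u\in L^p$ (since $B^{1/p}_{p,\infty}\subset L^p$, so $\|w_T\|_{L^p}\to 0$) combined with an equi-integrability argument for $|u|^p$ over the support set $\{|u(x)|>T\}\cup\{|u(x+\e n)|>T\}$. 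On the jump-integral side, $\int_{\mathcal{J}_{u_T}}|u_T^+-u_T^-|^q|\nu_{u_T}\cdot n|\,d\Haus^{N-1}\to\int_{\mathcal{J}_u}|u^+-u^-|^q|\nu_u\cdot n|\,d\Haus^{N-1}$ by dominated convergence, using that $\int_{\mathcal{J}_u}|u^+-u^-|^q\,d\Haus^{N-1}<\infty$ (a consequence of the slicing identity together with the finiteness established above).

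Part 2 then follows in two steps. Integrating the directional identity over $n\in S^{N-1}$ and using the rotation invariance $\int_{S^{N-1}}|\nu_u(x)\cdot n|\,d\Haus^{N-1}(n)=\int_{S^{N-1}}|z_1|\,d\Haus^{N-1}(z)$ yields the equality between the spherical-averaged limit and the jump integral. For the kernel integral I pass to polar coordinates
\[
\int_B\int_B\rho_\e(|x-y|)\frac{|u(x)-u(y)|^q}{|x-y|}\,dy\,dx=\int_0^{\infty}\rho_\e(r)\,r^{N-1}\,G(r)\,dr,
\]
with $G(r):=r^{-1}\int_{S^{N-1}}\int_B\chi_B(x+r\theta)|u(x+r\theta)-u(x)|^q\,dx\,d\Haus^{N-1}(\theta)$. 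By Part 1 (spherically averaged), $G(r)$ admits a finite limit as $r\to 0^+$; combining the identity $\int_0^{\infty}\rho_\e(r)r^{N-1}\,dr=\Haus^{N-1}(S^{N-1})^{-1}$ with the decreasing support property of the kernel (Definition \ref{def:decreasing support property}) then yields the kernel-integral formula.
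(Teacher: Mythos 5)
Your overall strategy agrees with the paper's: establish the directional limit for bounded $BV$ functions, then pass to the general $u\in BV\cap B^{1/p}_{p,\infty}$ case by truncating $u$ at level $T$ and controlling the error. The paper also uses truncation (the family $u_l$) and reduces the bounded case to a previously proven directional theorem (Theorem 2.4 in \cite{P}), rather than re-deriving it from scratch by 1D slicing. Your slicing derivation of the bounded case is a legitimate alternative, but it introduces subtleties the paper sidesteps: you need to select good representatives of the slices $g_{x'}$, the modulus-of-continuity bound $\omega_{g^c_{x'}}(\e)^{q-1}\|Dg^c_{x'}\|$ gives only a.e.\ pointwise convergence in $x'$ with no uniform rate, and the passage from the hypothesis $\Haus^{N-1}(\partial B\cap\mathcal{J}_u)=0$ to ``for a.e.\ $x'$ no jump of $g_{x'}$ lies at $\partial B_{x'}$'' requires its own coarea/Fubini argument. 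The paper instead approximates $B$ from inside by compacts $K$ with $\|Du\|(\partial K)=0$ and from outside by open sets $\Omega$ with $\|Du\|(\partial\Omega)=0$ (Lemmas \ref{lem:compact negligible boundary property} and \ref{lem:the open negligible boundary property}), which keeps the argument $N$-dimensional throughout.

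The one place where your proposal is genuinely off-target is the truncation error control. After the pointwise bound and H\"older in $x$ with exponents $p/(q-1)$ and $p/(p-q+1)$, the $w_T$-contribution is controlled by $[w_T]_{B^{1/r}_{r,\infty}}$ with $r=p/(p-q+1)\in(1,p)$. The BV--Besov interpolation you mention (Lemma \ref{lem:estimate for Besov seminorm by the variation}) then gives $[w_T]^r_{B^{1/r}_{r,\infty}}\le(\|Dw_T\|(\R^N))^{\alpha}([w_T]^p_{B^{1/p}_{p,\infty}})^{1-\alpha}$, and since $\|Dw_T\|(\R^N)\to0$ (Lemma \ref{lem:Convergence of the truncated family in BV}) while $[w_T]_{B^{1/p}_{p,\infty}}\le 2[u]_{B^{1/p}_{p,\infty}}$ stays bounded, this factor tends to zero as $T\to\infty$, uniformly in $\e$. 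That closes the argument completely. Your additional appeal to ``$\|w_T\|_{L^p}\to0$'' and an ``equi-integrability argument for $|u|^p$'' is a red herring: $L^p$ convergence of $w_T$ says nothing about the $\e^{-1}$-scaled difference quotients, and neither ingredient is needed once interpolation is in place. The paper packages this clean route as Corollary \ref{cor:convergence of truncated family in Besov seminorm} plus the continuity Lemma \ref{lem:Continuity of (r,q)-variation in Besov spaces Brq}. Your handling of the kernel formula via polar coordinates and the decreasing-support property is exactly the paper's Sandwich Lemma \ref{lem:sandwich lemma}, and the dominated-convergence step for the jump integral under truncation matches the paper's Lemma \ref{lem:limit of jump variation of truncated family}.
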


\begin{corollary}
\label{cor: limit for Besov integral in terms of jumps,introduction}
Let $1<q<p<\infty$, and $u\in BV(\R^N,\R^d)\cap B^{1/p}_{p,\infty}(\R^N,\R^d)$. Let $\eta\in  W^{1,1}\left(\R^N\right)$, and define, for each $\e\in(0,\infty)$ and $x\in\R^N$, the mollification $u_\e(x):=\int_{\R^N}\eta(z)u(x-\e z)dz$. Let $B\subset\R^N$ be a Borel set such that $\mathcal{H}^{N-1}(\partial B\cap \mathcal{J}_u)=0$. Let $\rho_\e$ be a kernel.

Then we have the following equalities:
\begin{multline}
\frac{1}{\mathcal{H}^{N-1}\left(S^{N-1}\right)}\lim_{\e\to
0^+}\frac{1}{|\ln{\e}|}\left[u_{\e}\right]^q_{W^{1/q,q}(B,\R^d)}
\\
=\left|\int_{\R^N}\eta(z)dz\right|^q\lim_{\e\to 0^+}\int_{B}\int_{B}\rho_{\e}(|x-y|)\frac{|u(x)-u(y)|^q}{|x-y|}dydx
\\
=\left|\int_{\R^N}\eta(z)dz\right|^q\lim_{\e\to 0^+}\fint_{S^{N-1}}\int_B\chi_{B}(x+\e n)\frac{|u(x+\e n)-u(x)|^q}{\e}dxd\mathcal{H}^{N-1}(n)
\\
=\left|\int_{\R^N}\eta(z)dz\right|^q \left(\fint_{S^{N-1}}|z_1|~d\Haus^{N-1}(z)\right)\int_{\mathcal{J}_u\cap B}
\Big|u^+(x)-u^-(x)\Big|^q d\mathcal{H}^{N-1}(x).
\end{multline}
\end{corollary}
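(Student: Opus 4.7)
The corollary is a chain of four equalities, and the plan is essentially to concatenate \rth{them:connection between Gagliardo constant and Besov constant dependent on arbitrary kernel, introduction} (which relates the first three quantities once the spherical limit exists) with \rth{lem: limit for Besov integral in terms of jumps,introduction} (which gives the spherical limit and identifies it with the jump integral). The only nontrivial preliminary step is to verify that the hypotheses of \rth{them:connection between Gagliardo constant and Besov constant dependent on arbitrary kernel, introduction} are met in our setting, namely that $u\in B^{1/q}_{q,\infty}(\R^N,\R^d)$ and that the spherical limit \eqref{eq:equation200000} (with $E=B$, $r=1/q$) exists.

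To establish $u\in B^{1/q}_{q,\infty}(\R^N,\R^d)$, I would interpolate between the $BV$ estimate
\[
\int_{\R^N}|u(x+h)-u(x)|\,dx\leq |h|\,\|Du\|(\R^N)
\]
and the Besov estimate $\int_{\R^N}|u(x+h)-u(x)|^p\,dx\leq C|h|$. A single application of H\"older's inequality with the dual exponents $\tfrac{p-1}{p-q}$ and $\tfrac{p-1}{q-1}$ to the factorisation $|u(x+h)-u(x)|^q=|u(x+h)-u(x)|^{\frac{p-q}{p-1}}\cdot |u(x+h)-u(x)|^{\frac{p(q-1)}{p-1}}$ yields
\[
\int_{\R^N}|u(x+h)-u(x)|^q\,dx\leq \bigl(|h|\,\|Du\|(\R^N)\bigr)^{\frac{p-q}{p-1}}\bigl(C|h|\bigr)^{\frac{q-1}{p-1}}=C'|h|,
\]
which gives the required $B^{1/q}_{q,\infty}$ membership. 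This is the only part of the argument that actually uses the joint hypothesis $BV\cap B^{1/p}_{p,\infty}$ and the constraint $q<p$, and I expect it to be the main (though still standard) technical obstacle.

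With $u\in B^{1/q}_{q,\infty}$ in hand, I can apply \rth{lem: limit for Besov integral in terms of jumps,introduction} directly to the Borel set $B$. Its second display produces the existence of the spherical limit
\[
\lim_{\e\to 0^+}\int_{S^{N-1}}\int_B \chi_B(x+\e n)\frac{|u(x+\e n)-u(x)|^q}{\e}\,dx\,d\Haus^{N-1}(n),
\]
and identifies it, up to the factor $\Haus^{N-1}(S^{N-1})$, with the kernel double integral and with $\bigl(\fint_{S^{N-1}}|z_1|d\Haus^{N-1}(z)\bigr)\int_{\mathcal{J}_u\cap B}|u^+-u^-|^q d\Haus^{N-1}$.

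Feeding this existence of the spherical limit (with $r=1/q$, so $\e^{rq}=\e$) into \rth{them:connection between Gagliardo constant and Besov constant dependent on arbitrary kernel, introduction} applied with $E=B$ then yields
\[
\lim_{\e\to 0^+}\frac{1}{|\ln\e|}\bigl[u_\e\bigr]^q_{W^{1/q,q}(B,\R^d)}=\Bigl|\int_{\R^N}\eta(z)dz\Bigr|^q\Haus^{N-1}(S^{N-1})\lim_{\e\to 0^+}\int_B\int_B\rho_\e(|x-y|)\frac{|u(x)-u(y)|^q}{|x-y|}\,dy\,dx,
\]
and dividing by $\Haus^{N-1}(S^{N-1})$ gives the first equality of the corollary. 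Chaining with the two equalities already provided by \rth{lem: limit for Besov integral in terms of jumps,introduction} (multiplied by $|\int\eta|^q$) delivers the remaining equalities, completing the proof.
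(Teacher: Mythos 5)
Your proposal is correct and follows essentially the same route as the paper. The paper proves this in the body as Corollary~\ref{cor:connection between $G$-functional and the $q$-jump variation}: it invokes Lemma~\ref{lem: limit for Besov integral in terms of jumps} to establish both the existence of the spherical-average limit and its identification with the jump integral, and then feeds that existence into Theorem~\ref{thm:connection between Gagliardo constant and Besov constant dependent on arbitrary kernel} with $r=1/q$ and $E=B$, exactly as you do. Your preliminary interpolation step, which shows $u\in B^{1/q}_{q,\infty}(\R^N,\R^d)$ via H\"older with exponents $\frac{p-1}{p-q}$ and $\frac{p-1}{q-1}$, is precisely the content of the paper's Lemma~\ref{lem:estimate for Besov seminorm by the variation}; the paper's proof of the corollary does not restate it explicitly (it is used earlier inside the proof of Lemma~\ref{lem: limit for Besov integral in terms of jumps} and implicitly when applying Theorem~\ref{thm:connection between Gagliardo constant and Besov constant dependent on arbitrary kernel}), so you are if anything slightly more careful in surfacing that hypothesis-verification step.
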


\begin{notation}
Throughout the paper, we adopt the following notation: $N$ and $d$ are natural numbers ($N, d \in \mathbb{N}$). We denote $S^{N-1}$ as the $(N-1)$-dimensional sphere in $\mathbb{R}^N$. The $N$-dimensional Lebesgue measure is denoted as $\mathcal{L}^N$, while $\mathcal{H}^{N-1}$ represents the $(N-1)$-dimensional Hausdorff measure. For an open ball in $\mathbb{R}^N$ centered at $x$ with a radius of $r$, we use the notation $B_r(x)$. The characteristic function of a set $E$ is denoted as $\chi_E$. Furthermore, we use the notation $A \subset\subset B$ to indicate that $\bar{A}$ is a compact set and $\bar{A}\subset B$, where $\bar{A}$ represents the topological closure of $A$.
\end{notation}

\section{Estimates for Gagliardo Seminorm of Mollified Besov Functions in Terms of Besov Seminorm}
In this section we establish estimates for the Gagliardo seminorm of mollified Besov functions in relation to the Besov seminorm of the functions themselves, without mollification (refer to Corollary \ref{cor:Estimates for Gagliardo seminorm of mollified Besov functions - part 2}). These estimates will enable us to establish a continuity property for the upper and lower $G$-functionals in the next section (refer to Definition \ref{def:upper and lower functionals} and Lemma \ref{lem:continuity of upper and lower $G$-functionals}).

\begin{definition}(Besov Seminorm)\\
\label{def:Besov seminorm}
Let $1\leq q<\infty$, $r\in (0,1)$ and $E\subset \R^N$ be an $\mathcal{L}^N$-measurable set. Let $u:E\to\R^d$ be an $\mathcal{L}^N$-measurable function. The {\it Besov seminorm} of $u$ with parameters $r,q$ in $E$ is defined by
\begin{align}
[u]_{B^{r}_{q,\infty}(E,\R^d)}:=\sup_{h\in \R^N\setminus\{0\}}\left(\int_{\R^N}\frac{|u(x+h)-u(x)|^q}{|h|^{rq}}\chi_{E}(x+h)\chi_{E}(x)dx\right)^{1/q}.
\end{align}
\end{definition}

\begin{definition}(Mollification and Mollifier)
\label{def:Mollification}

Let $\eta:\R^N\to \R$ be a function. For each $\e\in (0,\infty)$ we denote $\eta_{(\e)}(x):=\frac{1}{\e^N}\eta\left(\frac{x}{\e}\right),x\in \R^N$. The function $\eta_{(\e)}$ is called an {\it $\e$-mollifier} obtained by $\eta$. We call $\{\eta_{(\e)}\}_{\e\in(0,\infty)}$ a {\it family of mollifiers}. For $\eta\in L^1(\R^N)$, $1\leq q\leq \infty$, and $u\in L^q(\R^N,\R^d)$, let us define
\begin{equation}
\label{eq:convolution}
u_\e(x):=u*\eta_{(\e)}(x)=\int_{\R^N}\eta_{(\e)}(x-z)u(z)dz=\int_{\R^N}\eta(z)u(x-\e z)dz.
\end{equation}
The convolution $u_\e$ is called {\it mollification} of $u$ by the family of mollifiers $\{\eta_{(\e)}\}_{\e\in(0,\infty)}$.
\end{definition}

\begin{lemma}(Boundedness of Mollified Functions in Besov and Gagliardo Seminorms)
\label{lem:Besov seminorm and Gagliardo seminorm with convolution}

Let $1\leq q<\infty$, $u\in L^q(\R^N,\R^d)$ and $\eta\in L^1(\R^N)$. Then, for every $z\in \R^N$ and $\e\in (0,\infty)$
\begin{equation}
\label{eq:equation24}
\int_{\R^N}|u_\e(x)-u_\e(x+z)|^qdx\leq \left(\int_{\R^N}|\eta(v)|dv\right)^{q} \int_{\R^N}|u(x)-u(x+z)|^qdx.
\end{equation}
In particular, for every $r\in (0,1)$
\begin{equation}
\label{eq:equation21}
\sup_{\e\in (0,\infty)}\left([u_\e]_{B^{r}_{q,\infty}(\R^N,\R^d)}\right)\leq \left(\int_{\R^N}|\eta(v)|dv\right)[u]_{B^{r}_{q,\infty}(\R^N,\R^d)};
\end{equation}
\begin{equation}
\label{eq:equation22}
\sup_{\e\in (0,\infty)}\left(\left[u_\e\right]_{W^{r,q}(\R^N,\R^d)}\right)\leq \left(\int_{\R^N}|\eta(v)|dv\right)\left[u\right]_{W^{r,q}(\R^N,\R^d)}.
\end{equation}
\end{lemma}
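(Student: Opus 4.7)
The plan is to prove \eqref{eq:equation24} first, since both seminorm estimates follow from it by elementary manipulations.

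Starting from the definition $u_\e(x)=\int_{\R^N}\eta(v)\,u(x-\e v)\,dv$, I would write
\[
u_\e(x)-u_\e(x+z)=\int_{\R^N}\eta(v)\bigl[u(x-\e v)-u(x+z-\e v)\bigr]\,dv.
\]
Set $A:=\int_{\R^N}|\eta(v)|\,dv$. If $A=0$ the claim is trivial, so assume $A>0$ and view $d\mu(v):=A^{-1}|\eta(v)|\,dv$ as a probability measure. The key step is Jensen's inequality applied to the convex function $t\mapsto t^q$ on $[0,\infty)$ with respect to $\mu$: bounding $|u_\e(x)-u_\e(x+z)|$ by $A\int |u(x-\e v)-u(x+z-\e v)|\,d\mu(v)$ and raising to the $q$-th power gives
\[
|u_\e(x)-u_\e(x+z)|^q\le A^{q-1}\int_{\R^N}|\eta(v)|\,|u(x-\e v)-u(x+z-\e v)|^q\,dv.
\]
Now integrate in $x$, invoke Tonelli to swap the order of integration, and use translation invariance of Lebesgue measure in the inner $x$-integral (the shift $x\mapsto x+\e v$ is a bijection of $\R^N$ of Jacobian $1$). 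The $v$-dependence disappears from the inner integral, which becomes $\int_{\R^N}|u(y)-u(y+z)|^q\,dy$, and the remaining $\int |\eta(v)|\,dv$ contributes one more factor of $A$, yielding \eqref{eq:equation24}.

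For \eqref{eq:equation21}, I would divide both sides of \eqref{eq:equation24} by $|z|^{rq}$ with $z=h\in\R^N\setminus\{0\}$, take the $q$-th root, and pass to the supremum over $h$; since the Besov seminorm on $E=\R^N$ in Definition~\ref{def:Besov seminorm} reduces to exactly this supremum (with $\chi_{\R^N}\equiv 1$), this is immediate and uniform in $\e$. For \eqref{eq:equation22}, I would start from the definition
\[
\left[u_\e\right]^q_{W^{r,q}(\R^N,\R^d)}=\int_{\R^N}\int_{\R^N}\frac{|u_\e(x)-u_\e(y)|^q}{|x-y|^{N+rq}}\,dx\,dy,
\]
perform the change of variable $y=x+z$ to write this as $\int_{\R^N}|z|^{-(N+rq)}\bigl(\int_{\R^N}|u_\e(x)-u_\e(x+z)|^q\,dx\bigr)\,dz$, and apply \eqref{eq:equation24} pointwise in $z$; reversing the same change of variable on the right-hand side produces $A^q\,[u]^q_{W^{r,q}(\R^N,\R^d)}$.

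There is no substantial obstacle here: the only subtlety is that $\eta$ may change sign, which prevents a direct Jensen application to the signed measure $\eta\,dv$, and this is handled cleanly by normalizing $|\eta|\,dv$ to a probability measure. Everything else is bookkeeping via Tonelli and translation invariance.
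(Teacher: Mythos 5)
Your proof is correct and follows essentially the same route as the paper: both derive \eqref{eq:equation24} by the pointwise bound on $|u_\e(x)-u_\e(x+z)|^q$ followed by Fubini/Tonelli and translation invariance, and then obtain \eqref{eq:equation21} and \eqref{eq:equation22} by the same normalizations and changes of variable. The only cosmetic difference is that you invoke Jensen for $t\mapsto t^q$ with the normalized probability measure $A^{-1}|\eta|\,dv$, while the paper applies H\"older with the split $|\eta|=|\eta|^{(q-1)/q}|\eta|^{1/q}$; these yield the identical intermediate estimate.
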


\begin{proof}
By \eqref{eq:convolution}, H{\"o}lder's inequality, Fubini's theorem and change of variable formula
\begin{multline}
\label{eq:equation23}
\int_{\R^N}|u_\e(x)-u_\e(x+z)|^qdx
=\int_{\R^N}\left|\int_{\R^N}\eta(v)\left(u(x-\e v)-u(x+z-\e v)\right)dv\right|^qdx
\\
\leq\int_{\R^N}\left(\int_{\R^N}|\eta(v)|\left|u(x-\e v)-u(x+z-\e v)\right|dv\right)^qdx
\\
=\int_{\R^N}\left(\int_{\R^N}|\eta(v)|^{\frac{q-1}{q}}\left(|\eta(v)|^{\frac{1}{q}}\left|u(x-\e v)-u(x+z-\e v)\right|\right)dv\right)^qdx
\\
\leq\left(\int_{\R^N}|\eta(v)|dv\right)^{q-1}\int_{\R^N}\int_{\R^N}|\eta(v)|\left|u(x-\e v)-u(x+z-\e v)\right|^q dvdx
\\
=\left(\int_{\R^N}|\eta(v)|dv\right)^{q-1}\int_{\R^N}|\eta(v)|\left(\int_{\R^N}\left|u(x-\e v)-u(x+z-\e v)\right|^q dx\right)dv
\\
=\left(\int_{\R^N}|\eta(v)|dv\right)^{q}\int_{\R^N}\left|u(y)-u(y+z)\right|^qdy.
\end{multline}
Let $r\in (0,1)$. Dividing the inequality \eqref{eq:equation23} by $|z|^{rq},z\neq 0$, taking the supremum over $z\in \R^N\setminus\{0\}$ and then the supremum over $\e\in (0,\infty)$, we obtain \eqref{eq:equation21}.
By Definition \ref{def:Gagliardo seminorm} (Gagliardo seminorm), change of variable formula, Fubini's theorem and  \eqref{eq:equation24} we get
\begin{multline}
\label{eq:equation34}
\left[u_\e\right]^q_{W^{r,q}(\R^N,\R^d)}=\int_{\R^N}\left(\int_{\R^N}\frac{|u_\e(x)-u_\e(y)|^q}{|x-y|^{N+rq}}dx\right)dy
=\int_{\R^N}\left(\int_{\R^N}\frac{|u_\e(x+y)-u_\e(y)|^q}{|x|^{N+rq}}dx\right)dy
\\
=\int_{\R^N}\frac{1}{|x|^{N+rq}}\left(\int_{\R^N}|u_\e(x+y)-u_\e(y)|^qdy\right)dx
\\
\leq \|\eta\|^q_{L^1(\R^N)}\int_{\R^N}\frac{1}{|x|^{N+rq}}\left(\int_{\R^N}|u(x+y)-u(y)|^qdy\right)dx
=\|\eta\|^q_{L^1(\R^N)}\int_{\R^N}\left(\int_{\R^N}\frac{|u(x)-u(y)|^q}{|x-y|^{N+rq}}dx\right)dy.
\end{multline}
Inequality \eqref{eq:equation22} follows from \eqref{eq:equation34}.
\end{proof}

\begin{lemma}(Estimates for Gagliardo Seminorm of Mollified Besov Functions - part 1)\\
\label{lem:Estimates for Gagliardo seminorm of mollified Besov functions - part 1}
Let
$1\leq q<\infty$, $r\in (0,1)$, $u\in B^r_{q,\infty}(\R^N,\R^d)$ and $\eta\in W^{1,1}(\R^N)$. For every $\e\in (0,\infty)$ and $z\in \R^N\setminus\{0\}$ we denote
\begin{equation}
\label{eq:equation69}
g^\e(z):=\int_{\R^N}\frac{|u_\e(x)-u_\e(x+z)|^q}{|z|^{N+rq}}dx.
\end{equation}
Then, for every $0<\beta<\gamma<\infty$ it follows that
\begin{align}
\label{eq:equation68}
\int_{\R^N\setminus B_\gamma(0)}g^\e(z)dz\leq \|\eta\|^q_{L^1(\R^N)}2^q\|u\|^q_{L^q(\R^N,\R^d)}\frac{\mathcal{H}^{N-1}\left(S^{N-1}\right)}{rq\gamma^{rq}};
\end{align}
\begin{align}
\label{eq:equation70}
\int_{B_\gamma(0)\setminus B_\beta(0)}g^\e(z)dz\leq \|\eta\|^q_{L^1(\R^N)} [u]^q_{B^{r}_{q,\infty}(\R^N,\R^d)}\mathcal{H}^{N-1}\left(S^{N-1}\right)\left(\ln(\gamma)-\ln(\beta)\right);
\end{align}
\begin{align}
\label{eq:equation71}
\int_{B_\beta(0)}g^\e(z)dz\leq \left(\int_{\R^N}|\nabla\eta(v)|dv\right)^{q}2^q\|u\|^q_{L^q(\R^N,\R^d)}\frac{\mathcal{H}^{N-1}\left(S^{N-1}\right)}{q-rq}\frac{\beta^{q-rq}}{\e^{q}}.
\end{align}
If $\e=\beta$, then we have the following alternative to \eqref{eq:equation71} estimate:
\begin{equation}
\label{eq:equation72}
\int_{B_\e(0)}g^\e(z)dz\leq \|\nabla\eta\|^{q-1}_{L^1(\R^N,\R^N)}[u]^q_{B^{r}_{q,\infty}(\R^N,\R^d)}\left(\int_{\R^N}|\nabla\eta(v)|(|v|+2)^{rq}dv\right)\frac{\mathcal{H}^{N-1}\left(S^{N-1}\right)}{q-rq}.
\end{equation}
The right hand side of \eqref{eq:equation72} can be infinite.
\end{lemma}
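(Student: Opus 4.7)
My plan is to split the argument into four parts matching the four bounds and to organize them by increasing sophistication of the estimate on $\int_{\R^N}|u_\e(x+z)-u_\e(x)|^q\,dx$. A preliminary observation: a change of variables $y=\e v$ shows that $u_\e=\eta_{(\e)}\ast u$ with $\eta_{(\e)}(y):=\e^{-N}\eta(y/\e)$, so Young's convolution inequality gives $\|u_\e\|_{L^q}\le\|\eta\|_{L^1}\|u\|_{L^q}$; and since $\eta\in W^{1,1}$, we also obtain $\|\nabla u_\e\|_{L^q}\le\|\nabla\eta_{(\e)}\|_{L^1}\|u\|_{L^q}=\e^{-1}\|\nabla\eta\|_{L^1}\|u\|_{L^q}$ by the scaling $\|\nabla\eta_{(\e)}\|_{L^1}=\e^{-1}\|\nabla\eta\|_{L^1}$.

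For \eqref{eq:equation68} I would apply the triangle inequality $\|u_\e(\cdot+z)-u_\e\|_{L^q}^q\le 2^q\|u_\e\|_{L^q}^q\le 2^q\|\eta\|_{L^1}^q\|u\|_{L^q}^q$, divide by $|z|^{N+rq}$, and integrate in polar coordinates using $\int_{\R^N\setminus B_\gamma(0)}|z|^{-N-rq}\,dz=\mathcal{H}^{N-1}(S^{N-1})/(rq\gamma^{rq})$. For \eqref{eq:equation70} the crude bound is replaced by \eqref{eq:equation24} from \rlemma{lem:Besov seminorm and Gagliardo seminorm with convolution}, which together with the definition of the Besov seminorm yields $\|u_\e(\cdot+z)-u_\e\|_{L^q}^q\le\|\eta\|_{L^1}^q[u]_{B^r_{q,\infty}}^q|z|^{rq}$ and hence $g^\e(z)\le\|\eta\|_{L^1}^q[u]^q|z|^{-N}$; the logarithm appears from $\int_{B_\gamma\setminus B_\beta}|z|^{-N}\,dz=\mathcal{H}^{N-1}(S^{N-1})(\ln\gamma-\ln\beta)$. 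For \eqref{eq:equation71} I would exploit the $W^{1,q}$ regularity of $u_\e$ to write $u_\e(x+z)-u_\e(x)=\int_0^1\nabla u_\e(x+sz)\cdot z\,ds$, apply Jensen in $s$, and integrate in $x$ to get $\|u_\e(\cdot+z)-u_\e\|_{L^q}^q\le|z|^q\|\nabla u_\e\|_{L^q}^q\le|z|^q\e^{-q}\|\nabla\eta\|_{L^1}^q\|u\|_{L^q}^q$; the factor $\beta^{q-rq}$ then comes from $\int_{B_\beta}|z|^{q-N-rq}\,dz=\mathcal{H}^{N-1}(S^{N-1})\beta^{q-rq}/(q-rq)$, with the $2^q$ absorbed as slack.

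The main obstacle is \eqref{eq:equation72}, where the choice $\beta=\e$ rules out any $\e^{-q}$ blowup, yet a bare Besov estimate would give $g^\e(z)\lesssim[u]^q|z|^{-N}$ whose integral over $B_\e(0)$ diverges logarithmically at the origin. One must therefore simultaneously transfer the finite-difference onto $\eta$ (to produce a $|z|/\e$ factor from $\nabla\eta$) while extracting an $\e^{rq}$-smallness from $u$ (to cancel the $\e^{-q}$). My plan is to begin from the representation
\begin{equation*}
u_\e(x+z)-u_\e(x)=\int_{\R^N}\bigl[\eta(v+z/\e)-\eta(v)\bigr]\bigl[u(x-\e v)-u(x)\bigr]\,dv,
\end{equation*}
obtained by the substitution $v\mapsto v+z/\e$ in the defining integral of $u_\e(x+z)$ together with the cancellation $\int_{\R^N}[\eta(v+z/\e)-\eta(v)]\,dv=0$, valid by translation invariance of Lebesgue measure for $\eta\in L^1$. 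Writing $\eta(v+z/\e)-\eta(v)=\int_0^1\nabla\eta(v+sz/\e)\cdot(z/\e)\,ds$, changing variables $w=v+sz/\e$, and applying H\"older's inequality with the measure $|\nabla\eta(w)|\,dw\,ds$ of total mass $\|\nabla\eta\|_{L^1}$, I obtain
\begin{equation*}
|u_\e(x+z)-u_\e(x)|^q\le\frac{|z|^q}{\e^q}\,\|\nabla\eta\|_{L^1}^{q-1}\int_0^1\!\!\int_{\R^N}|\nabla\eta(w)|\,|u(x+sz-\e w)-u(x)|^q\,dw\,ds.
\end{equation*}
Integrating in $x$ and applying the Besov seminorm bound $\int_{\R^N}|u(x+y)-u(x)|^q\,dx\le[u]_{B^r_{q,\infty}}^q|y|^{rq}$ with $|y|=|sz-\e w|\le\e(|w|+2)$ (using $|z|<\e$ and $s\in[0,1]$) produces exactly the weight $(|v|+2)^{rq}$ appearing in \eqref{eq:equation72}. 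The final polar integration $\int_{B_\e(0)}|z|^{q-N-rq}\,dz=\mathcal{H}^{N-1}(S^{N-1})\e^{q-rq}/(q-rq)$ then exactly cancels the prefactor $\e^{rq-q}$ produced by the Besov estimate, leaving the advertised $\e$-independent constant. I expect the only delicate point beyond bookkeeping to be the rigorous justification of the difference-on-$\eta$ representation for a generic $\eta\in W^{1,1}$, which is handled by standard absolute continuity on lines for the Sobolev representative.
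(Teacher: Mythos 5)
Your proof is correct, and for \eqref{eq:equation68} and \eqref{eq:equation70} it is identical to the paper's (crude $L^q$ bound and mollification bound from Lemma~\ref{lem:Besov seminorm and Gagliardo seminorm with convolution}, respectively, followed by polar coordinates). For the two remaining estimates you take a genuinely different route that is worth contrasting. For \eqref{eq:equation71} you apply the fundamental theorem of calculus to $u_\e\in W^{1,q}$ directly, estimating $\|u_\e(\cdot+z)-u_\e\|_{L^q}\le|z|\,\|\nabla u_\e\|_{L^q}\le|z|\,\e^{-1}\|\nabla\eta\|_{L^1}\|u\|_{L^q}$; this is cleaner than the paper's derivation (which re-uses the difference-on-$\eta$ representation and a crude $2^q\|u\|_{L^q}^q$ bound) and in fact produces a sharper constant without the factor $2^q$, so the claimed inequality certainly follows. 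For \eqref{eq:equation72} you use precisely the same representation and Besov-seminorm cancellation as the paper, and your computation with $|sz-\e w|\le\e(1+|w|)\le\e(|w|+2)$ reproduces the stated bound. The one place where you and the paper diverge in \emph{strategy} is the passage to general $\eta\in W^{1,1}$: you invoke the absolute-continuity-on-lines property of the Sobolev representative to justify $\eta(v+w)-\eta(v)=\int_0^1\nabla\eta(v+sw)\cdot w\,ds$ for a.e.\ $v$, after which all integrands are nonnegative and Tonelli carries the rest; the paper instead proves the estimates for $\eta\in C^1\cap W^{1,1}$ and then mollifies $\eta$ (showing the constants $\|\nabla\eta_\delta\|_{L^1}$ and $\int|\nabla\eta_\delta|(|v|+1)^{rq}$ are uniformly controlled, which is where the worsening to $(|v|+2)^{rq}$ originates) and concludes by Fatou's lemma. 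Both are legitimate; your ACL route is more direct and avoids the approximation step entirely, at the cost of needing the precise-representative lemma; the paper's route is more conservative and self-contained but produces the constant $(|v|+2)^{rq}$ only as an artifact of the approximation. Your instinct that the ACL step is the single delicate point is accurate, and it is good that you flagged it.
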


\begin{proof}
By Lemma \ref{lem:Besov seminorm and Gagliardo seminorm with convolution} and the convexity of the function $r\longmapsto r^q,r\in [0,\infty)$, we have
\begin{align}
\label{eq:equation35}
g^\e(z)\leq \frac{\|\eta\|^q_{L^1(\R^N)}}{|z|^{N+rq}}\int_{\R^N}|u(x)-u(x+z)|^qdx\leq \|\eta\|^q_{L^1(\R^N)}2^q\|u\|^q_{L^q(\R^N,\R^d)} \frac{1}{|z|^{N+rq}}.
\end{align}
Thus, by polar coordinates (refer to Proposition \ref{prop:polar coordinates})
\begin{multline}
\int_{\R^N\setminus B_\gamma(0)}g^\e(z)dz\leq \|\eta\|^q_{L^1(\R^N)}2^q\|u\|^q_{L^q(\R^N,\R^d)}\int_{\R^N\setminus B_\gamma(0)} \frac{1}{|z|^{N+rq}}dz
\\
=\|\eta\|^q_{L^1(\R^N)}2^q\|u\|^q_{L^q(\R^N,\R^d)}\frac{\mathcal{H}^{N-1}\left(S^{N-1}\right)}{rq\gamma^{rq}}.
\end{multline}
It proves \eqref{eq:equation68}. By \eqref{eq:equation69}, \eqref{eq:equation21} and polar coordinates
\begin{multline}
\int_{B_\gamma(0)\setminus B_\beta(0)}g^\e(z)dz=\int_{B_\gamma(0)\setminus B_\beta(0)}\frac{1}{|z|^N}\left(\int_{\R^N}\frac{|u_\e(x)-u_\e(x+z)|^q}{|z|^{rq}}dx\right)dz
\\
\leq \left(\int_{\R^N}|\eta(v)|dv\right)^{q} [u]^q_{B^{r}_{q,\infty}(\R^N,\R^d)}\int_{B_\gamma(0)\setminus B_\beta(0)}\frac{1}{|z|^N}dz
\\
=\left(\int_{\R^N}|\eta(v)|dv\right)^{q} [u]^q_{B^{r}_{q,\infty}(\R^N,\R^d)}\mathcal{H}^{N-1}\left(S^{N-1}\right)\left(\ln(\gamma)-\ln(\beta)\right).
\end{multline}
It proves \eqref{eq:equation70}. We now prove \eqref{eq:equation71}. By \eqref{eq:equation69} and Fubini's theorem
\begin{align}
\int_{B_\beta(0)}g^\e(z)dz
=\int_{\R^N}\left(\int_{B_\beta(0)}\frac{|u_\e(x)-u_\e(x+z)|^q}{|z|^{N+rq}}dz\right)dx.
\end{align}
Assume for a moment that $\eta\in C^1(\R^N)\cap W^{1,1}(\R^N)$. By \eqref{eq:convolution}, change of variable formula, the fundamental theorem of calculus, Fubini's theorem and Jensen's inequality we obtain for every $x\in\R^N$
\begin{multline}
\label{eq:equation77}
\int_{B_\beta(0)}\frac{|u_\e(x)-u_\e(x+z)|^q}{|z|^{N+rq}}dz
\\
=\int_{B_\beta(0)}\frac{1}{|z|^{N+rq}}\left|\frac{1}{\e^N}\int_{\R^N}\left(\eta\left(\frac{x-y}{\e}\right)-\eta\left(\frac{x+z-y}{\e}\right)\right)u(y)dy\right|^qdz,\quad\quad [z=\e w]
\\
=\frac{1}{\e^{rq}}\int_{B_{\beta/\e}(0)}\frac{1}{|w|^{N+rq}}\left|\frac{1}{\e^N}\int_{\R^N}\left(\eta\left(\frac{x-y}{\e}\right)-\eta\left(\frac{x-y}{\e}+w\right)\right)u(y)dy\right|^qdw,\quad\quad [y=x-\e v]
\\
=\frac{1}{\e^{rq}}\int_{B_{\beta/\e}(0)}\frac{1}{|w|^{N+rq}}\left|\int_{\R^N}\left(\eta(v)-\eta(v+w)\right)u(x-\e v)dv\right|^qdw
\\
=\frac{1}{\e^{rq}}\int_{B_{\beta/\e}(0)}\frac{1}{|w|^{N+rq}}\left|\int_{\R^N}\left(\eta(v+w)-\eta(v)\right)\left(u(x-\e v)-u(x)\right)dv\right|^qdw
\\
=\frac{1}{\e^{rq}}\int_{B_{\beta/\e}(0)}\frac{1}{|w|^{N+rq}}\left|\int_{\R^N}\left(w\cdot\int_0^1\nabla\eta(v+tw)dt\right)\left(u(x-\e v)-u(x)\right)dv\right|^qdw
\\
\leq\frac{1}{\e^{rq}}\int_{B_{\beta/\e}(0)}\frac{1}{|w|^{N+rq-q}}\left(\int_0^1\int_{\R^N}|\nabla\eta(v+tw)|\left|u(x-\e v)-u(x)\right|dvdt\right)^qdw
\\
\leq\frac{1}{\e^{rq}}\int_0^1\int_{B_{\beta/\e}(0)}\frac{1}{|w|^{N+rq-q}}\left(\int_{\R^N}|\nabla\eta(v+tw)|\left|u(x-\e v)-u(x)\right|dv\right)^qdwdt
\\
=\frac{1}{\e^{rq}}\int_0^1\int_{B_{\beta/\e}(0)}\frac{1}{|w|^{N+rq-q}}\left(\int_{\R^N}|\nabla\eta(v)|\left|u(x-\e (v-tw))-u(x)\right|dv\right)^qdwdt.
\end{multline}
In the forth equality we use $\int_{\R^N}\left(\eta(v+w)-\eta(v)\right)dv=0,w\in \R^N$.
By H{\"o}lder's inequality
\begin{multline}
\label{eq:equation75}
\left(\int_{\R^N}|\nabla\eta(v)|\left|u(x-\e(v-tw))-u(x)\right|dv\right)^q
\\
=\left(\int_{\R^N}|\nabla\eta(v)|^{\frac{q-1}{q}}|\nabla\eta(v)|^{\frac{1}{q}}\left|u(x-\e(v-tw))-u(x)\right|dv\right)^q
\\
\leq \left(\int_{\R^N}|\nabla\eta(v)|dv\right)^{q-1}\int_{\R^N}|\nabla\eta(v)|\left|u(x-\e (v-tw))-u(x)\right|^qdv.
\end{multline}
By \eqref{eq:equation75} and Fubini's theorem
\begin{multline}
\label{eq:equation76}
\int_{\R^N}\left(\int_{\R^N}|\nabla\eta(v)|\left|u(x-\e(v-tw))-u(x)\right|dv\right)^qdx
\\
\leq\left(\int_{\R^N}|\nabla\eta(v)|dv\right)^{q-1}\int_{\R^N}|\nabla\eta(v)|\left(\int_{\R^N}\left|u(x-\e (v-tw))-u(x)\right|^qdx\right)dv
\\
\leq \left(\int_{\R^N}|\nabla\eta(v)|dv\right)^{q}2^q\|u\|^q_{L^q(\R^N,\R^d)}.
\end{multline}
By \eqref{eq:equation77} and \eqref{eq:equation76} we get
\begin{multline}
\label{eq:equation86}
\int_{\R^N}\int_{B_\beta(0)}\frac{|u_\e(x)-u_\e(x+z)|^q}{|z|^{N+rq}}dzdx
\\
\leq\left(\int_{\R^N}|\nabla\eta(v)|dv\right)^{q}2^q\|u\|^q_{L^q(\R^N,\R^d)}\frac{1}{\e^{rq}}\int_{B_{\beta/\e}(0)}\frac{1}{|w|^{N+rq-q}}dw
\\
=\left(\int_{\R^N}|\nabla\eta(v)|dv\right)^{q}2^q\|u\|^q_{L^q(\R^N,\R^d)}\frac{1}{\e^{rq}}\frac{\mathcal{H}^{N-1}\left(S^{N-1}\right)}{q-rq}\left(\frac{\beta}{\e}\right)^{q-rq}
\\
=\left(\int_{\R^N}|\nabla\eta(v)|dv\right)^{q}2^q\|u\|^q_{L^q(\R^N,\R^d)}\frac{\mathcal{H}^{N-1}\left(S^{N-1}\right)}{q-rq}\frac{\beta^{q-rq}}{\e^{q}}.
\end{multline}
It proves \eqref{eq:equation71} in case $\eta\in C^1(\R^N)\cap W^{1,1}(\R^N)$. We now prove \eqref{eq:equation72} in case $\eta\in C^1(\R^N)\cap W^{1,1}(\R^N)$.
By \eqref{eq:equation75} and Definition \ref{def:Besov seminorm} (definition of Besov seminorm)
\begin{multline}
\label{eq:equation79}
\int_{\R^N}\left(\int_{\R^N}|\nabla\eta(v)|\left|u(x-\e(v-tw))-u(x)\right|dv\right)^qdx
\\
\leq\left(\int_{\R^N}|\nabla\eta(v)|dv\right)^{q-1}\int_{\R^N}|\nabla\eta(v)|\left(\int_{\R^N}\left|u(x-\e (v-tw))-u(x)\right|^qdx\right)dv
\\
\leq \e^{rq}\left(\int_{\R^N}|\nabla\eta(v)|dv\right)^{q-1}[u]^q_{B^{r}_{q,\infty}(\R^N,\R^d)}\int_{\R^N}|\nabla\eta(v)||v-tw|^{rq}dv.
\end{multline}
By \eqref{eq:equation77} with $\e=\beta$ and \eqref{eq:equation79}
\begin{multline}
\label{eq:equation80}
\int_{\R^N}\int_{B_\e(0)}\frac{|u_\e(x)-u_\e(x+z)|^q}{|z|^{N+rq}}dzdx
\\
\leq \|\nabla\eta\|^{q-1}_{L^1(\R^N,\R^N)}[u]^q_{B^{r}_{q,\infty}(\R^N,\R^d)}\int_0^1\int_{B_1(0)}\frac{1}{|w|^{N+rq-q}}\left(\int_{\R^N}|\nabla\eta(v)||v-tw|^{rq}dv\right)dwdt
\\
\leq \|\nabla\eta\|^{q-1}_{L^1(\R^N,\R^N)}[u]^q_{B^{r}_{q,\infty}(\R^N,\R^d)}\int_0^1\int_{B_1(0)}\frac{1}{|w|^{N+rq-q}}\left(\int_{\R^N}|\nabla\eta(v)|(|v|+1)^{rq}dv\right)dwdt
\\
=\|\nabla\eta\|^{q-1}_{L^1(\R^N,\R^N)}[u]^q_{B^{r}_{q,\infty}(\R^N,\R^d)}\left(\int_{\R^N}|\nabla\eta(v)|(|v|+1)^{rq}dv\right)\int_{B_1(0)}\frac{1}{|w|^{N+rq-q}}dw
\\
=\|\nabla\eta\|^{q-1}_{L^1(\R^N,\R^N)}[u]^q_{B^{r}_{q,\infty}(\R^N,\R^d)}\left(\int_{\R^N}|\nabla\eta(v)|(|v|+1)^{rq}dv\right)\frac{\mathcal{H}^{N-1}\left(S^{N-1}\right)}{q-rq}.
\end{multline}
It proves \eqref{eq:equation72} in case $\eta\in C^1(\R^N)\cap W^{1,1}(\R^N)$. We now generalize \eqref{eq:equation71} and \eqref{eq:equation72} for $\eta\in W^{1,1}(\R^N)$.
For $\eta\in W^{1,1}(\R^N)$, let $\eta_\delta:=\eta*\gamma_{(\delta)},\gamma_{(\delta)}(v):=\frac{1}{\delta^N}\gamma\left(\frac{v}{\delta}\right)$, where $\gamma\in C^1(\R^N)$, $\supp(\gamma)\subset B_1(0)$, $\gamma\geq 0$ and $\|\gamma\|_{L^1(\R^N)}=1$. Here $\supp(\gamma)$ stands for the support of $\gamma$.
By \eqref{eq:equation80} we get for every $0<\delta<1$
\begin{multline}
\label{eq:equation81}
\int_{\R^N}\int_{B_\e(0)}\frac{|u*\left(\eta_\delta\right)_{(\e)}(x)-u*\left(\eta_\delta\right)_{(\e)}(x+z)|^q}{|z|^{N+rq}}dzdx
\\
\leq\|\nabla\eta_\delta\|^{q-1}_{L^1(\R^N,\R^N)}[u]^q_{B^{r}_{q,\infty}(\R^N,\R^d)}\left(\int_{\R^N}|\nabla\eta_\delta(v)|(|v|+1)^{rq}dv\right)\frac{\mathcal{H}^{N-1}\left(S^{N-1}\right)}{q-rq}
\\
\leq \|\nabla\eta\|^{q-1}_{L^1(\R^N,\R^N)}[u]^q_{B^{r}_{q,\infty}(\R^N,\R^d)}\left(\int_{\R^N}|\nabla\eta(v)|(|v|+2)^{rq}dv\right)\frac{\mathcal{H}^{N-1}\left(S^{N-1}\right)}{q-rq}.
\end{multline}
Let us explain the last inequality of \eqref{eq:equation81}: Since
\begin{multline}
|\nabla\eta_\delta(v)|=|\nabla\eta*\gamma_{(\delta)}(v)|=\left|\int_{\R^N}\nabla \eta(z)\gamma_{(\delta)}(v-z)dz\right|=\left|\int_{\R^N}\nabla \eta(v-\delta y)\gamma(y)dy\right|
\\
\leq \int_{\R^N}|\nabla \eta(v-\delta y)|\gamma(y)dy,
\end{multline}
then we get by Fubini's theorem, change of variable formula and properties of $\gamma$
\begin{align}
\label{eq:equation85}
\int_{\R^N}|\nabla\eta_\delta(v)|dv
\leq \int_{\R^N}\left(\int_{\R^N}|\nabla \eta(v-\delta y)|dv\right)\gamma(y)dy=\int_{\R^N}|\nabla \eta(v)|dv,
\end{align}
and
\begin{multline}
\label{eq:equation111}
\int_{\R^N}|\nabla\eta_\delta(v)|(|v|+1)^{rq}dv
\leq \int_{\R^N}\left(\int_{\R^N}|\nabla \eta(v-\delta y)|(|v|+1)^{rq}dv\right)\gamma(y)dy
\\
=\int_{B_1(0)}\left(\int_{\R^N}|\nabla \eta(v)|(|v+\delta y|+1)^{rq}dv\right)\gamma(y)dy
\leq\int_{\R^N}|\nabla \eta(v)|(|v|+2)^{rq}dv.
\end{multline}
The second inequality in \eqref{eq:equation81} follows from \eqref{eq:equation85} and \eqref{eq:equation111}.  Note that
\begin{align}
u*\left(\eta_\delta\right)_{(\e)}=u*\left(\eta*\gamma_{(\delta)}\right)_{(\e)}
=u*\left(\eta_{(\e)}*\left(\gamma_{(\delta)}\right)_{(\e)}\right)=u*\left(\eta_{(\e)}*\left(\gamma_{(\e)}\right)_{(\delta)}\right)
=\left(u*\eta_{(\e)}\right)*\left(\gamma_{(\e)}\right)_{(\delta)}.
\end{align}
Since $u*\eta_{(\e)}\in L^q(\R^N,\R^d)$, $\gamma_{(\e)}\in C^1_c(\R^N)$, $\gamma_{(\e)}\geq 0$ and $\|\gamma_{(\e)}\|_{L^1(\R^N)}=1$, then the family of functions $\{u*\left(\eta_\delta\right)_{(\e)}\}_{\{0<\delta<1\}}$ converges in $L^q(\R^N,\R^d)$ to the function $u*\eta_{(\e)}$ as $\delta\to 0^+$, and hence has a subsequence converging almost everywhere. Thus, by \eqref{eq:equation81} and Fatou's Lemma we get \eqref{eq:equation72} for $\eta\in W^{1,1}(\R^N)$.

Using the same technique we get also \eqref{eq:equation71} for $\eta\in W^{1,1}(\R^N)$: Let $\{u*\left(\eta_{\delta_{n}}\right)_{(\e)}\}_{n\in\N}$ be a sequence converging $\mathcal{L}^N$-almost everywhere to the function $u*\eta_{(\e)}$. By \eqref{eq:equation86} and \eqref{eq:equation85} we have for every $n\in\N$
\begin{multline}
\int_{\R^N}\int_{B_\beta(0)}\frac{|u*\left(\eta_{\delta_{n}}\right)_{(\e)}(x)-u*\left(\eta_{\delta_{n}}\right)_{(\e)}(x+z)|^q}{|z|^{N+rq}}dzdx
\\
\leq \left(\int_{\R^N}|\nabla\eta_{\delta_n}(v)|dv\right)^{q}2^q\|u\|^q_{L^q(\R^N,\R^d)}\frac{\mathcal{H}^{N-1}\left(S^{N-1}\right)}{q-rq}\frac{\beta^{q-rq}}{\e^{q}}
\\
\leq \left(\int_{\R^N}|\nabla\eta(v)|dv\right)^{q}2^q\|u\|^q_{L^q(\R^N,\R^d)}\frac{\mathcal{H}^{N-1}\left(S^{N-1}\right)}{q-rq}\frac{\beta^{q-rq}}{\e^{q}}.
\end{multline}
Taking the limit as $n$ goes to $\infty$ and using Fatou's lemma we get \eqref{eq:equation71} for $\eta\in W^{1,1}(\R^N)$.
\end{proof}

\begin{corollary}
\label{cor:Estimates for Gagliardo seminorm of mollified Besov functions - part 2}
(Estimates for Gagliardo Seminorm of Mollified Besov Functions - part 2)\\
Let
$1\leq q<\infty$, $r\in(0,1)$, $u\in B^r_{q,\infty}(\R^N,\R^d)$ and $\eta\in W^{1,1}(\R^N)$. For every $\e\in (0,\infty)$ and $0<\beta<\gamma<\infty$ it follows that
\begin{multline}
\label{eq:equation73}
\left[u_{\e}\right]^q_{W^{r,q}(\R^N,\R^d)}\leq \|\eta\|^q_{L^1(\R^N)}2^q\|u\|^q_{L^q(\R^N,\R^d)}\frac{\mathcal{H}^{N-1}\left(S^{N-1}\right)}{rq\gamma^{rq}}
\\
+\|\eta\|^q_{L^1(\R^N)} [u]^q_{B^{r}_{q,\infty}(\R^N,\R^d)}\mathcal{H}^{N-1}\left(S^{N-1}\right)\left(\ln(\gamma)-\ln(\beta)\right)
\\
+\left(\int_{\R^N}|\nabla\eta(v)|dv\right)^{q}2^q\|u\|^q_{L^q(\R^N,\R^d)}\frac{\mathcal{H}^{N-1}\left(S^{N-1}\right)}{q-rq}\frac{\beta^{q-rq}}{\e^{q}}.
\end{multline}
In particular,
\begin{multline}
\label{eq:equation74}
\sup_{\e\in (0,1/e)}\frac{1}{|\ln\e|}\left[u_{\e}\right]^q_{W^{r,q}(\R^N,\R^d)}\leq \|\eta\|^q_{L^1(\R^N)}2^q\|u\|^q_{L^q(\R^N,\R^d)}\frac{\mathcal{H}^{N-1}\left(S^{N-1}\right)}{rq}
\\
+\|\eta\|^q_{L^1(\R^N)}[u]^q_{B^{r}_{q,\infty}(\R^N,\R^d)}\mathcal{H}^{N-1}\left(S^{N-1}\right)\frac{q}{q-rq}
\\
+\left(\int_{\R^N}|\nabla\eta(v)|dv\right)^{q}2^q\|u\|^q_{L^q(\R^N,\R^d)}\frac{\mathcal{H}^{N-1}\left(S^{N-1}\right)}{q-rq}.
\end{multline}
\end{corollary}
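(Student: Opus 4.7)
The proof is a direct decomposition argument on top of Lemma~\ref{lem:Estimates for Gagliardo seminorm of mollified Besov functions - part 1}. As in the computation \eqref{eq:equation34}, a change of variables lets one rewrite the Gagliardo seminorm as $[u_\e]^q_{W^{r,q}(\R^N,\R^d)}=\int_{\R^N}g^\e(z)\,dz$, with $g^\e$ defined in \eqref{eq:equation69}. For any $0<\beta<\gamma<\infty$, the plan is to split the domain of integration into the three pieces $\R^N\setminus B_\gamma(0)$, $B_\gamma(0)\setminus B_\beta(0)$ and $B_\beta(0)$, and then apply the bounds \eqref{eq:equation68}, \eqref{eq:equation70} and \eqref{eq:equation71} of Lemma~\ref{lem:Estimates for Gagliardo seminorm of mollified Besov functions - part 1} respectively. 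Summing the three estimates gives \eqref{eq:equation73} directly.

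For the uniform bound \eqref{eq:equation74}, the task reduces to choosing $\beta$ and $\gamma$, possibly depending on $\e$, so that each of the three terms on the right of \eqref{eq:equation73}, divided by $|\ln\e|$, is controlled uniformly in $\e\in(0,1/e)$. I would take $\gamma=1$ and $\beta=\e^{q/(q-rq)}$. Since $r\in(0,1)$ implies $q/(q-rq)>1$, for every $\e\in(0,1/e)$ we have $0<\beta<\e<1=\gamma$, so the choice is admissible. With these values, $\gamma^{rq}=1$, so the first term of \eqref{eq:equation73} is exactly the first term of \eqref{eq:equation74}, and dividing by $|\ln\e|>1$ preserves the inequality. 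Likewise $\beta^{q-rq}/\e^q=1$, so the third term of \eqref{eq:equation73} equals the third term of \eqref{eq:equation74}, and again division by $|\ln\e|>1$ preserves the bound. Finally, $\ln\gamma-\ln\beta=\tfrac{q}{q-rq}|\ln\e|$, so the middle term of \eqref{eq:equation73} divided by $|\ln\e|$ equals the middle term of \eqref{eq:equation74}. Passing to the supremum over $\e\in(0,1/e)$ yields \eqref{eq:equation74}.

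There is no genuine obstacle here; the whole content of the corollary is packaged in the choice of scaling $\beta=\e^{q/(q-rq)}$. This exponent is dictated by requiring $\beta^{q-rq}=\e^q$, which is precisely what makes the $\e^{-q}$ prefactor in \eqref{eq:equation71} collapse into a pure constant rather than blowing up. A naive choice such as $\beta=\e$ would leave an uncontrolled factor $\e^{-rq}$ in the third term and defeat the logarithmic normalization; the fact that the remaining growth in $\beta$ is only logarithmic through \eqref{eq:equation70} is what allows this rescaling to succeed.
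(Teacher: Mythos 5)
Your proof is correct and follows the paper's argument exactly: decompose the Gagliardo seminorm over the three regions $\R^N\setminus B_\gamma(0)$, $B_\gamma(0)\setminus B_\beta(0)$, $B_\beta(0)$ via \eqref{eq:equation68}, \eqref{eq:equation70}, \eqref{eq:equation71}, then take $\gamma=1$ and $\beta=\e^{q/(q-rq)}$ together with $\frac{1}{|\ln\e|}<1$ on $(0,1/e)$ to get \eqref{eq:equation74}. The added remark explaining why the exponent $q/(q-rq)$ is forced is a nice touch but not required.
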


\begin{proof}
By definition of Gagliardo seminorm (Definition \ref{def:Gagliardo seminorm}), change of variable formula, Fubini's theorem and additivity of integral we get
\begin{multline}
\label{eq:equation83}
\left[u_{\e}\right]^q_{W^{r,q}(\R^N,\R^d)}=\int_{\R^N}\left(\int_{\R^N}\frac{|u_\e(x)-u_\e(y)|^q}{|x-y|^{N+rq}}dy\right)dx
=\int_{\R^N}\left(\int_{\R^N}\frac{|u_\e(x)-u_\e(x+z)|^q}{|z|^{N+rq}}dz\right)dx
\\
=\int_{\R^N}\left(\int_{\R^N}\frac{|u_\e(x)-u_\e(x+z)|^q}{|z|^{N+rq}}dx\right)dz
=\int_{\R^N\setminus B_\gamma(0)}g^\e(z)dz
+\int_{B_\gamma(0)\setminus B_\beta(0)}g^\e(z)dz
+\int_{B_\beta(0)}g^\e(z)dz,
\end{multline}
where we denote
\begin{equation}
g^\e(z):=\int_{\R^N}\frac{|u_\e(x)-u_\e(x+z)|^q}{|z|^{N+rq}}dx,\quad z\in \R^N\setminus\{0\}.
\end{equation}
Therefore, we get \eqref{eq:equation73} by \eqref{eq:equation68},\eqref{eq:equation70},\eqref{eq:equation71} and \eqref{eq:equation83}. Inequality \eqref{eq:equation74} follows from \eqref{eq:equation73} choosing $\gamma=1$, $\beta=\e^{\frac{q}{q-rq}}$, and using that $\frac{1}{|\ln\e|}<1$ for every $\e\in (0,1/e)$.
\end{proof}

\section{Continuity of $G$-Functionals}
In this section, we define the upper and lower $G$-functionals (see Definition \ref{def:upper and lower functionals}). We prove continuity properties for these functionals (see Lemma \ref{lem:continuity of upper and lower $G$-functionals}). These continuity properties, in particular, allow us to generalize results involving $\eta\in C^1_c(\R^N)$ to cases where $\eta\in W^{1,1}(\R^N)$ (refer to the proof of Corollary \ref{cor: the upper(lower) limit of J equals to the upper(lower) limit of Besov approximation of u}). Additionally, we introduce the Gagliardo constants, which are specific instances of the $G$-functionals where the function $\eta$ is fixed (see Definition \ref{def:Gagliardo constants}).

\begin{definition}(The Upper and Lower $G$-Functionals)
\label{def:upper and lower functionals}

Let us define for $q\in [1,\infty)$, $r\in(0,1)$ and an $\mathcal{L}^N$-measurable set $E\subset\R^N$ the {\it upper $G$-functional} and the {\it lower $G$-functional}, respectively, to be
\begin{multline}
\overline{G}_{E},\underline{G}_{E}: B^r_{q,\infty}(\R^N,\R^d)\times W^{1,1}(\R^N)\to [0,\infty),
\\
\overline{G}_{E}(u,\eta):=\limsup_{\e\to
0^+}\frac{1}{|\ln{\e}|}\left[u*\eta_{(\e)}\right]^q_{W^{r,q}(E,\R^d)},\quad\underline{G}_{E}(u,\eta):=\liminf_{\e\to
0^+}\frac{1}{|\ln{\e}|}\left[u*\eta_{(\e)}\right]^q_{W^{r,q}(E,\R^d)}.
\end{multline}
\end{definition}

\begin{remark}(Well-definedness of the Upper and Lower $G$-Functionals)
\label{rem:Well-definedness of the upper and lower $G$-functionals}

The well-definedness of the upper and lower $G$-functionals follows immediately from \eqref{eq:equation74}. Note that  $\left[u*\eta_{(\e)}\right]_{W^{r,q}(\R^N,\R^d)}\in [0,\infty)$ for every $\e\in (0,\infty)$ assuming only that $u\in L^q(\R^N,\R^d)$ and $\eta\in W^{1,1}(\R^N)$: One can show by H{\"o}lder's inequality that the convolution $u*\eta$ lies in  $L^q(\R^N,\R^d)$, $1\leq q\leq \infty$, whenever $u\in L^q(\R^N,\R^d)$ and $\eta\in L^1(\R^N)$.
Therefore, if $1\leq q<\infty$, $u\in L^q(\R^N,\R^d)$ and $\eta\in W^{1,1}(\R^N)$, then $u_\e\in L^q(\R^N,\R^d)$, and it has weak derivatives $\frac{\partial}{\partial x_i}u_\e=u*\frac{\partial}{\partial x_i}\eta_{(\e)}\in L^q(\R^N,\R^d)$ for each $1\leq i\leq N$. Therefore, $u_\e\in W^{1,q}(\R^N,\R^d)\subset W^{r,q}(\R^N,\R^d)$, for every $r\in(0,1)$. Thus, $\left[u*\eta_{(\e)}\right]_{W^{r,q}(\R^N,\R^d)}\in [0,\infty)$ for every $\e\in (0,\infty)$.
\end{remark}

\begin{lemma}(Continuity of the Upper and Lower $G$-Functionals)
\label{lem:continuity of upper and lower $G$-functionals}

Let $q\in[1,\infty)$, $r\in (0,1)$
and $E\subset\R^N$ be an $\mathcal{L}^N$-measurable set.
\\
1. If $u\in B^r_{q,\infty}(\R^N,\R^d)$ and $\{\eta_n\}_{n=1}^\infty\subset W^{1,1}(\R^N)$ is a sequence such that $\eta_n$ converges to $\eta$ in $W^{1,1}(\R^N)$, then
\begin{equation}
\label{eq:equation88}
\lim_{n\to \infty}\overline{G}_{E}(u,\eta_n)=\overline{G}_{E}(u,\eta),\quad \lim_{n\to \infty}\underline{G}_{E}(u,\eta_n)=\underline{G}_{E}(u,\eta).
\end{equation}
2. If $\eta\in W^{1,1}(\R^N)$ and $\{u_n\}_{n=1}^\infty\subset B^r_{q,\infty}(\R^N,\R^d)$ is a sequence such that $u_n$ converges to $u$ in $B^r_{q,\infty}(\R^N,\R^d)$, which means that $\lim_{n\to \infty}\left(\|u-u_n\|_{L^q(\R^N,\R^d)}+[u-u_n]_{B^{r}_{q,\infty}(\R^N,\R^d)}\right)=0$, then
\begin{equation}
\label{eq:equation92}
\lim_{n\to \infty}\overline{G}_{E}(u_n,\eta)=\overline{G}_{E}(u,\eta),\quad \lim_{n\to \infty}\underline{G}_{E}(u_n,\eta)=\underline{G}_{E}(u,\eta).
\end{equation}
\end{lemma}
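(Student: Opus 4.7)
The plan is to reduce both assertions to showing that $\overline{G}_E(u,\cdot)$ (respectively, $\overline{G}_E(\cdot,\eta)$) is small on arguments of small $W^{1,1}$-norm (respectively, small Besov norm), which will be an immediate consequence of the a priori bound \eqref{eq:equation74} of Corollary~\ref{cor:Estimates for Gagliardo seminorm of mollified Besov functions - part 2}. The key structural observation is that the Gagliardo seminorm is a genuine seminorm: by Minkowski's inequality in the weighted space $L^q\bigl(E\times E,\,|x-y|^{-N-rq}dxdy\bigr)$,
\[
[f+g]_{W^{r,q}(E,\R^d)}\leq [f]_{W^{r,q}(E,\R^d)}+[g]_{W^{r,q}(E,\R^d)}.
\]
Combined with linearity of convolution this gives, for every $\e>0$,
\[
[u*\eta_{(\e)}]_{W^{r,q}(E,\R^d)}\leq [u*(\eta_n)_{(\e)}]_{W^{r,q}(E,\R^d)}+[u*(\eta-\eta_n)_{(\e)}]_{W^{r,q}(E,\R^d)}.
\]

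For Part~1, I divide this inequality by $|\ln\e|^{1/q}$ and take $\limsup_{\e\to 0^+}$ (using $\limsup(a_\e+b_\e)\leq\limsup a_\e+\limsup b_\e$) and then $\liminf_{\e\to 0^+}$ (using $\liminf(a_\e+b_\e)\leq\liminf a_\e+\limsup b_\e$). Swapping the roles of $\eta$ and $\eta_n$ and noting that $\overline{G}_E(u,\eta-\eta_n)=\overline{G}_E(u,\eta_n-\eta)$ by absolute homogeneity of the seminorm, I obtain the two-sided estimates
\[
\bigl|\overline{G}_E(u,\eta)^{1/q}-\overline{G}_E(u,\eta_n)^{1/q}\bigr|\leq\overline{G}_E(u,\eta-\eta_n)^{1/q},\quad\bigl|\underline{G}_E(u,\eta)^{1/q}-\underline{G}_E(u,\eta_n)^{1/q}\bigr|\leq\overline{G}_E(u,\eta-\eta_n)^{1/q}.
\]
It then remains to show that $\overline{G}_E(u,\eta-\eta_n)\to 0$. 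Since integration over $E\subset\R^N$ is no larger than integration over $\R^N$, we have $\overline{G}_E(u,\eta-\eta_n)\leq\overline{G}_{\R^N}(u,\eta-\eta_n)$, and \eqref{eq:equation74} applied to the mollifier $\eta-\eta_n$ bounds the latter by a constant depending only on $N,q,r,\|u\|_{L^q(\R^N,\R^d)},[u]_{B^r_{q,\infty}(\R^N,\R^d)}$ multiplied by $\|\eta-\eta_n\|_{L^1(\R^N)}^q+\|\nabla(\eta-\eta_n)\|_{L^1(\R^N,\R^N)}^q$, which tends to $0$ by hypothesis.

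Part~2 follows by the identical template with the roles of $u$ and $\eta$ swapped: write $u*\eta_{(\e)}=u_n*\eta_{(\e)}+(u-u_n)*\eta_{(\e)}$, apply the seminorm triangle inequality, divide by $|\ln\e|^{1/q}$, pass to $\limsup$ and $\liminf$, and reduce to showing $\overline{G}_E(u-u_n,\eta)\to 0$. Applying \eqref{eq:equation74} to the Besov function $u-u_n$ with fixed mollifier $\eta$ bounds $\overline{G}_{\R^N}(u-u_n,\eta)$ by a constant depending only on $N,q,r,\|\eta\|_{L^1(\R^N)},\|\nabla\eta\|_{L^1(\R^N,\R^N)}$ multiplied by $\|u-u_n\|_{L^q(\R^N,\R^d)}^q+[u-u_n]_{B^{r}_{q,\infty}(\R^N,\R^d)}^q$, which vanishes by the convergence hypothesis. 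I do not expect any genuine obstacle in executing this plan; all the nontrivial analysis has been absorbed into Corollary~\ref{cor:Estimates for Gagliardo seminorm of mollified Besov functions - part 2}, and what remains is an essentially formal manipulation of $\limsup$ and $\liminf$.
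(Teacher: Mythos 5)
Your proof is correct and takes essentially the same approach as the paper: both reduce to the triangle inequality for the Gagliardo seminorm, linearity of the convolution, and the a priori bound \eqref{eq:equation74}. The only cosmetic difference is that you re-derive the elementary $\liminf$/$\limsup$ stability estimate inline, whereas the paper cites it as a separate lemma (Lemma~\ref{lem:liminfsup lemma}); one should note, as the paper does, that \eqref{eq:equation74} also supplies the finiteness needed so that the manipulations with $\limsup$ and $\liminf$ never produce an $\infty-\infty$.
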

\begin{proof}
1. For every $n\in \N$ we get by \eqref{eq:equation74}
\begin{align}
\sup_{\e\in (0,1/e)}\frac{1}{|\ln{\e}|}\left[u*\left(\eta_n\right)_{(\e)}\right]^q_{W^{r,q}(E,\R^d)}<\infty,\sup_{\e\in (0,1/e)}\frac{1}{|\ln{\e}|}\left[u*\eta_{(\e)}\right]^q_{W^{r,q}(E,\R^d)}<\infty.
\end{align}
Therefore, by Lemma \ref{lem:liminfsup lemma} we get
\begin{multline}
\label{eq:equation90}
\left|\liminf_{\e\to
0^+}\frac{1}{|\ln{\e}|^{1/q}}\left[u*\left(\eta_n\right)_{(\e)}\right]_{W^{r,q}(E,\R^d)}-\liminf_{\e\to
0^+}\frac{1}{|\ln{\e}|^{1/q}}\left[u*\eta_{(\e)}\right]_{W^{r,q}(E,\R^d)}\right|
\\
\leq \limsup_{\e\to
0^+}\frac{1}{|\ln{\e}|^{1/q}}\left|\left[u*\left(\eta_n\right)_{(\e)}\right]_{W^{r,q}(E,\R^d)}-\left[u*\eta_{(\e)}\right]_{W^{r,q}(E,\R^d)}\right|,
\end{multline}
and
\begin{multline}
\label{eq:equation91}
\left|\limsup_{\e\to
0^+}\frac{1}{|\ln{\e}|^{1/q}}\left[u*\left(\eta_n\right)_{(\e)}\right]_{W^{r,q}(E,\R^d)}-\limsup_{\e\to
0^+}\frac{1}{|\ln{\e}|^{1/q}}\left[u*\eta_{(\e)}\right]_{W^{r,q}(E,\R^d)}\right|
\\
\leq \limsup_{\e\to
0^+}\frac{1}{|\ln{\e}|^{1/q}}\left|\left[u*\left(\eta_n\right)_{(\e)}\right]_{W^{r,q}(E,\R^d)}-\left[u*\eta_{(\e)}\right]_{W^{r,q}(E,\R^d)}\right|.
\end{multline}
By the triangle inequality for Gagliardo seminorm we get
\begin{multline}
\limsup_{\e\to
0^+}\frac{1}{|\ln{\e}|^{1/q}}\left|\left[u*\left(\eta_n\right)_{(\e)}\right]_{W^{r,q}(E,\R^d)}-\left[u*\eta_{(\e)}\right]_{W^{r,q}(E,\R^d)}\right|
\\
\leq \limsup_{\e\to
0^+}\frac{1}{|\ln{\e}|^{1/q}}\left[u*\left(\eta_n\right)_{(\e)}-u*\eta_{(\e)}\right]_{W^{r,q}(E,\R^d)}
=\limsup_{\e\to
0^+}\frac{1}{|\ln{\e}|^{1/q}}\left[u*\left(\left(\eta_n\right)_{(\e)}-\eta_{(\e)}\right)\right]_{W^{r,q}(E,\R^d)}
\\
=\limsup_{\e\to
0^+}\frac{1}{|\ln{\e}|^{1/q}}\left[u*\left(\eta_n-\eta\right)_{(\e)}\right]_{W^{r,q}(E,\R^d)}
\leq\limsup_{\e\to
0^+}\frac{1}{|\ln{\e}|^{1/q}}\left[u*\left(\eta_n-\eta\right)_{(\e)}\right]_{W^{r,q}(\R^N,\R^d)}.
\end{multline}
Therefore, by \eqref{eq:equation74}
\begin{multline}
\label{eq:equation87}
\limsup_{\e\to
0^+}\frac{1}{|\ln{\e}|}\left|\left[u*\left(\eta_n\right)_{(\e)}\right]_{W^{r,q}(E,\R^d)}-\left[u*\eta_{(\e)}\right]_{W^{r,q}(E,\R^d)}\right|^q
\leq\limsup_{\e\to
0^+}\frac{1}{|\ln{\e}|}\left[u*\left(\eta_n-\eta\right)_{(\e)}\right]^q_{W^{r,q}(\R^N,\R^d)}
\\
\leq \sup_{\e\in (0,1/e)}\frac{1}{|\ln\e|}\left[u*\left(\eta_n-\eta\right)_{(\e)}\right]^q_{W^{r,q}(\R^N,\R^d)}
\leq \|\eta_n-\eta\|^q_{L^1(\R^N)}2^q\|u\|^q_{L^q(\R^N,\R^d)}\frac{\mathcal{H}^{N-1}\left(S^{N-1}\right)}{rq}
\\
+\|\eta_n-\eta\|^q_{L^1(\R^N)}[u]^q_{B^{r}_{q,\infty}(\R^N,\R^d)}\mathcal{H}^{N-1}\left(S^{N-1}\right)\frac{q}{q-rq}
\\
+\left(\int_{\R^N}|\nabla\left(\eta_n-\eta\right)(v)|dv\right)^{q}2^q\|u\|^q_{L^q(\R^N,\R^d)}\frac{\mathcal{H}^{N-1}\left(S^{N-1}\right)}{q-rq}.
\end{multline}
Taking the limit as $n\to \infty$ in \eqref{eq:equation87} we get \eqref{eq:equation88} from \eqref{eq:equation90} and \eqref{eq:equation91}.
\\
2. Replacing $\eta_n$ with $\eta$ and $u$ with $u_n$, we get in the same way
\begin{multline}
\limsup_{\e\to
0^+}\frac{1}{|\ln{\e}|}\left|\left[u_n*\eta_{(\e)}\right]_{W^{r,q}(E,\R^d)}-\left[u*\eta_{(\e)}\right]_{W^{r,q}(E,\R^d)}\right|^q
\leq \|\eta\|^q_{L^1(\R^N)}2^q\|u-u_n\|^q_{L^q(\R^N,\R^d)}\frac{\mathcal{H}^{N-1}\left(S^{N-1}\right)}{rq}
\\
+\|\eta\|^q_{L^1(\R^N)}[u-u_n]^q_{B^{r}_{q,\infty}(\R^N,\R^d)}\mathcal{H}^{N-1}\left(S^{N-1}\right)\frac{q}{q-rq}
\\
+\left(\int_{\R^N}|\nabla\eta(v)|dv\right)^{q}2^q\|u-u_n\|^q_{L^q(\R^N,\R^d)}\frac{\mathcal{H}^{N-1}\left(S^{N-1}\right)}{q-rq}.
\end{multline}
Taking the limit as $n\to\infty$ we get \eqref{eq:equation92}.
\end{proof}

\begin{definition}(Gagliardo constants)
\label{def:Gagliardo constants}

Let $q\in [1,\infty)$, $r\in(0,1)$, let $E\subset\R^N$ be an $\mathcal{L}^N$-measurable set, and $\eta\in W^{1,1}(\R^N)$. We define the {\it $(r,q)$ upper Gagliardo constant of $u$ in $E$ with respect to $\eta$} as the quantity:
\begin{equation}
\limsup_{\e\to
0^+}\frac{1}{|\ln{\e}|}\left[u*\eta_{(\e)}\right]^q_{W^{r,q}(E,\R^d)}.
\end{equation}
Similarly, replacing the $\limsup$ by the $\liminf$, we define the {\it $(r,q)$ lower Gagliardo constant of $u$ in $E$ with respect to $\eta$}. If the limit exists, we refer to it as the {\it $(r,q)$ Gagliardo constant of $u$ in $E$ with respect to $\eta$}.
\end{definition}

\section{$B^{r,q}$-Functions}
In this section, we introduce the space of functions $B^{r,q}(E,\mathbb{R}^d)$ (see Definition \ref{def:functions in BVr,q}). We establish several properties of these functions, as detailed in Propositions \ref{prop:properties of Brq seminorms} and \ref{prop:Negligibility of the upper infinitesimal $B^{r,q}$-seminorm for Sobolev functions}, as well as Corollary \ref{cor:Non-equivalence of the Seminorms}. Additionally, we prove the equivalence between the space $B^{r,q}(\mathbb{R}^N,\mathbb{R}^d)$ and the Besov space $B^r_{q,\infty}(\mathbb{R}^N,\mathbb{R}^d)$ (refer to Theorem \ref{thm:characterization of Besov functions via double integral}).

\begin{definition}($B^{r,q}$-Seminorms)
\label{def:Brq seminorms}

Let us define for $r\in(0,1)$, $q\in[1,\infty)$, an $\mathcal{L}^N$-measurable set $E\subset\R^N$ and $\mathcal{L}^N$-measurable function $u:E\to \R^d$ the following two quantities:\\
The {\it $B^{r,q}$-seminorm} is defined by
\begin{equation}
|u|_{B^{r,q}(E,\R^d)}:=\sup_{\e\in(0,1)}\left(\int_{E}\frac{1}{\e^N}\int_{E\cap B_{\e}(x)}\frac{|u(x)-u(y)|^q}{|x-y|^{rq}}dydx\right)^{\frac{1}{q}};
\end{equation}
the {\it upper infinitesimal $B^{r,q}$-seminorm} is defined by
\begin{equation}
[u]_{B^{r,q}(E,\R^d)}:=\limsup_{\e\to 0^+}\left(\int_{E}\frac{1}{\e^N}\int_{E\cap B_{\e}(x)}\frac{|u(x)-u(y)|^q}{|x-y|^{rq}}dydx\right)^{\frac{1}{q}}.
\end{equation}
\end{definition}

\begin{definition}(The Space $B^{r,q}$)
\label{def:functions in BVr,q}

Let $r\in(0,1)$, $q\in[1,\infty)$ and an $\mathcal{L}^N$-measurable set $E\subset\R^N$. We define a set
\begin{equation}
B^{r,q}(E,\R^d):=\bigg\{u\in L^q(E,\R^d):|u|_{B^{r,q}(E,\R^d)}<\infty\bigg\}.
\end{equation}
We define the local space $B^{r,q}_{\text{loc}}(E,\R^d)$ as follows: $u\in B^{r,q}_{\text{loc}}(E,\R^d)$ if and only if $u\in L^q_{\text{loc}}(E,\R^d)$ and $u\in B^{r,q}(K,\R^d)$ for every compact set $K\subset E$.
\end{definition}

\begin{proposition}(Properties of $B^{r,q}$-Seminorms)
\label{prop:properties of Brq seminorms}

Let $r\in(0,1)$, $q\in[1,\infty)$ and $E\subset\R^N$ be an $\mathcal{L}^N$-measurable set. Then,
\\
1. The $B^{r,q}$-seminorm and the upper infinitesimal $B^{r,q}$-seminorm are seminorms on $B^{r,q}(E,\R^d)$;
\\
2. For $u\in L^q(E,\R^d)$, $|u|_{B^{r,q}(E,\R^d)}<\infty$ if and only if $[u]_{B^{r,q}(E,\R^d)}<\infty$;
\\
3. For an open set $\Omega\subset\R^N$,  $u\in B^{r,q}_{\text{loc}}(\Omega,\R^d)$ if and only if for every compact set $K\subset \Omega$ we have
\begin{equation}
\label{eq:equation108}
\limsup_{\e\to 0^+}\int_{K}\frac{1}{\e^N}\int_{B_{\e}(x)}\frac{|u(x)-u(y)|^q}{|x-y|^{rq}}dydx<\infty.
\end{equation}
\\
4. Let us denote:
\begin{equation}
\label{eq:the norms 1 and 2}
\|u\|_1:=[u]_{B^{r,q}(E,\R^d)}+\|u\|_{L^q(E,\R^d)},\quad \|u\|_2:=|u|_{B^{r,q}(E,\R^d)}+\|u\|_{L^q(E,\R^d)}.
\end{equation}
Then, $\|\cdot\|_1,\|\cdot\|_2$ are norms on the space $B^{r,q}(E,\R^d)$\footnote{As usual, on the space of equivalent classes obtained by equality $\mathcal{L}^N$-almost everywhere.} and $\left(B^{r,q}(E,\R^d),\|\cdot\|_2\right)$ is a Banach space.
\end{proposition}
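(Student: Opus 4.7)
The plan is to treat the four parts in order, using Minkowski's inequality for the seminorm structure, a two-scale decomposition of the inner ball for the equivalence of finiteness (Part 2) and the local characterization (Part 3), and Fatou's lemma for Banach completeness (Part 4).

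For Part 1, fix $\e \in (0,1)$ and view
\[
F_\e(u) := \Bigl(\int_E \frac{1}{\e^N} \int_{E \cap B_\e(x)} \frac{|u(x)-u(y)|^q}{|x-y|^{rq}} \, dy \, dx\Bigr)^{1/q}
\]
as the $L^q$-norm of the measurable function $(x,y) \mapsto u(x)-u(y)$ against the positive measure $\e^{-N}|x-y|^{-rq}\chi_{E\times E}\,\chi_{\{|x-y|<\e\}}\,dx\,dy$. Minkowski's inequality then gives $F_\e(u+v) \leq F_\e(u)+F_\e(v)$, and both $\sup_{\e \in (0,1)}$ and $\limsup_{\e \to 0^+}$ preserve the triangle inequality; homogeneity and non-negativity are obvious. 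For Part 2 the implication $|u|_{B^{r,q}} < \infty \Rightarrow [u]_{B^{r,q}} < \infty$ is trivial. For the converse, fix $\e_0 \in (0,1)$ so that $G(\e) := \int_E \frac{1}{\e^N}\int_{E\cap B_\e(x)} \frac{|u(x)-u(y)|^q}{|x-y|^{rq}}dy\,dx \leq M$ for $\e \in (0,\e_0)$. For $\e \in [\e_0,1)$ split $E \cap B_\e(x) = (E \cap B_{\e_0}(x)) \cup (E \cap (B_\e(x) \setminus B_{\e_0}(x)))$. Over the first piece, converting $\e^{-N}$ into $\e_0^{-N}$ costs the factor $(\e_0/\e)^N \leq 1$, bounding the contribution by $M$. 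Over the second piece $|x-y| \geq \e_0$, so the weight is dominated by $\e_0^{-rq}$, and the inequality $|u(x)-u(y)|^q \leq 2^{q-1}(|u(x)|^q + |u(y)|^q)$ together with Fubini gives a bound of the form $C \e_0^{-rq} \|u\|_{L^q(E)}^q$.

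For Part 3, the forward direction is a thickening: given compact $K \subset \Omega$, pick $K \subset\subset K' \subset\subset \Omega$ compact and $\delta > 0$ with $B_\delta(x) \subset K'$ for every $x \in K$. Then for $\e < \delta$ the inner integral over $B_\e(x)$ coincides with the integral over $K' \cap B_\e(x)$, so $\limsup_{\e \to 0^+} \int_K \frac{1}{\e^N}\int_{B_\e(x)} \ldots \leq |u|_{B^{r,q}(K',\R^d)}^q < \infty$. Conversely, assume the limsup condition holds for every compact $K \subset \Omega$. Choose $\e_0 > 0$ small enough that $B_{\e_0}(x) \subset \Omega$ for all $x \in K$ and that the inner integral is bounded by some $M$ on $(0,\e_0)$. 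Applying the same two-scale split from Part 2 to the defining integral of $|u|_{B^{r,q}(K,\R^d)}$ (with inner domain $K \cap B_\e(x) \subset B_\e(x)$) produces the desired bound in terms of $M$ and $\|u\|_{L^q(K)}$.

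For Part 4, the seminorm structure from Part 1 combined with the $L^q$-summand (which separates points) yields that $\|\cdot\|_1$ and $\|\cdot\|_2$ are norms. For completeness of $\|\cdot\|_2$, let $\{u_n\}$ be Cauchy; the $L^q$-Cauchy property gives a limit $u \in L^q(E,\R^d)$, and after passing to a pointwise $\mathcal{L}^N$-a.e. convergent subsequence, Fatou's lemma gives for each fixed $\e \in (0,1)$
\[
\int_E \frac{1}{\e^N}\int_{E\cap B_\e(x)} \frac{|(u-u_n)(x)-(u-u_n)(y)|^q}{|x-y|^{rq}}\,dy\,dx \leq \liminf_m |u_m-u_n|_{B^{r,q}(E,\R^d)}^q.
\]
Taking $\sup_{\e \in (0,1)}$ on the left and applying the elementary inequality $\sup_\e \liminf_m \leq \liminf_m \sup_\e$ bounds $|u-u_n|_{B^{r,q}(E,\R^d)}^q$ by $\liminf_m |u_m-u_n|_{B^{r,q}(E,\R^d)}^q$, which tends to $0$ as $n \to \infty$ by the Cauchy property. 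The main technical obstacle is exactly this last step: one must be careful to apply Fatou pointwise in $\e$ before taking the supremum, because the $\sup$-$\liminf$ interchange only goes in the direction $\sup_\e \liminf_m \leq \liminf_m \sup_\e$. Everything else reduces to Minkowski's inequality and the two-scale split from Part 2.
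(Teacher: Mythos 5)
Your proof is correct and follows the paper's route for Parts 1 through 3: Minkowski's inequality for the triangle inequalities in Part 1, the two-scale decomposition (inner ball of radius $\e_0$ plus the annulus $B_\e(x)\setminus B_{\e_0}(x)$) together with the elementary bound $|u(x)-u(y)|^q \le 2^{q-1}(|u(x)|^q+|u(y)|^q)$ for Part 2, and a thickening argument for Part 3 (you pick a compact $K'$ with $K$ in its interior and $K'\subset\Omega$, the paper picks an open $\Omega_0$ with $K\subset\Omega_0\subset\subset\Omega$; same idea).

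For Part 4, your argument is actually more complete than the paper's, and this is worth noting. The paper's proof of completeness applies Fatou directly to the sequence $u_n$ and establishes only that the $L^q$-limit $u$ satisfies $|u|_{B^{r,q}(E,\R^d)}\le M<\infty$, i.e., that $u\in B^{r,q}(E,\R^d)$. It stops there, without showing $\|u-u_n\|_2\to 0$, which is what completeness actually requires. Your application of Fatou to the differences $u_m-u_n$ (for a pointwise a.e.\ convergent subsequence of $u_m$), followed by the observation that $\sup_\e\liminf_m\le\liminf_m\sup_\e$, yields the estimate
\begin{equation*}
|u-u_n|_{B^{r,q}(E,\R^d)}^q\le\liminf_{m\to\infty}|u_m-u_n|_{B^{r,q}(E,\R^d)}^q,
\end{equation*}
and the right-hand side tends to $0$ as $n\to\infty$ by the Cauchy property. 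Combined with the $L^q$-convergence, this gives the norm convergence that the paper's proof leaves implicit. So your version of Part 4 genuinely improves on the paper's argument.
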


\begin{proof}
1. Let $u,v\in B^{r,q}(E,\R^d)$ and $a\in\R$. It follows immediately from definitions that $|u|_{B^{r,q}(E,\R^d)},[u]_{B^{r,q}(E,\R^d)}$ are non-negative and homogeneous,  which means that $|au|_{B^{r,q}(E,\R^d)}=|a||u|_{B^{r,q}(E,\R^d)}$ and $[au]_{B^{r,q}(E,\R^d)}=|a|[u]_{B^{r,q}(E,\R^d)}$. We have by Minkowski's inequality
\begin{multline}
\label{eq:calculation for proving seminorm property}
\left(\int_{E}\int_{E\cap B_\e(x)}\frac{|(u+v)(x)-(u+v)(y)|^q}{|x-y|^{rq}}dydx\right)^{\frac{1}{q}}
\\
\leq \left(\int_{E}\int_{E}\left[\chi_{B_\e(x)}(y)\frac{|u(x)-u(y)|}{|x-y|^{r}}+\chi_{B_\e(x)}(y)\frac{|v(x)-v(y)|}{|x-y|^{r}}\right]^qdydx\right)^{\frac{1}{q}}
\\
\leq \left(\int_{E}\int_{E}\left[\chi_{B_\e(x)}(y)\frac{|u(x)-u(y)|}{|x-y|^{r}}\right]^qdydx\right)^{\frac{1}{q}}
+\left(\int_{E}\int_{E}\left[\chi_{B_\e(x)}(y)\frac{|v(x)-v(y)|}{|x-y|^{r}}\right]^qdydx\right)^{\frac{1}{q}}.
\end{multline}
The triangle inequality for $[\cdot]_{B^{r,q}(E,\R^d)},|\cdot|_{B^{r,q}(E,\R^d)}$ follows from \eqref{eq:calculation for proving seminorm property}.
\\
2. Since for every $\mathcal{L}^N$-measurable function $u:E\to \R^d$ we have $[u]_{B^{r,q}(E,\R^d)}\leq |u|_{B^{r,q}(E,\R^d)}$, then the finiteness of $|u|_{B^{r,q}(E,\R^d)}$ implies the finiteness of $[u]_{B^{r,q}(E,\R^d)}$. Assume $[u]_{B^{r,q}(E,\R^d)}<\infty$. Then, there exists a number
$0<\e_0<1$
such that
\begin{equation}
\sup_{\e\in (0,\e_0]}\int_{E}\left(\int_{E\cap B_\e(x)}\frac{1}{\e^N}\frac{|u(x)-u(y)|^q}{|x-y|^{rq}}dy\right)dx<\infty.
\end{equation}
We have
\begin{multline}
\sup_{\e\in [\e_0,1)}\int_{E}\left(\int_{E\cap B_\e(x)}\frac{1}{\e^N}\frac{|u(x)-u(y)|^q}{|x-y|^{rq}}dy\right)dx
\leq \int_{E}\left(\int_{E\cap B_{\e_0}(x)}\frac{1}{\e_0^N}\frac{|u(x)-u(y)|^q}{|x-y|^{rq}}dy\right)dx
\\
+\sup_{\e\in [\e_0,1)}\int_{E}\left(\int_{E\cap \left(B_\e(x)\setminus B_{\e_0}(x)\right)}\frac{1}{\e^N}\frac{|u(x)-u(y)|^q}{|x-y|^{rq}}dy\right)dx,
\end{multline}
and
\begin{multline}
\sup_{\e\in [\e_0,1)}\int_{E}\left(\int_{E\cap \left(B_\e(x)\setminus B_{\e_0}(x)\right)}\frac{1}{\e^N}\frac{|u(x)-u(y)|^q}{|x-y|^{rq}}dy\right)dx
\\
\leq 2^{q-1}\frac{1}{\e_0^{N+rq}}\sup_{\e\in [\e_0,1)}\int_{E}\left(\int_{E\cap \left(B_\e(x)\setminus B_{\e_0}(x)\right)}|u(x)|^qdy\right)dx
\\
+2^{q-1}\frac{1}{\e_0^{N+rq}}\sup_{\e\in [\e_0,1)}\int_{E}\left(\int_{E\cap \left(B_{\e}(x)\setminus B_{\e_0}(x)\right)}|u(y)|^qdy\right)dx
\leq2^q\frac{1}{\e_0^{N+rq}}\Leb^N(B_1(0))\|u\|^q_{L^q(E,\mathbb{R}^d)}<\infty.
\end{multline}
Thus, $|u|_{B^{r,q}(E,\R^d)}<\infty$.
\\
3. If for a compact set $K\subset\R^N$ we have \eqref{eq:equation108}, then $[u]_{B^{r,q}(K,\R^d)}<\infty$ and by item 2 we have also $|u|_{B^{r,q}(K,\R^d)}<\infty$, hence $u\in B^{r,q}(K,\R^d)$. For the opposite implication, let $u\in B^{r,q}_{\text{loc}}(\Omega,\R^d)$ and $K\subset\Omega$ be a compact set. Let $\Omega_0\subset\subset\Omega$ be an open set containing $K$. Since $\Omega_0$ is open, then we have for every small enough $\e\in(0,\infty)$ that $K+B_\e(0)\subset\Omega_0$. Hence, by item 2 we have
\begin{equation}
\limsup_{\e\to 0^+}\int_{K}\frac{1}{\e^N}\int_{B_{\e}(x)}\frac{|u(x)-u(y)|^q}{|x-y|^{rq}}dydx\leq \limsup_{\e\to 0^+}\int_{\Omega_0}\frac{1}{\e^N}\int_{B_{\e}(x)\cap \Omega_0}\frac{|u(x)-u(y)|^q}{|x-y|^{rq}}dydx<\infty.
\end{equation}
\\
4.  Since by assertion 1, $[\cdot]_{B^{r,q}(E,\R^d)},|\cdot|_{B^{r,q}(E,\R^d)}$ are seminorms and $\|\cdot\|_{L^q(E,\R^d)}$ is a norm, then $\|\cdot\|_1,\|\cdot\|_2$ are norms on the space $B^{r,q}(E,\R^d)$. The space $\left(B^{r,q}(E,\R^d),\|\cdot\|_2\right)$ is complete: let $\{u_n\}_{n=1}^\infty\subset B^{r,q}(E,\R^d)$ be a Cauchy sequence. Then, it is also a Cauchy sequence in $L^q(E,\R^d)$, so, since $L^q(E,\R^d)$ is complete, there exists a function $u\in L^q(E,\R^d)$ such that $\{u_n\}_{n=1}^\infty$ converges to $u$ in $L^q(E,\R^d)$. Let $\{u_{n_k}\}_{n=1}^\infty$ be a subsequence that converges to $u$ also $\mathcal{L}^N$-almost everywhere. Since $\{u_{n_k}\}_{n=1}^\infty$ is a Cauchy sequence, then it is bounded, so there exists a number $M$ such that $|u_{n_k}|_{B^{r,q}(E,\R^d)}\leq M$ for every $k\in\N$. By Fatou's lemma we get
\begin{equation}
|u|_{B^{r,q}(E,\R^d)}\leq \liminf_{k\to \infty}\sup_{\e\in(0,1)}\left(\int_{E}\frac{1}{\e^N}\int_{E\cap B_{\e}(x)}\frac{|u_{n_k}(x)-u_{n_k}(y)|^q}{|x-y|^{rq}}dydx\right)^{\frac{1}{q}}\leq M.
\end{equation}
Thus, $u\in B^{r,q}(E,\R^d)$. Let us prove that $u_n$ converges to $u$ in $B^{r,q}(E,\R^d)$. Let $\xi>0$. Since $\{u_n\}_{n=1}^\infty$ is a Cauchy sequence in $B^{r,q}(E,\R^d)$, there exists  $N_0\in \N$ such that for every $n,k>N_0$, we get
\begin{multline}
\xi\geq |u_n-u_k|_{B^{r,q}(E,\R^d)}=\sup_{\e\in(0,1)}\int_{E}\frac{1}{\e^N}\int_{E\cap B_{\e}(x)}\frac{|(u_n-u_k)(x)-(u_n-u_k)(y)|^q}{|x-y|^{rq}}dydx.
\end{multline}

Therefore, for every $n>N_0$, we obtain
\begin{multline}
\xi\geq \liminf_{k\to \infty}\left(\sup_{\e\in(0,1)}\int_{E}\frac{1}{\e^N}\int_{E\cap B_{\e}(x)}\frac{|(u_n-u_{n_k})(x)-(u_n-u_{n_k})(y)|^q}{|x-y|^{rq}}dydx\right)
\\
\geq \sup_{\e\in(0,1)}\left(\liminf_{k\to \infty}\int_{E}\frac{1}{\e^N}\int_{E\cap B_{\e}(x)}\frac{|(u_n-u_{n_k})(x)-(u_n-u_{n_k})(y)|^q}{|x-y|^{rq}}dydx\right)
\\
\geq \sup_{\e\in(0,1)}\left(\int_{E}\frac{1}{\e^N}\int_{E\cap B_{\e}(x)}\frac{|(u_n-u)(x)-(u_n-u)(y)|^q}{|x-y|^{rq}}dydx\right)=|u_n-u|_{B^{r,q}(E,\R^d)}.
\end{multline}
\end{proof}

\begin{proposition}(Continuous Embedding of $W^{r,q}$ into $B^{r,q}$, and Negligibility of the Upper Infinitesimal $B^{r,q}$-Seminorm for Sobolev Functions)
\label{prop:Negligibility of the upper infinitesimal $B^{r,q}$-seminorm for Sobolev functions}

Let $r\in(0,1)$, $q\in [1,\infty)$ and $E\subset\R^N$ be an $\mathcal{L}^N$-measurable set. Then, the space $W^{r,q}(E,\R^d)$ with the norm $[\cdot]_{W^{r,q}(E,\R^d)}+\|\cdot\|_{L^q(E,\R^d)}$ is continuously embedded in the space $B^{r,q}(E,\R^d)$ with the norm $\|\cdot\|_2$ defined in \eqref{eq:the norms 1 and 2}. Moreover, $[u]_{B^{r,q}(E,\R^d)}=0$ for every $u\in W^{r,q}(E,\R^d)$.
\end{proposition}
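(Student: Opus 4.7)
The plan is to bound both seminorms by the Gagliardo seminorm using a single pointwise estimate and then apply dominated convergence for the vanishing statement.

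The key observation is a pointwise comparison. For $y \in B_\e(x)$ we have $|x-y| < \e$, hence $\e^{-N} \leq |x-y|^{-N}$, so
\begin{equation*}
\chi_{B_\e(x)}(y)\,\frac{1}{\e^N}\,\frac{|u(x)-u(y)|^q}{|x-y|^{rq}} \;\leq\; \chi_{B_\e(x)}(y)\,\frac{|u(x)-u(y)|^q}{|x-y|^{N+rq}}.
\end{equation*}
Integrating this over $(x,y) \in E\times E$ and dropping the indicator on the right gives, uniformly in $\e\in(0,1)$,
\begin{equation*}
\int_E \frac{1}{\e^N}\int_{E\cap B_\e(x)} \frac{|u(x)-u(y)|^q}{|x-y|^{rq}}\,dy\,dx \;\leq\; \int_E\int_E \frac{|u(x)-u(y)|^q}{|x-y|^{N+rq}}\,dy\,dx \;=\; [u]^q_{W^{r,q}(E,\R^d)}.
\end{equation*}
Taking the supremum over $\e\in(0,1)$ yields $|u|_{B^{r,q}(E,\R^d)} \leq [u]_{W^{r,q}(E,\R^d)}$, and combined with the trivial $\|u\|_{L^q} \leq \|u\|_{L^q}$ this gives the continuous embedding with respect to the norm $\|\cdot\|_2$ from \eqref{eq:the norms 1 and 2}.

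For the second part, I apply the same pointwise estimate but now consider, for each $\e>0$, the integrand
\begin{equation*}
F_\e(x,y) := \chi_{E}(x)\chi_{E}(y)\chi_{B_\e(x)}(y)\,\frac{|u(x)-u(y)|^q}{|x-y|^{N+rq}}
\end{equation*}
on $\R^N\times\R^N$. Since $u\in W^{r,q}(E,\R^d)$ means $\chi_E(x)\chi_E(y)|u(x)-u(y)|^q/|x-y|^{N+rq}$ is integrable on $\R^N\times\R^N$, it serves as an $L^1$-dominant for the family $\{F_\e\}$. Moreover, as $\e\to 0^+$, $F_\e(x,y)\to 0$ for $\mathcal{L}^{2N}$-almost every $(x,y)$ (the exceptional set $\{x=y\}$ is a null set). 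The dominated convergence theorem therefore gives
\begin{equation*}
\lim_{\e\to 0^+}\int_E \frac{1}{\e^N}\int_{E\cap B_\e(x)}\frac{|u(x)-u(y)|^q}{|x-y|^{rq}}\,dy\,dx \;\leq\; \lim_{\e\to 0^+}\int_{\R^N}\int_{\R^N} F_\e(x,y)\,dy\,dx \;=\; 0,
\end{equation*}
which shows $[u]_{B^{r,q}(E,\R^d)}=0$.

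Neither step is delicate: the only thing to watch is that the pointwise bound used for continuity of the embedding is the \emph{same} bound that, once viewed as integrands on $\R^N\times\R^N$, provides both the $L^1$-dominant and the a.e. pointwise limit needed for the dominated convergence argument. No additional regularity of $E$ or of $u$ beyond $u\in W^{r,q}(E,\R^d)$ is required.
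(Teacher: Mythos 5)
Your proof is correct and takes essentially the same approach as the paper's: the same pointwise bound $\e^{-N}\chi_{B_\e(x)}(y)\leq |x-y|^{-N}\chi_{B_\e(x)}(y)$ gives both the continuous embedding and the $L^1$-dominant for the vanishing of $[u]_{B^{r,q}}$. The only (cosmetic) difference is that the paper first integrates the inner variable $y$, observes that $\int_{E\cap B_\e(x)}\frac{|u(x)-u(y)|^q}{|x-y|^{N+rq}}\,dy\to 0$ for a.e.\ $x$, and applies dominated convergence over $x\in E$, while you apply dominated convergence directly to the joint integrand $F_\e$ on $\R^N\times\R^N$, using that $\{x=y\}$ is $\mathcal{L}^{2N}$-null; these are equivalent.
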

\begin{proof}
We have for every $\e\in(0,\infty)$ and $u\in W^{r,q}(E,\R^d)$
\begin{multline}
\label{eq:equation109}
\infty>\int_{E}\int_{E}\frac{|u(x)-u(y)|^q}{|x-y|^{N+rq}}dydx\geq \int_{E}\left(\int_{E\cap B_{\e}(x)}\frac{|u(x)-u(y)|^q}{|x-y|^{N+rq}}dy\right)dx
\\
\geq \int_{E}\frac{1}{\e^N}\int_{E\cap B_{\e}(x)}\frac{|u(x)-u(y)|^q}{|x-y|^{rq}}dydx.
\end{multline}
By \eqref{eq:equation109} we conclude that $W^{r,q}(E,\R^d)$ is continuously embedded in $B^{r,q}(E,\R^d)$. Notice that
\begin{multline}
\int_{E}\left(\sup_{\e\in(0,\infty)}\int_{E\cap B_{\e}(x)}\frac{|u(x)-u(y)|^q}{|x-y|^{N+rq}}dy\right)dx\leq \int_{E}\int_{E}\frac{|u(x)-u(y)|^q}{|x-y|^{N+rq}}dydx<\infty;
\\
\lim_{\e\to 0^+}\int_{E\cap B_{\e}(x)}\frac{|u(x)-u(y)|^q}{|x-y|^{N+rq}}dy=0,\quad \text{for $\mathcal{L}^N$-almost every $x\in E$}.
\end{multline}
Therefore, by Dominated Convergence Theorem
\begin{multline}
[u]_{B^{r,q}(E,\R^d)}\leq  \limsup_{\e\to 0^+}\int_{E}\int_{E\cap B_{\e}(x)}\frac{|u(x)-u(y)|^q}{|x-y|^{N+rq}}dydx
\\
=\int_{E}\lim_{\e\to 0^+}\left(\int_{E\cap B_{\e}(x)}\frac{|u(x)-u(y)|^q}{|x-y|^{N+rq}}dy\right)dx=0.
\end{multline}
\end{proof}

\begin{corollary}(Non-equivalence of the Seminorms $|\cdot|_{B^{r,q}},[\cdot]_{B^{r,q}}$)
\label{cor:Non-equivalence of the Seminorms}

Let $r\in(0,1)$, $q\in [1,\infty)$ and $\Omega\subset\R^N$ be an open set which is not empty. Let $\|\cdot\|_1,\|\cdot\|_2$ be the norms defined in \eqref{eq:the norms 1 and 2}. Then, the space $\left(B^{r,q}(\Omega,\R^d),\|\cdot\|_1\right)$ is not a Banach space. In particular, the seminorms $|\cdot|_{B^{r,q}},[\cdot]_{B^{r,q}}$ are not equivalent.
\end{corollary}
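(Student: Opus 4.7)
The plan is to exploit the trivial inequality $\|u\|_1\le\|u\|_2$ together with the completeness of $(B^{r,q}(\Omega,\R^d),\|\cdot\|_2)$ established in item~4 of Proposition~\ref{prop:properties of Brq seminorms}. If $(B^{r,q}(\Omega,\R^d),\|\cdot\|_1)$ were also a Banach space, then the identity map between the two would be a continuous bijection between Banach spaces, and the open mapping theorem would force its inverse to be bounded, yielding $\|\cdot\|_2\le C\|\cdot\|_1$ for some $C>0$. Hence both conclusions of the corollary reduce to producing a sequence in $B^{r,q}(\Omega,\R^d)$ on which the ratio $|\cdot|_{B^{r,q}(\Omega,\R^d)}/\|\cdot\|_{L^q(\Omega,\R^d)}$ blows up while $[\cdot]_{B^{r,q}(\Omega,\R^d)}$ stays zero.

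For the witness I would use a rescaled bump. Pick $x_0\in\Omega$ with $\overline{B_{r_0}(x_0)}\subset\Omega$ for some $r_0>0$, fix a nonconstant $\phi\in C^\infty_c(B_1(0))$, and for $\e_n\downarrow 0$ with $\e_n<r_0/2$ set $u_n(x):=\phi\bigl((x-x_0)/\e_n\bigr)$. Since $u_n\in C^\infty_c(\Omega)\subset W^{r,q}(\Omega,\R^d)$, Proposition~\ref{prop:Negligibility of the upper infinitesimal $B^{r,q}$-seminorm for Sobolev functions} yields $[u_n]_{B^{r,q}(\Omega,\R^d)}=0$, so that $\|u_n\|_1=\|u_n\|_{L^q(\Omega,\R^d)}$.

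The core is a straightforward scaling. A change of variable gives $\|u_n\|_{L^q(\Omega,\R^d)}^q=\e_n^N\|\phi\|_{L^q(\R^N,\R^d)}^q$. For the other seminorm I would evaluate the supremum in the definition of $|u_n|_{B^{r,q}(\Omega,\R^d)}$ at $\delta=\e_n\in(0,1)$; after substituting $x=x_0+\e_n x'$, $y=x_0+\e_n y'$ and noting that for small $\e_n$ the effective region $x\in B_{2\e_n}(x_0)$ satisfies $B_{\e_n}(x)\subset\Omega$, so that the outer restriction to $\Omega$ is inactive, one arrives at
\[
|u_n|_{B^{r,q}(\Omega,\R^d)}^q\ \ge\ \e_n^{N-rq}\int_{\R^N}\int_{B_1(x')}\frac{|\phi(x')-\phi(y')|^q}{|x'-y'|^{rq}}\,dy'\,dx'\ =:\ C_\phi\,\e_n^{N-rq},
\]
with $C_\phi\in(0,\infty)$ (finite because $\phi$ is smooth with compact support, strictly positive because $\phi$ is nonconstant on $B_1(0)$). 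Dividing by $\|u_n\|_{L^q}^q=\e_n^N\|\phi\|_{L^q}^q$ and taking $q$-th roots gives $|u_n|_{B^{r,q}(\Omega,\R^d)}/\|u_n\|_{L^q(\Omega,\R^d)}\gtrsim \e_n^{-r}\to\infty$. Normalizing $v_n:=u_n/\|u_n\|_{L^q(\Omega,\R^d)}$ yields $\|v_n\|_1=1$ while $\|v_n\|_2\to\infty$, contradicting the equivalence demanded by the open mapping argument.

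I do not anticipate a serious obstacle: the argument is a routine concentration-scaling paired with the open mapping theorem. The only mild technical points are (i) that the test value $\delta=\e_n$ must lie in the admissible range $(0,1)$, which is automatic once $\e_n<1$, and (ii) that the outer integration in the definition of $|\cdot|_{B^{r,q}}$ is over $\Omega$ rather than $\R^N$ — but the support condition $\mathrm{supp}(u_n)\subset B_{\e_n}(x_0)\subset\subset B_{r_0}(x_0)\subset\Omega$ makes this restriction harmless for all sufficiently small $\e_n$.
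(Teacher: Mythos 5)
Your proof is correct, and it reaches the corollary by a genuinely different route than the paper. The paper's argument picks an arbitrary $u\in L^q(\Omega,\R^d)$ with $[u]_{B^{r,q}(\Omega,\R^d)}=\infty$, approximates it in $L^q$ by a sequence $\{u_n\}\subset C^1_c(\Omega,\R^d)$, and observes that since $[u_n-u_k]_{B^{r,q}}=0$ (Proposition~\ref{prop:Negligibility of the upper infinitesimal $B^{r,q}$-seminorm for Sobolev functions}), one has $\|u_n-u_k\|_1=\|u_n-u_k\|_{L^q}$, so $\{u_n\}$ is $\|\cdot\|_1$-Cauchy in $B^{r,q}(\Omega,\R^d)$; yet any $\|\cdot\|_1$-limit would also be the $L^q$-limit $u\notin B^{r,q}(\Omega,\R^d)$, so the sequence has no limit and $(B^{r,q}(\Omega,\R^d),\|\cdot\|_1)$ is incomplete. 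Your argument instead proves non-equivalence of the norms directly by an explicit concentration family $u_n=\phi((\cdot-x_0)/\e_n)$, using $[u_n]_{B^{r,q}}=0$ (again via Proposition~\ref{prop:Negligibility of the upper infinitesimal $B^{r,q}$-seminorm for Sobolev functions}) together with the scaling lower bound $|u_n|^q_{B^{r,q}}\ge C_\phi\,\e_n^{N-rq}$ against $\|u_n\|^q_{L^q}=\e_n^N\|\phi\|^q_{L^q}$, and then deduces incompleteness of $(B^{r,q}(\Omega,\R^d),\|\cdot\|_1)$ from the open mapping theorem, the trivial comparison $\|\cdot\|_1\le\|\cdot\|_2$, and the completeness of $(B^{r,q}(\Omega,\R^d),\|\cdot\|_2)$ established in item 4 of Proposition~\ref{prop:properties of Brq seminorms}. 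The trade-off is clear: your route is fully constructive and sidesteps the (unproved in the paper, though standard) existence of an $L^q(\Omega)$ function lying outside $B^{r,q}(\Omega,\R^d)$, at the price of invoking the open mapping theorem and doing a short scaling computation; the paper's route is shorter and avoids the open mapping theorem by exhibiting a Cauchy sequence without a limit, at the price of taking that existence for granted. Both routes hinge on the same two ingredients from Section 4, so either is a legitimate proof.
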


\begin{proof}
Let $u\in L^q(\Omega,\R^d)$ such that $[u]_{B^{r,q}(\Omega,\R^d)}=\infty$. Let $\{u_n\}_{n=1}^\infty\subset C^1_c(\Omega,\R^d)$ be a sequence which converges to $u$ in $L^q(\Omega,\R^d)$, so it is also a Cauchy sequence in $L^q(\Omega,\R^d)$. Therefore, by Proposition \ref{prop:Negligibility of the upper infinitesimal $B^{r,q}$-seminorm for Sobolev functions} we have that $\{u_n\}_{n=1}^\infty\subset W^{r,q}(\Omega,\R^d)\subset B^{r,q}(\Omega,\R^d)$ and this sequence is also a Cauchy sequence with respect to the norm $\|\cdot\|_1$ because $\|u_n-u_k\|_1=\|u_n-u_k\|_{L^q(\Omega,\R^d)}$ for every $k,n\in\N$. Thus, $\{u_n\}_{n=1}^\infty$ is a Cauchy sequence in the space $\left(B^{r,q}(\Omega,\R^d),\|\cdot\|_1\right)$ which does not have a limit in the space. Since by item 4 of Proposition \ref{prop:properties of Brq seminorms} $\left(B^{r,q}(\Omega,\R^d),\|\cdot\|_2\right)$ is a Banach space, then the norms $\|\cdot\|_1,\|\cdot\|_2$ are not equivalent and so as the seminorms $|\cdot|_{B^{r,q}},[\cdot]_{B^{r,q}}$.
\end{proof}

\begin{theorem}(Equivalence Between $B^{r,q}$-Spaces and Besov Spaces $B^r_{q,\infty}$)
\label{thm:characterization of Besov functions via double integral}

Let $r\in(0,1)$, $q\in[1,\infty)$. Then,
\begin{equation}
\label{eq:equality between global Besov and Brq}
B^r_{q,\infty}(\R^N,\R^d)=B^{r,q}(\R^N,\R^d),
\end{equation}
and for every open set $\Omega\subset\R^N$
\begin{equation}
\label{eq:equality between local Besov and Brq}
\left(B^r_{q,\infty}\right)_{\text{loc}}(\Omega,\R^d)=B^{r,q}_{\text{loc}}(\Omega,\R^d).
\end{equation}
\end{theorem}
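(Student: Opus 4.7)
The plan is to first establish the global equality \eqref{eq:equality between global Besov and Brq}, and then deduce the local version \eqref{eq:equality between local Besov and Brq} from it via a cutoff argument.

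For the inclusion $B^r_{q,\infty}(\R^N,\R^d) \subset B^{r,q}(\R^N,\R^d)$, I take $u \in B^r_{q,\infty}(\R^N,\R^d)$, fix $\e \in (0,1)$, and use the change of variable $z = y-x$ together with Fubini's theorem to rewrite
\begin{equation*}
\int_{\R^N} \frac{1}{\e^N} \int_{B_\e(x)} \frac{|u(x)-u(y)|^q}{|x-y|^{rq}}\, dy\, dx = \frac{1}{\e^N}\int_{B_\e(0)} \left(\int_{\R^N} \frac{|u(x+z)-u(x)|^q}{|z|^{rq}}\, dx\right) dz.
\end{equation*}
The inner integral is bounded by $[u]^q_{B^r_{q,\infty}(\R^N,\R^d)}$ for every $z\neq 0$, so the whole expression is at most $\Leb^N(B_1(0))\,[u]^q_{B^r_{q,\infty}(\R^N,\R^d)}$ uniformly in $\e \in (0,1)$, yielding $|u|_{B^{r,q}(\R^N,\R^d)}<\infty$.

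For the converse, let $u \in B^{r,q}(\R^N,\R^d)$ and set $M := |u|_{B^{r,q}(\R^N,\R^d)}$. Fix $h \in \R^N$ with $0 < |h| < 1/2$ and put $\e = 2|h|$. Since the centers of $B_\e(x)$ and $B_\e(x+h)$ are at distance $\e/2$, there is a dimensional constant $c_N>0$ with $\Leb^N\bigl(B_\e(x)\cap B_\e(x+h)\bigr) \geq c_N \e^N$. Averaging the inequality $|u(x+h)-u(x)|^q \leq 2^{q-1}(|u(x)-u(y)|^q + |u(x+h)-u(y)|^q)$ over $y$ in this intersection, integrating in $x$, and using a translation to identify the two resulting double integrals gives
\begin{equation*}
\int_{\R^N} |u(x+h)-u(x)|^q\, dx \leq \frac{2^q}{c_N\e^N}\int_{\R^N}\int_{B_\e(x)} |u(x)-u(y)|^q\, dy\, dx.
\end{equation*}
The pointwise bound $|u(x)-u(y)|^q \leq \e^{rq}\,|u(x)-u(y)|^q/|x-y|^{rq}$ for $|x-y|<\e$ then shows $\int_{\R^N} |u(x+h)-u(x)|^q\, dx \leq (2^q/c_N)\,\e^{rq} M^q$. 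Dividing by $|h|^{rq}=(\e/2)^{rq}$ yields a uniform bound in $h$ for $|h|<1/2$, while $|h|\geq 1/2$ is handled by $\|u(\cdot+h)-u(\cdot)\|_{L^q}\leq 2\|u\|_{L^q}$.

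For the local statement, the inclusion $(B^r_{q,\infty})_{\text{loc}}(\Omega,\R^d) \subset B^{r,q}_{\text{loc}}(\Omega,\R^d)$ is straightforward: for each compact $K\subset\Omega$ there is $u_K \in B^r_{q,\infty}(\R^N,\R^d)$ with $u_K = u$ $\Leb^N$-a.e.\ on $K$, hence $u_K \in B^{r,q}(\R^N,\R^d)$ by the global equality; since the defining integral for $|u|_{B^{r,q}(K,\R^d)}$ only sees points in $K$, $|u|_{B^{r,q}(K,\R^d)} \leq |u_K|_{B^{r,q}(\R^N,\R^d)}<\infty$. For the reverse inclusion, given compact $K\subset\Omega$, pick an open $\Omega_0$ with $K\subset\Omega_0\subset\subset\Omega$ and $\phi\in C^\infty_c(\Omega_0)$ with $\phi\equiv 1$ on $K$. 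Using the pointwise estimate
\begin{equation*}
|\phi u(x)-\phi u(y)|^q \leq 2^{q-1}\bigl(\|\phi\|_\infty^q|u(x)-u(y)|^q + \|\nabla\phi\|_\infty^q|x-y|^q|u(y)|^q\bigr),
\end{equation*}
the fact that for small $\e$ the integrand is supported in a fixed compact subset of $\Omega$, and item~3 of Proposition~\ref{prop:properties of Brq seminorms}, one obtains $[\phi u]_{B^{r,q}(\R^N,\R^d)}<\infty$; item~2 of the same proposition upgrades this to $|\phi u|_{B^{r,q}(\R^N,\R^d)}<\infty$, so the global equality yields $\phi u \in B^r_{q,\infty}(\R^N,\R^d)$. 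Since $\phi u = u$ on $K$, this proves $u \in (B^r_{q,\infty})_{\text{loc}}(\Omega,\R^d)$. The main technical step is the reverse global inclusion, whose only non-trivial ingredient is the geometric observation that $B_\e(x)\cap B_\e(x+h)$ has Lebesgue measure comparable to $\e^N$ whenever $|h|$ is a bounded fraction of $\e$; everything else reduces to standard changes of variables and Fubini.
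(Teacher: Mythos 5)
Your proposal is correct and takes essentially the same approach as the paper: the reverse global inclusion in both cases rests on the same geometric observation, that $B_\e(x)\cap B_\e(x+h)$ (equivalently $B_1(0)\cap B_1(\nu)$ after rescaling, with $h=\e\nu$) has measure comparable to $\e^N$, followed by an averaging and translation argument. Your presentation averages directly over the physical overlap region $B_\e(x)\cap B_\e(x+h)$ and integrates in $x$, whereas the paper first records an abstract triangle inequality for difference quotients (Step 1) and then averages over a parameter $z\in B_{1/2}(\tfrac{1}{2}\nu)$ (Step 2); the local arguments likewise match up to the choice of a smooth versus H\"older cutoff and a minor simplification in the forward direction.
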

\begin{proof}
Assume that $u\in B^r_{q,\infty}(\R^N,\R^d)$. Then, for every $\e\in(0,\infty)$
\begin{multline}
\int_{\R^N}\frac{1}{\e^N}\int_{B_{\e}(x)}\frac{|u(x)-u(y)|^q}{|x-y|^{rq}}dydx=\int_{\R^N}\int_{B_{1}(0)}\frac{|u(x)-u(x+\e z)|^q}{|\e z|^{rq}}dzdx
\\
=\int_{B_{1}(0)}\left(\int_{\R^N}\frac{|u(x)-u(x+\e z)|^q}{|\e z|^{rq}}dx\right)dz
\leq [u]^q_{B^{r}_{q,\infty}(\R^N,\R^d)}\mathcal{L}^N\left(B_1(0)\right)<\infty.
\end{multline}
Thus, we get
\begin{equation}
\sup_{\e\in(0,\infty)}\int_{\R^N}\frac{1}{\e^N}\int_{B_{\e}(x)}\frac{|u(x)-u(y)|^q}{|x-y|^{rq}}dydx\leq [u]^q_{B^{r}_{q,\infty}(\R^N,\R^d)}\mathcal{L}^N\left(B_1(0)\right)<\infty.
\end{equation}
Thus, $u\in B^{r,q}(\R^N,\R^d)$. Assume that $u\in B^{r,q}(\R^N,\R^d)$.
\\
Step 1: for every $h_1,h_2\in \R^N$ such that $0\notin\{h_1,h_2,h_1+h_2\}$ we have
\begin{multline}
\label{eq:equation95}
\int_{\R^N}\frac{|u(x+(h_1+h_2))-u(x)|^q}{|h_1+h_2|^{rq}}dx=\int_{\R^N}\frac{|\left(u(x+(h_1+h_2))-u(x+h_1)\right)+\left(u(x+h_1)-u(x)\right)|^q}{|h_1+h_2|^{rq}}dx
\\
\leq 2^{q-1}\int_{\R^N}\frac{|u(x+(h_1+h_2))-u(x+h_1)|^q}{|h_1+h_2|^{rq}}dx+2^{q-1}\int_{\R^N}\frac{|u(x+h_1)-u(x)|^q}{|h_1+h_2|^{rq}}dx
\\
=\frac{2^{q-1}|h_2|^{rq}}{|h_1+h_2|^{rq}}\int_{\R^N}\frac{|u(x+h_2)-u(x)|^q}{|h_2|^{rq}}dx+\frac{2^{q-1}|h_1|^{rq}}{|h_1+h_2|^{rq}}\int_{\R^N}\frac{|u(x+h_1)-u(x)|^q}{|h_1|^{rq}}dx.
\end{multline}
Step 2: let $\nu\in S^{N-1}$, $\e\in(0,\infty)$ and $z\in\R^N$. Denote $h_1:=\e z$ and $h_2:=\e(\nu-z)$. Note that $B_{1/2}\left(\frac{1}{2}\nu\right)\subset B_1(0)\cap B_1(\nu)$.
For $z\in B_{1/2}\left(\frac{1}{2}\nu\right)$, we get by \eqref{eq:equation95}
\begin{multline}
\label{eq:equation96}
\int_{\R^N}\frac{|u(x+\e\nu)-u(x)|^q}{\e^{rq}}dx
\\
\leq 2^{q-1}|\nu-z|^{rq}\int_{\R^N}\frac{|u(x+\e(\nu-z))-u(x)|^q}{|\e(\nu-z)|^{rq}}dx+2^{q-1}|z|^{rq}\int_{\R^N}\frac{|u(x+\e z)-u(x)|^q}{|\e z|^{rq}}dx
\\
\leq 2^{q-1}\int_{\R^N}\frac{|u(x+\e(\nu-z))-u(x)|^q}{|\e(\nu-z)|^{rq}}dx+2^{q-1}\int_{\R^N}\frac{|u(x+\e z)-u(x)|^q}{|\e z|^{rq}}dx.
\end{multline}
Taking the average with respect to $dz$ on the ball $B_{1/2}\left(\frac{1}{2}\nu\right)$ of both sides of the inequality \eqref{eq:equation96}, we get
\begin{multline}
\label{eq:equation97}
\int_{\R^N}\frac{|u(x+\e\nu)-u(x)|^q}{\e^{rq}}dx
\leq \frac{2^{q-1}}{\mathcal{L}^N\left(B_{1/2}\left(\frac{1}{2}\nu\right)\right)}\int_{B_{1/2}\left(\frac{1}{2}\nu\right)}\int_{\R^N}\frac{|u(x+\e(\nu-z))-u(x)|^q}{|\e(\nu-z)|^{rq}}dxdz
\\
+\frac{2^{q-1}}{\mathcal{L}^N\left(B_{1/2}\left(\frac{1}{2}\nu\right)\right)}\int_{B_{1/2}\left(\frac{1}{2}\nu\right)}\int_{\R^N}\frac{|u(x+\e z)-u(x)|^q}{|\e z|^{rq}}dxdz
\\
\leq \frac{2^{N+q-1}}{\mathcal{L}^N\left(B_1(0)\right)}\int_{B_1(\nu)}\int_{\R^N}\frac{|u(x+\e(\nu-z))-u(x)|^q}{|\e(\nu-z)|^{rq}}dxdz
\\
+\frac{2^{N+q-1}}{\mathcal{L}^N\left(B_1(0)\right)}\int_{B_1(0)}\int_{\R^N}\frac{|u(x+\e z)-u(x)|^q}{|\e z|^{rq}}dxdz
=\frac{2^{N+q}}{\mathcal{L}^N\left(B_1(0)\right)}\int_{B_1(0)}\int_{\R^N}\frac{|u(x+\e z)-u(x)|^q}{|\e z|^{rq}}dxdz.
\end{multline}
Therefore, since $u\in B^{r,q}(\R^N,\R^d)$, then
\begin{multline}
\label{eq:equation98}
\limsup_{\e\to 0^+}\left(\sup_{\nu\in S^{N-1}}\int_{\R^N}\frac{|u(x+\e\nu)-u(x)|^q}{\e^{rq}}dx\right)
\leq \frac{2^{N+q}}{\mathcal{L}^N\left(B_1(0)\right)}\limsup_{\e\to 0^+}\int_{B_1(0)}\int_{\R^N}\frac{|u(x+\e z)-u(x)|^q}{|\e z|^{rq}}dxdz
\\
= \frac{2^{N+q}}{\mathcal{L}^N\left(B_1(0)\right)}\limsup_{\e\to 0^+}\int_{\R^N}\frac{1}{\e^N}\int_{B_{\e}(x)}\frac{|u(x)-u(y)|^q}{|x-y|^{rq}}dydx<\infty.
\end{multline}
Step 3: notice that
\begin{multline}
\label{eq:equation99}
[u]^q_{B^{r}_{q,\infty}(\R^N,\R^d)}=\sup_{h\in \R^N\setminus\{0\}}\int_{\R^N}\frac{|u(x+h)-u(x)|^q}{|h|^{rq}}dx
\\
=\sup_{\e\in(0,\infty)}\left(\sup_{|h|=\e}\int_{\R^N}\frac{|u(x+h)-u(x)|^q}{\e^{rq}}dx\right)
=\sup_{\e\in(0,\infty)}\left(\sup_{h\in S^{N-1}}\int_{\R^N}\frac{|u(x+\e h)-u(x)|^q}{\e^{rq}}dx\right).
\end{multline}
By \eqref{eq:equation98} there exists $\delta\in(0,\infty)$ such that
\begin{equation}
\label{eq:equation100}
\sup_{\e\in(0,\delta)}\left(\sup_{h\in S^{N-1}}\int_{\R^N}\frac{|u(x+\e h)-u(x)|^q}{\e^{rq}}dx\right)<\infty.
\end{equation}
Therefore, by \eqref{eq:equation99} and \eqref{eq:equation100} we get
\begin{multline}
[u]^q_{B^{r}_{q,\infty}(\R^N,\R^d)}\leq \sup_{\e\in(0,\delta)}\left(\sup_{h\in S^{N-1}}\int_{\R^N}\frac{|u(x+\e h)-u(x)|^q}{\e^{rq}}dx\right)+\sup_{\e\in[\delta,\infty)}\left(\sup_{h\in S^{N-1}}\int_{\R^N}\frac{|u(x+\e h)-u(x)|^q}{\e^{rq}}dx\right)
\\
\leq \sup_{\e\in(0,\delta)}\left(\sup_{h\in S^{N-1}}\int_{\R^N}\frac{|u(x+\e h)-u(x)|^q}{\e^{rq}}dx\right)+\frac{2^q}{\delta^{rq}}\|u\|^q_{L^q(\R^N,\R^d)}<\infty.
\end{multline}
Thus, $u\in B^r_{q,\infty}(\R^N,\R^d)$. It completes the proof of \eqref{eq:equality between global Besov and Brq}. We will derive the local case \eqref{eq:equality between local Besov and Brq} from the global one \eqref{eq:equality between global Besov and Brq}.
\\
Assume now that $u\in \left(B^r_{q,\infty}\right)_{\text{loc}}(\Omega,\R^d)$. Let $K\subset\Omega$ be a compact set and let $\Omega_0\subset\subset\Omega$ be an open set such that $K\subset\Omega_0$. Let $g\in B^r_{q,\infty}(\R^N,\R^d)$ be such that $u=g$ $\mathcal{L}^N$-almost everywhere in $\Omega_0$. We have for $\e\in(0,\infty)$ such that $K+B_\e(0)\subset\Omega_0$
\begin{multline}
\int_{K}\frac{1}{\e^N}\int_{B_{\e}(x)}\frac{|u(x)-u(y)|^q}{|x-y|^{rq}}dydx=\int_{K}\frac{1}{\e^N}\int_{B_{\e}(x)}\frac{|g(x)-g(y)|^q}{|x-y|^{rq}}dydx
\\
\leq \int_{\R^N}\frac{1}{\e^N}\int_{B_{\e}(x)}\frac{|g(x)-g(y)|^q}{|x-y|^{rq}}dydx.
\end{multline}
By \eqref{eq:equality between global Besov and Brq}, we get
\begin{equation}
\limsup_{\e\to 0^+}\int_{K}\frac{1}{\e^N}\int_{B_{\e}(x)}\frac{|u(x)-u(y)|^q}{|x-y|^{rq}}dydx\leq\limsup_{\e\to 0^+}
\int_{\R^N}\frac{1}{\e^N}\int_{B_{\e}(x)}\frac{|g(x)-g(y)|^q}{|x-y|^{rq}}dydx<\infty.
\end{equation}
By item 3 of Proposition \ref{prop:properties of Brq seminorms} we conclude that $u\in B^{r,q}_{\text{loc}}(\Omega,\R^d)$.
Assume that  $u\in B^{r,q}_{\text{loc}}(\Omega,\R^d)$. Let $K\subset\Omega$ be a compact set and let $\Omega_0\subset\subset\Omega_1\subset\subset\Omega$ be open sets such that $K\subset\Omega_0$. Let $f\in C^{0,r}_{c}(\R^N)$\footnote{The space of Hölder continuous functions with exponent $r$ and compact support.} which is constant $1$ on $K$ and constant $0$ outside $\Omega_0$. We have for $g:=uf$ and $\e\in(0,\infty)$ such that $\R^N\setminus\Omega_1+B_\e(0)\subset \R^N\setminus\Omega_0$
\begin{multline}
\int_{\R^N}\frac{1}{\e^N}\int_{B_{\e}(x)}\frac{|g(x)-g(y)|^q}{|x-y|^{rq}}dydx
\\
=\int_{\Omega_1}\frac{1}{\e^N}\int_{B_{\e}(x)}\frac{|u(x)f(x)-u(x)f(y)+u(x)f(y)-u(y)f(y)|^q}{|x-y|^{rq}}dydx
\\
\leq 2^{q-1}\int_{\Omega_1}\frac{|u(x)|^q}{\e^N}\int_{B_{\e}(x)}\frac{|f(x)-f(y)|^q}{|x-y|^{rq}}dydx
+2^{q-1}\|f\|^q_{L^\infty(\R^N)}\int_{\Omega_1}\frac{1}{\e^N}\int_{B_{\e}(x)}\frac{|u(x)-u(y)|^q}{|x-y|^{rq}}dydx
\\
\leq 2^{q-1}C^q\mathcal{L}^N(B_1(0))\int_{\Omega_1}|u(x)|^qdx
+2^{q-1}\|f\|^q_{L^\infty(\R^N)}\int_{\Omega_1}\frac{1}{\e^N}\int_{B_{\e}(x)}\frac{|u(x)-u(y)|^q}{|x-y|^{rq}}dydx,
\end{multline}
where $C$ is a number such that $|f(x)-f(y)|\leq C|x-y|^r$ for $x,y\in\R^N$.
Therefore,
\begin{equation}
\limsup_{\e\to 0^+}\int_{\R^N}\frac{1}{\e^N}\int_{B_{\e}(x)}\frac{|g(x)-g(y)|^q}{|x-y|^{rq}}dydx<\infty,
\end{equation}
and by \eqref{eq:equality between global Besov and Brq} we conclude that $g\in B^r_{q,\infty}(\R^N,\R^d)$. Thus, $u\in \left(B^r_{q,\infty}\right)_{\text{loc}}(\Omega,\R^d)$.
\end{proof}

\section{Kernels}
In this section, we analyse the concept of a kernel (see Definition \ref{def:kernel}). Additionally, we discuss specific kernels, namely the logarithmic and trivial kernels (see Definitions \ref{def:logarithmic kernel} and \ref{def:definition of trivial kernel}), and establish their properties.

\begin{definition}
\label{def:compact support property}
(Compact Support Property)

Let $a\in(0,\infty]$ and
$\rho_\e:(0,\infty)\to [0,\infty),\e\in (0,a),$
be a family of functions. We say that the family $\{\rho_\e\}_{\e\in (0,a)}$ has the {\it compact support property} if for every $r>0$ there exists $\delta_r>0$ such that $\supp(\rho_\e)\subset B_r(0)$ for every
$\e\in (0,\delta_r)$.
\end{definition}

Note that, if the functions $\{\rho_\e\}_{\e\in (0,a)}$ are $\mathcal{L}^1$-measurable, the compact support property implies the decreasing support property (see Definition \ref{def:decreasing support property}).

\begin{definition}(Logarithmic Kernel)
\label{def:logarithmic kernel}

For every $\e\in (0,1/e)$ and $\omega\in(0,1)$ let us define a function
\begin{equation}
\rho_{\e,\omega}(r):=\frac{1}{\mathcal{H}^{N-1}\left(S^{N-1}\right)\left(|\ln \e|-|\ln R_{\e,\omega}|\right)}\frac{1}{r^N}\chi_{[\e,R_{\e,\omega})}(r),\quad \rho_{\e,\omega}:(0,\infty)\to [0,\infty),
\end{equation}
where $R_{\e,\omega}:=\frac{1}{|\ln\e|^{\omega}}$, and $\chi_{[\e,R_{\e,\omega})}$ is the characteristic function of the interval $[\e,R_{\e,\omega})$.
We call the family of functions $\{\rho_{\e,\omega}\}_{\e\in(0,1/e)}$ the {\it$N$-dimensional logarithmic kernel}, or just {\it logarithmic kernel}.
\end{definition}
\begin{remark}(Comments about the  Logarithmic Kernel)
\\
1. Note that for every $\e,\omega\in (0,1)$ we have $\e<R_{\e,\omega}$: $\e<R_{\e,\omega}$ if and only if $\e<\frac{1}{\ln\left(\frac{1}{\e}\right)^{\omega}}$ if and only if $\e^{1/\omega}\ln\left(\frac{1}{\e}\right)<1$. The last inequality holds since $\ln(z)<z$ for every $z\in (0,\infty)$.
\\
2. Note that for $\e\in (0,1)$, $\ln R_{\e,\omega}=-\omega\ln\left(\ln\left(\frac{1}{\e}\right)\right)$, and for $\e\in (0,1/e)$, $|\ln R_{\e,\omega}|=\omega\ln\left(\ln\left(\frac{1}{\e}\right)\right)$, so $|\ln \e|-|\ln R_{\e,\omega}|=\ln\left(\frac{1}{\e}\right)-\omega\ln\left(\ln\left(\frac{1}{\e}\right)\right)=\ln\left(\frac{1}{\e}\right)+\ln\left(\frac{1}{\left(\ln\left(\frac{1}{\e}\right)\right)^{\omega}}\right)=\ln\left(\frac{1}{\e\left(\ln\left(\frac{1}{\e}\right)\right)^{\omega}}\right)>0$. The last inequality holds since $\e\left(\ln\left(\frac{1}{\e}\right)\right)^{\omega}<1$.
\\
3. By  L'hopital's rule we have $\lim_{x \to \infty}\frac{\ln\left(\ln\left(x\right)\right)}{\ln\left(x\right)}=0$, so we get by definition of $R_{\e,\omega}$
\begin{align}
\label{eq:equation42}
\lim_{\e \to 0^+}\frac{|\ln R_{\e,\omega}|}{|\ln\e|}=\lim_{\e \to 0^+}\frac{\omega\ln\left(\ln\left(\frac{1}{\e}\right)\right)}{\ln\left(\frac{1}{\e}\right)}=0.
\end{align}
\end{remark}

\begin{proposition}(Properties of the Logarithmic Kernel)
\label{prop:Properties of the logarithmic kernel}

For $\omega\in(0,1)$, the logarithmic kernel $\{\rho_{\e,\omega}\}_{\e\in(0,1/e)}$ has the following properties:\\
1. The logarithmic kernel is a kernel that also possesses the compact support property;
\\
2. $\lim_{\e\to 0^+}\e^\alpha\int_{\R^N}\frac{\rho_{\e,\omega}(|z|)}{|z|^\alpha}dz=0$,\,\, $\forall \alpha\in (0,\infty)$.
\end{proposition}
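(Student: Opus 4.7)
The plan is to verify both assertions by direct computation using polar coordinates (Proposition on polar coordinates cited in the paper), together with the bookkeeping observed in the remark preceding the proposition, namely $|\ln\e|-|\ln R_{\e,\omega}|=\ln\bigl(1/(\e(\ln(1/\e))^{\omega})\bigr)>0$ and $|\ln R_{\e,\omega}|/|\ln\e|\to 0$ as $\e\to 0^+$.

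For the first item, I would check the three defining properties of a kernel in turn. First, using polar coordinates and the definition of $\rho_{\e,\omega}$,
\begin{equation*}
\int_{\R^N}\rho_{\e,\omega}(|z|)\,dz=\mathcal{H}^{N-1}(S^{N-1})\int_{\e}^{R_{\e,\omega}}\frac{1}{\mathcal{H}^{N-1}(S^{N-1})\bigl(|\ln\e|-|\ln R_{\e,\omega}|\bigr)}\frac{dr}{r}=\frac{\ln R_{\e,\omega}-\ln\e}{|\ln\e|-|\ln R_{\e,\omega}|}=1,
\end{equation*}
since for $\e\in(0,1/e)$ both logarithms are negative and $\ln R_{\e,\omega}-\ln\e=|\ln\e|-|\ln R_{\e,\omega}|$. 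Next, to establish the compact support property (Definition \ref{def:compact support property}), I observe that $\supp(\rho_{\e,\omega})\subset[\e,R_{\e,\omega}]$, so I only need $R_{\e,\omega}<r$ eventually; since $R_{\e,\omega}=1/|\ln\e|^{\omega}\to 0$ as $\e\to 0^+$, this is immediate. Then, as noted right after Definition \ref{def:compact support property}, the compact support property automatically yields the decreasing support property (Definition \ref{def:decreasing support property}). Combining normalization with the decreasing support property gives the kernel property.

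For the second item, again by polar coordinates,
\begin{equation*}
\int_{\R^N}\frac{\rho_{\e,\omega}(|z|)}{|z|^{\alpha}}\,dz=\int_{\e}^{R_{\e,\omega}}\frac{1}{|\ln\e|-|\ln R_{\e,\omega}|}\frac{dr}{r^{\alpha+1}}=\frac{1}{\alpha\bigl(|\ln\e|-|\ln R_{\e,\omega}|\bigr)}\left(\frac{1}{\e^{\alpha}}-\frac{1}{R_{\e,\omega}^{\alpha}}\right),
\end{equation*}
so multiplying by $\e^{\alpha}$ yields
\begin{equation*}
\e^{\alpha}\int_{\R^N}\frac{\rho_{\e,\omega}(|z|)}{|z|^{\alpha}}\,dz=\frac{1}{\alpha\bigl(|\ln\e|-|\ln R_{\e,\omega}|\bigr)}\left(1-\bigl(\e\,|\ln\e|^{\omega}\bigr)^{\alpha}\right).
\end{equation*}
Using \eqref{eq:equation42} the denominator $|\ln\e|-|\ln R_{\e,\omega}|=|\ln\e|\bigl(1-|\ln R_{\e,\omega}|/|\ln\e|\bigr)\to\infty$ as $\e\to 0^+$, while the bracketed factor remains bounded (in fact tends to $1$ since $\e\,|\ln\e|^{\omega}\to 0$). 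The limit is therefore $0$, which is what we need.

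The only place any care is required is in the sign bookkeeping for the logarithms and in checking that $|\ln\e|-|\ln R_{\e,\omega}|$ is both positive and blows up; both points are covered by the remark preceding the proposition, so no genuine obstacle arises, and the proof is essentially a direct computation.
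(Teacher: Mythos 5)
Your proof is correct and follows essentially the same path as the paper's: polar coordinates for the normalization integral, the observation that compact support plus measurability yields the decreasing support (hence kernel) property, and the same explicit computation followed by the fact that the denominator $|\ln\e|-|\ln R_{\e,\omega}|\to\infty$ while the numerator stays bounded. The only cosmetic difference is that the paper exhibits an explicit $\delta_r=e^{-1/r^{1/\omega}}$ for the compact support property where you simply invoke $R_{\e,\omega}\to 0$, which amounts to the same thing.
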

\begin{proof}
1. It is easy to see that for every $\e\in(0,1/e)$, $\omega\in (0,1)$, the function $\rho_{\e,\omega}$ is $\mathcal{L}^1$-measurable. By polar coordinates
\begin{align}
&\int_{\R^N}\frac{1}{|z|^N}\chi_{[\e,R_{\e,\omega})}(|z|)dz=
\int_{B_{R_{\e,\omega}}(0)\setminus B_\e(0)}\frac{1}{|z|^N}dz=\int_\e^{R_{\e,\omega}}\left(\int_{\partial B_r(0)}\frac{1}{|z|^N}d\mathcal{H}^{N-1}(z)\right)dr\nonumber
\\
&=\int_\e^{R_{\e,\omega}}\frac{1}{r^N}r^{N-1}\mathcal{H}^{N-1}\left(S^{N-1}\right)dr=\mathcal{H}^{N-1}\left(S^{N-1}\right)\left(\ln R_{\e,\omega}-\ln \e\right)=\mathcal{H}^{N-1}\left(S^{N-1}\right)\left(|\ln \e|-|\ln R_{\e,\omega}|\right).
\end{align}
Note that, since $\e\in (0,1/e)$, then $\e,R_{\e,\omega}<1$, and therefore $-\ln\e=|\ln\e| $ and $\ln R_{\e,\omega}=-|\ln R_{\e,\omega}|$, so $\ln R_{\e,\omega}-\ln \e=|\ln \e|-|\ln R_{\e,\omega}|$. Thus, $\int_{\R^N}\rho_{\e,\omega}(|z|)dz=1$. The logarithmic kernel satisfies the compact support property:
for every $r\in (0,\infty)$ let $\delta_r:=e^{-\frac{1}{r^{1/\omega}}}$. Note that if $\e\in (0,\delta_r)$, then $R_{\e,\omega}<r$, so $\supp(\rho_{\e,\omega})\subset B_{R_{\e,\omega}}(0)\subset B_{r}(0)$, where $R_{\e,\omega}:=\frac{1}{|\ln\e|^{\omega}}$.
\\
2. By polar coordinates
\begin{multline}
\int_{\R^N}\frac{\rho_{\e,\omega}(|z|)}{|z|^\alpha}dz=
\frac{1}{\mathcal{H}^{N-1}\left(S^{N-1}\right)\left(|\ln \e|-|\ln R_{\e,\omega}|\right)}\int_{B_{R_{\e,\omega}}(0)\setminus B_\e(0)}\frac{1}{|z|^{N+\alpha}}dz
\\
=\frac{1}{\mathcal{H}^{N-1}\left(S^{N-1}\right)\left(|\ln \e|-|\ln R_{\e,\omega}|\right)}\int^{R_{\e,\omega}}_{\e}\frac{1}{r^{N+\alpha}}r^{N-1}\mathcal{H}^{N-1}\left(S^{N-1}\right)dr
\\
=\frac{1}{\left(|\ln \e|-|\ln R_{\e,\omega}|\right)}\frac{1}{\alpha}\left(\frac{1}{\e^\alpha}-\frac{1}{R_{\e,\omega}^\alpha}\right)
=\frac{1}{\alpha\e^\alpha}\left(\frac{1-\frac{\e^\alpha}{R_{\e,\omega}^\alpha}}{|\ln \e|-|\ln R_{\e,\omega}|}\right)
=\frac{1}{\alpha\e^\alpha}\left(\frac{1-\e^\alpha|\ln\e|^{\alpha\omega}}{|\ln\e|-\omega\ln|\ln\e|}\right).
\end{multline}
Hence,
\begin{equation}
\lim_{\e\to 0^+}\e^\alpha\int_{\R^N}\frac{\rho_{\e,\omega}(|z|)}{|z|^\alpha}dz
=\frac{1}{\alpha}\lim_{\e\to 0^+}\frac{1-\e^\alpha|\ln\e|^{\alpha\omega}}{|\ln\e|-\omega\ln|\ln\e|}
=\frac{1}{\alpha}\lim_{\e\to 0^+}\frac{\frac{1}{|\ln\e|}-\left(\e|\ln\e|^{\omega}\right)^\alpha\frac{1}{|\ln\e|}}{1-\omega\frac{\ln|\ln\e|}{|\ln\e|}}
=\frac{0}{1}=0.
\end{equation}
\end{proof}
\begin{definition}(The Trivial Kernel)
\label{def:definition of trivial kernel}

Let us define the {\it $N$-dimensional trivial kernel}, or just {\it trivial kernel}, to be
\begin{equation}
\tilde{\rho}_\e(r):=
\begin{cases}
\frac{1}{\e^N \Leb^{N}\left(B_1(0)\right)}& if \quad 0<r<\e\\
0 & if \quad r\geq \e
\end{cases},\quad \e\in (0,\infty).
\end{equation}
\end{definition}
\begin{remark}
\label{rem:compact support property for trivial kernel}
Notice that the trivial kernel is a kernel. Moreover, it satisfies the compact support property: for every $r\in (0,\infty)$, let $\delta_r:=r$. Thus, if $\e\in (0,\delta_r)$, then $\supp(\tilde{\rho}_\e)\subset B_{\e}(0)\subset B_{r}(0)$.
\end{remark}

\begin{definition}($\sigma$-Approximating Kernels)

For every number $\sigma\in (0,\infty)$, the {\it $N$-dimensional $\sigma$-approximating kernel} is defined to be
\begin{equation}
\label{eq:sigma approximating kernel}
\rho^\sigma_\e(r):=\frac{1}{2\sigma\mathcal{H}^{N-1}(S^{N-1})r^{N-1}}\chi_{[\e-\sigma,\e+\sigma]}(r),\quad \rho^\sigma_\e:(0,\infty)\to [0,\infty).
\end{equation}
\end{definition}

\begin{remark}($\sigma$-Approximating Kernels Give us Kernels)
\label{rem:sigma-approximating kernels give us kernels}

Note that $\sigma$-approximating kernels are not kernels because they lack the decreasing support property (see Definition \ref{def:decreasing support property}). However, if we select a number $\sigma_\e \in (0,\e)$ for every $\e \in (0,\infty)$, then the family $\{\rho^{\sigma_\e}_\e\}_{\e\in(0,\infty)}$ possesses the compact support property, and in particular, it satisfies the decreasing support property. By employing polar coordinates, we find that $\int_{\mathbb{R}^N}\rho^\sigma_\e(|z|)dz=1$ for every choice of $\e$ and $\sigma$ in $(0,\infty)$ with $\sigma<\e$. Therefore, $\{\rho^{\sigma_\e}_\e\}_{\e\in(0,\infty)}$ is a kernel, as defined in Definition \ref{def:kernel}.
\end{remark}

\section{Variations and Besov constants}
In this section, we introduce the notion of $(r,q)$-Variation (see Definition \ref{def:(r,q)-variation}). We prove that $(r,q)$-variations control Besov Constants (see Lemma \ref{lem:sandwich lemma}). Furthermore, we demonstrate that $(r,q)$-variation can be represented as a Besov constant (see Corollary \ref{cor:Representability of Variations as Besov Constants}). Additionally, we establish the continuity of Variations and Besov constants with respect to convergence in Besov Space (see Lemma \ref{lem:Continuity of (r,q)-variation in Besov spaces Brq}).

\begin{definition}($(r,q)$-Variation and Directional $(r,q)$-Variation)
\label{def:(r,q)-variation}

Let $r,q\in (0,\infty)$, and $u:\R^N\to \R^d$ be an $\mathcal{L}^N$-measurable function. Suppose $E\subset\R^N$ is an $\mathcal{L}^N$-measurable set, and let $n\in S^{N-1}$ be a direction. Then, the \textit{$(r,q)$ upper variation of $u$ in $E$ in the direction $n$} is defined by
\begin{equation}
(r,q)-\overline{V}(u,E,n):=\limsup_{\e\to 0^+}\int_E\chi_{E}(x+\e n)\frac{|u(x+\e n)-u(x)|^q}{\e^{rq}}dx.
\end{equation}
Similarly, replacing the $\limsup$ by the $\liminf$, we define the \textit{$(r,q)$ lower variation of $u$ in $E$ in the direction $n$} and denote it by $(r,q)-\underline{V}(u,E,n)$. If the limit exists, we denote it by $(r,q)-V(u,E,n)$, and we call it the \textit{$(r,q)$ variation of $u$ in $E$ in the direction $n$}.

The \textit{$(r,q)$ upper variation of $u$ in $E$} is defined by
\begin{equation}
(r,q)-\overline{V}(u,E):=\limsup_{\e\to 0^+}\int_{S^{N-1}}\int_E\chi_{E}(x+\e n)\frac{|u(x+\e n)-u(x)|^q}{\e^{rq}}dxd\mathcal{H}^{N-1}(n).
\end{equation}
Similarly, replacing the $\limsup$ by the $\liminf$, we define the \textit{$(r,q)$ lower variation of $u$ in $E$} and denote it by $(r,q)-\underline{V}(u,E)$. If the limit exists, we denote it by $(r,q)-V(u,E)$, and we call it the \textit{$(r,q)$ variation of $u$ in $E$}. We also define the notions of $(r,q)$ lower (upper) essential variation of $u$ in $E$, replacing the lower (upper) limit by the essential lower (upper) limit.
\end{definition}

\begin{definition}(Besov Constants)
\label{def:Besov constants}

Let $r,q\in (0,\infty)$, and $u:\R^N\to \R^d$ be an $\mathcal{L}^N$-measurable function. Suppose $E\subset\R^N$ is an $\mathcal{L}^N$-measurable set, and let $\{\rho_\e\}_{\e\in (0,a)}$ be a kernel for some $a\in(0,\infty]$. The {\it upper infinitesimal $(r,q)$ Besov constant of $u$ in $E$ with respect to the kernel $\rho_\e$} is defined as the quantity:
\begin{equation}
\label{eq:upper Besov constant}
\limsup_{\e\to 0^+}\int_{E}\int_{E}\rho_\e(|x-y|)\frac{|u(x)-u(y)|^q}{|x-y|^{rq}}dydx.
\end{equation}
Similarly, replacing the $\limsup$ by the $\liminf$, we define the {\it lower infinitesimal $(r,q)$ Besov constant of $u$ in $E$ with respect to the kernel $\rho_\e$}. If the limit exists, we refer to it as the {\it infinitesimal $(r,q)$ Besov constant of $u$ in $E$ with respect to the kernel $\rho_\e$}.
\end{definition}

\begin{remark}(The Upper Infinitesimal $B^{r,q}$-seminorm is a Besov Constant)
\label{rem:The Upper Infinitesimal Brq-seminorm is a Besov Constant}

Note that if we select the trivial kernel as defined in Definition \ref{def:definition of trivial kernel} in \eqref{eq:upper Besov constant}, multiply the result by $\mathcal{L}^N(B_1(0))$, and then take the result to the power of $\frac{1}{q}$, we obtain the upper infinitesimal $B^{r,q}$-seminorm as defined in \ref{def:Brq seminorms}.
\end{remark}

\begin{remark}(Variations of $W^{1,q}$, $BV$ and $B^r_{q,\infty}$)

From the $BBM$ formula, for an open and bounded set $\Omega \subset \mathbb{R}^N$ with a Lipschitz boundary, where $1 < q < \infty$ and $u \in W^{1,q}(\Omega)$, we have
\begin{equation}
(1,q)-V(u,\Omega)=C_{q,N}\|\nabla u\|^q_{L^q(\Omega)};
\end{equation}
for $u \in BV(\Omega)$, we have
\begin{equation}
\label{eq:ineqality6}
(1,1)-V(u,\Omega)=C_{1,N}\|D u\|(\Omega),
\end{equation}
where $C_{q,N}:=\int_{S^{N-1}}|z_1|^qd\mathcal{H}^{N-1}(n)$ for every $q \geq 1$. For proof of this result see \cite{Poliakovsky}.

For $r \in (0,1)$ and $q \in [1,\infty)$, we observe from Sandwich Lemma \ref{lem:sandwich lemma} and Theorem \ref{thm:characterization of Besov functions via double integral} that the finiteness of the upper variation $(r,q)-\overline{V}(u,\mathbb{R}^N)$ of $u$ together with $u \in L^q(\mathbb{R}^N,\mathbb{R}^d)$ is equivalent to $u \in B^r_{q,\infty}(\mathbb{R}^N,\mathbb{R}^d)$.
\end{remark}

\begin{lemma}(The Sandwich Lemma with Variations and Besov Constants Included)
\label{lem:sandwich lemma}

Let $E\subset\R^N$ be an $\mathcal{L}^N$-measurable set and
$u:E\to \R^d$ be an $\mathcal{L}^N$-measurable function. Let $a\in(0,\infty]$ and let $\rho_\e:(0,\infty)\to [0,\infty),\e\in (0,a),$
be a kernel, and $\alpha,q\in (0,\infty)$. Assume that at least one of the following three assumptions holds:
\begin{enumerate}

\item
\label{item:assumption about finitness of variation}
$\esssup_{\e\in(0,\infty)}\int_{S^{N-1}}\int_E\chi_{E}(x+\e n)\frac{|u(x+\e n)-u(x)|^q}{\e^\alpha}dxd\mathcal{H}^{N-1}(n)<\infty$;

\item $u\in L^q(\R^N,\R^d)$;

\item The kernel $\{\rho_\e\}_{\e\in(0,a)}$ has the compact support property as defined in Definition \ref{def:compact support property}.
\end{enumerate}
Then,
\begin{multline}
\label{eq:equation19}
\liminf_{\e\to 0^+}\fint_{S^{N-1}}\int_E\chi_{E}(x+\e n)\frac{|u(x+\e n)-u(x)|^q}{\e^\alpha}dxd\mathcal{H}^{N-1}(n)
\\
\leq \essliminf_{\e\to 0^+}\fint_{S^{N-1}}\int_E\chi_{E}(x+\e n)\frac{|u(x+\e n)-u(x)|^q}{\e^\alpha}dxd\mathcal{H}^{N-1}(n)
\\
\leq \liminf_{\e\to 0^+}\int_{E}\int_{E}\rho_\e(|x-y|)\frac{|u(x)-u(y)|^q}{|x-y|^\alpha}dydx\leq \limsup_{\e\to 0^+}\int_{E}\int_{E}\rho_\e(|x-y|)\frac{|u(x)-u(y)|^q}{|x-y|^\alpha}dydx
\\
\leq\esslimsup_{\e\to 0^+}\fint_{S^{N-1}}\int_E\chi_{E}(x+\e n)\frac{|u(x+\e n)-u(x)|^q}{\e^\alpha}dxd\mathcal{H}^{N-1}(n)
\\
\leq\limsup_{\e\to 0^+}\fint_{S^{N-1}}\int_E\chi_{E}(x+\e n)\frac{|u(x+\e n)-u(x)|^q}{\e^\alpha}dxd\mathcal{H}^{N-1}(n).
\end{multline}
In particular, for $r\in (0,\infty)$ and $\alpha=rq$, we get \eqref{eq:equation19} for $(r,q)$ variations and Besov constants.
\end{lemma}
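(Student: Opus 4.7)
The plan is to reduce the double integral to a one-dimensional weighted average via polar coordinates and then exploit the concentration of the kernel near $0$. Writing $y=x+rn$ with $r=|x-y|$ and $n\in S^{N-1}$, one has
$$\int_{E}\int_{E}\rho_\e(|x-y|)\frac{|u(x)-u(y)|^q}{|x-y|^\alpha}dydx=\mathcal{H}^{N-1}(S^{N-1})\int_0^\infty\rho_\e(r)r^{N-1}G(r)\,dr,$$
where $G(r):=\fint_{S^{N-1}}\int_E\chi_E(x+rn)\frac{|u(x+rn)-u(x)|^q}{r^\alpha}\,dx\,d\mathcal{H}^{N-1}(n)$. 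Because $\int_{\R^N}\rho_\e(|z|)dz=1$, polar coordinates give $\mathcal{H}^{N-1}(S^{N-1})\int_0^\infty\rho_\e(r)r^{N-1}dr=1$, so the right hand side above is the integral of $G$ against a probability measure $\mu_\e$ on $(0,\infty)$ with density $\mathcal{H}^{N-1}(S^{N-1})\rho_\e(r)r^{N-1}$. The outer two inequalities in \eqref{eq:equation19}, namely $\liminf\leq\essliminf$ and $\esslimsup\leq\limsup$, are immediate from the definitions, so the core of the proof is the pair
$$\essliminf_{\e\to0^+}G(\e)\leq\liminf_{\e\to0^+}\int_0^\infty G\,d\mu_\e,\qquad\limsup_{\e\to0^+}\int_0^\infty G\,d\mu_\e\leq\esslimsup_{\e\to0^+}G(\e).$$

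I would prove the upper inequality first (the lower one being symmetric). Set $M:=\esslimsup_{\e\to0^+}G(\e)$ and, given $\epsilon>0$, use $M=\inf_{\delta>0}\esssup_{r\in(0,\delta)}G(r)$ to pick $\delta_0>0$ with $G(r)\leq M+\epsilon$ for $\mathcal{L}^1$-a.e. $r\in(0,\delta_0)$. Since $\mu_\e$ is absolutely continuous with respect to $\mathcal{L}^1$, the exceptional null set is invisible to $\mu_\e$, and splitting
$$\int_0^\infty G\,d\mu_\e=\int_0^{\delta_0}G\,d\mu_\e+\int_{\delta_0}^\infty G\,d\mu_\e\leq(M+\epsilon)\,\mu_\e((0,\delta_0))+\int_{\delta_0}^\infty G\,d\mu_\e$$
bounds the first term by $M+\epsilon$ (since $\mu_\e$ is a probability measure). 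Letting $\epsilon\to0^+$ at the end will finish the argument, provided the tail integral vanishes as $\e\to0^+$.

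The tail estimate is the one place where the three alternative hypotheses enter, and it is the main obstacle. Under Assumption 1 the essential boundedness $\esssup_{r>0}G(r)<\infty$ combined with the decreasing support property $\int_{\delta_0}^\infty\rho_\e(r)r^{N-1}dr\to0$ (Definition \ref{def:decreasing support property}) immediately kills the tail. Under Assumption 2, a crude bound via the triangle inequality and translation invariance yields $G(r)\leq 2^q\|u\|_{L^q(\R^N,\R^d)}^q/r^\alpha$; then on $(\delta_0,\infty)$ the factor $r^{-\alpha}$ is bounded by $\delta_0^{-\alpha}$, and again the decreasing support property delivers the conclusion. Under Assumption 3 the compact support property (Definition \ref{def:compact support property}) makes $\supp(\rho_\e)\subset B_{\delta_0}(0)$ for all sufficiently small $\e$, so the tail is identically zero.

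The corresponding lower bound is obtained by the same splitting with $m:=\essliminf_{\e\to0^+}G(\e)$: pick $\delta_0$ so that $G(r)\geq m-\epsilon$ a.e.\ on $(0,\delta_0)$ and estimate $\int_0^{\delta_0}G\,d\mu_\e\geq (m-\epsilon)\mu_\e((0,\delta_0))=(m-\epsilon)(1-\mu_\e((\delta_0,\infty)))$, where $\mu_\e((\delta_0,\infty))\to0$ by the same tail argument. The specialization to $(r,q)$-variations and Besov constants is just the case $\alpha=rq$, which needs no separate argument.
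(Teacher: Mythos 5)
Your proof is correct and follows essentially the same route as the paper: reduce the double integral by polar coordinates to a one-dimensional weighted average of the directional variation $V(t)$, split at a small radius $\delta_0$, bound the near-zero part by the essential sup/inf, and use the three alternative hypotheses only to kill the tail (decreasing support plus either essential boundedness, the crude $L^q$ bound $G(r)\lesssim r^{-\alpha}\|u\|_{L^q}^q$, or compact support). The only cosmetic difference is that you package the weight $\mathcal{H}^{N-1}(S^{N-1})\rho_\e(r)r^{N-1}\,dr$ as a probability measure $\mu_\e$ and run an $\epsilon$-argument at the level of $\esslimsup$, whereas the paper keeps the integral form and sends $\delta\to 0^+$ after taking $\limsup_{\e\to 0^+}$; these are equivalent.
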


\begin{proof}
By using polar coordinates, we get for every $\delta\in (0,\infty)$
\begin{multline}
\label{eq:equation55}
\int_{E}\int_{E}\rho_\e(|x-y|)\frac{|u(x)-u(y)|^q}{|x-y|^\alpha}dydx=\int_{E}\left(\int_{\R^N}\chi_E(y)\rho_\e(|x-y|)\frac{|u(x)-u(y)|^q}{|x-y|^\alpha}dy\right)dx
\\
=\int_{E}\left(\int_{\R^N}\chi_E(x+z)\rho_\e(|z|)\frac{|u(x)-u(x+z)|^q}{|z|^\alpha}dz\right)dx
\\
=\int_{\R^N}\left(\int_{E}\chi_E(x+z)\rho_\e(|z|)\frac{|u(x)-u(x+z)|^q}{|z|^\alpha}dx\right)dz
\\
=\int_0^\infty \int_{\partial B_t(0)}\left(\int_{E}\chi_E(x+z)\rho_\e(|z|)\frac{|u(x)-u(x+z)|^q}{|z|^\alpha}dx\right)d\mathcal{H}^{N-1}(z)dt
\\
=\int_0^\infty \int_{S^{N-1}}t^{N-1}\left(\int_{E}\chi_E(x+tn)\rho_\e(t)\frac{|u(x)-u(x+tn)|^q}{t^\alpha}dx\right)d\mathcal{H}^{N-1}(n)dt
\\
=\int_0^\infty t^{N-1}\rho_\e(t)\left(\int_{S^{N-1}}\int_{E}\chi_E(x+tn)\frac{|u(x)-u(x+tn)|^q}{t^\alpha}dxd\mathcal{H}^{N-1}(n)\right)dt
\\
=\int_0^\delta t^{N-1}\rho_\e(t)V(t)dt
+\int_\delta^\infty t^{N-1}\rho_\e(t)V(t)dt.
\end{multline}
In formula \eqref{eq:equation55} we denote
\begin{equation}
V(t):=\int_{S^{N-1}}\int_{E}\chi_E(x+tn)\frac{|u(x)-u(x+tn)|^q}{t^\alpha}dxd\mathcal{H}^{N-1}(n).
\end{equation}
By polar coordinates we see that
\begin{equation}
\label{eq:calculating the integral of the kernel}
\int_{\R^N}\rho_\e(|z|)dz=1\quad \Longrightarrow \quad \frac{1}{\mathcal{H}^{N-1}(S^{N-1})}=\int_0^\infty t^{N-1}\rho_\e(t)dt.
\end{equation}
Since $\{\rho_\e\}_{\e\in (0,a)}$ is a kernel, then it has the decreasing support property (see Definition \ref{def:decreasing support property}). Therefore, for every $\delta>0$, we get $\lim_{\e\to 0^+}\int^\infty_\delta t^{N-1}\rho_\e(t)dt=0$, and by \eqref{eq:calculating the integral of the kernel} we obtain
\begin{equation}
\frac{1}{\mathcal{H}^{N-1}(S^{N-1})}=\lim_{\e\to 0^+}\left\{\int_0^\delta t^{N-1}\rho_\e(t)dt+\int_\delta^\infty t^{N-1}\rho_\e(t)dt\right\}=\lim_{\e\to 0^+}\int_0^\delta t^{N-1}\rho_\e(t)dt.
\end{equation}

By equation \eqref{eq:equation55} we obtain
\begin{multline}
\label{eq:estimate including H}
\esssup_{t\in(0,\delta)}V(t)\int_0^\delta t^{N-1}\rho_\e(t)dt
+\int_\delta^\infty t^{N-1}\rho_\e(t)V(t)dt
\\
\geq \int_{E}\int_{E}\rho_\e(|x-y|)\frac{|u(x)-u(y)|^q}{|x-y|^\alpha}dydx
\\
\geq \essinf_{t\in(0,\delta)}V(t)\int_0^\delta t^{N-1}\rho_\e(t)dt
+\int_\delta^\infty t^{N-1}\rho_\e(t)V(t)dt.
\end{multline}
If $\esssup_{t\in(0,\infty)}V(t)<\infty$, then we get
$\lim_{\e\to 0^+}\int_\delta^\infty t^{N-1}\rho_\e(t)V(t)dt=0$. If $u\in L^q(\R^N,\R^d)$, then
\begin{equation}
\sup_{t\in(\delta,\infty)}V(t)\leq \frac{2^q\mathcal{H}^{N-1}(S^{N-1})}{\delta^{\alpha}}\|u\|^q_{L^q(\R^N,\R^d)}<\infty.
\end{equation}
So we get again that $\lim_{\e\to 0^+}\int_\delta^\infty t^{N-1}\rho_\e(t)V(t)dt=0$. Therefore, in both cases, we obtain \eqref{eq:equation19} by first taking the liminf(limsup) as $\e\to 0^+$ and then the limit as $\delta\to 0^+$ in inequality \eqref{eq:estimate including H}.

In case $\{\rho_\e\}_{\e\in (0,a)}$ has the compact support property, for $r>0$ there exists $\delta_r$ such that for every  $\e\in (0,\delta_r)$ we obtain $\supp(\rho_\e)\subset(0,r)$, and by \eqref{eq:equation55} we get

\begin{multline}
\label{eq:estimate including H in compact case}
\int_{E}\int_{E}\rho_\e(|x-y|)\frac{|u(x)-u(y)|^q}{|x-y|^\alpha}dydx=\int_0^r t^{N-1}\rho_\e(t)V(t)dt
\\
\leq \left(\int_0^r t^{N-1}\rho_\e(t)dt\right)\esssup_{t\in(0,r)}V(t)=\frac{1}{\mathcal{H}^{N-1}(S^{N-1})}\esssup_{t\in(0,r)}V(t),
\end{multline}

\begin{multline}
\label{eq:estimate including H in compact case 1}
\int_{E}\int_{E}\rho_\e(|x-y|)\frac{|u(x)-u(y)|^q}{|x-y|^\alpha}dydx=\int_0^r t^{N-1}\rho_\e(t)V(t)dt
\\
\geq \left(\int_0^r t^{N-1}\rho_\e(t)dt\right)\essinf_{t\in(0,r)}V(t)=\frac{1}{\mathcal{H}^{N-1}(S^{N-1})}\essinf_{t\in(0,r)}V(t).
\end{multline}
Taking the upper limit as $\e\to 0^+$ and then the limit as $r \to 0^+$ in \eqref{eq:estimate including H in compact case}, we obtain the forth inequality in \eqref{eq:equation19}. Similarly, taking the lower limit as $\e\to 0^+$ and then the limit as $r \to 0^+$ in \eqref{eq:estimate including H in compact case 1}, we obtain the second inequality of \eqref{eq:equation19}.
\end{proof}

\begin{lemma}(Variations and Essential Variations)
\label{lem:variations and essential variations}

Let $q,\alpha\in (0,\infty)$, and let $u\in L^q(\R^N,\R^d)$. Assume that $E\subset \R^N$ is a Lebesgue measurable set such that for every $v\in \R^N$ we have $\mathcal{L}^N\left(E\cap (\partial E+v)\right)=0$. Then,
\begin{multline}
\liminf_{\e\to 0^+}\fint_{S^{N-1}}\int_E\chi_{E}(x+\e n)\frac{|u(x+\e n)-u(x)|^q}{\e^\alpha}dxd\mathcal{H}^{N-1}(n)
\\
=\essliminf_{\e\to 0^+}\fint_{S^{N-1}}\int_E\chi_{E}(x+\e n)\frac{|u(x+\e n)-u(x)|^q}{\e^\alpha}dxd\mathcal{H}^{N-1}(n)
\end{multline}
and
\begin{multline}
\limsup_{\e\to 0^+}\fint_{S^{N-1}}\int_E\chi_{E}(x+\e n)\frac{|u(x+\e n)-u(x)|^q}{\e^\alpha}dxd\mathcal{H}^{N-1}(n)
\\
=\esslimsup_{\e\to 0^+}\fint_{S^{N-1}}\int_E\chi_{E}(x+\e n)\frac{|u(x+\e n)-u(x)|^q}{\e^\alpha}dxd\mathcal{H}^{N-1}(n).
\end{multline}
In particular, for $r\in (0,\infty)$ and $\alpha=rq$, we get the result for $(r,q)$ essential variations and Besov constants.
\end{lemma}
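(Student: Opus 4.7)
The plan is to reduce the claim to the single observation that the function
$$F(\e) := \fint_{S^{N-1}}\int_E\chi_{E}(x+\e n)\frac{|u(x+\e n)-u(x)|^q}{\e^\alpha}\,dx\,d\mathcal{H}^{N-1}(n)$$
is continuous on $(0,\infty)$. Once this is established, the conclusion is immediate: for a continuous function on a nonempty open interval, if $F\ge c$ holds $\mathcal{L}^1$-a.e.\ on $(0,\delta)$ then continuity forces $F\ge c$ everywhere on $(0,\delta)$, so $\essinf_{(0,\delta)}F=\inf_{(0,\delta)}F$; an analogous statement holds for $\sup$ and $\esssup$. Taking the supremum over $\delta>0$ then delivers the identities $\liminf_{\e\to 0^+}F(\e)=\essliminf_{\e\to 0^+}F(\e)$ and $\limsup_{\e\to 0^+}F(\e)=\esslimsup_{\e\to 0^+}F(\e)$.

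To establish continuity of $F$, I introduce
$$H(v):=\int_{\R^N}\chi_E(x)\chi_E(x+v)|u(x+v)-u(x)|^q\,dx,\qquad v\in\R^N,$$
so that $F(\e)=\e^{-\alpha}\fint_{S^{N-1}}H(\e n)\,d\mathcal{H}^{N-1}(n)$. The central step is to prove that $H$ is continuous on $\R^N$. For $v_k\to v$ I decompose
\begin{multline*}
H(v_k)-H(v)=\int_{\R^N}\chi_E(x)\chi_E(x+v_k)\bigl[|u(x+v_k)-u(x)|^q-|u(x+v)-u(x)|^q\bigr]dx\\
+\int_{\R^N}\chi_E(x)|u(x+v)-u(x)|^q\bigl[\chi_E(x+v_k)-\chi_E(x+v)\bigr]dx.
\end{multline*}
The first integral tends to $0$ because translation is continuous in $L^q$ and the map $f\mapsto|f|^q$ is continuous from $L^q(\R^N,\R^d)$ into $L^1(\R^N)$ (for $0<q\le 1$ use $||a|^q-|b|^q|\le|a-b|^q$, while for $q>1$ combine the mean-value bound $||a|^q-|b|^q|\le q\max(|a|,|b|)^{q-1}|a-b|$ with H\"older's inequality).

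The second integral is where the hypothesis on $E$ enters. Applied with $-v$ in place of $v$, the hypothesis gives $\mathcal{L}^N(E\cap(\partial E-v))=0$, so for $\mathcal{L}^N$-a.e.\ $x\in E$ the point $x+v$ lies in the interior or the complement of $E$; consequently $\chi_E(x+v_k)\to\chi_E(x+v)$ pointwise a.e.\ on $E$. Since the integrand is dominated by $2\chi_E(x)|u(x+v)-u(x)|^q\in L^1(\R^N)$, the dominated convergence theorem drives this term to $0$ as well. Thus $H$ is continuous on $\R^N$, and continuity of $F$ on $(0,\infty)$ follows by one further application of dominated convergence to $\e\mapsto\e^{-\alpha}\fint_{S^{N-1}}H(\e n)\,d\mathcal{H}^{N-1}(n)$, aided by the uniform bound $|H|\le 2^{q+1}\|u\|_{L^q}^q$. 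The only delicate point in the whole argument is precisely the a.e.\ convergence $\chi_E(x+v_k)\to\chi_E(x+v)$ on $E$, and this is exactly what the structural assumption $\mathcal{L}^N(E\cap(\partial E+v))=0$ for every $v\in\R^N$ is designed to guarantee.
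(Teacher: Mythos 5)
Your proof is correct and follows essentially the same strategy as the paper's: establish continuity of the function $F$ on $(0,\infty)$ via continuity of translation in $L^q$ together with the structural hypothesis on $E$, and then observe that continuity forces the essential and ordinary one-sided liminf/limsup to coincide. The paper handles the boundary term through $\e$-neighborhoods of $\partial E - t_0 n$ and delegates the final step to Proposition~\ref{prop:extremal sets for essential infimum and supremum} and Corollary~\ref{cor:existence of extremal sets for Lebesgue functions}, whereas you argue both points directly (pointwise a.e.\ convergence of $\chi_E(\cdot+v_k)$ on $E$, and the elementary fact that continuity makes $\essinf=\inf$ on any open interval), but these are presentational rather than substantive differences.
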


\begin{proof}
Let us denote
\begin{equation}
\label{eq:definition of V}
V(t):=\int_{S^{N-1}}\int_{E}\chi_E(x+tn)\frac{|u(x)-u(x+tn)|^q}{t^\alpha}dxd\mathcal{H}^{N-1}(n),
\end{equation}
and
\begin{equation}
F(t):=\int_{S^{N-1}}\int_E\chi_{E}(x+tn)|u(x+tn)-u(x)|^qdxd\mathcal{H}^{N-1}(n),\quad F:\R \to \R.
\end{equation}
Note that $F(t)=t^\alpha V(t)$. We prove the continuity of $F$ in $\mathbb{R}$, and consequently establish the continuity of $V$ in $(0,\infty)$. Thus, every point in $(0,\infty)$ is a Lebesgue point of $V$. Therefore, by Proposition \ref{prop:extremal sets for essential infimum and supremum} and Corollary \ref{cor:existence of extremal sets for Lebesgue functions}, we conclude that
\begin{equation}
\liminf_{\e\to 0^+}V(\e)=\essliminf_{\e\to 0^+}V(\e),\quad \limsup_{\e\to 0^+}V(\e)=\esslimsup_{\e\to 0^+}V(\e).
\end{equation}

Let $t_0\in\R$ be any number, and let us show that $F$ is continuous at $t_0$. Note first that
\begin{multline}
\label{eq:estimate for integrand for proving continuity of F}
\Big|\chi_{E}(x+tn)|u(x+tn)-u(x)|^q-\chi_{E}(x+t_0n)|u(x+t_0n)-u(x)|^q\Big|
\\
\leq \chi_{E}(x+tn)\Big||u(x+tn)-u(x)|^q-|u(x+t_0n)-u(x)|^q\Big|
\\
+\left|\chi_{E}(x+tn)-\chi_{E}(x+t_0n)\right||u(x+t_0n)-u(x)|^q
\\
\leq \Big||u(x+tn)-u(x)|^q-|u(x+t_0n)-u(x)|^q\Big|
\\
+\chi_{(E-tn)\Delta (E-t_0n)}(x)|u(x+t_0n)-u(x)|^q.
\end{multline}
Therefore, by \eqref{eq:estimate for integrand for proving continuity of F}
\begin{multline}
\label{eq:estimate for F(t)-F(t0)}
|F(t)-F(t_0)|=\Bigg|\int_{S^{N-1}}\int_E\chi_{E}(x+tn)|u(x+tn)-u(x)|^qdxd\mathcal{H}^{N-1}(n)
\\
-\int_{S^{N-1}}\int_E\chi_{E}(x+t_0n)|u(x+t_0n)-u(x)|^qdxd\mathcal{H}^{N-1}(n)\Bigg|
\\
\leq \int_{S^{N-1}}\int_E\Big|\chi_{E}(x+tn)|u(x+tn)-u(x)|^q-\chi_{E}(x+t_0n)|u(x+t_0n)-u(x)|^q\Big|dxd\mathcal{H}^{N-1}(n)
\\
\leq \int_{S^{N-1}}\int_E\Big||u(x+tn)-u(x)|^q-|u(x+t_0n)-u(x)|^q\Big|dxd\mathcal{H}^{N-1}(n)
\\
+\int_{S^{N-1}}\int_E\chi_{(E-tn)\Delta (E-t_0n)}(x)|u(x+t_0n)-u(x)|^qdxd\mathcal{H}^{N-1}(n).
\end{multline}

By Dominated Convergence Theorem and continuity of translations in $L^q$ we obtain
\begin{multline}
\label{eq:limit estimate for F(t)-F(t0) 1}
\lim_{t\to t_0}\int_{S^{N-1}}\int_{E}\Big||u(x+tn)-u(x)|^q-|u(x+t_0n)-u(x)|^q\Big|dxd\mathcal{H}^{N-1}(n)
\\
=\int_{S^{N-1}}\left(\lim_{t\to t_0}\int_{E}\Big||u(x+tn)-u(x)|^q-|u(x+t_0n)-u(x)|^q\Big|dx\right)d\mathcal{H}^{N-1}(n)=0.
\end{multline}
We utilized the continuity of translations in $L^q$ as follows: since $u(\cdot + tn)$ converges to $u(\cdot + t_0n)$ in $L^q$ as $t$ tends to $t_0$, then $u(\cdot + tn) - u$ converges to $u(\cdot + t_0n) - u$ in $L^q$ as $t$ tends to $t_0$. Consequently, $|u(\cdot + tn) - u|$ converges to $|u(\cdot + t_0n) - u|$ in $L^q$ as $t$ tends to $t_0$, and thus $|u(\cdot + tn) - u|^q$ converges to $|u(\cdot + t_0n) - u|^q$ in $L^1$ as $t$ tends to $t_0$.

Let us define for every $\e\in (0,\infty)$ the $\e$-neighbourhood of $\partial E-t_0n$ by
\begin{equation}
E_\e:=\Set{x\in\R^N}[\dist(x,\partial E-t_0n)\leq \e].
\end{equation}
Note that $\cap_{\e\in(0,\infty)}E_\e=\partial E-t_0n$. Therefore, for every $\e\in (0,\infty)$, there exists $R(\e)\in (0,\infty)$ such that for every $t\in \mathbb{R}$ with $|t-t_0|<R(\e)$, we have $(E-tn)\Delta (E-t_0n)\subset E_\e$ and so
$\chi_{(E-tn)\Delta (E-t_0n)}(x)\leq \chi_{E_\e}(x)$ for every $x\in\R^N$. Therefore,
\begin{multline}
\label{eq:inequality for part including characteristic function under the sign of the integral}
\limsup_{t\to t_0}\int_{S^{N-1}}\int_E\chi_{(E-tn)\Delta (E-t_0n)}(x)|u(x+t_0n)-u(x)|^qdxd\mathcal{H}^{N-1}(n)
\\
\leq \int_{S^{N-1}}\int_E\chi_{E_\e}(x)|u(x+t_0n)-u(x)|^qdxd\mathcal{H}^{N-1}(n).
\end{multline}
Therefore, taking the limit as $\e\to 0^+$ in \eqref{eq:inequality for part including characteristic function under the sign of the integral}, we get by Dominated Convergence Theorem and the assumption about $E$
\begin{multline}
\label{eq:limit estimate for F(t)-F(t0) 2}
\limsup_{t\to t_0}\int_{S^{N-1}}\int_E\chi_{(E-tn)\Delta (E-t_0n)}(x)|u(x+t_0n)-u(x)|^qdxd\mathcal{H}^{N-1}(n)
\\
\leq \int_{S^{N-1}}\int_{E\cap \left(\cap_{\e>0}E_\e\right)}|u(x+t_0n)-u(x)|^qdxd\mathcal{H}^{N-1}(n)
\\
=\int_{S^{N-1}}\int_{E\cap\left(\partial E-t_0n\right)}|u(x+t_0n)-u(x)|^qdxd\mathcal{H}^{N-1}(n)=0.
\end{multline}
Using \eqref{eq:estimate for F(t)-F(t0)}, \eqref{eq:limit estimate for F(t)-F(t0) 1}, and \eqref{eq:limit estimate for F(t)-F(t0) 2}, we conclude the continuity of $F$ at $t_0\in\R$. It completes the proof.
\end{proof}

\begin{proposition}(Besov Constants and Essential Variations)
\label{prop:Besov constants and essential variations}

Let $q,\alpha\in (0,\infty)$, and let $u\in L^q(\R^N,\R^d)$. Assume that $E\subset \R^N$ is a Lebesgue measurable set. Then, there exists a kernel $\{\rho_\e\}_{\e\in(0,\infty)}$ such that
\begin{multline}
\label{eq:equation199111}
\liminf_{\e\to 0^+}\int_{E}\int_{E}\rho_\e(|x-y|)\frac{|u(x)-u(y)|^q}{|x-y|^\alpha}dydx=\essliminf_{\e\to 0^+}\int_{E}\int_{E}\rho_\e(|x-y|)\frac{|u(x)-u(y)|^q}{|x-y|^\alpha}dydx
\\
=\essliminf_{\e\to 0^+}\fint_{S^{N-1}}\int_E\chi_{E}(x+\e n)\frac{|u(x+\e n)-u(x)|^q}{\e^\alpha}dxd\mathcal{H}^{N-1}(n),
\end{multline}
and
\begin{multline}
\label{eq:equation199222}
\limsup_{\e\to 0^+}\int_{E}\int_{E}\rho_\e(|x-y|)\frac{|u(x)-u(y)|^q}{|x-y|^\alpha}dydx=\esslimsup_{\e\to 0^+}\int_{E}\int_{E}\rho_\e(|x-y|)\frac{|u(x)-u(y)|^q}{|x-y|^\alpha}dydx
\\
=\esslimsup_{\e\to 0^+}\fint_{S^{N-1}}\int_E\chi_{E}(x+\e n)\frac{|u(x+\e n)-u(x)|^q}{\e^\alpha}dxd\mathcal{H}^{N-1}(n).
\end{multline}
In particular, for $r\in (0,\infty)$ and $\alpha=rq$, we get the result for $(r,q)$ variations and Besov constants.
\end{proposition}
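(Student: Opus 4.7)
Write
\begin{equation*}
\bar V(t):=\fint_{S^{N-1}}\int_E\chi_E(x+tn)\,\frac{|u(x+tn)-u(x)|^q}{t^\alpha}\,dx\,d\mathcal H^{N-1}(n),\quad L:=\essliminf_{t\to 0^+}\bar V(t),\quad U:=\esslimsup_{t\to 0^+}\bar V(t).
\end{equation*}
Because $u\in L^q(\R^N,\R^d)$, the crude bound $\bar V(t)\le 2^q\|u\|_{L^q(\R^N,\R^d)}^q/t^\alpha$ shows that $\bar V$ is finite at every $t>0$; since $\bar V$ is measurable by Fubini, $\bar V\in L^1_{\mathrm{loc}}((0,\infty))$, so $\mathcal L^1$-a.e.\ point of $(0,\infty)$ is a Lebesgue point of $\bar V$. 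My strategy is to build a single kernel whose associated Besov integral is, as a function of $\e$, a step function that takes values close to $L$ on infinitely many intervals of positive measure approaching $0$, and close to $U$ on infinitely many other such intervals. The sandwich inequality from Lemma~\ref{lem:sandwich lemma} will then identify all four quantities appearing in \eqref{eq:equation199111}--\eqref{eq:equation199222}.

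Pick a strictly decreasing sequence of Lebesgue points $(t_k)_{k\in\N}\subset(0,\infty)$ with $t_{k+1}<t_k/4$, $t_k\to 0^+$, such that $\bar V(t_{2k})\to L$ and $\bar V(t_{2k+1})\to U$; this is available from the definitions of $\essliminf,\esslimsup$ and the Lebesgue differentiation theorem (with the obvious interpretation when $L$ or $U$ equals $+\infty$, using that $\{\bar V>M\}$ has positive measure in every neighborhood of $0$ for every $M$ in that case). Using the Lebesgue-point property, choose $\sigma_k\in(0,t_k/2)$ so small that
\begin{equation*}
\left|\fint_{t_k-\sigma_k}^{t_k+\sigma_k}\bar V(s)\,ds-\bar V(t_k)\right|<\frac{1}{k}.
\end{equation*}
Now set $\rho_\e:=\rho^{\sigma_k}_{t_k}$ for $\e\in(t_{k+1},t_k]$ and $k\ge 1$, and $\rho_\e:=\tilde\rho_1$ for $\e>t_1$ (the latter choice is irrelevant to the limit analysis). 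Since $t_k+\sigma_k<2t_k\to 0$, the family $\{\rho_\e\}$ has the compact support property, and each $\rho_\e$ integrates to $1$ by Remark~\ref{rem:sigma-approximating kernels give us kernels}; hence $\{\rho_\e\}_{\e\in(0,\infty)}$ is a kernel in the sense of Definition~\ref{def:kernel}.

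The polar-coordinate identity from the proof of Lemma~\ref{lem:sandwich lemma} (see \eqref{eq:equation55}) yields, for every $\e\in(t_{k+1},t_k]$,
\begin{equation*}
\mathrm{BC}(\e):=\int_E\int_E\rho_\e(|x-y|)\,\frac{|u(x)-u(y)|^q}{|x-y|^\alpha}\,dy\,dx=\fint_{t_k-\sigma_k}^{t_k+\sigma_k}\bar V(s)\,ds=:c_k,
\end{equation*}
so that $\mathrm{BC}$ is a step function in $\e$ with $c_{2k}\to L$ and $c_{2k+1}\to U$. Consequently $\liminf_{\e\to 0^+}\mathrm{BC}(\e)\le L$ and $\limsup_{\e\to 0^+}\mathrm{BC}(\e)\ge U$. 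Because $\mathrm{BC}$ is constant on each positive-measure interval $(t_{k+1},t_k]$, we have $\essinf_{\e\in(0,\delta)}\mathrm{BC}(\e)=\inf\{c_k:t_k<\delta\}$ and $\esssup_{\e\in(0,\delta)}\mathrm{BC}(\e)=\sup\{c_k:t_k<\delta\}$, giving $\essliminf\,\mathrm{BC}=\liminf_k c_k=L$ and $\esslimsup\,\mathrm{BC}=\limsup_k c_k=U$. Combining these with the general inequalities $\liminf\le\essliminf$, $\esslimsup\le\limsup$, and with the sandwich $L\le\liminf\,\mathrm{BC}\le\limsup\,\mathrm{BC}\le U$ from Lemma~\ref{lem:sandwich lemma}, all four lower quantities coincide with $L$ and all four upper quantities coincide with $U$, which is exactly \eqref{eq:equation199111}--\eqref{eq:equation199222}.

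The one delicate point to carry out carefully is the inductive joint selection of $(t_k,\sigma_k)$, simultaneously respecting the alternation between the targets $L$ and $U$, the Lebesgue-point requirement, the separation condition $t_{k+1}<t_k/4$, and the degenerate cases $L=U$ or $L,U\in\{+\infty\}$; once this recursive construction is in place, the remainder of the argument is a routine application of the polar-coordinate formula and the sandwich lemma.
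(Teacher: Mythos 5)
Your proof is correct, but the kernel you construct is genuinely different from the paper's, and the way the essential limits of the Besov constant are identified is correspondingly different. The paper's proof keeps the $\sigma$-approximating kernel $\rho^{\sigma_\e}_\e$ centered at $\e$ itself: for each Lebesgue point $\e$ of $V$ it chooses $\sigma_\e$ so small that $\mathrm{BC}(\e)$ differs from $\bar V(\e)$ by less than $\e$, and then identifies $\essliminf\mathrm{BC}$ with $\essliminf\bar V$ by combining the inequality $\essliminf\mathrm{BC}\le\liminf_{\text{L.P.}}\mathrm{BC}$ with Proposition~\ref{prop:extremal sets for essential infimum and supremum} and Corollary~\ref{cor:existence of extremal sets for Lebesgue functions} applied to $V$, and then closing the loop via the Sandwich Lemma~\ref{lem:sandwich lemma}. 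You instead freeze the kernel on the intervals $(t_{k+1},t_k]$ and concentrate it at a pre-selected sequence of Lebesgue points $t_k\to 0$ whose even and odd subsequences witness $L=\essliminf\bar V$ and $U=\esslimsup\bar V$. This makes $\mathrm{BC}$ a step function, whose essential $\liminf$/$\limsup$ are read off directly as $\liminf_k c_k$ and $\limsup_k c_k$; the Sandwich Lemma then squeezes all four quantities in each of \eqref{eq:equation199111}, \eqref{eq:equation199222} to $L$ and $U$. Your discretized construction buys a cleaner identification of the essential limits of the Besov constant (no appeal to the extremal-set machinery on the $\mathrm{BC}$ side, and no need to think about what $\sigma_\e$ should be at non-Lebesgue points of $V$), at the cost of the recursive joint selection of $(t_k,\sigma_k)$, which you correctly flag as the point that needs care in a fully written-up proof; that step is straightforward to carry out, including the degenerate cases $L=U$ or $U=+\infty$.
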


\begin{proof}
For every $\e\in (0,\infty)$ and $\sigma\in (0,\e)$ let $\rho_\e^{\sigma}$ as in \eqref{eq:sigma approximating kernel}. By \eqref{eq:equation55}, we get
\begin{multline}
\label{eq:equation5555}
\int_{E}\int_{E}\rho^\sigma_\e(|x-y|)\frac{|u(x)-u(y)|^q}{|x-y|^\alpha}dydx
\\
=\int_0^\infty t^{N-1}\rho^\sigma_\e(t)\left(\int_{S^{N-1}}\int_{E}\chi_E(x+tn)\frac{|u(x)-u(x+tn)|^q}{t^\alpha}dxd\mathcal{H}^{N-1}(n)\right)dt
\\
=\frac{1}{2\sigma\mathcal{H}^{N-1}(S^{N-1})}\int_{\e-\sigma}^{\e+\sigma}V(t)dt,
\end{multline}
where
\begin{equation}
V(t):=\int_{S^{N-1}}\int_{E}\chi_E(x+tn)\frac{|u(x)-u(x+tn)|^q}{t^\alpha}dxd\mathcal{H}^{N-1}(n).
\end{equation}

Since $u\in L^q(\R^N,\R^d)$, the function $V$ is locally integrable in $(0,\infty)$, so almost every point in $(0,\infty)$ is a Lebesgue point of $V$. Let $\e\in (0,\infty)$ be a Lebesgue point of $V$. There exists $0<\sigma_\e<\e$ such that $\left|\frac{1}{2\sigma_\e}\int_{\e-\sigma_\e}^{\e+\sigma_\e}V(t)dt-V(\e)\right|<\e$. Therefore,
\begin{multline}
\label{eq:equation555551}
\int_{E}\int_{E}\rho^{\sigma_\e}_\e(|x-y|)\frac{|u(x)-u(y)|^q}{|x-y|^\alpha}dydx
=\frac{1}{2{\sigma_\e}\mathcal{H}^{N-1}(S^{N-1})}\int_{\e-{\sigma_\e}}^{\e+{\sigma_\e}}V(t)dt
\\
=\frac{1}{\mathcal{H}^{N-1}(S^{N-1})}V(\e)+\frac{1}{\mathcal{H}^{N-1}(S^{N-1})}\left(\frac{1}{2{\sigma_\e}}\int_{\e-{\sigma_\e}}^{\e+{\sigma_\e}}V(t)dt-V(\e)\right).
\end{multline}
By taking the lower limit in \eqref{eq:equation555551} as $\e\to 0^+$, with $\e$ being a Lebesgue point of $V$, we derive the second equation in \eqref{eq:equation199111} using Proposition \ref{prop:extremal sets for essential infimum and supremum} and Corollary \ref{cor:existence of extremal sets for Lebesgue functions}. Similarly, by taking the upper limit in \eqref{eq:equation555551} as $\e\to 0^+$, with $\e$ being a Lebesgue point of $V$, we obtain the second equation in \eqref{eq:equation199222}. Note that $\{\rho^{\sigma_\e}_\e\}_{\e\in(0,\infty)}$ is a kernel as was explained in Remark \ref{rem:sigma-approximating kernels give us kernels}.

By the definition of $\essliminf$ and the second equation of \eqref{eq:equation199111} we obtain
\begin{equation}
\label{eq:lower Besov constant is less than the essential lower variation}
\liminf_{\e\to 0^+}\int_{E}\int_{E}\rho^{\sigma_\e}_\e(|x-y|)\frac{|u(x)-u(y)|^q}{|x-y|^\alpha}dydx
\leq \essliminf_{\e\to 0^+}\frac{1}{\mathcal{H}^{N-1}(S^{N-1})}V(\e).
\end{equation}
By Lemma \ref{lem:sandwich lemma}, we get
\begin{equation}
\label{eq:lower Besov constant is more than the essential lower variation}
\liminf_{\e\to 0^+}\int_{E}\int_{E}\rho^{\sigma_\e}_\e(|x-y|)\frac{|u(x)-u(y)|^q}{|x-y|^\alpha}dydx
\geq \essliminf_{\e\to 0^+}\frac{1}{\mathcal{H}^{N-1}(S^{N-1})}V(\e).
\end{equation}
We get the first equation of \eqref{eq:equation199111} by \eqref{eq:lower Besov constant is less than the essential lower variation} and \eqref{eq:lower Besov constant is more than the essential lower variation}. We get the first equation of \eqref{eq:equation199222} in a similar way.
\end{proof}

\begin{corollary}(Representability of Variations as Besov Constants)
\label{cor:Representability of Variations as Besov Constants}

Let $q,\alpha\in (0,\infty)$, and let $u\in L^q(\R^N,\R^d)$. Assume that $E\subset \R^N$ is a Lebesgue measurable set such that for every $v\in \R^N$ we have $\mathcal{L}^N\left(E\cap (\partial E+v)\right)=0$. Then, there exists a kernel $\{\rho_\e\}_{\e\in(0,\infty)}$ such that
\begin{multline}
\label{eq:lower Besov constant equals to lower variation}
\liminf_{\e\to 0^+}\int_{E}\int_{E}\rho_\e(|x-y|)\frac{|u(x)-u(y)|^q}{|x-y|^\alpha}dydx
\\
=\liminf_{\e\to 0^+}\fint_{S^{N-1}}\int_E\chi_{E}(x+\e n)\frac{|u(x+\e n)-u(x)|^q}{\e^\alpha}dxd\mathcal{H}^{N-1}(n),
\end{multline}
and
\begin{multline}
\label{eq:upper Besov constant equals to upper variation}
\limsup_{\e\to 0^+}\int_{E}\int_{E}\rho_\e(|x-y|)\frac{|u(x)-u(y)|^q}{|x-y|^\alpha}dydx
\\
=\limsup_{\e\to 0^+}\fint_{S^{N-1}}\int_E\chi_{E}(x+\e n)\frac{|u(x+\e n)-u(x)|^q}{\e^\alpha}dxd\mathcal{H}^{N-1}(n).
\end{multline}
In particular, for $r\in (0,\infty)$ and $\alpha=rq$, we get the result for $(r,q)$ variations and Besov constants.
\end{corollary}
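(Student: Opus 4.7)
The plan is to combine the two preceding results directly: Proposition \ref{prop:Besov constants and essential variations} produces a kernel that realizes the essential variations as Besov constants, and Lemma \ref{lem:variations and essential variations} upgrades the essential variations to ordinary variations under the geometric hypothesis on $E$.

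More precisely, I would first invoke Proposition \ref{prop:Besov constants and essential variations} for the given $u \in L^q(\R^N,\R^d)$ and the Lebesgue measurable set $E$. This yields a kernel $\{\rho_\e\}_{\e \in (0,\infty)}$, in fact the family $\{\rho^{\sigma_\e}_\e\}$ constructed from the $\sigma$-approximating kernels by selecting, for each $\e$ in the set of full measure of Lebesgue points of the function
\begin{equation}
V(t) := \int_{S^{N-1}} \int_E \chi_E(x+tn) \frac{|u(x+tn)-u(x)|^q}{t^\alpha}\, dx\, d\mathcal{H}^{N-1}(n),
\end{equation}
a parameter $\sigma_\e \in (0,\e)$ small enough that the averaging error against $V(\e)$ is at most $\e$. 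For this kernel, the conclusion of Proposition \ref{prop:Besov constants and essential variations} gives
\begin{multline}
\liminf_{\e \to 0^+}\int_E \int_E \rho_\e(|x-y|)\frac{|u(x)-u(y)|^q}{|x-y|^\alpha}\, dy\, dx
\\
= \essliminf_{\e \to 0^+}\fint_{S^{N-1}}\int_E \chi_E(x+\e n)\frac{|u(x+\e n)-u(x)|^q}{\e^\alpha}\, dx\, d\mathcal{H}^{N-1}(n),
\end{multline}
and the analogous identity for $\limsup$ and $\esslimsup$.

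Next, because the set $E$ satisfies the hypothesis $\mathcal{L}^N(E \cap (\partial E + v)) = 0$ for every $v \in \R^N$, Lemma \ref{lem:variations and essential variations} applies and asserts the equality of the lower (resp.\ upper) variation with the essential lower (resp.\ upper) variation. Substituting these equalities into the identities above immediately yields \eqref{eq:lower Besov constant equals to lower variation} and \eqref{eq:upper Besov constant equals to upper variation}. The particular case $\alpha = rq$ for $r \in (0,\infty)$ then recovers the statement for $(r,q)$ variations and Besov constants, completing the argument.

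No step is genuinely hard here: the technical content has already been absorbed by Proposition \ref{prop:Besov constants and essential variations} (the Lebesgue differentiation argument for the $\sigma$-approximating kernel) and Lemma \ref{lem:variations and essential variations} (the continuity of $F(t) = t^\alpha V(t)$, which uses the continuity of translations in $L^q$ together with the boundary hypothesis on $E$). The only point to check is that the hypothesis needed by Lemma \ref{lem:variations and essential variations} is precisely the one we assume in the corollary, so both results apply simultaneously without any compatibility issue.
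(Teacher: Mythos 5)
Your proof is correct and takes exactly the same route as the paper: combine Proposition \ref{prop:Besov constants and essential variations} (which constructs the kernel via $\sigma$-approximating kernels and Lebesgue points of $V$, realizing essential variations as Besov constants) with Lemma \ref{lem:variations and essential variations} (which upgrades essential variations to ordinary variations under the boundary hypothesis on $E$). The paper's own proof is a one-liner citing precisely these two results, so your fuller account of what they deliver matches in substance and spirit.
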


\begin{proof}
Formulas \eqref{eq:lower Besov constant equals to lower variation} and \eqref{eq:upper Besov constant equals to upper variation} follow immediately from Lemma \ref{lem:variations and essential variations} and Proposition \ref{prop:Besov constants and essential variations}.
\end{proof}

\begin{lemma}(Continuity of Variations and Besov Constants in Besov Spaces $B^r_{q,\infty}$)
\label{lem:Continuity of (r,q)-variation in Besov spaces Brq}

Let $q\in[1,\infty)$, $r\in (0,1)$, and $E\subset\R^N$ be an $\mathcal{L}^N$-measurable set. Consider a sequence $\{u_k\}_{k=1}^\infty\subset B^r_{q,\infty}(\R^N,\R^d)$ such that $u_k$ converges to $u$ in $B^r_{q,\infty}(\R^N,\R^d)$. Then,
\begin{enumerate}
\item for every $n\in\R^N$, we have
\begin{multline}
\label{eq: continuity of upper surface Besov term}
\lim_{k\to\infty}\left(\limsup_{\e\to 0^+}\int_E\chi_{E}(x+\e n)\frac{|u_k(x+\e n)-u_k(x)|^q}{\e^{rq}}dx\right)
\\
=\limsup_{\e\to 0^+}\int_E\chi_{E}(x+\e n)\frac{|u(x+\e n)-u(x)|^q}{\e^{rq}}dx,
\end{multline}
and a similar result also holds when replacing the $\limsup$ with the $\liminf$.

\item
\label{assertion 2 of Continuity of (r,q)-variation in Besov spaces Brq}
It follows that
\begin{multline}
\label{eq: continuity of upper surface Besov term1}
\lim_{k\to\infty}\left(\limsup_{\e\to 0^+}\int_{S^{N-1}}\int_E\chi_{E}(x+\e n)\frac{|u_k(x+\e n)-u_k(x)|^q}{\e^{rq}}dxd\mathcal{H}^{N-1}(n)\right)
\\
=\limsup_{\e\to 0^+}\int_{S^{N-1}}\int_E\chi_{E}(x+\e n)\frac{|u(x+\e n)-u(x)|^q}{\e^{rq}}dxd\mathcal{H}^{N-1}(n),
\end{multline}
and a similar result also holds when replacing the $\limsup$ with the $\liminf$.

\item
\label{assertion 3 of Lemma about continuity}
 Let $a\in(0,\infty]$. For every kernel $\rho_\e:(0,\infty)\to [0,\infty),\e\in (0,a)$, we get
\begin{multline}
\label{eq:continuity for upper Besov constant dependent on kernel}
\lim_{k\to\infty}\left(\limsup_{\e\to 0^+}\int_{E}\int_{E}\rho_\e(|x-y|)\frac{|u_k(x)-u_k(y)|^q}{|x-y|^{rq}}dydx\right)
\\
=\limsup_{\e\to 0^+}\int_{E}\int_{E}\rho_\e(|x-y|)\frac{|u(x)-u(y)|^q}{|x-y|^{rq}}dydx,
\end{multline}
and a similar result also holds when replacing the $\limsup$ with the $\liminf$.

\end{enumerate}
\end{lemma}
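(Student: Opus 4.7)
The plan is to handle the three assertions in parallel by a single mechanism: apply Minkowski's inequality (the triangle inequality for $L^q$) to bound the difference of the $q$-th roots of the relevant integrals by the $q$-th root of the same integral with $u_k-u$ replacing $u_k,u$, dominate that quantity by $[u_k-u]_{B^{r}_{q,\infty}(\R^N,\R^d)}$ (times harmless constants) \emph{uniformly in $\e$}, and then use Lemma \ref{lem:liminfsup lemma} combined with the hypothesis $[u_k-u]_{B^{r}_{q,\infty}(\R^N,\R^d)}\to 0$ to conclude.

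For assertion 1, viewing the $L^q(\R^N,\R^d)$-norm of the vector-valued function $x\mapsto\chi_E(x)^{1/q}\chi_E(x+\e n)^{1/q}(u_k(x+\e n)-u_k(x))/\e^r$ and applying Minkowski yields
\begin{multline*}
\left|\left(\int_E\chi_E(x+\e n)\frac{|u_k(x+\e n)-u_k(x)|^q}{\e^{rq}}dx\right)^{1/q}-\left(\int_E\chi_E(x+\e n)\frac{|u(x+\e n)-u(x)|^q}{\e^{rq}}dx\right)^{1/q}\right|\\
\leq\left(\int_{\R^N}\frac{|(u_k-u)(x+\e n)-(u_k-u)(x)|^q}{\e^{rq}}dx\right)^{1/q}\leq[u_k-u]_{B^{r}_{q,\infty}(\R^N,\R^d)},
\end{multline*}
where the last estimate uses Definition \ref{def:Besov seminorm} applied to $u_k-u$. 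The $\e$-uniformity of the right-hand side combined with Lemma \ref{lem:liminfsup lemma} yields \eqref{eq: continuity of upper surface Besov term} after letting $k\to\infty$; the $\liminf$ variant is identical.

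For assertion 2, the same Minkowski argument on $S^{N-1}\times\R^N$ equipped with the product measure $d\Haus^{N-1}(n)\otimes dx$ produces the analogous bound multiplied by $\Haus^{N-1}(S^{N-1})^{1/q}$, since after Fubini the inner $x$-integral is dominated by $[u_k-u]_{B^{r}_{q,\infty}}^q$ independently of $n$; the rest of the argument is as in assertion 1.

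For assertion 3, Minkowski on $E\times E$ with the measure $\rho_\e(|x-y|)\,dxdy$ reduces the task to estimating $\bigl(\int_E\int_E\rho_\e(|x-y|)|(u_k-u)(x)-(u_k-u)(y)|^q/|x-y|^{rq}\,dydx\bigr)^{1/q}$. The polar-coordinate computation from the proof of Lemma \ref{lem:sandwich lemma} rewrites this as
\begin{equation*}
\left(\int_0^\infty t^{N-1}\rho_\e(t)\int_{S^{N-1}}\int_E\chi_E(x+tn)\frac{|(u_k-u)(x+tn)-(u_k-u)(x)|^q}{t^{rq}}dx\,d\Haus^{N-1}(n)\,dt\right)^{1/q},
\end{equation*}
which is bounded by $[u_k-u]_{B^{r}_{q,\infty}(\R^N,\R^d)}\bigl(\Haus^{N-1}(S^{N-1})\int_0^\infty t^{N-1}\rho_\e(t)dt\bigr)^{1/q}=[u_k-u]_{B^{r}_{q,\infty}(\R^N,\R^d)}$, the last equality coming from the kernel normalization $\int_{\R^N}\rho_\e(|z|)dz=1$. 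Letting $k\to\infty$ completes the proof. The only subtlety worth emphasizing is that in each of the three steps the Besov-type bound must be uniform in $\e\in(0,a)$; in assertion 3 this uniformity is precisely what the kernel normalization secures, and beyond that the argument is a routine Minkowski estimate.
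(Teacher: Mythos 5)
Your approach is essentially the paper's: in each assertion you apply Minkowski's inequality to bound the difference of $q$-th roots by the same integral with $u_k-u$, dominate it uniformly in $\e$ by the Besov seminorm (using the kernel normalization for assertion 3), and then pass the limit via Lemma~\ref{lem:liminfsup lemma}. One small slip in assertion 1: your claimed bound
\[
\left(\int_{\R^N}\frac{|(u_k-u)(x+\e n)-(u_k-u)(x)|^q}{\e^{rq}}dx\right)^{1/q}\leq[u_k-u]_{B^{r}_{q,\infty}(\R^N,\R^d)}
\]
is off by a factor of $|n|^r$, since Definition~\ref{def:Besov seminorm} has $|h|^{rq}=\e^{rq}|n|^{rq}$ in the denominator with $h=\e n$, so the correct bound is $|n|^r[u_k-u]_{B^{r}_{q,\infty}(\R^N,\R^d)}$ (the paper's bound). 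This is harmless because $n$ is fixed and $|n|^r$ is a constant, so the limit as $k\to\infty$ is still zero; in assertions 2 and 3 the relevant directions lie on $S^{N-1}$ so $|n|^r=1$ and there is nothing to fix. In assertion 3 you unroll the polar-coordinate identity \eqref{eq:equation55} from the Sandwich Lemma's proof inline rather than invoking Lemma~\ref{lem:sandwich lemma} as a black box followed by the Besov-seminorm bound; this is a slightly cleaner route to the same estimate and is fully equivalent.
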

\begin{proof}
Let us prove assertion 1. Note that if $n=0$, then equation \eqref{eq: continuity of upper surface Besov term} in both $\liminf$ and $\limsup$ cases trivially holds. Assume $n\neq 0$.
Let us denote
\begin{equation}
I_{\e}(u_k,n)(x):=\chi_{E}(x+\e n)\frac{|u_k(x+\e n)-u_k(x)|^q}{\e^{rq}},
\end{equation}
and
\begin{equation}
I_{\e}(u,n)(x):=\chi_{E}(x+\e n)\frac{|u(x+\e n)-u(x)|^q}{\e^{rq}}.
\end{equation}

By Lemma \ref{lem:liminfsup lemma}, Minkowski's inequality, and the definition of the Besov seminorm $[\cdot]_{B^r_{q,\infty}}$, we obtain
\begin{multline}
\label{eq:estimate for continuity of upper surface Besov term}
\left|\limsup_{\e\to 0^+}\left(\int_EI_{\e}(u_k,n)(x)dx\right)^{\frac{1}{q}}-\limsup_{\e\to 0^+}\left(\int_EI_{\e}(u,n)(x)dx\right)^{\frac{1}{q}}\right|
\\
\leq \limsup_{\e\to 0^+}\left|\left(\int_EI_{\e}(u_k,n)(x)dx\right)^{\frac{1}{q}}-\left(\int_EI_{\e}(u,n)(x)dx\right)^{\frac{1}{q}}\right|
\\
=\limsup_{\e\to 0^+}\left|\left(\int_E\left[\left(I_{\e}(u_k,n)(x)\right)^{1/q}\right]^qdx\right)^{\frac{1}{q}}-\left(\int_E\left[\left(I_{\e}(u,n)(x)\right)^{1/q}\right]^qdx\right)^{\frac{1}{q}}\right|
\\
\leq \limsup_{\e\to 0^+}\left(\int_E\left|\left(I_{\e}(u_k,n)(x)\right)^{1/q}-\left(I_{\e}(u,n)(x)\right)^{1/q}\right|^qdx\right)^{\frac{1}{q}}
\\
=\limsup_{\e\to 0^+}\left(\int_E\left|\chi_{E}(x+\e n)\frac{|u_k(x+\e n)-u_k(x)|}{\e^{r}}-\chi_{E}(x+\e n)\frac{|u(x+\e n)-u(x)|}{\e^{r}}\right|^qdx\right)^{\frac{1}{q}}
\\
=\limsup_{\e\to 0^+}\left(\int_E\chi_{E}(x+\e n)\frac{\left||u_k(x+\e n)-u_k(x)|-|u(x+\e n)-u(x)|\right|^q}{\e^{rq}}dx\right)^{\frac{1}{q}}
\\
\leq \limsup_{\e\to 0^+}\left(\int_E\chi_{E}(x+\e n)\frac{\left|(u_k-u)(x+\e n)-(u_k-u)(x)\right|^q}{\e^{rq}}dx\right)^{\frac{1}{q}}
\leq |n|^r[u_k-u]_{B^r_{q,\infty}(\R^N,\R^d)}.
\end{multline}

We take the limit as $k\to \infty$ on both sides of \eqref{eq:estimate for continuity of upper surface Besov term} to obtain \eqref{eq: continuity of upper surface Besov term}. Similarly, we get
\begin{equation}
\label{eq:estimate for continuity of lower surface Besov term}
\left|\liminf_{\e\to 0^+}\left(\int_E I_{\e}(u_k,n)(x)dx\right)^{\frac{1}{q}}-\liminf_{\e\to 0^+}\left(\int_EI_{\e}(u,n)(x)dx\right)^{\frac{1}{q}}\right|\leq |n|^r[u_k-u]_{B^r_{q,\infty}(\R^N,\R^d)}.
\end{equation}

Assertion 2 of the Lemma is proven in the same way. By replacing the integral $\int_{E}(\cdot)dx$ with the integral $\int_{S^{N-1}}\int_E(\cdot)dxd\mathcal{H}^{N-1}(n)$ in \eqref{eq:estimate for continuity of upper surface Besov term} throughout, we obtain
\begin{multline}
\label{eq:inequality for assertion 2 with limsup}
\Bigg|\limsup_{\e\to 0^+}\left(\int_{S^{N-1}}\int_EI_{\e}(u_k,n)(x)dxd\mathcal{H}^{N-1}(n)\right)^{\frac{1}{q}}
-\limsup_{\e\to 0^+}\left(\int_{S^{N-1}}\int_EI_{\e}(u,n)(x)dxd\mathcal{H}^{N-1}(n)\right)^{\frac{1}{q}}\Bigg|
\\
\leq \left(\mathcal{H}^{N-1}(S^{N-1})\right)^{1/q}|n|^r[u_k-u]_{B^r_{q,\infty}(\R^N,\R^d)},
\end{multline}
and
\begin{multline}
\label{eq:inequality for assertion 2 with liminf}
\Bigg|\liminf_{\e\to 0^+}\left(\int_{S^{N-1}}\int_EI_{\e}(u_k,n)(x)dxd\mathcal{H}^{N-1}(n)\right)^{\frac{1}{q}}
-\liminf_{\e\to 0^+}\left(\int_{S^{N-1}}\int_EI_{\e}(u,n)(x)dxd\mathcal{H}^{N-1}(n)\right)^{\frac{1}{q}}\Bigg|
\\
\leq \left(\mathcal{H}^{N-1}(S^{N-1})\right)^{1/q}|n|^r[u_k-u]_{B^r_{q,\infty}(\R^N,\R^d)}.
\end{multline}
Taking the limit as $k$ tends to infinity in inequalities \eqref{eq:inequality for assertion 2 with limsup} and \eqref{eq:inequality for assertion 2 with liminf}, we get formula \eqref{eq: continuity of upper surface Besov term1} in both cases $\liminf$ and $\limsup$.

We prove assertion 3. Let us denote
\begin{equation}
B_{\e,u_k}(x,y):=\rho_\e(|x-y|)\frac{|u_k(x)-u_k(y)|^q}{|x-y|^{rq}},\quad B_{\e,u}(x,y):=\rho_\e(|x-y|)\frac{|u(x)-u(y)|^q}{|x-y|^{rq}}.
\end{equation}
As in \eqref{eq:estimate for continuity of upper surface Besov term}, for $n\neq0$, by Lemma \ref{lem:liminfsup lemma}, Minkowski's inequality, Sandwich Lemma with $\alpha=rq$ (Lemma \ref{lem:sandwich lemma}) and the definition of the Besov seminorm $[\cdot]_{B^r_{q,\infty}}$, we obtain
\begin{multline}
\label{eq:estimate for continuity of upper  Besov term dependent of gereral kernel}
\left|\limsup_{\e\to 0^+}\left(\int_{E}\int_{E}B_{\e,u_k}(x,y)dydx\right)^{\frac{1}{q}}-\limsup_{\e\to 0^+}\left(\int_{E}\int_{E}B_{\e,u}(x,y)dydx\right)^{\frac{1}{q}}\right|
\\
\leq \limsup_{\e\to 0^+}\Bigg|\left(\int_{E}\int_{E}\left[\left(B_{\e,u_k}(x,y)\right)^{\frac{1}{q}}\right]^qdydx\right)^{\frac{1}{q}}
-\left(\int_{E}\int_{E}\left[\left(B_{\e,u}(x,y)\right)^{\frac{1}{q}}\right]^qdydx\right)^{\frac{1}{q}}\Bigg|
\\
\leq \limsup_{\e\to 0^+}\left(\int_{E}\int_{E}\left|\left(B_{\e,u_k}(x,y)\right)^{\frac{1}{q}}-\left(B_{\e,u}(x,y)\right)^{\frac{1}{q}}\right|^qdydx\right)^{\frac{1}{q}}
\\
=\limsup_{\e\to 0^+}\left(\int_{E}\int_{E}\left|\left(\rho_\e(|x-y|)\right)^{\frac{1}{q}}\frac{|u_k(x)-u_k(y)|}{|x-y|^{r}}-\left(\rho_\e(|x-y|)\right)^{\frac{1}{q}}\frac{|u(x)-u(y)|}{|x-y|^{r}}\right|^qdydx\right)^{\frac{1}{q}}
\\
\leq\limsup_{\e\to 0^+}\left(\int_{E}\int_{E}\rho_\e(|x-y|)\frac{|(u_k-u)(x)-(u_k-u)(y)|^q}{|x-y|^{rq}}dydx\right)^{\frac{1}{q}}
\\
\leq \limsup_{\e\to 0^+}\left(\fint_{S^{N-1}}\int_E\chi_{E}(x+\e n)\frac{|(u_k-u)(x+\e n)-(u_k-u)(x)|^q}{\e^{rq}}dxd\mathcal{H}^{N-1}(n)\right)^{\frac{1}{q}}.
\\
\leq |n|^r[u_k-u]_{B^r_{q,\infty}(\R^N,\R^d)}.
\end{multline}
Taking the limit as $k\to \infty$ in the inequality \eqref{eq:estimate for continuity of upper  Besov term dependent of gereral kernel}, we obtain \eqref{eq:continuity for upper Besov constant dependent on kernel}. We get this result for $\liminf$ in a similar way.
\end{proof}

\begin{remark}
In Lemma \ref{lem:Continuity of (r,q)-variation in Besov spaces Brq}, we can utilize Corollary \ref{cor:Representability of Variations as Besov Constants} to derive assertion \ref{assertion 2 of Continuity of (r,q)-variation in Besov spaces Brq} from assertion \ref{assertion 3 of Lemma about continuity} in Lemma \ref{lem:Continuity of (r,q)-variation in Besov spaces Brq}, provided that we limit ourselves to sets $E$ satisfying the conditions of Corollary \ref{cor:Representability of Variations as Besov Constants}.
\end{remark}

\section{Equivalence Between Gagliardo Constants and Besov Constants}

In this section, we demonstrate that the upper and lower variations control Gagliardo constants (refer to Theorem \ref{thm:Sandwich inequality for Gagliardo constants}). Furthermore, we establish that, under certain conditions, Gagliardo constants and Besov constants are equivalent (see Theorem \ref{thm:connection between Gagliardo constant and Besov constant dependent on arbitrary kernel}). As a special case, we derive the equivalence between Gagliardo constants and infinitesimal $B^{r,q}$-seminorms (refer to Corollary \ref{cor:main result}).

\begin{corollary}(Besov Constants Bounded by Besov Seminorms)
\label{cor:Finiteness of infinitesimal double integral including kernel}

Let $q\in [1,\infty)$, $r\in(0,1)$ and $u\in B^r_{q,\infty}(\R^N,\R^d)$. Let $\rho_\e:(0,\infty)\to [0,\infty),\e\in (0,a),$ be a kernel for some $a\in(0,\infty)$. Then
\begin{align}
\limsup_{\e\to 0^+}\int_{\R^N}\int_{\R^N}\rho_\e(|x-y|)\frac{|u(x)-u(y)|^q}{|x-y|^{rq}}dydx\leq [u]^q_{B^{r}_{q,\infty}(\R^N,\R^d)}<\infty.
\end{align}
\end{corollary}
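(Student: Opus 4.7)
The plan is to derive this corollary directly from the Sandwich Lemma (\rlemma{lem:sandwich lemma}) together with the definition of the Besov seminorm. First I would note that membership $u\in B^r_{q,\infty}(\R^N,\R^d)$ entails $u\in L^q(\R^N,\R^d)$ by Definition \ref{def:definition of Besov space}, so hypothesis 2 of \rlemma{lem:sandwich lemma} is satisfied; therefore I may apply that lemma with the choices $E=\R^N$ and $\alpha=rq$.

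Applying the lemma gives
\begin{multline*}
\limsup_{\e\to 0^+}\int_{\R^N}\int_{\R^N}\rho_\e(|x-y|)\frac{|u(x)-u(y)|^q}{|x-y|^{rq}}dy\,dx
\\
\leq\limsup_{\e\to 0^+}\fint_{S^{N-1}}\int_{\R^N}\frac{|u(x+\e n)-u(x)|^q}{\e^{rq}}dx\,d\mathcal{H}^{N-1}(n),
\end{multline*}
where the characteristic factor $\chi_{\R^N}$ is identically one. Next, for every fixed $n\in S^{N-1}$ one has $|\e n|=\e$, so the integrand coincides with $|u(x+\e n)-u(x)|^q/|\e n|^{rq}$, and by Definition \ref{def:Besov seminorm} of the Besov seminorm,
\[
\int_{\R^N}\frac{|u(x+\e n)-u(x)|^q}{\e^{rq}}\,dx\leq [u]^q_{B^{r}_{q,\infty}(\R^N,\R^d)}.
\]

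This bound is uniform in $n\in S^{N-1}$ and in $\e>0$, so averaging over $S^{N-1}$ preserves it, and taking $\limsup_{\e\to 0^+}$ yields the desired inequality. The finiteness of $[u]^q_{B^{r}_{q,\infty}(\R^N,\R^d)}$ is precisely the defining property of membership in the Besov space. No step presents real difficulty—this is a one-line consequence of the Sandwich Lemma and the definitions—so there is no serious obstacle to overcome.
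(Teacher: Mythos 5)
Your proof is correct and follows essentially the same route as the paper: apply the Sandwich Lemma (\rlemma{lem:sandwich lemma}) with $E=\R^N$, $\alpha=rq$, then bound the averaged directional integral by the Besov seminorm $[u]^q_{B^{r}_{q,\infty}(\R^N,\R^d)}$ via Definition \ref{def:Besov seminorm}. The only addition you make over the paper's two-line proof is the explicit verification of hypothesis 2 of the Sandwich Lemma ($u\in L^q$), which the paper leaves implicit; this is a harmless (and welcome) bit of extra care, not a different approach.
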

\begin{proof}
By Lemma \ref{lem:sandwich lemma} with $\alpha=rq$ and $E=\R^N$, Definition \ref{def:Besov seminorm} (definition of Besov seminorm) and the assumption $u\in B^r_{q,\infty}(\R^N,\R^d)$ we get
\begin{multline}
\limsup_{\e\to 0^+}\int_{\R^N}\int_{\R^N}\rho_\e(|x-y|)\frac{|u(x)-u(y)|^q}{|x-y|^{rq}}dydx
\\
\leq\frac{1}{\mathcal{H}^{N-1}(S^{N-1})}\limsup_{\e\to 0^+}\int_{S^{N-1}}\int_{\R^N}\frac{|u(x+\e n)-u(x)|^q}{\e^{rq}}dxd\mathcal{H}^{N-1}(n)
\leq [u]^q_{B^{r}_{q,\infty}(\R^N,\R^d)}<\infty.
\end{multline}
\end{proof}

\begin{lemma}(Approximation of Gagliardo Constants by Besov Constants through the Logarithmic Kernel)
\label{lem:J equals to Besov approximation with mollified u up to a small error}

Let
$1\leq q<\infty$, $r\in (0,1)$. Let $\omega\in(0,1)$ be such that $rq<{1/\omega}$. Let $u\in B^r_{q,\infty}(\R^N,\R^d)$, $E\subset\R^N$ be an $\mathcal{L}^N$-measurable set, and let $\eta$ be such that
\begin{equation}
\label{eq:assumptions about eta}
\eta\in  W^{1,1}\left(\R^N\right),\quad \int_{\R^N}|\nabla\eta(v)||v|^{rq}dv<\infty.
\end{equation}
Then for every $\e\in (0,1/e)$
it follows that
\begin{equation}
\frac{1}{|\ln\e|}\left[u_{\e}\right]^q_{W^{r,q}(E,\R^d)}=\mathcal{H}^{N-1}\left(S^{N-1}\right)\int_{E}\int_{E}\rho_{\e,\omega}(|x-y|)\frac{|u_\e(x)-u_\e(y)|^q}{|x-y|^{rq}}dydx+o_\e(1),
\end{equation}
where $u_\e(x):=\int_{\R^N}\eta(z)u(x-\e z)dz$, $\lim_{\e\to 0^+}o_\e(1)=0$ and $\rho_{\e,\omega}$ is the logarithmic kernel defined in Definition \ref{def:logarithmic kernel}.
\end{lemma}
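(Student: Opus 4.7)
The plan is to split the Gagliardo double integral defining $[u_\e]^q_{W^{r,q}(E,\R^d)}$ into three scale regimes in the difference variable $z := y-x$, namely $|z|<\e$, $\e\le|z|<R_{\e,\omega}$, and $|z|\ge R_{\e,\omega}$, and show that only the middle piece survives after division by $|\ln\e|$, and that the middle piece is precisely what one obtains from the logarithmic-kernel integral up to a normalization that differs from $1/|\ln\e|$ by a negligible factor. Throughout I would use the substitution $y=x+z$ together with the pointwise bound $\chi_E(x)\chi_E(x+z)\le 1$ to reduce the $E$-integrals to (restrictions of) the full-space function $g^\e(z):=\int_{\R^N}|u_\e(x)-u_\e(x+z)|^q/|z|^{N+rq}\,dx$ controlled in \rlemma{lem:Estimates for Gagliardo seminorm of mollified Besov functions - part 1}.

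First I would unravel the logarithmic kernel and verify the identity
\begin{equation*}
\mathcal{H}^{N-1}(S^{N-1})\int_E\int_E\rho_{\e,\omega}(|x-y|)\frac{|u_\e(x)-u_\e(y)|^q}{|x-y|^{rq}}\,dy\,dx
=\frac{M_\e}{|\ln\e|-|\ln R_{\e,\omega}|},
\end{equation*}
where $M_\e$ denotes the middle piece of the Gagliardo integral, i.e.\ the integrand restricted to the annulus $\e\le|x-y|<R_{\e,\omega}$. This is a direct consequence of inserting the explicit form of $\rho_{\e,\omega}$ and cancelling the factor $\mathcal{H}^{N-1}(S^{N-1})$.

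Next I would dispose of the two extreme pieces. For the small piece $S_\e:=\int_{B_\e(0)}g^\e_E(z)\,dz$ (where $g^\e_E\le g^\e$), estimate \eqref{eq:equation72} with $\beta=\e$ together with the assumption $\int|\nabla\eta|(|v|+2)^{rq}\,dv<\infty$ (which follows from $\eta\in W^{1,1}(\R^N)$ and the hypothesis $\int|\nabla\eta||v|^{rq}\,dv<\infty$) gives $S_\e\le C$ uniformly in $\e$, hence $S_\e/|\ln\e|\to 0$. For the large piece $L_\e:=\int_{\R^N\setminus B_{R_{\e,\omega}}(0)}g^\e_E(z)\,dz$, estimate \eqref{eq:equation68} with $\gamma=R_{\e,\omega}$ yields $L_\e\le C R_{\e,\omega}^{-rq}=C|\ln\e|^{\omega rq}$, so $L_\e/|\ln\e|\le C|\ln\e|^{\omega rq-1}\to 0$ because the hypothesis $rq<1/\omega$ guarantees $\omega rq<1$.

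Finally I would compare $M_\e/|\ln\e|$ with $M_\e/(|\ln\e|-|\ln R_{\e,\omega}|)$. Using \eqref{eq:equation70} with $\gamma=R_{\e,\omega}$, $\beta=\e$, one has $M_\e\le C\bigl(|\ln\e|-|\ln R_{\e,\omega}|\bigr)$ uniformly in $\e$, so the quotient $M_\e/(|\ln\e|-|\ln R_{\e,\omega}|)$ is uniformly bounded. Therefore
\begin{equation*}
\left|\frac{M_\e}{|\ln\e|}-\frac{M_\e}{|\ln\e|-|\ln R_{\e,\omega}|}\right|
\le C\cdot\frac{|\ln R_{\e,\omega}|}{|\ln\e|},
\end{equation*}
and this tends to zero by \eqref{eq:equation42}. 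Putting the three scale estimates together gives the claimed identity with $o_\e(1)$ as $\e\to 0^+$. The only delicate point is the exact matching of the normalizing constants: the fact that $\mathcal{H}^{N-1}(S^{N-1})$ in the statement cancels against the $1/\mathcal{H}^{N-1}(S^{N-1})$ inside $\rho_{\e,\omega}$, leaving a denominator $|\ln\e|-|\ln R_{\e,\omega}|$ that is asymptotically equivalent to $|\ln\e|$ thanks to \eqref{eq:equation42}. The rest is just careful bookkeeping using the uniform bounds supplied by \rlemma{lem:Estimates for Gagliardo seminorm of mollified Besov functions - part 1}.
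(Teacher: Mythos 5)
Your proof is correct and follows essentially the same route as the paper's: the same three-annulus decomposition of the Gagliardo integral, the same applications of \eqref{eq:equation68}, \eqref{eq:equation70}, and \eqref{eq:equation72} to kill the outer and inner pieces and to bound the middle one, and the same observation that $|\ln R_{\e,\omega}|/|\ln\e|\to 0$ via \eqref{eq:equation42} handles the mismatch between $|\ln\e|$ and $|\ln\e|-|\ln R_{\e,\omega}|$. The only cosmetic difference is that the paper factors out $(1-|\ln R_{\e,\omega}|/|\ln\e|)$ and bounds the residual term with \eqref{eq:equation21}, whereas you compare $M_\e/|\ln\e|$ to $M_\e/(|\ln\e|-|\ln R_{\e,\omega}|)$ directly using the uniform bound from \eqref{eq:equation70}; these are equivalent.
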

\begin{proof}
Let $\e\in (0,1/e)$ be fixed. By definition of Gagliardo seminorm $\left[\cdot\right]_{W^{r,q}}$ (Definition \ref{def:Gagliardo seminorm}), change of variable formula, Fubini's theorem and additivity of integral
\begin{multline}
\label{eq:equation10}
\frac{1}{|\ln\e|}\left[u_{\e}\right]^q_{W^{r,q}(E,\R^d)}=
\frac{1}{|\ln\e|}\int_{E}\left(\int_{E}\frac{|u_\e(x)-u_\e(y)|^q}{|x-y|^{N+rq}}dy\right)dx
\\
=\frac{1}{|\ln\e|}\int_{\R^N}\left(\int_{\R^N}\frac{|u_\e(x)-u_\e(x+z)|^q}{|z|^{N+rq}}\chi_{E}(x+z)\chi_{E}(x)dz\right)dx
\\
=\frac{1}{|\ln\e|}\int_{\R^N}\left(\int_{\R^N}\frac{|u_\e(x)-u_\e(x+z)|^q}{|z|^{N+rq}}\chi_{E}(x+z)\chi_{E}(x)dx\right)dz
\\
=\frac{1}{|\ln\e|}\left\{\int_{\R^N\setminus B_{R_{\e,\omega}}(0)}g^\e(z)dz+\int_{ B_{R_{\e,\omega}}(0)\setminus B_\e(0)}g^\e(z)dz+\int_{B_\e(0)}g^\e(z)dz\right\},
\end{multline}
where we set
\begin{align}
\label{eq:equation1}
g^\e(z):&=\int_{\R^N}\frac{|u_\e(x)-u_\e(x+z)|^q}{|z|^{N+rq}}\chi_{E}(x+z)\chi_{E}(x)dx.
\end{align}
Using \eqref{eq:equation68} with $\gamma=R_{\e,\omega}:=|\ln\e|^{-\omega}$ we get
\begin{equation}
\label{eq:equation8}
\frac{1}{|\ln\e|}\int_{\R^N\setminus B_{R_{\e,\omega}}(0)}g^\e(z)dz\leq \|\eta\|^q_{L^1(\R^N)}2^q\|u\|^q_{L^q(\R^N,\R^d)}\frac{\mathcal{H}^{N-1}\left(S^{N-1}\right)}{|\ln\e|^{1-rq\omega}rq}=o_\e(1).
\end{equation}
Using \eqref{eq:equation72} we get
\begin{multline}
\label{eq:equation84}
\frac{1}{|\ln\e|}\int_{B_\e(0)}g^\e(z)dz
\\
\leq \|\nabla\eta\|^{q-1}_{L^1(\R^N,\R^N)}\left(\int_{\R^N}|\nabla\eta(v)|(|v|+2)^{rq}dv\right)[u]^q_{B^{r}_{q,\infty}(\R^N,\R^d)}\frac{\mathcal{H}^{N-1}\left(S^{N-1}\right)}{q-rq}\frac{1}{|\ln\e|}
=o_\e(1).
\end{multline}
Therefore, we obtain by \eqref{eq:equation10},\eqref{eq:equation8},\eqref{eq:equation84} and the definition of the logarithmic kernel $\rho_{\e,\omega}$
\begin{multline}
\frac{1}{|\ln\e|}\left[u_{\e}\right]^q_{W^{r,q}(E,\R^d)}
=\frac{1}{|\ln\e|}\int_{ B_{R_{\e,\omega}}(0)\setminus B_\e(0)}g^\e(z)dz+o_\e(1)
\\
=\frac{1}{|\ln\e|}\int_{ B_{R_{\e,\omega}}(0)\setminus B_\e(0)}\left(\int_{\R^N}\frac{|u_\e(x)-u_\e(x+z)|^q}{|z|^{N+rq}}\chi_{E}(x+z)\chi_{E}(x)dx\right)dz+o_\e(1)
\\
=\frac{1}{|\ln\e|}\int_{\R^N}\frac{\chi_{[\e,R_{\e,\omega})}(|z|)}{|z|^N}\left(\int_{\R^N}\frac{|u_\e(x)-u_\e(x+z)|^q}{|z|^{rq}}\chi_{E}(x+z)\chi_{E}(x)dx\right)dz+o_\e(1)
\\
=\left(1-\frac{|\ln R_{\e,\omega}|}{|\ln\e|}\right)\mathcal{H}^{N-1}\left(S^{N-1}\right)\int_{\R^N}\rho_{\e,\omega}(|z|)\left(\int_{\R^N}\frac{|u_\e(x)-u_\e(x+z)|^q}{|z|^{rq}}\chi_{E}(x+z)\chi_{E}(x)dx\right)dz+o_\e(1)
\\
=\mathcal{H}^{N-1}\left(S^{N-1}\right)\int_{E}\int_{E}\rho_{\e,\omega}(|x-y|)\frac{|u_\e(x)-u_\e(y)|^q}{|x-y|^{rq}}dxdy
\\
-\frac{|\ln R_{\e,\omega}|}{|\ln\e|}\mathcal{H}^{N-1}\left(S^{N-1}\right)\int_{\R^N}\rho_{\e,\omega}(|z|)\left(\int_{\R^N}\frac{|u_\e(x)-u_\e(x+z)|^q}{|z|^{rq}}\chi_{E}(x+z)\chi_{E}(x)dx\right)dz
+o_\e(1)
\\
=\mathcal{H}^{N-1}\left(S^{N-1}\right)\int_{E}\int_{E}\rho_{\e,\omega}(|x-y|)\frac{|u_\e(x)-u_\e(y)|^q}{|x-y|^{rq}}dxdy+o_\e(1).
\end{multline}
In the last equality we used \eqref{eq:equation42}, item 1 of Proposition \ref{prop:Properties of the logarithmic kernel}, \eqref{eq:equation21} and $u\in B^r_{q,\infty}(\R^N,\R^d)$ in order to get
\begin{multline}
\frac{|\ln R_{\e,\omega}|}{|\ln\e|}\mathcal{H}^{N-1}\left(S^{N-1}\right)\int_{\R^N}\rho_{\e,\omega}(|z|)\left(\int_{\R^N}\frac{|u_\e(x)-u_\e(x+z)|^q}{|z|^{rq}}\chi_{E}(x+z)\chi_{E}(x)dx\right)dz
\\
\leq \frac{|\ln R_{\e,\omega}|}{|\ln\e|}\mathcal{H}^{N-1}\left(S^{N-1}\right)\left(\int_{\R^N}|\eta(v)|dv\right)^{q} [u]^q_{B^{r}_{q,\infty}(\R^N,\R^d)}=o_\e(1).
\end{multline}
It completes the proof.
\end{proof}
\begin{lemma}(The $\eta$-Separating Lemma)
\label{lem:eta-separating lemma}

Assume
$q\in[1,\infty)$, $r\in(0,1)$ and
$u\in B^r_{q,\infty}(\R^N,\R^d)$.
Let $\eta\in L^1(\R^N)$ be such that
\begin{equation}
\label{eq:assumptions of eta1}
\int_{\R^N}|\eta(z)||z|^{rq}dz<\infty.
\end{equation}
Let $\{\rho_\e\}_{\e\in (0,a)},a\in(0,\infty]$, $\rho_\e:(0,\infty)\to [0,\infty)$, be a kernel such that
\begin{equation}
\label{eq:assumption on the family rho}
\lim_{\e\to 0^+}\e^{rq}\int_{\R^N}\frac{\rho_\e(|z|)}{|z|^{rq}}dz=0.
\end{equation}
Then for every $\mathcal{L}^N$-measurable set $E\subset \R^N$ we have
\begin{multline}
\label{eq:equation52}
\liminf_{\e\to 0^+}\int_{E}\int_{E}\rho_\e(|x-y|)\frac{\left|u_\e(x)-u_\e(y)\right|^q}{|x-y|^{rq}}dydx
\\
=\left|\int_{\R^N}\eta(z)dz\right|^q\liminf_{\e\to 0^+}\int_{E}\int_{E}\rho_\e(|x-y|)\frac{\left|u(x)-u(y)\right|^q}{|x-y|^{rq}}dydx,
\end{multline}
and
\begin{multline}
\label{eq:equation53}
\limsup_{\e\to 0^+}\int_{E}\int_{E}\rho_\e(|x-y|)\frac{\left|u_\e(x)-u_\e(y)\right|^q}{|x-y|^{rq}}dydx
\\
=\left|\int_{\R^N}\eta(z)dz\right|^q\limsup_{\e\to 0^+}\int_{E}\int_{E}\rho_\e(|x-y|)\frac{\left|u(x)-u(y)\right|^q}{|x-y|^{rq}}dydx.
\end{multline}
\end{lemma}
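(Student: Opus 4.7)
Set $c := \int_{\R^N}\eta(z)\,dz$ and
\[
v_\e(x) \;:=\; u_\e(x) - c\,u(x) \;=\; \int_{\R^N}\eta(z)\bigl[u(x-\e z)-u(x)\bigr]\,dz,
\]
so that pointwise
\[
u_\e(x) - u_\e(y) \;=\; c\bigl(u(x)-u(y)\bigr) \,+\, \bigl(v_\e(x)-v_\e(y)\bigr).
\]
Regard the quantity $\int_E\int_E\rho_\e(|x-y|)\,|w(x)-w(y)|^q/|x-y|^{rq}\,dy\,dx$ as the $q$-th power of the $L^q$-norm of $(x,y)\mapsto[\rho_\e(|x-y|)]^{1/q}[w(x)-w(y)]/|x-y|^r$ on $E\times E$ (with Lebesgue measure). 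Denote by $A_\e,B_\e,C_\e$ these double integrals for $w=u_\e,u,v_\e$ respectively. Minkowski's inequality applied to the above pointwise splitting yields
\[
\bigl|\,A_\e^{1/q} - |c|\,B_\e^{1/q}\,\bigr| \;\leq\; C_\e^{1/q}.
\]
The entire problem therefore reduces to showing $\lim_{\e\to 0^+}C_\e = 0$.

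For this, set $\phi_{\e,z}(x):=u(x-\e z)-u(x)$, so that $v_\e(x)-v_\e(y)=\int\eta(z)[\phi_{\e,z}(x)-\phi_{\e,z}(y)]\,dz$. Jensen's inequality with respect to the probability measure $|\eta(z)|\,dz/\|\eta\|_{L^1(\R^N)}$ (trivial if $\eta\equiv 0$) gives
\[
|v_\e(x)-v_\e(y)|^q \;\leq\; \|\eta\|_{L^1(\R^N)}^{\,q-1}\int_{\R^N}|\eta(z)|\,|\phi_{\e,z}(x)-\phi_{\e,z}(y)|^q\,dz.
\]
Inserting this into $C_\e$, applying the elementary bound $|a-b|^q\leq 2^{q-1}(|a|^q+|b|^q)$ and exploiting the symmetry of the kernel $\rho_\e(|x-y|)/|x-y|^{rq}$ under $x\leftrightarrow y$ together with Fubini's theorem, one obtains
\[
C_\e \;\leq\; 2^q\,\|\eta\|_{L^1(\R^N)}^{\,q-1}\,\Bigl(\int_{\R^N}|\eta(z)|\,\|\phi_{\e,z}\|_{L^q(\R^N,\R^d)}^q\,dz\Bigr)\,\int_{\R^N}\frac{\rho_\e(|w|)}{|w|^{rq}}\,dw.
\]
Since $u\in B^r_{q,\infty}(\R^N,\R^d)$, the Besov seminorm estimate gives $\|\phi_{\e,z}\|_{L^q(\R^N,\R^d)}^q\leq [u]_{B^r_{q,\infty}(\R^N,\R^d)}^q\,(\e|z|)^{rq}$; combined with assumption \eqref{eq:assumptions of eta1}, this yields
\[
C_\e \;\leq\; 2^q\,\|\eta\|_{L^1(\R^N)}^{\,q-1}\,[u]_{B^r_{q,\infty}(\R^N,\R^d)}^q\,\Bigl(\int_{\R^N}|\eta(z)|\,|z|^{rq}\,dz\Bigr)\,\e^{rq}\int_{\R^N}\frac{\rho_\e(|w|)}{|w|^{rq}}\,dw,
\]
and the right-hand side tends to $0$ by the standing hypothesis \eqref{eq:assumption on the family rho}.

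Taking $\liminf_{\e\to 0^+}$ and $\limsup_{\e\to 0^+}$ on both sides of the Minkowski inequality $|A_\e^{1/q}-|c|B_\e^{1/q}|\leq C_\e^{1/q}$ and invoking the standard inequality $|\liminf a_\e-\liminf b_\e|\leq \limsup|a_\e-b_\e|$ (Lemma \ref{lem:liminfsup lemma}), we conclude
\[
\liminf_{\e\to 0^+}A_\e^{1/q} \;=\; |c|\,\liminf_{\e\to 0^+}B_\e^{1/q}, \qquad \limsup_{\e\to 0^+}A_\e^{1/q} \;=\; |c|\,\limsup_{\e\to 0^+}B_\e^{1/q},
\]
and raising to the $q$-th power delivers \eqref{eq:equation52} and \eqref{eq:equation53}. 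The only mildly delicate point is the Fubini/symmetry reduction in the estimate of $C_\e$: once this collapses the double integral into the product of $\|\phi_{\e,z}\|_{L^q}^q$ and $\int\rho_\e(|w|)/|w|^{rq}\,dw$, the Besov regularity of $u$ and the kernel assumption \eqref{eq:assumption on the family rho} close the argument mechanically.
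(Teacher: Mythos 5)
Your proof is correct, and it follows a genuinely cleaner route than the paper's. The paper splits with the two-parameter convexity inequality $(A+B)^q\le \alpha^{1-q}A^q+(1-\alpha)^{1-q}B^q$, passes to the limit $\alpha\to 1^-$, and then has to treat the cases $\int\eta\neq 0$ (by rescaling $\eta$) and $\int\eta=0$ (by a separate approximation argument via $\eta_n=\eta-\tfrac{1}{n}\eta_0$) separately. Your decomposition $u_\e = c\,u + v_\e$ with $c:=\int\eta$, combined with the reverse triangle inequality in $L^q(E\times E,\R^d)$, collapses all of this into the single estimate $\bigl|A_\e^{1/q}-|c|\,B_\e^{1/q}\bigr|\le C_\e^{1/q}$, where the error term $C_\e$ involves only $v_\e$ and is handled uniformly in $c$ (including $c=0$). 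In effect, Minkowski's inequality is the clean limiting form of the paper's $\alpha\to1^-$ argument, and shifting the comparison to $u_\e - c\,u$ rather than $u - u_\e$ eliminates the need to normalize $\int\eta$, so the case analysis disappears. Both proofs then bound the error in the same way: Jensen/H\"older against the measure $|\eta(z)|\,dz$, Fubini plus the symmetry of $\rho_\e(|x-y|)/|x-y|^{rq}$ to extract $\int\rho_\e(|w|)|w|^{-rq}\,dw$, the Besov bound $\|u(\cdot-\e z)-u\|_{L^q}^q\le[u]^q_{B^r_{q,\infty}}(\e|z|)^{rq}$, hypothesis \eqref{eq:assumptions of eta1}, and finally \eqref{eq:assumption on the family rho}. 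To close the last step you invoke Lemma~\ref{lem:liminfsup lemma}, which requires boundedness of $A_\e^{1/q}$ and $|c|B_\e^{1/q}$ near $\e=0$; this follows from the pointwise-in-$\e$ bound $\int_E\int_E\rho_\e(|x-y|)|w(x)-w(y)|^q|x-y|^{-rq}\le [w]^q_{B^r_{q,\infty}(\R^N,\R^d)}$ (the normalization $\int_0^\infty t^{N-1}\rho_\e(t)\,dt=1/\mathcal H^{N-1}(S^{N-1})$ and the Besov seminorm) together with Lemma~\ref{lem:Besov seminorm and Gagliardo seminorm with convolution} for $w=u_\e$; you may want to make that remark explicit.
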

\begin{proof}
Let $0<\alpha<1$. It follows for $\mathcal{L}^N$-almost every $x,z\in \R^N$ that
\begin{multline}
\label{eq:equation93}
\left|u(x)-u(x+z)\right|^q
=\left|(u(x)-u_\e(x))+(u_\e(x)-u_\e(x+z))+(u_\e(x+z)-u(x+z))\right|^q
\\
\leq \left(|u(x)-u_\e(x)|+|u_\e(x)-u_\e(x+z)|+|u_\e(x+z)-u(x+z)|\right)^q
\\
\leq \frac{1}{\alpha^{q-1}}\left|u_\e(x)-u_\e(x+z)\right|^q
+\frac{1}{(1-\alpha)^{q-1}}\left(|u(x)-u_\e(x)|+|u_\e(x+z)-u(x+z)|\right)^q.
\end{multline}
In the last inequality we use the following convex inequality:
for numbers $A,B\geq 0$ and convex function $\Psi:[0,\infty)\to \R$ it follows that
\begin{equation}
\Psi(A+B)=\Psi\left(\alpha\frac{A}{\alpha}+(1-\alpha)\frac{B}{1-\alpha}\right)\leq \alpha\Psi\left(\frac{A}{\alpha}\right)+(1-\alpha)\Psi\left(\frac{B}{1-\alpha}\right).
\end{equation}
In the inequality \eqref{eq:equation93} we choose
\begin{equation}
A=|u_\e(x)-u_\e(x+z)|,\quad
B=|u(x)-u_\e(x)|+|u_\e(x+z)-u(x+z)|,\quad \Psi(r)=r^q.
\end{equation}
Therefore, by \eqref{eq:equation93}
\begin{multline}
\label{eq:equation46}
\int_{E}\int_{E}\rho_\e(|x-y|)\frac{\left|u(x)-u(y)\right|^q}{|x-y|^{rq}}dydx=\int_{\R^N}\int_{\R^N}\rho_\e(|x-y|)\frac{\left|u(x)-u(y)\right|^q}{|x-y|^{rq}}\chi_{E}(y)\chi_{E}(x)dydx
\\
=\int_{\R^N}\int_{\R^N}\rho_\e(|z|)\frac{\left|u(x)-u(x+z)\right|^q}{|z|^{rq}}\chi_{E}(x+z)\chi_{E}(x)dzdx
\\
\leq \frac{1}{\alpha^{q-1}}\int_{\R^N}\int_{\R^N}\rho_\e(|z|)\frac{\left|u_\e(x)-u_\e(x+z)\right|^q}{|z|^{rq}}\chi_{E}(x+z)\chi_{E}(x)dzdx
\\
+\frac{1}{(1-\alpha)^{q-1}} \int_{\R^N}\int_{\R^N}\rho_\e(|z|)\frac{\left(|u(x)-u_\e(x)|+|u_\e(x+z)-u(x+z)|\right)^q}{|z|^{rq}}dzdx
\\
=\frac{1}{\alpha^{q-1}}\int_{E}\int_{E}\rho_\e(|x-y|)\frac{\left|u_\e(x)-u_\e(y)\right|^q}{|x-y|^{rq}}dydx
\\
+\frac{1}{(1-\alpha)^{q-1}} \int_{\R^N}\int_{\R^N}\rho_\e(|z|)\frac{\left(|u(x)-u_\e(x)|+|u_\e(x+z)-u(x+z)|\right)^q}{|z|^{rq}}dzdx.
\end{multline}
Notice that
\begin{multline}
\label{eq:equation45}
\int_{\R^N}\int_{\R^N}\rho_\e(|z|)\frac{\left(|u(x)-u_\e(x)|+|u(x+z)-u_\e(x+z)|\right)^q}{|z|^{rq}}dzdx
\\
\leq 2^{q-1}\int_{\R^N}\int_{\R^N}\rho_\e(|z|)\frac{|u(x)-u_\e(x)|^q+|u(x+z)-u_\e(x+z)|^q}{|z|^{rq}}dzdx
\\
=2^q\int_{\R^N}\int_{\R^N}\rho_\e(|z|)\frac{|u(x)-u_\e(x)|^q}{|z|^{rq}}dzdx
=2^q\left(\int_{\R^N}\frac{\rho_\e(|z|)}{|z|^{rq}}dz\right)\|u-u_\e\|^q_{L^q(\R^N,\R^d)}.
\end{multline}
Assume for a moment that $\int_{\R^N}\eta(z)dz=1$. Then, by Hölder's inequality
\begin{multline}
\label{eq:equation44}
\|u-u_\e\|^q_{L^q(\R^N,\R^d)}=\int_{\R^N}|u(x)-u_\e(x)|^qdx
=\int_{\R^N}\left|\int_{\R^N}\eta(v)\left(u(x)-u(x-\e v)\right)dv\right|^qdx
\\
\leq \int_{\R^N}\left(\int_{\R^N}|\eta(v)|^{\frac{q-1}{q}}\left(|\eta(v)|^{\frac{1}{q}}\left|u(x)-u(x-\e v)\right|\right)dv\right)^qdx
\\
\leq \|\eta\|^{q-1}_{L^1(\R^N)}\int_{\R^N}\left(\int_{\R^N}|\eta(v)|\left|u(x)-u(x-\e v)\right|^qdv\right)dx
\\
= \|\eta\|^{q-1}_{L^1(\R^N)}\int_{\R^N}|\eta(v)|\left(\int_{\R^N}\left|u(x)-u(x-\e v)\right|^qdx\right)dv
\\
\leq \e^{rq}\|\eta\|^{q-1}_{L^1(\R^N)}[u]^q_{B^{r}_{q,\infty}(\R^N,\R^d)}\int_{\R^N}|\eta(v)||v|^{rq}dv.
\end{multline}
Hence, by \eqref{eq:equation46}, \eqref{eq:equation45} and \eqref{eq:equation44}
\begin{multline}
\label{eq:equation47}
\int_{E}\int_{E}\rho_\e(|x-y|)\frac{\left|u(x)-u(y)\right|^q}{|x-y|^{rq}}dydx
\leq \frac{1}{\alpha^{q-1}}\int_{E}\int_{E}\rho_\e(|x-y|)\frac{\left|u_\e(x)-u_\e(y)\right|^q}{|x-y|^{rq}}dydx
\\
+\frac{2^q}{(1-\alpha)^{q-1}}\left(\e^{rq}\int_{\R^N}\frac{\rho_\e(|z|)}{|z|^{rq}}dz\right)\frac{\|u-u_\e\|^q_{L^q(\R^N,\R^d)}}{\e^{rq}}
\leq \frac{1}{\alpha^{q-1}}\int_{E}\int_{E}\rho_\e(|x-y|)\frac{\left|u_\e(x)-u_\e(y)\right|^q}{|x-y|^{rq}}dydx
\\
+\frac{2^q}{(1-\alpha)^{q-1}}\left(\e^{rq}\int_{\R^N}\frac{\rho_\e(|z|)}{|z|^{rq}}dz\right)\|\eta\|^{q-1}_{L^1(\R^N)}[u]^q_{B^{r}_{q,\infty}(\R^N,\R^d)}\int_{\R^N}|\eta(v)||v|^{rq}dv.
\end{multline}
By \eqref{eq:assumptions of eta1}, \eqref{eq:assumption on the family rho}, $u\in B^r_{q,\infty}(\R^N,\R^d)$ and \eqref{eq:equation47}
we obtain
\begin{align}
\liminf_{\e\to 0^+}\int_{E}\int_{E}\rho_\e(|x-y|)\frac{\left|u(x)-u(y)\right|^q}{|x-y|^{rq}}dydx\leq \frac{1}{\alpha^{q-1}}\liminf_{\e\to 0^+}\int_{E}\int_{E}\rho_\e(|x-y|)\frac{\left|u_\e(x)-u_\e(y)\right|^q}{|x-y|^{rq}}dydx,
\end{align}
and
\begin{align}
\limsup_{\e\to 0^+}\int_{E}\int_{E}\rho_\e(|x-y|)\frac{\left|u(x)-u(y)\right|^q}{|x-y|^{rq}}dydx\leq \frac{1}{\alpha^{q-1}}\limsup_{\e\to 0^+}\int_{E}\int_{E}\rho_\e(|x-y|)\frac{\left|u_\e(x)-u_\e(y)\right|^q}{|x-y|^{rq}}dydx.
\end{align}
Taking the limit as
$\alpha\to 1^-$
we get
\begin{align}
\liminf_{\e\to 0^+}\int_{E}\int_{E}\rho_\e(|x-y|)\frac{\left|u(x)-u(y)\right|^q}{|x-y|^{rq}}dydx\leq\liminf_{\e\to 0^+}\int_{E}\int_{E}\rho_\e(|x-y|)\frac{\left|u_\e(x)-u_\e(y)\right|^q}{|x-y|^{rq}}dydx,
\end{align}
and
\begin{align}
\limsup_{\e\to 0^+}\int_{E}\int_{E}\rho_\e(|x-y|)\frac{\left|u(x)-u(y)\right|^q}{|x-y|^{rq}}dydx\leq \limsup_{\e\to 0^+}\int_{E}\int_{E}\rho_\e(|x-y|)\frac{\left|u_\e(x)-u_\e(y)\right|^q}{|x-y|^{rq}}dydx.
\end{align}
Replacing the roles of
$u_\e(x),u_\e(y)$
with
$u(x),u(y),$
respectively,
one can prove similarly (to the inequality \eqref{eq:equation47}) the inequality
\begin{multline}
\int_{E}\int_{E}\rho_\e(|x-y|)\frac{\left|u_\e(x)-u_\e(y)\right|^q}{|x-y|^{rq}}dydx
\leq \frac{1}{\alpha^{q-1}}\int_{E}\int_{E}\rho_\e(|x-y|)\frac{\left|u(x)-u(y)\right|^q}{|x-y|^{rq}}dydx
\\
+\frac{2^q}{(1-\alpha)^{q-1}}\left(\e^{rq}\int_{\R^N}\frac{\rho_\e(|z|)}{|z|^{rq}}dz\right)\|\eta\|^{q-1}_{L^1(\R^N)}[u]^q_{B^{r}_{q,\infty}(\R^N,\R^d)}\int_{\R^N}|\eta(v)||v|^{rq}dv
\end{multline}
in order to obtain
\begin{align}
\liminf_{\e\to 0^+}\int_{E}\int_{E}\rho_\e(|x-y|)\frac{\left|u_\e(x)-u_\e(y)\right|^q}{|x-y|^{rq}}dydx\leq \liminf_{\e\to 0^+}\int_{E}\int_{E}\rho_\e(|x-y|)\frac{\left|u(x)-u(y)\right|^q}{|x-y|^{rq}}dydx,
\end{align}
and
\begin{align}
\limsup_{\e\to 0^+}\int_{E}\int_{E}\rho_\e(|x-y|)\frac{\left|u_\e(x)-u_\e(y)\right|^q}{|x-y|^{rq}}dydx\leq\limsup_{\e\to 0^+}\int_{E}\int_{E}\rho_\e(|x-y|)\frac{\left|u(x)-u(y)\right|^q}{|x-y|^{rq}}dydx.
\end{align}

Assume now that $\int_{\R^N}\eta(z)dz\neq 0$. Replacing $\eta$ with $c\eta$, where $c:=\frac{1}{\int_{\R^N}\eta(z)dz}$, and using the homogeneity of the convolution $u*(c\eta)=c\left(u*\eta\right)$, one can get \eqref{eq:equation52} and \eqref{eq:equation53}. In case $\int_{\R^N}\eta(z)dz=0$, let us choose any $\eta_0\in C_c(\R^N)$ such that $\int_{\R^N}\eta_0(z)dz=1$, and for each $n\in\N$ define $\eta_n:=\eta-\frac{1}{n}\eta_0$. It follows that
\begin{multline}
\left|u*\eta_{(\e)}(x)-u*\eta_{(\e)}(y)\right|=\left|u*\left(\eta_n+\frac{1}{n}\eta_0\right)_{(\e)}(x)-u*\left(\eta_n+\frac{1}{n}\eta_0\right)_{(\e)}(y)\right|
\\
=\left|u*\left(\left(\eta_n\right)_{(\e)}+\left(\frac{1}{n}\eta_0\right)_{(\e)}\right)(x)-u*\left(\left(\eta_n\right)_{(\e)}+\left(\frac{1}{n}\eta_0\right)_{(\e)}\right)(y)\right|
\\
=\left|\left(u*\left(\eta_n\right)_{(\e)}(x)-u*\left(\eta_n\right)_{(\e)}(y)\right)+\left(u*\left(\frac{1}{n}\eta_0\right)_{(\e)}(x)-u*\left(\frac{1}{n}\eta_0\right)_{(\e)}(y)\right)\right|.
\end{multline}
Therefore,
\begin{multline}
\left|u*\eta_{(\e)}(x)-u*\eta_{(\e)}(y)\right|^q
\\
\leq 2^{q-1}\left(\left|u*\left(\eta_n\right)_{(\e)}(x)-u*\left(\eta_n\right)_{(\e)}(y)\right|^q+\frac{1}{n^q}\left|u*\left(\eta_0\right)_{(\e)}(x)-u*\left(\eta_0\right)_{(\e)}(y)\right|^q\right).
\end{multline}
Thus, since $\int_{\R^N}\eta_n(v)dv=-\frac{1}{n}\neq 0$, $\int_{\R^N}\eta_0(v)dv=1\neq 0$, then
\begin{multline}
\limsup_{\e\to 0^+}\int_{E}\int_{E}\rho_\e(|x-y|)\frac{\left|u_\e(x)-u_\e(y)\right|^q}{|x-y|^{rq}}dydx
\\
\leq 2^{q-1}\limsup_{\e\to 0^+}\int_{E}\int_{E}\rho_\e(|x-y|)\frac{\left|u*\left(\eta_n\right)_{(\e)}(x)-u*\left(\eta_n\right)_{(\e)}(y)\right|^q}{|x-y|^{rq}}dydx
\\
+\frac{2^{q-1}}{n^q}\limsup_{\e\to 0^+}\int_{E}\int_{E}\rho_\e(|x-y|)\frac{\left|u*\left(\eta_0\right)_{(\e)}(x)-u*\left(\eta_0\right)_{(\e)}(y)\right|^q}{|x-y|^{rq}}dydx
\\
=\frac{2^{q-1}}{n^q}\limsup_{\e\to 0^+}\int_{E}\int_{E}\rho_\e(|x-y|)\frac{\left|u(x)-u(y)\right|^q}{|x-y|^{rq}}dydx
+\frac{2^{q-1}}{n^q}\limsup_{\e\to 0^+}\int_{E}\int_{E}\rho_\e(|x-y|)\frac{\left|u(x)-u(y)\right|^q}{|x-y|^{rq}}dydx
\\
=\frac{2^{q}}{n^q}\limsup_{\e\to 0^+}\int_{E}\int_{E}\rho_\e(|x-y|)\frac{\left|u(x)-u(y)\right|^q}{|x-y|^{rq}}dydx.
\end{multline}
Taking the limit as $n\to \infty$ and using Corollary \ref{cor:Finiteness of infinitesimal double integral including kernel} we get
\begin{equation}
\lim_{\e\to 0^+}\int_{E}\int_{E}\rho_\e(|x-y|)\frac{\left|u_\e(x)-u_\e(y)\right|^q}{|x-y|^{rq}}dydx=0.
\end{equation}
\end{proof}

\begin{corollary}(Equivalence Between Gagliardo and Besov Constants Including the Logarithmic Kernel)
\label{cor: the upper(lower) limit of J equals to the upper(lower) limit of Besov approximation of u}

Let $q\in[1,\infty)$, $r\in(0,1)$. Let $\omega\in (0,1)$ be such that $rq<{1/\omega}$. Let $u\in B^r_{q,\infty}(\R^N,\R^d)$, $E\subset\R^N$ be an $\mathcal{L}^N$-measurable set and $\eta\in  W^{1,1}\left(\R^N\right)$. Then,
\begin{multline}
\label{eq:equation13}
\liminf_{\e\to
0^+}\frac{1}{|\ln{\e}|}\left[u*\eta_{(\e)}\right]^q_{W^{r,q}(E,\R^d)}
\\
=\left|\int_{\R^N}\eta(z)dz\right|^q\mathcal{H}^{N-1}\left(S^{N-1}\right)\liminf_{\e\to 0^+}\int_{E}\int_{E}\rho_{\e,\omega}(|x-y|)\frac{|u(x)-u(y)|^q}{|x-y|^{rq}}dydx,
\end{multline}
\begin{multline}
\label{eq:equation14}
\limsup_{\e\to
0^+}\frac{1}{|\ln{\e}|}\left[u*\eta_{(\e)}\right]^q_{W^{r,q}(E,\R^d)}
\\
=\left|\int_{\R^N}\eta(z)dz\right|^q\mathcal{H}^{N-1}\left(S^{N-1}\right)\limsup_{\e\to 0^+}\int_{E}\int_{E}\rho_{\e,\omega}(|x-y|)\frac{|u(x)-u(y)|^q}{|x-y|^{rq}}dydx,
\end{multline}
where $\rho_{\e,\omega}$ is the logarithmic kernel.
\end{corollary}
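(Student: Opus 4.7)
The plan is to first establish the identities for a nice class of functions $\eta$ where both preparatory lemmas apply directly, and then use continuity to extend to arbitrary $\eta\in W^{1,1}(\R^N)$. Concretely, I would start with $\eta\in C^1_c(\R^N)$, for which both integrability conditions $\int_{\R^N}|\eta(z)||z|^{rq}dz<\infty$ and $\int_{\R^N}|\nabla\eta(v)|(|v|+2)^{rq}dv<\infty$ are automatic, since $\eta$ and $\nabla\eta$ are bounded and compactly supported.

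For such $\eta$, Lemma~\ref{lem:J equals to Besov approximation with mollified u up to a small error} gives, for every $\e\in(0,1/e)$,
\begin{equation*}
\frac{1}{|\ln\e|}\left[u_\e\right]^q_{W^{r,q}(E,\R^d)}=\Haus^{N-1}(S^{N-1})\int_E\int_E \rho_{\e,\omega}(|x-y|)\frac{|u_\e(x)-u_\e(y)|^q}{|x-y|^{rq}}dy\,dx+o_\e(1).
\end{equation*}
Next I would verify that the logarithmic kernel $\rho_{\e,\omega}$ fulfills the hypothesis of Lemma~\ref{lem:eta-separating lemma}: by item~2 of Proposition~\ref{prop:Properties of the logarithmic kernel} applied with $\alpha=rq$, we indeed have $\lim_{\e\to 0^+}\e^{rq}\int_{\R^N}\rho_{\e,\omega}(|z|)/|z|^{rq}dz=0$. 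Therefore Lemma~\ref{lem:eta-separating lemma} allows me to replace $u_\e$ by $u$ inside the Besov constant on the right, absorbing the factor $|\int\eta|^q$. Taking $\liminf$ (respectively $\limsup$) of both sides of the displayed identity, the $o_\e(1)$ term disappears and we obtain \eqref{eq:equation13} and \eqref{eq:equation14} for every $\eta\in C^1_c(\R^N)$.

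To remove the smoothness hypothesis, pick an arbitrary $\eta\in W^{1,1}(\R^N)$ and approximate it by a sequence $\{\eta_n\}\subset C^1_c(\R^N)$ with $\eta_n\to\eta$ in $W^{1,1}(\R^N)$. Since formulas \eqref{eq:equation13} and \eqref{eq:equation14} hold for each $\eta_n$, it suffices to pass to the limit $n\to\infty$ on both sides. The left-hand sides $\underline{G}_E(u,\eta_n)$ and $\overline{G}_E(u,\eta_n)$ converge to $\underline{G}_E(u,\eta)$ and $\overline{G}_E(u,\eta)$, respectively, by the continuity of the lower and upper $G$-functionals in the $\eta$-variable, which is exactly assertion~1 of Lemma~\ref{lem:continuity of upper and lower $G$-functionals}. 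The right-hand sides depend on $\eta_n$ only through the scalar factor $|\int_{\R^N}\eta_n(z)dz|^q$, which converges to $|\int_{\R^N}\eta(z)dz|^q$ because $\eta_n\to\eta$ in $L^1(\R^N)$; the $u$-dependent Besov constant with the logarithmic kernel is unchanged. This completes the argument.

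The genuine work has already been absorbed into the two preparatory lemmas: the three-region splitting of $[u_\e]_{W^{r,q}}^q$ against the radial scales $\e$, $R_{\e,\omega}$, and $\infty$ (which gives the approximation by the Besov constant against $\rho_{\e,\omega}$), and the convex-combination estimate that removes the mollification. The only potentially delicate point in the extension step is that the hypothesis of Lemma~\ref{lem:J equals to Besov approximation with mollified u up to a small error} on $\int|\nabla\eta(v)||v|^{rq}dv$ is \emph{not} preserved under $W^{1,1}$-convergence, which is precisely why the argument must be run at the level of the $G$-functionals (via Lemma~\ref{lem:continuity of upper and lower $G$-functionals}) rather than of the integrands themselves.
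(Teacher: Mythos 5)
Your proposal is correct and follows the paper's own proof essentially verbatim: pass to the nice class $\eta\in C^1_c(\R^N)$, invoke Lemma~\ref{lem:J equals to Besov approximation with mollified u up to a small error} to trade the normalized Gagliardo seminorm for the Besov constant with the logarithmic kernel against $u_\e$ (up to $o_\e(1)$), verify via item~2 of Proposition~\ref{prop:Properties of the logarithmic kernel} that $\rho_{\e,\omega}$ meets the hypothesis of Lemma~\ref{lem:eta-separating lemma}, strip the mollification to pass from $u_\e$ to $u$ with the scalar factor $|\int\eta|^q$, and finally extend to all of $W^{1,1}(\R^N)$ by density using assertion~1 of Lemma~\ref{lem:continuity of upper and lower $G$-functionals}. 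Your closing observation that the density step must be run at the level of the $G$-functionals, because the moment condition on $\nabla\eta$ is not stable under $W^{1,1}$-convergence, correctly identifies the reason the paper does the extension this way.
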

\begin{proof}
Assume for a moment that $\eta\in C^1_c(\R^N)$. By Lemma \ref{lem:J equals to Besov approximation with mollified u up to a small error} we have
\begin{equation}
\label{eq:equation15}
\liminf_{\e\to 0^+}\frac{1}{|\ln{\e}|}\left[u_{\e}\right]^q_{W^{r,q}(E,\R^d)}=\mathcal{H}^{N-1}\left(S^{N-1}\right)\liminf_{\e\to 0^+}\int_{E}\int_{E}\rho_{\e,\omega}(|x-y|)\frac{|u_\e(x)-u_\e(y)|^q}{|x-y|^{rq}}dydx,
\end{equation}
\begin{equation}
\label{eq:equation16}
\limsup_{\e\to 0^+}\frac{1}{|\ln{\e}|}\left[u_{\e}\right]^q_{W^{r,q}(E,\R^d)}=\mathcal{H}^{N-1}\left(S^{N-1}\right)\limsup_{\e\to 0^+}\int_{E}\int_{E}\rho_{\e,\omega}(|x-y|)\frac{|u_\e(x)-u_\e(y)|^q}{|x-y|^{rq}}dydx.
\end{equation}

Note that by item 2 of Proposition \ref{prop:Properties of the logarithmic kernel} with $\alpha=rq$, the logarithmic kernel satisfies condition \eqref{eq:assumption on the family rho} of Lemma \ref{lem:eta-separating lemma}. Therefore, by Lemma \ref{lem:eta-separating lemma}
\begin{multline}
\label{eq:equation17}
\liminf_{\e\to 0^+}\int_{E}\int_{E}\rho_{\e,\omega}(|x-y|)\frac{\left|u_\e(x)-u_\e(y)\right|^q}{|x-y|^{rq}}dydx
\\
=\left|\int_{\R^N}\eta(z)dz\right|^q\liminf_{\e\to 0^+}\int_{E}\int_{E}\rho_{\e,\omega}(|x-y|)\frac{\left|u(x)-u(y)\right|^q}{|x-y|^{rq}}dydx,
\end{multline}
and
\begin{multline}
\label{eq:equation18}
\limsup_{\e\to 0^+}\int_{E}\int_{E}\rho_{\e,\omega}(|x-y|)\frac{\left|u_\e(x)-u_\e(y)\right|^q}{|x-y|^{rq}}dydx
\\
=\left|\int_{\R^N}\eta(z)dz\right|^q\limsup_{\e\to 0^+}\int_{E}\int_{E}\rho_{\e,\omega}(|x-y|)\frac{\left|u(x)-u(y)\right|^q}{|x-y|^{rq}}dydx.
\end{multline}
Now, \eqref{eq:equation13} and \eqref{eq:equation14} follow from \eqref{eq:equation15},\eqref{eq:equation16},\eqref{eq:equation17},\eqref{eq:equation18}. For $\eta\in W^{1,1}(\R^N)$ choose any sequence  $\{\eta_n\}_{n=1}^\infty\subset C^1_c(\R^N)$ which converges to $\eta$ in $W^{1,1}(\R^N)$. So we have \eqref{eq:equation13} and \eqref{eq:equation14} for $\eta_n$, for every $n\in \N$. Taking the limit as $n$ goes to $\infty$ and using item 1 of Lemma \ref{lem:continuity of upper and lower $G$-functionals}, we obtain \eqref{eq:equation13} and \eqref{eq:equation14} for $\eta\in W^{1,1}(\R^N)$.
\end{proof}

\begin{corollary}(Gagliardo Constants are Controlled by Besov Seminorms)
\label{cor:upper bound for $G$- functionals in terms of Besov seminorm}

Let $1\leq q<\infty$, $r\in (0,1)$, $u\in B^r_{q,\infty}(\R^N,\R^d)$ and $E\subset\R^N$ be an $\mathcal{L}^N$-measurable set. Let $\eta\in W^{1,1}(\R^N)$.
Then,
\begin{align}
\limsup_{\e\to
0^+}\frac{1}{|\ln{\e}|}\left[u_\e\right]^q_{W^{r,q}(E,\R^d)}\leq \left|\int_{\R^N}\eta(z)dz\right|^q \mathcal{H}^{N-1}(S^{N-1})[u]^q_{B^{r}_{q,\infty}(E,\R^d)}<\infty.
\end{align}
\end{corollary}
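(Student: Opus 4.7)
The plan is to chain together three previously established results: Corollary \ref{cor: the upper(lower) limit of J equals to the upper(lower) limit of Besov approximation of u}, the Sandwich Lemma \ref{lem:sandwich lemma}, and the definition of the Besov seminorm $[\cdot]_{B^r_{q,\infty}(E,\R^d)}$.

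First, I would choose $\omega\in(0,1)$ such that $rq<1/\omega$ (such $\omega$ always exists: take $\omega\in(0,\min\{1,1/(rq)\})$, noting that since $rq>0$ this interval is nonempty). With this choice, Corollary \ref{cor: the upper(lower) limit of J equals to the upper(lower) limit of Besov approximation of u} applies and yields
\begin{equation*}
\limsup_{\e\to 0^+}\frac{1}{|\ln\e|}\left[u_\e\right]^q_{W^{r,q}(E,\R^d)}
=\left|\int_{\R^N}\eta(z)dz\right|^q\mathcal{H}^{N-1}(S^{N-1})\limsup_{\e\to 0^+}\int_E\int_E\rho_{\e,\omega}(|x-y|)\frac{|u(x)-u(y)|^q}{|x-y|^{rq}}dydx,
\end{equation*}
where $\rho_{\e,\omega}$ is the logarithmic kernel.

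Next, by Proposition \ref{prop:Properties of the logarithmic kernel}, the logarithmic kernel possesses the compact support property, so assumption 3 of the Sandwich Lemma \ref{lem:sandwich lemma} is satisfied. Applying that lemma with $\alpha=rq$ gives
\begin{equation*}
\limsup_{\e\to 0^+}\int_E\int_E\rho_{\e,\omega}(|x-y|)\frac{|u(x)-u(y)|^q}{|x-y|^{rq}}dydx
\leq\limsup_{\e\to 0^+}\fint_{S^{N-1}}\int_E\chi_E(x+\e n)\frac{|u(x+\e n)-u(x)|^q}{\e^{rq}}dxd\mathcal{H}^{N-1}(n).
\end{equation*}

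Finally, for each fixed $n\in S^{N-1}$ and $\e>0$, setting $h:=\e n$ so that $|h|=\e$ and applying the very definition of the Besov seminorm $[u]_{B^r_{q,\infty}(E,\R^d)}$ (Definition \ref{def:Besov seminorm}) yields
\begin{equation*}
\int_E\chi_E(x+\e n)\frac{|u(x+\e n)-u(x)|^q}{\e^{rq}}dx\leq [u]^q_{B^r_{q,\infty}(E,\R^d)}.
\end{equation*}
Integrating over $S^{N-1}$ and dividing by $\mathcal{H}^{N-1}(S^{N-1})$ shows the $\fint$ above is bounded uniformly in $\e$ by $[u]^q_{B^r_{q,\infty}(E,\R^d)}$, which is finite since $u\in B^r_{q,\infty}(\R^N,\R^d)$. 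Multiplying this bound by $|\int_{\R^N}\eta\,dz|^q\,\mathcal{H}^{N-1}(S^{N-1})$ and combining with the two displayed identities above yields the claim. There is no real obstacle here; the proof is essentially a bookkeeping exercise assembling machinery already developed in the previous sections.
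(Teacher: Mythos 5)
Your proof is correct and takes essentially the same route as the paper: the paper's own argument chains Corollary \ref{cor: the upper(lower) limit of J equals to the upper(lower) limit of Besov approximation of u} (equation \eqref{eq:equation14}), the Sandwich Lemma \eqref{eq:equation19}, and the definition of the Besov seminorm on $E$ in exactly the order you present. Your added remark that assumption 3 of Lemma \ref{lem:sandwich lemma} (compact support of the logarithmic kernel) justifies the Sandwich Lemma is a valid way to invoke it (assumption 2, $u\in L^q$, also applies); the paper does not spell out which hypothesis it is using, but the logic is identical.
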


\begin{proof}
Let $\omega\in (0,1)$ be such that $rq<1/\omega$ and $\rho_{\e,\omega}$ be the logarithmic kernel as defined in Definition \ref{def:logarithmic kernel}.
By \eqref{eq:equation14}, \eqref{eq:equation19} and Definition \ref{def:Besov seminorm} (Definition of Besov seminorm) we get
\begin{multline}
\limsup_{\e\to
0^+}\frac{1}{|\ln{\e}|}\left[u_\e\right]^q_{W^{r,q}(E,\R^d)}=\left|\int_{\R^N}\eta(z)dz\right|^q\mathcal{H}^{N-1}\left(S^{N-1}\right)\limsup_{\e\to 0^+}\int_{E}\int_{E}\rho_{\e,\omega}(|x-y|)\frac{|u(x)-u(y)|^q}{|x-y|^{rq}}dydx
\\
\leq \left|\int_{\R^N}\eta(z)dz\right|^q\limsup_{\e\to 0^+}\int_{S^{N-1}}\int_E\chi_{E}(x+\e n)\frac{|u(x+\e n)-u(x)|^q}{\e^{rq}}dxd\mathcal{H}^{N-1}(n)
\\
\leq \left|\int_{\R^N}\eta(z)dz\right|^q\mathcal{H}^{N-1}(S^{N-1})[u]^q_{B^{r}_{q,\infty}(E,\R^d)}<\infty.
\end{multline}
\end{proof}

\begin{theorem}(Variations Control Gagliardo Constants)
\label{thm:Sandwich inequality for Gagliardo constants}

Let $q \in [1,\infty)$ and $r \in (0,1)$. Suppose $u \in B^r_{q,\infty}(\mathbb{R}^N,\mathbb{R}^d)$, $E \subset \mathbb{R}^N$ be an $\mathcal{L}^N$-measurable set and $\eta \in W^{1,1}(\mathbb{R}^N)$. Then,
\begin{multline}
\label{eq:Sandwich inequality for Gagliardo constants}
\left|\int_{\R^N}\eta(z)dz\right|^q\liminf_{\e\to 0^+}\int_{S^{N-1}}\int_E\chi_{E}(x+\e n)\frac{|u(x+\e n)-u(x)|^q}{\e^{rq}}dxd\mathcal{H}^{N-1}(n)
\\
\leq\liminf_{\e\to
0^+}\frac{1}{|\ln{\e}|}\left[u_\e\right]^q_{W^{r,q}(E,\R^d)}
\leq\limsup_{\e\to
0^+}\frac{1}{|\ln{\e}|}\left[u_\e\right]^q_{W^{r,q}(E,\R^d)}
\\
\leq  \left|\int_{\R^N}\eta(z)dz\right|^q\limsup_{\e\to 0^+}\int_{S^{N-1}}\int_E\chi_{E}(x+\e n)\frac{|u(x+\e n)-u(x)|^q}{\e^{rq}}dxd\mathcal{H}^{N-1}(n).
\end{multline}
\end{theorem}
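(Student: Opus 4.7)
The plan is to derive the theorem by combining two already-established results from the paper: Corollary~\ref{cor: the upper(lower) limit of J equals to the upper(lower) limit of Besov approximation of u}, which identifies each of the upper and lower Gagliardo constants as $\left|\int_{\R^N}\eta(z)dz\right|^q\mathcal{H}^{N-1}(S^{N-1})$ times the corresponding Besov constant against the logarithmic kernel $\rho_{\e,\omega}$, and the Sandwich Lemma~\ref{lem:sandwich lemma}, which bounds Besov constants above and below by the corresponding directional averages of variations.

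First, fix $\omega\in(0,1)$ small enough that $rq<1/\omega$ (any $\omega<\min\{1,1/(rq+1)\}$ works), and let $\rho_{\e,\omega}$ be the logarithmic kernel of Definition~\ref{def:logarithmic kernel}. Corollary~\ref{cor: the upper(lower) limit of J equals to the upper(lower) limit of Besov approximation of u} rewrites the two middle quantities of \eqref{eq:Sandwich inequality for Gagliardo constants} as
\begin{equation*}
\liminf_{\e\to 0^+}\frac{1}{|\ln\e|}\left[u_\e\right]^q_{W^{r,q}(E,\R^d)}=\left|\int_{\R^N}\eta(z)dz\right|^q\mathcal{H}^{N-1}(S^{N-1})\,L_{-},
\end{equation*}
\begin{equation*}
\limsup_{\e\to 0^+}\frac{1}{|\ln\e|}\left[u_\e\right]^q_{W^{r,q}(E,\R^d)}=\left|\int_{\R^N}\eta(z)dz\right|^q\mathcal{H}^{N-1}(S^{N-1})\,L_{+},
\end{equation*}
where $L_{-}$ and $L_{+}$ denote respectively the $\liminf$ and the $\limsup$ as $\e\to 0^+$ of the double integral $\int_E\int_E\rho_{\e,\omega}(|x-y|)|u(x)-u(y)|^q|x-y|^{-rq}\,dy\,dx$.

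Second, by Proposition~\ref{prop:Properties of the logarithmic kernel}(1), the family $\{\rho_{\e,\omega}\}$ is a kernel with the compact support property, so hypothesis~3 of the Sandwich Lemma~\ref{lem:sandwich lemma} is satisfied. Applying that lemma with $\alpha=rq$ and $\rho_\e=\rho_{\e,\omega}$ gives
\begin{equation*}
\liminf_{\e\to 0^+}\fint_{S^{N-1}}\int_E\chi_E(x+\e n)\frac{|u(x+\e n)-u(x)|^q}{\e^{rq}}dxd\mathcal{H}^{N-1}(n)\leq L_{-},
\end{equation*}
and symmetrically
\begin{equation*}
L_{+}\leq\limsup_{\e\to 0^+}\fint_{S^{N-1}}\int_E\chi_E(x+\e n)\frac{|u(x+\e n)-u(x)|^q}{\e^{rq}}dxd\mathcal{H}^{N-1}(n).
\end{equation*}

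To conclude, multiply both sandwich inequalities by $\left|\int_{\R^N}\eta(z)dz\right|^q\mathcal{H}^{N-1}(S^{N-1})$: on the Besov-constant side the first step converts each product into the desired $\liminf$ or $\limsup$ of the Gagliardo functional, and on the other side the factor $\mathcal{H}^{N-1}(S^{N-1})$ absorbs the normalization in $\fint_{S^{N-1}}$, producing the plain integral $\int_{S^{N-1}}$. This yields the outer two inequalities of \eqref{eq:Sandwich inequality for Gagliardo constants}, while the trivial bound $\liminf\leq\limsup$ supplies the middle one. Since both ingredient results are already proved in the paper, no substantial obstacle remains; I expect the argument to reduce to essentially a bookkeeping combination of the two cited statements.
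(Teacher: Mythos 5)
Your proof is correct and uses essentially the same ingredients and structure as the paper's: Corollary~\ref{cor: the upper(lower) limit of J equals to the upper(lower) limit of Besov approximation of u} to identify the Gagliardo constants with Besov constants against the logarithmic kernel, and the Sandwich Lemma~\ref{lem:sandwich lemma} applied to that kernel. The only difference is that you multiply the sandwich inequality by $\left|\int_{\R^N}\eta(z)dz\right|^q\mathcal{H}^{N-1}(S^{N-1})$ rather than dividing by it, which cleanly subsumes the case $\int_{\R^N}\eta(z)dz=0$ that the paper handles separately via Corollary~\ref{cor:upper bound for $G$- functionals in terms of Besov seminorm}.
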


\begin{proof}
Let $\omega\in (0,1)$ be such that $rq<{1/\omega}$ and $\rho_{\e,\omega}$ be the logarithmic kernel as defined in Definition \ref{def:logarithmic kernel}. By Sandwich Lemma (Lemma \ref{lem:sandwich lemma}) we get for $\alpha=rq$ and $\rho_\e=\rho_{\e,\omega}$
\begin{multline}
\label{eq:Sandwich inequality for Gagliardo constants 2}
\liminf_{\e\to 0^+}\fint_{S^{N-1}}\int_E\chi_{E}(x+\e n)\frac{|u(x+\e n)-u(x)|^q}{\e^{rq}}dxd\mathcal{H}^{N-1}(n)
\\
\leq \liminf_{\e\to 0^+}\int_{E}\int_{E}\rho_{\e,\omega}(|x-y|)\frac{|u(x)-u(y)|^q}{|x-y|^{rq}}dydx\leq \limsup_{\e\to 0^+}\int_{E}\int_{E}\rho_{\e,\omega}(|x-y|)\frac{|u(x)-u(y)|^q}{|x-y|^{rq}}dydx
\\
\leq\limsup_{\e\to 0^+}\fint_{S^{N-1}}\int_E\chi_{E}(x+\e n)\frac{|u(x+\e n)-u(x)|^q}{\e^{rq}}dxd\mathcal{H}^{N-1}(n).
\end{multline}
If $\int_{\R^N}\eta(z)dz=0$, then \eqref{eq:Sandwich inequality for Gagliardo constants} follows from Corollary \ref{cor:upper bound for $G$- functionals in terms of Besov seminorm}.   Assume that $\int_{\R^N}\eta(z)dz\neq 0$. By Corollary \ref{cor: the upper(lower) limit of J equals to the upper(lower) limit of Besov approximation of u} and \eqref{eq:Sandwich inequality for Gagliardo constants 2} we get
\begin{multline}
\label{eq:Sandwich inequality for Gagliardo constants 1}
\liminf_{\e\to 0^+}\fint_{S^{N-1}}\int_E\chi_{E}(x+\e n)\frac{|u(x+\e n)-u(x)|^q}{\e^{rq}}dxd\mathcal{H}^{N-1}(n)
\\
\leq \frac{\liminf_{\e\to
0^+}\frac{1}{|\ln{\e}|}\left[u*\eta_{(\e)}\right]^q_{W^{r,q}(E,\R^d)}}{
\left|\int_{\R^N}\eta(z)dz\right|^q\mathcal{H}^{N-1}\left(S^{N-1}\right)}\leq \frac{\limsup_{\e\to
0^+}\frac{1}{|\ln{\e}|}\left[u*\eta_{(\e)}\right]^q_{W^{r,q}(E,\R^d)}}{
\left|\int_{\R^N}\eta(z)dz\right|^q\mathcal{H}^{N-1}\left(S^{N-1}\right)}
\\
\leq\limsup_{\e\to 0^+}\fint_{S^{N-1}}\int_E\chi_{E}(x+\e n)\frac{|u(x+\e n)-u(x)|^q}{\e^{rq}}dxd\mathcal{H}^{N-1}(n).
\end{multline}
Multiplying both sides of inequality \eqref{eq:Sandwich inequality for Gagliardo constants 1} by $\left|\int_{\R^N}\eta(z)dz\right|^q\mathcal{H}^{N-1}\left(S^{N-1}\right)$ we obtain \eqref{eq:Sandwich inequality for Gagliardo constants}.
\end{proof}

\begin{theorem}(Equivalence Between Gagliardo and Beosv Constants)
\label{thm:connection between Gagliardo constant and Besov constant dependent on arbitrary kernel}

Let $q\in[1,\infty)$, $r\in(0,1)$. Let $u\in B^r_{q,\infty}(\R^N,\R^d)$, $E\subset\R^N$ be an $\mathcal{L}^N$-measurable set and $\eta\in  W^{1,1}\left(\R^N\right)$. If the following limit exists:
\begin{equation}
\label{eq:equation200}
\lim_{\e\to 0^+}\int_{S^{N-1}}\int_E\chi_{E}(x+\e n)\frac{|u(x+\e n)-u(x)|^q}{\e^{rq}}dxd\mathcal{H}^{N-1}(n),
\end{equation}
then, for every kernel $\rho_\e$ we get
\begin{multline}
\label{eq:equivalence between G-constant, variation and B-constant}
\lim_{\e\to
0^+}\frac{1}{|\ln{\e}|}\left[u_\e\right]^q_{W^{r,q}(E,\R^d)}=\left|\int_{\R^N}\eta(z)dz\right|^q\lim_{\e\to 0^+}\int_{S^{N-1}}\int_E\chi_{E}(x+\e n)\frac{|u(x+\e n)-u(x)|^q}{\e^{rq}}dxd\mathcal{H}^{N-1}(n)
\\
=\left|\int_{\R^N}\eta(z)dz\right|^q\mathcal{H}^{N-1}\left(S^{N-1}\right)\lim_{\e\to 0^+}\int_{E}\int_{E}\rho_{\e}(|x-y|)\frac{|u(x)-u(y)|^q}{|x-y|^{rq}}dydx.
\end{multline}
\end{theorem}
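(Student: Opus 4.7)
The plan is to combine two earlier squeeze results: the Sandwich inequality for Gagliardo constants (Theorem \ref{thm:Sandwich inequality for Gagliardo constants}) and the Sandwich Lemma for kernels (Lemma \ref{lem:sandwich lemma}). The existence of the surface-integral limit in \eqref{eq:equation200} forces $\liminf$ and $\limsup$ of the $(r,q)$-variation on $E$ to coincide, and both sandwich results then collapse to genuine equalities.

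First, I apply Theorem \ref{thm:Sandwich inequality for Gagliardo constants} with the given $u$, $\eta$, and $E$. This yields
\begin{multline*}
\left|\int_{\R^N}\eta(z)dz\right|^q\liminf_{\e\to 0^+}\int_{S^{N-1}}\int_E\chi_{E}(x+\e n)\frac{|u(x+\e n)-u(x)|^q}{\e^{rq}}dxd\mathcal{H}^{N-1}(n)
\\
\leq \liminf_{\e\to 0^+}\tfrac{1}{|\ln\e|}[u_\e]^q_{W^{r,q}(E,\R^d)}
\leq \limsup_{\e\to 0^+}\tfrac{1}{|\ln\e|}[u_\e]^q_{W^{r,q}(E,\R^d)}
\\
\leq \left|\int_{\R^N}\eta(z)dz\right|^q\limsup_{\e\to 0^+}\int_{S^{N-1}}\int_E\chi_{E}(x+\e n)\frac{|u(x+\e n)-u(x)|^q}{\e^{rq}}dxd\mathcal{H}^{N-1}(n).
\end{multline*}
Since \eqref{eq:equation200} exists, the outer quantities coincide, so the two middle quantities are squeezed to the common value. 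This produces the first equality in \eqref{eq:equivalence between G-constant, variation and B-constant}.

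Next, for the equality with the kernel-double-integral, I invoke the Sandwich Lemma \ref{lem:sandwich lemma} with $\alpha=rq$. Since $u\in B^r_{q,\infty}(\R^N,\R^d)\subset L^q(\R^N,\R^d)$, hypothesis~(2) of that lemma is satisfied for any kernel $\rho_\e$, without any extra compact-support assumption. Thus
\begin{multline*}
\liminf_{\e\to 0^+}\fint_{S^{N-1}}\int_E\chi_{E}(x+\e n)\frac{|u(x+\e n)-u(x)|^q}{\e^{rq}}dxd\mathcal{H}^{N-1}(n)
\\
\leq \liminf_{\e\to 0^+}\int_E\int_E\rho_\e(|x-y|)\frac{|u(x)-u(y)|^q}{|x-y|^{rq}}dydx
\leq \limsup_{\e\to 0^+}\int_E\int_E\rho_\e(|x-y|)\frac{|u(x)-u(y)|^q}{|x-y|^{rq}}dydx
\\
\leq \limsup_{\e\to 0^+}\fint_{S^{N-1}}\int_E\chi_{E}(x+\e n)\frac{|u(x+\e n)-u(x)|^q}{\e^{rq}}dxd\mathcal{H}^{N-1}(n).
\end{multline*}
Again, the assumed existence of \eqref{eq:equation200} makes the outer $\liminf$ and $\limsup$ equal (dividing by the constant $\mathcal{H}^{N-1}(S^{N-1})$ to pass from $\int_{S^{N-1}}$ to $\fint_{S^{N-1}}$), so the kernel integral also has a limit, and
\[
\mathcal{H}^{N-1}(S^{N-1})\lim_{\e\to 0^+}\int_E\int_E\rho_\e(|x-y|)\frac{|u(x)-u(y)|^q}{|x-y|^{rq}}dydx=\lim_{\e\to 0^+}\int_{S^{N-1}}\int_E\chi_{E}(x+\e n)\frac{|u(x+\e n)-u(x)|^q}{\e^{rq}}dxd\mathcal{H}^{N-1}(n).
\]
Multiplying by $|\int_{\R^N}\eta(z)dz|^q$ and combining with the first step yields all three identities in \eqref{eq:equivalence between G-constant, variation and B-constant}.

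There is no real obstacle here beyond checking applicability of the two sandwich results; the only point that needs mild attention is that the Sandwich Lemma is stated in the normalized form with $\fint_{S^{N-1}}$, so the factor $\mathcal{H}^{N-1}(S^{N-1})$ must be tracked when converting to the $\int_{S^{N-1}}$ form appearing in the statement. Everything else is a direct squeeze argument.
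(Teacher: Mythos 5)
Your proposal is correct and follows exactly the paper's own proof: the paper likewise derives the result by combining Theorem~\ref{thm:Sandwich inequality for Gagliardo constants} with Lemma~\ref{lem:sandwich lemma} (applied with $\alpha=rq$), using the assumed existence of the limit \eqref{eq:equation200} to collapse both sandwiches. Your elaboration of the $\fint$ versus $\int$ normalization and the verification that hypothesis~(2) of the Sandwich Lemma holds because $u\in B^r_{q,\infty}\subset L^q$ are both accurate and fill in details the paper leaves implicit.
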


\begin{proof}
Formulas \eqref{eq:equivalence between G-constant, variation and B-constant} follow from assumption \eqref{eq:equation200}, Theorem \ref{thm:Sandwich inequality for Gagliardo constants} and Sandwich Lemma with $\alpha=rq$ (Lemma \ref{lem:sandwich lemma}).
\end{proof}

\begin{corollary}(Equivalence Between Gagliardo Constants and $B^{r,q}$-Seminorms)
\label{cor:main result}

Let $1\leq q<\infty$, $r\in(0,1)$, $u\in B^r_{q,\infty}(\R^N,\R^d)$, $E\subset\R^N$ be an $\mathcal{L}^N$-measurable set  and $\eta\in  W^{1,1}\left(\R^N\right)$. If the following limit exists:
\begin{equation}
\label{eq:equation20}
\lim_{\e\to 0^+}\int_{S^{N-1}}\int_E\chi_{E}(x+\e n)\frac{|u(x+\e n)-u(x)|^q}{\e^{rq}}dxd\mathcal{H}^{N-1}(n),
\end{equation}
then
\begin{multline}
\label{eq:equation110}
\lim_{\e\to
0^+}\frac{1}{|\ln{\e}|}\left[u_\e\right]^q_{W^{r,q}(E,\R^d)}=N\left|\int_{\R^N}\eta(z)dz\right|^q\lim_{\e\to 0^+}\int_{E}\frac{1}{\e^N}\int_{E\cap B_\e(x)}\frac{|u(x)-u(y)|^q}{|x-y|^{rq}}dydx
\\
=N\left|\int_{\R^N}\eta(z)dz\right|^q[u]^q_{B^{r,q}(E,\R^d)},
\end{multline}
where $[u]_{B^{r,q}(E,\R^d)}$ is the upper infinitesimal $B^{r,q}$-seminorm defined in \ref{def:Brq seminorms}.
\end{corollary}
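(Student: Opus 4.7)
The plan is to deduce this as a direct specialization of the preceding Theorem \ref{thm:connection between Gagliardo constant and Besov constant dependent on arbitrary kernel} to the trivial kernel $\tilde{\rho}_\e$ of Definition \ref{def:definition of trivial kernel}. Since $\tilde{\rho}_\e$ is a kernel (it has the compact support property, as noted in Remark \ref{rem:compact support property for trivial kernel}), Theorem \ref{thm:connection between Gagliardo constant and Besov constant dependent on arbitrary kernel} applies under hypothesis \eqref{eq:equation20}, and it yields the existence of the limit
\[
\lim_{\e\to 0^+}\int_E\int_E \tilde{\rho}_\e(|x-y|)\frac{|u(x)-u(y)|^q}{|x-y|^{rq}}\,dy\,dx
\]
together with its identification in terms of the Gagliardo constant and the surface-integral limit.

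The only step of substance is to rewrite the double integral against $\tilde{\rho}_\e$ explicitly. By the definition of $\tilde{\rho}_\e$,
\[
\int_E\int_E \tilde{\rho}_\e(|x-y|)\frac{|u(x)-u(y)|^q}{|x-y|^{rq}}\,dy\,dx
= \frac{1}{\mathcal{L}^N(B_1(0))}\int_E\frac{1}{\e^N}\int_{E\cap B_\e(x)}\frac{|u(x)-u(y)|^q}{|x-y|^{rq}}\,dy\,dx.
\]
Inserting this into the conclusion of Theorem \ref{thm:connection between Gagliardo constant and Besov constant dependent on arbitrary kernel} and using the standard identity $\mathcal{H}^{N-1}(S^{N-1})=N\,\mathcal{L}^N(B_1(0))$ gives
\[
\lim_{\e\to 0^+}\frac{1}{|\ln\e|}[u_\e]^q_{W^{r,q}(E,\R^d)}
= N\left|\int_{\R^N}\eta(z)dz\right|^q\lim_{\e\to 0^+}\int_E\frac{1}{\e^N}\int_{E\cap B_\e(x)}\frac{|u(x)-u(y)|^q}{|x-y|^{rq}}\,dy\,dx,
\]
which is the first equality in \eqref{eq:equation110}.

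For the second equality, the existence of the limit on the right-hand side means that the $\limsup$ defining $[u]_{B^{r,q}(E,\R^d)}$ in Definition \ref{def:Brq seminorms} coincides with the actual limit; raising to the power $q$ gives $[u]^q_{B^{r,q}(E,\R^d)}$, completing the identification. There is no real obstacle here: the whole corollary is an instantiation of the general kernel statement, and the only bookkeeping to watch is the constant $\mathcal{H}^{N-1}(S^{N-1})/\mathcal{L}^N(B_1(0))=N$ arising from the normalization of $\tilde{\rho}_\e$.
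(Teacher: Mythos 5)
Your proposal matches the paper's proof exactly: both apply Theorem \ref{thm:connection between Gagliardo constant and Besov constant dependent on arbitrary kernel} to the trivial kernel $\tilde\rho_\e$ (noting it is a kernel via Remark \ref{rem:compact support property for trivial kernel}) and then convert the constant via $\mathcal{H}^{N-1}(S^{N-1})=N\,\mathcal{L}^N(B_1(0))$. Your extra sentence justifying the second equality in \eqref{eq:equation110} (limsup equals the limit since it exists) is a small addition the paper leaves implicit, but the argument is the same.
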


\begin{proof}
By Remark \ref{rem:compact support property for trivial kernel} the trivial kernel, $\tilde{\rho}_\e$, is a kernel. Therefore, by assumption \eqref{eq:equation20} and Theorem \ref{thm:connection between Gagliardo constant and Besov constant dependent on arbitrary kernel} we obtain
\begin{multline}
\lim_{\e\to
0^+}\frac{1}{|\ln{\e}|}\left[u_\e\right]^q_{W^{r,q}(E,\R^d)}=\left|\int_{\R^N}\eta(z)dz\right|^q\mathcal{H}^{N-1}\left(S^{N-1}\right)\lim_{\e\to 0^+}\int_{E}\int_{E}\tilde{\rho}_\e(|x-y|)\frac{|u(x)-u(y)|^q}{|x-y|^{rq}}dydx
\\
=\left|\int_{\R^N}\eta(z)dz\right|^q\frac{\mathcal{H}^{N-1}\left(S^{N-1}\right)}{\Leb^{N}\left(B_1(0)\right)}\lim_{\e\to 0^+}\int_{E}\frac{1}{\e^N}\int_{E\cap B_\e(x)}\frac{|u(x)-u(y)|^q}{|x-y|^{rq}}dydx
\\
=N\left|\int_{\R^N}\eta(z)dz\right|^q\lim_{\e\to 0^+}\int_{E}\frac{1}{\e^N}\int_{E\cap B_\e(x)}\frac{|u(x)-u(y)|^q}{|x-y|^{rq}}dydx.
\end{multline}
The equation $\Leb^{N}\left(B_1(0)\right)=\frac{\mathcal{H}^{N-1}\left(S^{N-1}\right)}{N}$ follows from polar coordinates.

\end{proof}

\begin{remark}(Consistency with Previous Results)

Equation \eqref{eq:equation110} can be derived for functions $u\in BV(\R^N,\R^d)\cap L^\infty(\R^N,\R^d)$, $1<q<\infty$, $r=\frac{1}{q}$, $\eta\in W^{1,1}(\R^N)$, and an open set $\Omega\subset\R^N$ with a bounded Lipschitz boundary such that $\|Du\|(\partial \Omega)=0$. By combining Theorem 1.2 in \cite{PAsymptotic} and Theorem 1.1 in \cite{P}, we get
\begin{multline}
\lim_{\e\to
0^+}\frac{1}{|\ln{\e}|}\left[u_\e\right]^q_{W^{\frac{1}{q},q}(\Omega,\R^d)}
\\
=\frac{\int_{\R^{N-1}}2\left(1+|v|^2\right)^{-\frac{N+1}{2}}dv}{\frac{1}{N}\int_{S^{N-1}}|z_1|d\mathcal{H}^{N-1}(z)}\left|\int_{\R^N}\eta(z)dz\right|^q\lim_{\e\to 0^+}\int_{\Omega}\frac{1}{\e^N}\int_{\Omega\cap B_\e(x)}\frac{|u(x)-u(y)|^q}{|x-y|}dydx.
\end{multline}
According to Proposition \ref{prop:calculation of NC_N}, we obtain $\int_{\R^{N-1}}2\left(1+|v|^2\right)^{-\frac{N+1}{2}}dv=\int_{S^{N-1}}|z_1|d\mathcal{H}^{N-1}(z)$.
\end{remark}

\section{Jump Detection in $BV\cap B^{1/p,p}$}
In this section we prove formulas for $\lim_{\e\to
0^+}\frac{1}{|\ln{\e}|}\left[u_\e\right]^q_{W^{1/q,q}(B,\R^d)}$, where $u\in BV\cap B^{1/p,p}$ and $B\subset\R^N$ is a Borel set (refer to Corollary \ref{cor:connection between $G$-functional and the $q$-jump variation}).
\begin{remark}($BV\cap L^\infty$ is a Subset of $B^r_{q,\infty},rq\leq 1$)
\label{rem: BV and bounded lies in Besov}

Let $u\in BV\left(\R^N,\R^d\right)\cap L^\infty\left(\R^N,\R^d\right)$. Let $1\leq q<\infty$ and $r\in (0,1)$ be such that $rq\leq1$. By Lemma \ref{lem:variation inequality} we get
\begin{multline}
\sup_{h\in\R^N\setminus\{0\}}\int_{\R^N}\frac{|u(x+h)-u(x)|^q}{|h|^{rq}}dx
\\
\leq \sup_{h\in\R^N\setminus B_1(0)}\int_{\R^N}\frac{|u(x+h)-u(x)|^q}{|h|^{rq}}dx
+\sup_{h\in B_1(0)\setminus\{0\}}\int_{\R^N}\frac{|u(x+h)-u(x)|^q}{|h|^{rq}}dx
\\
\leq 2^{q}\|u\|^{q}_{L^q(\R^N,\R^d)}
+2^{q-1}\|u\|^{q-1}_{L^\infty(\R^N,\R^d)} \sup_{h\in B_1(0)\setminus\{0\}}\int_{\R^N}\frac{|u(x+h)-u(x)|}{|h|}dx
\\
\leq  2^{q}\|u\|^{q}_{L^q(\R^N,\R^d)}
+2^{q-1}\|u\|^{q-1}_{L^\infty(\R^N,\R^d)}\|Du\|(\R^N)<\infty.
\end{multline}
Note that since $u\in L^1\left(\R^N,\R^d\right)\cap L^\infty\left(\R^N,\R^d\right)$, then $u\in  L^q\left(\R^N,\R^d\right)$. Thus, by Definition \ref{def:definition of Besov space} (definition of Besov space) we get $u\in B^{r}_{q,\infty}(\R^N,\R^d)$.
\end{remark}

\begin{lemma}(Interpolation for Besov Seminorms)
\label{lem:estimate for Besov seminorm by the variation}

Let $p\in(1,\infty)$ and $u\in BV(\R^N,\R^d)\cap B^{1/p,p}(\R^N,\R^d)$. Then for every $q\in(1,p)$ we have $u\in B^{1/q,q}(\R^N,\R^d)$ and
\begin{equation}
[u]^q_{B^{1/q}_{q,\infty}(\R^N,\R^d)}\leq \left(\|Du\|(\R^N)\right)^\alpha\left([u]^{p}_{B^{1/p}_{p,\infty}(\R^N,\R^d)}\right)^{1-\alpha},
\end{equation}
where $\alpha:=\frac{p-q}{p-1}$.
\end{lemma}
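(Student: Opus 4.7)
The plan is to interpolate between the two hypotheses: the $BV$ condition yields a bound on the $L^1$-translation increment that is linear in $|h|$, while membership in $B^{1/p}_{p,\infty}(\R^N,\R^d)$ yields a bound on the $L^p$-translation increment that is also linear in $|h|$. Since $q \in (1,p)$, the exponent $\alpha := (p-q)/(p-1)$ lies in $(0,1)$ and is the unique convex-combination weight satisfying $q = \alpha \cdot 1 + (1-\alpha) \cdot p$, so it is the natural interpolation parameter between the $L^1$ and $L^p$ estimates.

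First I would fix $h \in \R^N \setminus \{0\}$ and split the integrand as
\[
|u(x+h) - u(x)|^q = |u(x+h)-u(x)|^{\alpha} \cdot |u(x+h)-u(x)|^{(1-\alpha)p}.
\]
Applying H\"older's inequality with conjugate exponents $1/\alpha$ and $1/(1-\alpha)$ gives
\[
\int_{\R^N} |u(x+h)-u(x)|^q\,dx \leq \left(\int_{\R^N}|u(x+h)-u(x)|\,dx\right)^{\!\alpha}\!\left(\int_{\R^N}|u(x+h)-u(x)|^p\,dx\right)^{\!1-\alpha}.
\]
The first factor I would control by the standard $BV$-translation inequality (used in Remark \ref{rem: BV and bounded lies in Besov} via Lemma \ref{lem:variation inequality}), which yields $\int_{\R^N}|u(x+h)-u(x)|\,dx \leq |h|\,\|Du\|(\R^N)$. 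The second factor is immediate from the definition of the Besov seminorm: $\int_{\R^N}|u(x+h)-u(x)|^p\,dx \leq |h|\,[u]^p_{B^{1/p}_{p,\infty}(\R^N,\R^d)}$.

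Substituting both bounds and using $|h|^{\alpha} \cdot |h|^{1-\alpha} = |h|$, I would obtain
\[
\frac{1}{|h|}\int_{\R^N}|u(x+h)-u(x)|^q\,dx \leq \bigl(\|Du\|(\R^N)\bigr)^{\alpha}\bigl([u]^p_{B^{1/p}_{p,\infty}(\R^N,\R^d)}\bigr)^{1-\alpha}
\]
uniformly in $h \neq 0$. Taking the supremum over $h \in \R^N \setminus \{0\}$ yields the claimed inequality, and finiteness of the right-hand side together with $u \in L^q(\R^N,\R^d)$ (which follows e.g.\ by interpolating $L^1 \cap L^p \subset L^q$, using $u \in BV \subset L^1$ and $u \in B^{1/p}_{p,\infty} \subset L^p$) gives $u \in B^{1/q}_{q,\infty}(\R^N,\R^d) = B^{1/q,q}(\R^N,\R^d)$ by Theorem \ref{thm:characterization of Besov functions via double integral}. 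There is no real obstacle in this argument beyond correctly identifying the interpolation exponent $\alpha$; once the splitting is in place the proof is essentially a one-line application of H\"older.
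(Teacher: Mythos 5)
Your proposal is correct and takes essentially the same approach as the paper: both split $|u(x+h)-u(x)|^q$ into a product of an $L^1$-increment (controlled by $\|Du\|(\R^N)$ via Lemma \ref{lem:variation inequality}) and an $L^p$-increment (controlled by the Besov seminorm), apply H\"older with exponents $1/\alpha$, $1/(1-\alpha)$, and take the supremum over $h$. The only difference is that the paper distributes the $|h|$ inside the H\"older factors rather than pulling it out at the end, which is purely cosmetic; you also add a welcome explicit remark that $u\in L^q$ follows from $L^1\cap L^p\subset L^q$, a point the paper leaves implicit.
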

\begin{proof}
Since $\alpha=\frac{p-q}{p-1}$, then $q=\alpha+(1-\alpha)p$. By H{\"o}lder's inequality and Lemma \ref{lem:variation inequality} we get
\begin{multline}
[u]^{q}_{B^{1/q}_{q,\infty}(\R^N,\R^d)}=\sup_{h\in \R^N\setminus\{0\}}\int_{\R^N}\frac{|u(x+h)-u(x)|^q}{|h|}dx
\\
=\sup_{h\in \R^N\setminus\{0\}}\int_{\R^N}\left(\frac{|u(x+h)-u(x)|}{|h|}\right)^{\alpha}\left(\frac{|u(x+h)-u(x)|^p}{|h|}\right)^{1-\alpha}dx
\\
\leq \left(\sup_{h\in \R^N\setminus\{0\}}\int_{\R^N}\frac{|u(x+h)-u(x)|}{|h|}dx\right)^{\alpha}\left(\sup_{h\in \R^N\setminus\{0\}}\int_{\R^N}\frac{|u(x+h)-u(x)|^p}{|h|}dx\right)^{1-\alpha}
\\
\leq\left(\|Du\|(\R^N)\right)^\alpha\left([u]^{p}_{B^{1/p}_{p,\infty}(\R^N,\R^d)}\right)^{1-\alpha}.
\end{multline}
\end{proof}

\begin{corollary}(Convergence of the Truncated Family in Besov Seminorm)
\label{cor:convergence of truncated family in Besov seminorm}

Let $p\in(1,\infty)$ and $u\in BV(\R^N,\R^d)\cap B^{1/p}_{p,\infty}(\R^N,\R^d)$. Then, for every $q\in (1,p)$ we have
\begin{equation}
\label{eq:convergence of truncated family in Besov seminorm}
\lim_{l\to\infty}[u-u_l]_{B^{1/q}_{q,\infty}(\R^N,\R^d)}=0,
\end{equation}
where $\{u_l\}_{l\in[0,\infty)}$ is the truncated family obtained by $u$ as defined in Definition \ref{def:truncated family}. In particular, the truncated family $u_l$ converges to $u$ in the norm of the space $B^{1/q}_{q,\infty}(\R^N,\R^d)$, which means that
\begin{equation}
\lim_{l\to\infty}\left([u-u_l]_{B^{1/q}_{q,\infty}(\R^N,\R^d)}+\|u-u_l\|_{L^q(\R^N,\R^d)}\right)=0.
\end{equation}
\end{corollary}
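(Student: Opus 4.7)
The plan is to deduce the result by applying the interpolation inequality from Lemma~\ref{lem:estimate for Besov seminorm by the variation} to the difference $u - u_l$ in place of $u$. With $\alpha = (p-q)/(p-1) > 0$ as in the lemma, provided $u - u_l \in BV(\R^N,\R^d) \cap B^{1/p}_{p,\infty}(\R^N,\R^d)$, we obtain
\begin{equation*}
[u - u_l]^q_{B^{1/q}_{q,\infty}(\R^N,\R^d)} \leq \left(\|D(u-u_l)\|(\R^N)\right)^\alpha \left([u - u_l]_{B^{1/p}_{p,\infty}(\R^N,\R^d)}^{p}\right)^{1-\alpha}.
\end{equation*}
The convergence \eqref{eq:convergence of truncated family in Besov seminorm} then follows once the first factor is shown to tend to zero while the second remains bounded in $l$.

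To bound $[u - u_l]_{B^{1/p}_{p,\infty}(\R^N,\R^d)}$ uniformly in $l$, I would exploit the Lipschitz character of the truncation $T_l\colon \R^d\to\R^d$ which defines $u_l=T_l\circ u$. Since $T_l$ is $1$-Lipschitz, $\mathrm{Id} - T_l$ is $2$-Lipschitz, so pointwise
\begin{equation*}
|(u - u_l)(x+h) - (u - u_l)(x)| \leq 2|u(x+h) - u(x)|,
\end{equation*}
which yields $[u - u_l]_{B^{1/p}_{p,\infty}(\R^N,\R^d)} \leq 2[u]_{B^{1/p}_{p,\infty}(\R^N,\R^d)}$ independently of $l$. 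The same reasoning, combined with the standard fact that composition of a BV function with a Lipschitz map remains in BV, ensures $u - u_l \in BV(\R^N,\R^d) \cap B^{1/p}_{p,\infty}(\R^N,\R^d)$ so that the interpolation lemma applies.

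The main step is establishing $\|D(u - u_l)\|(\R^N) \to 0$ as $l \to \infty$. Since $u \in L^1(\R^N,\R^d)$ (because $u \in BV$), the super-level sets $\{|u| > l\}$ shrink in Lebesgue measure as $l\to\infty$. Via the chain rule for BV functions composed with Lipschitz maps, the vector measure $D(u - u_l)$ is supported, in an appropriate sense on each of its absolutely continuous, Cantor and jump parts, on the set where $|u|>l$ (or on its precise-representative analogue); hence its total variation on $\R^N$ tends to zero by absolute continuity of $|Du|$ with respect to a dominating finite measure. Substituting this convergence together with the uniform Besov bound into the interpolation inequality produces \eqref{eq:convergence of truncated family in Besov seminorm}.

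For the ``in particular'' clause, it remains to verify $\|u - u_l\|_{L^q(\R^N,\R^d)} \to 0$. This follows by dominated convergence: $u_l(x)\to u(x)$ pointwise with $|u_l|\leq|u|$, whence $|u - u_l|^q\leq 2^q|u|^q$, and $|u|^q$ is integrable because $u\in L^p(\R^N,\R^d)$ (from the Besov hypothesis) and $u\in L^1(\R^N,\R^d)$ (from $BV$), so $u\in L^q(\R^N,\R^d)$ by interpolation for $q\in[1,p]$. The principal obstacle is the rigorous justification of $\|D(u-u_l)\|(\R^N)\to 0$ in the vector-valued setting, which is classical but requires the structure theorem for $BV$ and the Lipschitz chain rule applied componentwise with some care about the interaction of $T_l$ with the jump and Cantor parts of $Du$.
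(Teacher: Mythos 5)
Your proposal follows the same route as the paper: interpolate via Lemma~\ref{lem:estimate for Besov seminorm by the variation} applied to $u-u_l$, bound $[u-u_l]_{B^{1/p}_{p,\infty}}$ uniformly in $l$ by a constant multiple of $[u]_{B^{1/p}_{p,\infty}}$ (your Lipschitz argument on $\mathrm{Id}-T_l$ yields the same $2^p$ bound as the paper's elementary $(a+b)^p\le 2^{p-1}(a^p+b^p)$ estimate), invoke $\|D(u-u_l)\|(\R^N)\to 0$ (the paper's Lemma~\ref{lem:Convergence of the truncated family in BV}, which you correctly sketch via the $BV$ chain rule and dominated shrinkage of $\{|u|>l\}$), and finish the $L^q$ convergence by dominated convergence. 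The argument is correct and essentially identical to the paper's.
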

\begin{proof}
Let $q\in(1,p)$ and denote $\alpha:=\frac{p-q}{p-1}$. From Lemma \ref{lem:estimate for Besov seminorm by the variation} we get
\begin{equation}
[u-u_l]^q_{B^{1/q}_{q,\infty}(\R^N,\R^d)}\leq \left(\|D(u-u_l)\|(\R^N)\right)^\alpha\left([u-u_l]^{p}_{B^{1/p}_{p,\infty}(\R^N,\R^d)}\right)^{1-\alpha}.
\end{equation}
Note that
\begin{multline}
[u-u_l]^{p}_{B^{1/p}_{p,\infty}(\R^N,\R^d)}=\sup_{h\in\R^N\setminus\{0\}}\int_{\R^N}\frac{|(u-u_l)(x+h)-(u-u_l)(x)|^p}{|h|}dx
\\
\leq 2^{p-1}\sup_{h\in\R^N\setminus\{0\}}\int_{\R^N}\frac{|u(x+h)-u(x)|^p}{|h|}dx+2^{p-1}\sup_{h\in\R^N\setminus\{0\}}\int_{\R^N}\frac{|u_l(x+h)-u_l(x)|^p}{|h|}dx
\\
\leq 2^p[u]^{p}_{B^{1/p}_{p,\infty}(\R^N,\R^d)}<\infty.
\end{multline}
Since by Lemma \ref{lem:Convergence of the truncated family in BV} we have $\lim_{l\to \infty}\|D(u-u_l)\|(\R^N)=0$, then \eqref{eq:convergence of truncated family in Besov seminorm} follows. The convergence of $u_l$ to $u$ as $l\to\infty$ in the norm of the space $B^{1/q}_{q,\infty}(\R^N,\R^d)$ follows from \eqref{eq:convergence of truncated family in Besov seminorm} and Lemma \ref{lem:convergence of the truncated in Lebesgue space}.
\end{proof}

Recall Definition \ref{def:approximate jump points} for $u^+,u^-,\mathcal{J}_u,\nu_u$.
\begin{theorem}(Proposition 2.4 in \cite{P})
\label{thm:convergence of $Ie(h;K)$ to the pre jump variation}

Let $\Omega\subset\R^N$ be an open set, $1<q<\infty$, $u\in BV_{\text{loc}}(\Omega,\R^d)\cap L_{\text{loc}}^\infty (\Omega,\R^d)$. Then, for every $h\in \R^N$ and every compact set $K\subset \Omega$ such that $\|Du\|\left(\partial K\right)=0$ we have
\begin{equation}
\lim_{\e\to 0^+}\int_{K}\frac{|u(x+\e h)-u(x)|^q}{\e}dx=\int_{K\cap \mathcal{J}_u}\left|u^+(x)-u^-(x)\right|^q|\nu_u(x)\cdot h|d\mathcal{H}^{N-1}(x).
\end{equation}
\end{theorem}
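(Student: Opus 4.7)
The plan is to reduce to a one-dimensional problem via slicing and then exploit the factorization
\begin{equation*}
\frac{|u(x+\e h)-u(x)|^q}{\e} = |u(x+\e h)-u(x)|^{q-1}\cdot\frac{|u(x+\e h)-u(x)|}{\e},
\end{equation*}
in which, on a neighborhood of $K$, the first factor is uniformly bounded by $(2\|u\|_{L^\infty})^{q-1}$ while the second, integrated against $dx$, is controlled by the BV measure via Lemma \ref{lem:variation inequality}. Write $h=|h|e$ with $e\in S^{N-1}$, and for $y$ in the hyperplane $e^\perp$ set $u_y(s):=u(y+se)$ and $K_y:=\{s\in\R:y+se\in K\}$. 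By Fubini,
\begin{equation*}
\int_K\frac{|u(x+\e h)-u(x)|^q}{\e}\,dx = \int_{e^\perp}\int_{K_y}\frac{|u_y(s+\e|h|)-u_y(s)|^q}{\e}\,ds\,d\Haus^{N-1}(y).
\end{equation*}
By the slicing theorem for BV functions, $u_y\in BV_{\text{loc}}\cap L^\infty_{\text{loc}}$ for $\Haus^{N-1}$-a.e.\ $y$, with jump set $J_{u_y}=\{s:y+se\in\mathcal{J}_u\}$ and jump values inherited from $u^\pm$.

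For the one-dimensional core, I would prove that for $v\in BV_{\text{loc}}(\R)\cap L^\infty_{\text{loc}}(\R)$ and a compact $K_1\subset\R$ with $|Dv|(\partial K_1)=0$,
\begin{equation*}
\lim_{\delta\to 0^+}\int_{K_1}\frac{|v(s+\delta)-v(s)|^q}{\delta}\,ds = \sum_{s\in K_1\cap J_v}|v(s^+)-v(s^-)|^q.
\end{equation*}
Decompose $v=v_c+v_j$ where $v_j$ is the jump part (a sum of step functions $\sum_k c_k\chi_{[s_k,\infty)}$) and $v_c$ is the continuous part, so $v_c$ is (approximately) continuous everywhere. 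Pointwise a.e.\ one has $|v_c(s+\delta)-v_c(s)|\to 0$; combined with the uniform bound $\|v_c\|_\infty$, the estimate $\int \frac{|v_c(s+\delta)-v_c(s)|}{\delta}ds\leq |Dv_c|(K_1+B_\delta)$, and the factorization, dominated convergence forces the pure $v_c$ contribution and the cross-term with $v_j$ to vanish. The jump part contributes, for each $s_k\in K_1$, precisely $|c_k|^q$: the difference $|v_j(s+\delta)-v_j(s)|$ equals $|c_k|$ on $[s_k-\delta,s_k)$ (up to contributions from other jumps, which are negligible as $\delta\to 0$), giving $|c_k|^q\cdot\delta/\delta$ after integration. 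The hypothesis $|Dv|(\partial K_1)=0$ ensures no jumps sit on $\partial K_1$, so boundary effects from shifts by $\delta$ are negligible.

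Reassembling, the coarea-type identity
\begin{equation*}
\int_{e^\perp}\sum_{s\in K_y\cap J_{u_y}}|u_y(s^+)-u_y(s^-)|^q\,d\Haus^{N-1}(y) = \int_{K\cap\mathcal{J}_u}|u^+(x)-u^-(x)|^q|\nu_u(x)\cdot e|\,d\Haus^{N-1}(x),
\end{equation*}
expressing that the density of the projection of $\Haus^{N-1}\lfloor\mathcal{J}_u$ onto $e^\perp$ along the fibers equals $|\nu_u\cdot e|$, then yields the result after multiplying by $|h|$ to account for the slice parametrization. The main obstacle is justifying the exchange of limit and $y$-integration: for this I would use the pointwise bound
\begin{equation*}
\int_{K_y}\frac{|u_y(s+\e|h|)-u_y(s)|^q}{\e}\,ds\leq (2\|u\|_{L^\infty(K+B_{\e|h|})})^{q-1}\,|h|\,|Du_y|(K_y+B_{\e|h|}),
\end{equation*}
and Fubini applied to $y\mapsto|Du_y|(K_y+B_\delta)$, which is integrable by the slicing theorem since $u\in BV_{\text{loc}}(\Omega)$. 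This gives the equi-integrability needed to interchange limit and outer integral, completing the proof.
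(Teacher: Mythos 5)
The paper presents this statement as a cited external result (Proposition 2.4 of \cite{P}) and does not prove it, so there is no internal proof to compare against; I therefore assess your proposal on its own terms.

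The slicing-plus-coarea architecture is natural and globally sound. The Fubini slicing, the choice of dominating function, the use of the $BV$ slicing theorem to justify interchanging the limit with the $y$-integral, and the rectifiable coarea/area identity with tangential Jacobian $|\nu_u\cdot e|$ are all invoked correctly, including the factor $|h|$ bookkeeping. One item you do not mention but that is needed and does hold: the boundary hypothesis must be transferred to the slices, i.e.\ $|Du_y|(\partial(K_y))=0$ for $\mathcal H^{N-1}$-a.e.\ $y$, which follows from $\partial(K_y)\subset(\partial K)_y$ and the slicing identity $\int_{e^\perp}|Du_y|((\partial K)_y)\,d\mathcal H^{N-1}(y)=|D_eu|(\partial K)\le\|Du\|(\partial K)=0$.

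Where the proposal has a genuine gap is the one-dimensional jump analysis. First, a minor misstatement: dominated convergence is not what makes the $v_c$-contribution vanish — $|v_c(s+\delta)-v_c(s)|^q/\delta$ has neither a pointwise limit nor a $\delta$-independent dominant. The correct mechanism is the direct estimate
\begin{equation*}
\int_{K_1}\frac{|v_c(s+\delta)-v_c(s)|^q}{\delta}\,ds\le\|v_c(\cdot+\delta)-v_c\|_{L^\infty(K_1)}^{q-1}\,|Dv_c|(K_1+B_\delta),
\end{equation*}
with the first factor $\to0$ by uniform continuity of $v_c$. Second, and more seriously: the assertion that $|v_j(s+\delta)-v_j(s)|$ equals $|c_k|$ on $[s_k-\delta,s_k)$ ``up to contributions from other jumps, which are negligible as $\delta\to0$'' is not a statement that can be accepted without proof, and taken literally it is false for a general one-dimensional $BV$ function, whose jump set may be dense — in which case every interval of length $\delta$ contains infinitely many jumps, for every $\delta>0$. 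The standard repair is to split the jumps into finitely many ``large'' ones (heights $\ge\eta$, hence isolated, so your naive computation applies once $\delta$ is below the separation) and the remaining ``small'' ones whose total variation is $<\eta$; the small part contributes $O(\eta^{q-1})$ after the factorization $|a|^q\le(2\|v\|_\infty)^{q-1}|a|$, and the cross-terms are absorbed via the convexity inequality $|a+b|^q\le\lambda^{1-q}|a|^q+(1-\lambda)^{1-q}|b|^q$ before letting $\eta\to0$ after $\delta\to0$. Without this step the 1D lemma, and hence the whole reduction, is not established. Once that is supplied, the argument closes.
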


\begin{remark}(Variation Negligibility of Boundaries of Sets)
\label{rem:negligibility with respect to variation}

The purpose of this remark is to explain the condition $\|Du\|\left(\partial K\right)=0$ in Theorem \ref{thm:convergence of $Ie(h;K)$ to the pre jump variation}. For an open set $\Omega\subset \mathbb{R}^N$ and a function $u\in BV(\Omega,\mathbb{R}^d)$, it follows that $\|Du\|\geq |u^+-u^-|\mathcal{H}^{N-1}\llcorner \mathcal{J}_u$ (refer to Lemma 3.76 in \cite{AFP}). It is important to note that according to Definition \ref{def:approximate jump points}, $|u^+(x)-u^-(x)|>0$ for $x\in \mathcal{J}_u$. Therefore, for a set $E\subset\Omega$, the assumption $\|Du\|(\partial E)=0$ indicates that $\mathcal{H}^{N-1}\left(\partial E\cap\mathcal{J}_u\right)=0$, implying that the portion of the jump set $\mathcal{J}_u$ within the topological boundary of $E$ is negligible with respect to $\mathcal{H}^{N-1}$.
\end{remark}

\begin{lemma}(Equivalence Between Variation and Jump Variation in the $BV$ Case)
\label{lem: limit for Besov integral in terms of jumps}

Let $1<p<\infty$, $u\in BV(\R^N,\R^d)\cap B^{\frac{1}{p},p} (\R^N,\R^d)$ and $1<q<p$. Then, for every $n\in \R^N$ and every Borel set $B\subset\R^N$ such that $\mathcal{H}^{N-1}(\partial B\cap \mathcal{J}_u)=0$ we have
\begin{equation}
\label{eq:equation27}
\lim_{\e\to 0^+}\int_B\chi_{B}(x+\e n)\frac{|u(x+\e n)-u(x)|^q}{\e}dx
=\int_{B\cap \mathcal{J}_u}\left|u^+(x)-u^-(x)\right|^q|\nu_u(x)\cdot n|d\mathcal{H}^{N-1}(x).
\end{equation}
In particular, the following limit exists:
\begin{multline}
\label{eq:equation28}
\lim_{\e\to 0^+}\int_{S^{N-1}}\int_B\chi_{B}(x+\e n)\frac{|u(x+\e n)-u(x)|^q}{\e}dxd\mathcal{H}^{N-1}(n)
\\
=\left(\int_{S^{N-1}}|z_1|~d\Haus^{N-1}(z)\right)\int_{B\cap \mathcal{J}_u}\left|u^+(x)-u^-(x)\right|^qd\mathcal{H}^{N-1}(x).
\end{multline}
\end{lemma}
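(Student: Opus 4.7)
My plan is to prove \eqref{eq:equation27} by truncation, using the known formula for $BV\cap L^\infty$ functions from \rth{thm:convergence of $Ie(h;K)$ to the pre jump variation} together with the continuity of directional variations (\rlemma{lem:Continuity of (r,q)-variation in Besov spaces Brq}). Let $\{u_l\}_{l\in\N}$ be the truncated family of $u$. Each $u_l\in BV(\R^N,\R^d)\cap L^\infty(\R^N,\R^d)$ satisfies $\mathcal{J}_{u_l}\subseteq\mathcal{J}_u$, $\nu_{u_l}=\nu_u$ on $\mathcal{J}_{u_l}$, and $u_l^\pm = T_l(u^\pm)$ on $\mathcal{J}_u$, and by \rcor{cor:convergence of truncated family in Besov seminorm}, $u_l\to u$ in the norm of $B^{1/q}_{q,\infty}(\R^N,\R^d)$ for every $1<q<p$.

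The first task is to establish \eqref{eq:equation27} with $u_l$ in place of $u$. I would introduce the Borel measures $\sigma_{l,\e}(E):=\int_E |u_l(x+\e n)-u_l(x)|^q/\e\,dx$ and the Radon measure $\mu_l(E):=\int_{E\cap\mathcal{J}_{u_l}}|u_l^+-u_l^-|^q|\nu_{u_l}\cdot n|\,d\mathcal{H}^{N-1}$. \rth{thm:convergence of $Ie(h;K)$ to the pre jump variation} applied to $u_l$ gives $\sigma_{l,\e}(K)\to \mu_l(K)$ for every compact $K$ with $\|Du_l\|(\partial K)=0$; a standard approximation of $\phi\in C_c(\R^N)$ by step functions built from such compacts upgrades this to weak-$*$ convergence $\sigma_{l,\e}\to\mu_l$ of Radon measures on $\R^N$. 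Since $\mathcal{J}_{u_l}\subseteq\mathcal{J}_u$ and $|u_l^+-u_l^-|\leq 2l$, the hypothesis gives $\mu_l(\partial B)\leq (2l)^q\mathcal{H}^{N-1}(\partial B\cap\mathcal{J}_u)=0$, and the Portmanteau theorem yields $\sigma_{l,\e}(B)\to\mu_l(B)$. Writing
\[
\int_B\chi_B(x+\e n)\tfrac{|u_l(x+\e n)-u_l(x)|^q}{\e}\,dx=\sigma_{l,\e}(B)-\sigma_{l,\e}\bigl(B\setminus(B-\e n)\bigr),
\]
the second term is controlled by $\sigma_{l,\e}$ of the closed $\delta$-neighbourhood of $\partial B$ whenever $\e|n|\leq\delta$; its $\limsup_{\e\to 0^+}$ is at most the $\mu_l$-measure of the same neighbourhood, which shrinks to $\mu_l(\partial B)=0$ as $\delta\to 0$. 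Hence \eqref{eq:equation27} holds for $u_l$.

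To pass to $u$, I will apply assertion~1 of \rlemma{lem:Continuity of (r,q)-variation in Besov spaces Brq} with $r=1/q$ and $E=B$: the $B^{1/q}_{q,\infty}$-convergence $u_l\to u$ forces both $\limsup_{\e\to 0^+}$ and $\liminf_{\e\to 0^+}$ of the left-hand side of \eqref{eq:equation27} for $u$ to agree with the $l\to\infty$ limits of the corresponding quantities for $u_l$, both equal to $\lim_{l\to\infty}\mu_l(B)$. On the right, $|u_l^+-u_l^-|^q\to |u^+-u^-|^q$ pointwise on $\mathcal{J}_u$ with $|u_l^+-u_l^-|^q|\nu_{u_l}\cdot n|\leq |u^+-u^-|^q$, and the integrability of the dominating function over $\mathcal{J}_u\cap B$ follows by integrating the $u_l$-formulas against $d\mathcal{H}^{N-1}(n)$ over $S^{N-1}$, bounding through \rdef{def:Besov seminorm} and $u\in B^{1/q}_{q,\infty}$ (granted by \rlemma{lem:estimate for Besov seminorm by the variation}), and invoking Fatou's lemma. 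Dominated Convergence then produces \eqref{eq:equation27}. Formula \eqref{eq:equation28} follows by integrating \eqref{eq:equation27} over $n\in S^{N-1}$: the uniform bound $\int_B\chi_B(x+\e n)|u(x+\e n)-u(x)|^q/\e\,dx\leq |n|\,[u]^q_{B^{1/q}_{q,\infty}(\R^N,\R^d)}$ from \rdef{def:Besov seminorm} allows exchanging $\lim_{\e\to 0^+}$ with $\int_{S^{N-1}}d\mathcal{H}^{N-1}(n)$ by Dominated Convergence, and the identity $\int_{S^{N-1}}|\nu\cdot n|\,d\mathcal{H}^{N-1}(n)=\int_{S^{N-1}}|z_1|\,d\mathcal{H}^{N-1}(z)$, valid for every unit $\nu$, supplies the stated constant.

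The main obstacle is the extension from compact sets $K$ with $\|Du_l\|(\partial K)=0$, as required in \rth{thm:convergence of $Ie(h;K)$ to the pre jump variation}, to a Borel $B$ controlled only by the weaker jump-condition $\mathcal{H}^{N-1}(\partial B\cap\mathcal{J}_u)=0$. There is no direct $\|Du_l\|$-negligibility of $\partial B$, but because the limiting measure $\mu_l$ is supported on $\mathcal{J}_{u_l}\subseteq\mathcal{J}_u$, the jump-condition is precisely what the Portmanteau step demands, so the hypothesis of the lemma is exactly sufficient.
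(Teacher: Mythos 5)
Your architecture for passing from the truncations $u_l$ to $u$ (your ``Step 2'') matches the paper's proof exactly: the same use of Corollary~\ref{cor:convergence of truncated family in Besov seminorm}, of assertion~1 of Lemma~\ref{lem:Continuity of (r,q)-variation in Besov spaces Brq}, of monotone/dominated convergence for the jump integrals, and the same final integration over $S^{N-1}$. The weak-$*$ packaging of the bounded case is a legitimate repackaging of what the paper does by hand, \emph{but as written it has a genuine gap that the paper's Step~1 is specifically designed to close}.

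From \rth{thm:convergence of $Ie(h;K)$ to the pre jump variation} and an approximation of $\phi\in C_c(\R^N)$ by step functions you obtain at best \emph{vague} convergence of $\sigma_{l,\e}$ to $\mu_l$, i.e.\ convergence when tested against $C_c(\R^N)$. You then invoke Portmanteau twice: to get $\sigma_{l,\e}(B)\to\mu_l(B)$ from $\mu_l(\partial B)=0$, and to bound $\limsup_{\e\to0^+}\sigma_{l,\e}$ of the closed $\delta$-neighbourhood of $\partial B$ by the $\mu_l$-measure of that neighbourhood. Under vague convergence, the lower-semicontinuity inequality holds for all open sets, but the upper-semicontinuity inequality $\limsup_\e\sigma_{l,\e}(C)\le\mu_l(C)$ holds only for \emph{compact} $C$, and the continuity-set convergence $\sigma_{l,\e}(B)\to\mu_l(B)$ holds only for \emph{bounded} Borel $B$; for unbounded sets mass can escape to infinity. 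The lemma, however, is stated for every Borel $B\subset\R^N$ (including $B=\R^N$, half-spaces, etc.), so $\partial B$ and its $\delta$-neighbourhood may be unbounded and your Portmanteau step breaks on the $\limsup$ side. This is exactly the point the paper handles by choosing radii $R_k\to\infty$ with $\|Du_l\|(\partial B_{R_k}(0))=0$ (Lemma~\ref{lem: F is at most countable}), applying \rth{thm:convergence of $Ie(h;K)$ to the pre jump variation} to the compact piece $\overline\Omega\cap\overline B_{R_{k+1}}(0)$, and killing the unbounded tail via Lemma~\ref{lem:variation inequality}:
\begin{equation*}
\limsup_{\e\to 0^+}\int_{\R^N\setminus \overline B_{R_{k+1}}(0)}\frac{|u_l(x+\e n)-u_l(x)|^q}{\e}\,dx
\le 2^{q-1}\|u_l\|^{q-1}_{L^\infty}\,|n|\,\|Du_l\|\bigl(\R^N\setminus\overline B_{R_k}(0)\bigr)\xrightarrow[k\to\infty]{}0 .
\end{equation*}
This tail estimate is precisely the tightness needed to upgrade vague convergence to narrow convergence (equivalently, convergence of total masses $\sigma_{l,\e}(\R^N)\to\mu_l(\R^N)$), after which Portmanteau for arbitrary Borel continuity sets is available. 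Insert it, or simply replace the Portmanteau step by the paper's direct $\liminf$/$\limsup$ squeeze using Lemmas~\ref{lem:compact negligible boundary property}, \ref{lem: F is at most countable}, \ref{lem:the open negligible boundary property} and \ref{lem:variation inequality}, and your argument becomes correct and essentially identical to the paper's.
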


\begin{remark}(The Assumption of Bounded Variation in Lemma \ref{lem: limit for Besov integral in terms of jumps})

In Lemma \ref{lem: limit for Besov integral in terms of jumps}, the assumption that $u$ has bounded variation cannot be dropped in general to obtain inequality \eqref{eq:equation28}. There are examples of functions in $B^{\frac{1}{p},p}(\R^N,\R^d)$ for which equation \eqref{eq:equation28} does not hold. Examples can be found in \cite{PA}. Here, we will mention that bi-H{\"o}lder functions can be used to demonstrate that the jump variation of such a function is zero (the right-hand side of \eqref{eq:equation28}), but the variation of $u$ (the left-hand side of \eqref{eq:equation28}) is positive.
\end{remark}

\begin{proof}
\textbf{Step 1}: $u\in L^\infty(\R^N,\R^d)$.
Let $n\in \R^N$, and let $B\subset\R^N$ be a Borel set. Let $K\subset B^o$ be a compact set such that $\|Du\|\left(\partial K\right)=0$, where $B^o$ is the topological interior of $B$. By Theorem \ref{thm:convergence of $Ie(h;K)$ to the pre jump variation} we obtain
\begin{multline}
\liminf_{\e\to 0^+}\int_B\chi_{B}(x+\e n)\frac{|u(x+\e n)-u(x)|^q}{\e}dx\geq \liminf_{\e\to 0^+}\int_{B^o}\chi_{B^o}(x+\e n)\frac{|u(x+\e n)-u(x)|^q}{\e}dx
\\
\geq \liminf_{\e\to 0^+}\int_{K}\frac{|u(x+\e n)-u(x)|^q}{\e}dx
=\int_{K\cap \mathcal{J}_u}\left|u^+(x)-u^-(x)\right|^q|\nu_u(x)\cdot n|d\mathcal{H}^{N-1}(x).
\end{multline}
Taking the supremum over compact sets $K\subset B^o$ such that $\|Du\|\left(\partial K\right)=0$ we get by Lemma \ref{lem:compact negligible boundary property}
\begin{align}
\label{eq:equation25}
&\liminf_{\e\to 0^+}\int_B\chi_{B}(x+\e n)\frac{|u(x+\e n)-u(x)|^q}{\e}dx\geq \int_{B^o\cap \mathcal{J}_u}\left|u^+(x)-u^-(x)\right|^q|\nu_u(x)\cdot n|d\mathcal{H}^{N-1}(x).
\end{align}
Let $\Omega$ be an open set such that $\overline{B}\subset \Omega$ and $\|Du\|\left(\partial \Omega\right)=0$.
By Lemma \ref{lem: F is at most countable}, there exists a sequence of numbers $\{R_k\}_{k=1}^\infty$ such that for every $k\in \N$: $R_k>0$, $R_k<R_{k+1}$, $\|Du\|\left(\partial B_{R_k}(0)\right)=0$, and $\lim_{k\to \infty}R_k=\infty$. Note that since $\partial \left(\overline{\Omega}\cap \overline{B}_{R_{k}}(0)\right)\subset \partial\overline{\Omega}\cup \partial\overline{B}_{R_{k}}(0)$, then $\|Du\|\left(\partial\left(\overline{\Omega}\cap \overline{B}_{R_{k}}(0)\right)\right)=0$. Note that if $n=0$, then equation \eqref{eq:equation27} holds trivially. Assume $n\neq 0$. It follows from Theorem \ref{thm:convergence of $Ie(h;K)$ to the pre jump variation}, Lemma \ref{lem:variation inequality} and Remark \ref{rem:negligibility with respect to variation}
\begin{multline}
\label{eq:equation30}
\limsup_{\e\to 0^+}\int_B\chi_{B}(x+\e n)\frac{|u(x+\e n)-u(x)|^q}{\e}dx
\leq \limsup_{\e\to 0^+}\int_\Omega\frac{|u(x+\e n)-u(x)|^q}{\e}dx
\\
\leq\limsup_{\e\to 0^+}\int_{\overline{\Omega}}\frac{|u(x+\e n)-u(x)|^q}{\e}dx
\\
\leq \limsup_{\e\to 0^+}\int_{\overline{\Omega}\cap \overline{B}_{R_{k+1}}(0)}\frac{|u(x+\e n)-u(x)|^q}{\e}dx
+ \limsup_{\e\to 0^+}\int_{\overline{\Omega}\setminus \overline{B}_{R_{k+1}}(0)}\frac{|u(x+\e n)-u(x)|^q}{\e}dx
\\
\leq \int_{\left(\overline{\Omega}\cap \overline{B}_{R_{k+1}}(0)\right)\cap \mathcal{J}_u}\left|u^+(x)-u^-(x)\right|^q|\nu_u(x)\cdot n|d\mathcal{H}^{N-1}(x)
\\
+2^{q-1}\|u\|^{q-1}_{L^\infty(\R^N,\R^d)}\limsup_{\e\to 0^+}\int_{\R^N\setminus \overline{B}_{R_{k+1}}(0)}\frac{|u(x+\e n)-u(x)|}{\e}dx
\\
\leq\int_{\Omega\cap \mathcal{J}_u}\left|u^+(x)-u^-(x)\right|^q|\nu_u(x)\cdot n|d\mathcal{H}^{N-1}(x)+2^{q-1}\|u\|^{q-1}_{L^\infty(\R^N,\R^d)}|n|\|Du\|\left(\R^N\setminus \overline{B}_{R_{k}}(0)\right).
\end{multline}
Taking the limit as $k\to \infty$ in \eqref{eq:equation30}, we get
\begin{align}
&\limsup_{\e\to 0^+}\int_B\chi_{B}(x+\e n)\frac{|u(x+\e n)-u(x)|^q}{\e}dx
\leq\int_{\Omega\cap \mathcal{J}_u}\left|u^+(x)-u^-(x)\right|^q|\nu_u(x)\cdot n|d\mathcal{H}^{N-1}(x).
\end{align}
Therefore, by the Lemma \ref{lem:the open negligible boundary property} we get
\begin{align}
\label{eq:equation31}
&\limsup_{\e\to 0^+}\int_B\chi_{B}(x+\e n)\frac{|u(x+\e n)-u(x)|^q}{\e}dx
\leq\int_{\overline{B}\cap \mathcal{J}_u}\left|u^+(x)-u^-(x)\right|^q|\nu_u(x)\cdot n|d\mathcal{H}^{N-1}(x).
\end{align}
By \eqref{eq:equation25} and \eqref{eq:equation31} we get \eqref{eq:equation27} for every Borel set $B\subset \R^N$ such that $\mathcal{H}^{N-1}(\partial B\cap \mathcal{J}_u)=0$.

Since by Lemma \ref{lem:variation inequality}
\begin{multline}
\sup_{\e\in (0,\infty)}\left(\int_B\chi_{B}(x+\e n)\frac{|u(x+\e n)-u(x)|^q}{\e}dx\right)\leq\sup_{\e\in (0,\infty)}\left(\int_B\frac{|u(x+\e n)-u(x)|^q}{\e}dx\right)
\\
\leq 2^{q-1}\|u\|^{q-1}_{L^\infty(\R^N,\R^d)}\sup_{\e\in (0,\infty)}\left(\int_B\frac{|u(x+\e n)-u(x)|}{\e}dx\right)
\\
\leq 2^{q-1}\|u\|^{q-1}_{L^\infty(\R^N,\R^d)}|n|\|Du\|(\R^N)<\infty,
\end{multline}
then we get by Dominated Convergence Theorem, equation \eqref{eq:equation27}, Fubini's Theorem and Proposition \ref{prop:independence of integrals on the sphere}
\begin{multline}
\label{eq:equation32}
\lim_{\e\to 0^+}\int_{S^{N-1}}\int_B\chi_{B}(x+\e n)\frac{|u(x+\e n)-u(x)|^q}{\e}dxd\mathcal{H}^{N-1}(n)
\\
=\int_{S^{N-1}}\left(\lim_{\e\to 0^+}\int_B\chi_{B}(x+\e n)\frac{|u(x+\e n)-u(x)|^q}{\e}dx\right)d\mathcal{H}^{N-1}(n)
\\
=\int_{S^{N-1}}\left(\int_{B\cap \mathcal{J}_u}\left|u^+(x)-u^-(x)\right|^q|\nu_u(x)\cdot n|d\mathcal{H}^{N-1}(x)\right)d\mathcal{H}^{N-1}(n)
\\
=\int_{B\cap \mathcal{J}_u}\left|u^+(x)-u^-(x)\right|^q\left(\int_{S^{N-1}}|\nu_u(x)\cdot n|d\mathcal{H}^{N-1}(n)\right)d\mathcal{H}^{N-1}(x)
\\
=\left(\int_{S^{N-1}}|e_1\cdot n|d\mathcal{H}^{N-1}(n)\right)\int_{B\cap \mathcal{J}_u}\left|u^+(x)-u^-(x)\right|^qd\mathcal{H}^{N-1}(x).
\end{multline}
In particular, the limit in \eqref{eq:equation28} exists.
\\
\textbf{Step 2}: $u$ is not necessarily bounded. For every $l\in [0,\infty)$ we have $u_l\in L^\infty(\R^N,\R^d)$, where $\{u_l\}_{l\in [0,\infty)}$ is the truncated family defined in Definition \ref{def:truncated family}. So we get for every $l\in [0,\infty)$ by the previous step the formulas
\begin{equation}
\label{eq:equation27 with l}
\lim_{\e\to 0^+}\int_B\chi_{B}(x+\e n)\frac{|u_l(x+\e n)-u_l(x)|^q}{\e}dx
=\int_{B\cap \mathcal{J}_{u_l}}\left|(u_l)^+(x)-(u_l)^-(x)\right|^q|\nu_{u_l}(x)\cdot n|d\mathcal{H}^{N-1}(x),
\end{equation}
and
\begin{multline}
\label{eq:equation28 with l}
\lim_{\e\to 0^+}\int_{S^{N-1}}\int_B\chi_{B}(x+\e n)\frac{|u_l(x+\e n)-u_l(x)|^q}{\e}dxd\mathcal{H}^{N-1}(n)
\\
=\left(\int_{S^{N-1}}|z_1|~d\Haus^{N-1}(z)\right)\int_{B\cap \mathcal{J}_{u_l}}\left|(u_l)^+(x)-(u_l)^-(x)\right|^qd\mathcal{H}^{N-1}(x).
\end{multline}
By Lemma \ref{lem:limit of jump variation of truncated family} we obtain
\begin{equation}
\label{eq:equation28 with l including normal}
\lim_{l\to\infty}\int_{B\cap \mathcal{J}_{u_l}}\left|(u_l)^+(x)-(u_l)^-(x)\right|^q|\nu_{u_l}(x)\cdot n|d\mathcal{H}^{N-1}(x)=\int_{B\cap \mathcal{J}_{u}}\left|u^+(x)-u^-(x)\right|^q|\nu_{u}(x)\cdot n|d\mathcal{H}^{N-1}(x),
\end{equation}
and
\begin{equation}
\lim_{l\to\infty}\int_{B\cap \mathcal{J}_{u_l}}\left|(u_l)^+(x)-(u_l)^-(x)\right|^qd\mathcal{H}^{N-1}(x)=\int_{B\cap \mathcal{J}_{u}}\left|u^+(x)-u^-(x)\right|^qd\mathcal{H}^{N-1}(x).
\end{equation}
By Corollary \ref{cor:convergence of truncated family in Besov seminorm}, we know that the truncated family $u_l$ converges to $u$ in Besov space $B^{1/q}_{q,\infty}$. Let us denote
\begin{equation}
F_\e(u):=\int_B\chi_{B}(x+\e n)\frac{|u(x+\e n)-u(x)|^q}{\e}dx.
\end{equation}
By Lemma \ref{lem:Continuity of (r,q)-variation in Besov spaces Brq}, we get
\begin{equation}
\label{eq:limits of F epsilon}
\lim_{l\to \infty}\left(\limsup_{\e\to 0^+}F_\e(u_l)\right)=\limsup_{\e\to 0^+}F_\e(u),\quad \lim_{l\to \infty}\left(\liminf_{\e\to 0^+}F_\e(u_l)\right)=\liminf_{\e\to 0^+}F_\e(u).
\end{equation}
By \eqref{eq:equation27 with l} the limit $\lim_{\e\to 0^+}F_\e(u_l)$ exists for every $l\in [0,\infty)$. Thus, by \eqref{eq:limits of F epsilon}, we conclude the existence of the limit $\lim_{\e\to 0^+}F_\e(u)$, and
\begin{equation}
\label{eq:limit of F epsilon}
\lim_{l\to \infty}\left(\lim_{\e\to 0^+}F_\e(u_l)\right)=\lim_{\e\to 0^+}F_\e(u).
\end{equation}
Taking the limit in \eqref{eq:equation27 with l} as $l\to \infty$, and using \eqref{eq:equation28 with l including normal} and \eqref{eq:limit of F epsilon}, we obtain \eqref{eq:equation27}. By the Dominated Convergence Theorem, we deduce \eqref{eq:equation28} from \eqref{eq:equation27}, as shown in calculation \eqref{eq:equation32}.
\end{proof}

\begin{definition}($q$-Jump Variation)
\label{def:jump variation}

Let $\Omega\subset\R^N$ be an open set, $u\in L^1_{\text{loc}}(\Omega,\R^d)$, $q\in \R$, and $S\subset\Omega$ is an $\mathcal{H}^{N-1}$-measurable set. We define the \textit{$q$-jump variation of $u$ in $S$} by
\begin{equation}
JV_{u,q}(S):=\int_{S\cap \mathcal{J}_u}|u^+(x)-u^-(x)|^qd\mathcal{H}^{N-1}(x).
\end{equation}
Let $n\in S^{N-1}$. We define the \textit{$q$-jump variation of $u$ in $S$ in direction $n$} by
\begin{equation}
JV_{u,q,n}(S):=\int_{S\cap \mathcal{J}_u}|u^+(x)-u^-(x)|^q|\nu_{u}(x)\cdot n|d\mathcal{H}^{N-1}(x).
\end{equation}
\end{definition}

\begin{corollary}(Equivalence Between Gagliardo Constants and the $q$-Jump Variations)
\label{cor:connection between $G$-functional and the $q$-jump variation}

Let $p\in(1,\infty)$, $q\in (1,p)$, $u\in BV(\R^N,\R^d)\cap B^{\frac{1}{p},p}(\R^N,\R^d)$, $\eta\in W^{1,1}\left(\R^N\right)$ and $B\subset\R^N$ be a Borel set such that $\mathcal{H}^{N-1}(\partial B\cap \mathcal{J}_u)=0$. Then, for every kernel $\rho_\e$, we obtain

\begin{multline}
\label{eq:equation104}
\lim_{\e\to
0^+}\frac{1}{|\ln{\e}|}\left[u_\e\right]^q_{W^{1/q,q}(B,\R^d)}
\\
=\left|\int_{\R^N}\eta(z)dz\right|^q\mathcal{H}^{N-1}\left(S^{N-1}\right)\lim_{\e\to 0^+}\int_{B}\int_{B}\rho_{\e}(|x-y|)\frac{|u(x)-u(y)|^q}{|x-y|}dydx
\\
=\left|\int_{\R^N}\eta(z)dz\right|^q\lim_{\e\to 0^+}\int_{S^{N-1}}\int_B\chi_{B}(x+\e n)\frac{|u(x+\e n)-u(x)|^q}{\e}dxd\mathcal{H}^{N-1}(n)
\\
=\left|\int_{\R^N}\eta(z)dz\right|^q\left(\int_{S^{N-1}}|z_1|~d\Haus^{N-1}(z)\right)\int_{\mathcal{J}_u\cap B}
\Big|u^+(x)-u^-(x)\Big|^q d\mathcal{H}^{N-1}(x).
\end{multline}
\end{corollary}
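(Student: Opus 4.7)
The plan is to recognize the corollary as a direct consequence of chaining three earlier results: the interpolation Lemma \ref{lem:estimate for Besov seminorm by the variation}, the $BV$-jump identification Lemma \ref{lem: limit for Besov integral in terms of jumps}, and the kernel-free equivalence Theorem \ref{thm:connection between Gagliardo constant and Besov constant dependent on arbitrary kernel}. No new analytic estimate is needed; the work is in verifying that the hypotheses of each cited result are met and in lining up the constants.

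First I would use Lemma \ref{lem:estimate for Besov seminorm by the variation} to pass from the assumption $u\in BV(\R^N,\R^d)\cap B^{1/p}_{p,\infty}(\R^N,\R^d)$ with $1<q<p$ to the conclusion $u\in B^{1/q}_{q,\infty}(\R^N,\R^d)$. This is the key membership needed in order to apply the results of the previous section to the exponent pair $(r,q)=(1/q,q)$.

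Next I would apply Lemma \ref{lem: limit for Besov integral in terms of jumps} with the Borel set $B$ (the assumption $\mathcal{H}^{N-1}(\partial B\cap\mathcal{J}_u)=0$ is exactly the one required there). This gives both the directional identity \eqref{eq:equation277} and, after integration over $S^{N-1}$, the existence of
\begin{equation*}
\lim_{\e\to 0^+}\int_{S^{N-1}}\int_B\chi_{B}(x+\e n)\frac{|u(x+\e n)-u(x)|^q}{\e}\,dx\,d\mathcal{H}^{N-1}(n)
=\Bigl(\int_{S^{N-1}}|z_1|\,d\mathcal{H}^{N-1}(z)\Bigr)\int_{\mathcal{J}_u\cap B}|u^+-u^-|^q\,d\mathcal{H}^{N-1},
\end{equation*}
which identifies the rightmost expression in \eqref{eq:equation104} with the directional variation limit in the same display.

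Finally, having verified that the limit in \eqref{eq:equation200} exists (with $E=B$ and $r=1/q$), I would invoke Theorem \ref{thm:connection between Gagliardo constant and Besov constant dependent on arbitrary kernel} for the function $u\in B^{1/q}_{q,\infty}(\R^N,\R^d)$ and the given mollifier $\eta\in W^{1,1}(\R^N)$. This theorem immediately yields that the Gagliardo constant $\lim_{\e\to 0^+}\frac{1}{|\ln\e|}[u_\e]^q_{W^{1/q,q}(B,\R^d)}$ coincides, up to the factor $|\int_{\R^N}\eta|^q$, with the directional variation limit, and also with $\mathcal{H}^{N-1}(S^{N-1})$ times the Besov constant $\lim_{\e\to 0^+}\int_B\int_B\rho_\e(|x-y|)\frac{|u(x)-u(y)|^q}{|x-y|}\,dy\,dx$ for any kernel $\rho_\e$. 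Combining with the previous step gives the full chain of equalities \eqref{eq:equation104}. There is no real obstacle here beyond bookkeeping; the only subtlety is to ensure that the hypothesis of Theorem \ref{thm:connection between Gagliardo constant and Besov constant dependent on arbitrary kernel} (existence of the surface-integral limit) is supplied by Lemma \ref{lem: limit for Besov integral in terms of jumps}, for which Lemma \ref{lem:estimate for Besov seminorm by the variation} provides the required Besov regularity.
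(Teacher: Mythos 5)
Your proof is correct and follows essentially the same route as the paper: apply Lemma \ref{lem: limit for Besov integral in terms of jumps} to obtain the existence of the surface-integral limit and its identification with the jump integral, then invoke Theorem \ref{thm:connection between Gagliardo constant and Besov constant dependent on arbitrary kernel} with $r=1/q$ and $E=B$ to obtain the remaining equalities. Your explicit use of Lemma \ref{lem:estimate for Besov seminorm by the variation} to verify $u\in B^{1/q}_{q,\infty}(\R^N,\R^d)$ before applying Theorem \ref{thm:connection between Gagliardo constant and Besov constant dependent on arbitrary kernel} is a hypothesis check that the paper leaves implicit; making it explicit is a small but welcome clarification.
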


\begin{proof}
By Lemma \ref{lem: limit for Besov integral in terms of jumps} the limit
\begin{equation}
\label{eq:equation28 with l1}
\lim_{\e\to 0^+}\int_{S^{N-1}}\int_B\chi_{B}(x+\e n)\frac{|u(x+\e n)-u(x)|^q}{\e}dxd\mathcal{H}^{N-1}(n)
\end{equation}
exists, and
\begin{multline}
\label{eq:equation28 with l11}
\lim_{\e\to 0^+}\int_{S^{N-1}}\int_B\chi_{B}(x+\e n)\frac{|u(x+\e n)-u(x)|^q}{\e}dxd\mathcal{H}^{N-1}(n)
\\
=\left(\int_{S^{N-1}}|z_1|~d\Haus^{N-1}(z)\right)\int_{B\cap \mathcal{J}_u}\left|u^+(x)-u^-(x)\right|^qd\mathcal{H}^{N-1}(x).
\end{multline}
Since the limit in \eqref{eq:equation28 with l1} exists, we get by Theorem \ref{thm:connection between Gagliardo constant and Besov constant dependent on arbitrary kernel} with $r=\frac{1}{q}$ and $E=B$ that
\begin{multline}
\label{eq:equation1044}
\lim_{\e\to
0^+}\frac{1}{|\ln{\e}|}\left[u_\e\right]^q_{W^{1/q,q}(B,\R^d)}
=\left|\int_{\R^N}\eta(z)dz\right|^q\mathcal{H}^{N-1}\left(S^{N-1}\right)\lim_{\e\to 0^+}\int_{B}\int_{B}\rho_{\e}(|x-y|)\frac{|u(x)-u(y)|^q}{|x-y|}dydx
\\
=\left|\int_{\R^N}\eta(z)dz\right|^q\lim_{\e\to 0^+}\int_{S^{N-1}}\int_B\chi_{B}(x+\e n)\frac{|u(x+\e n)-u(x)|^q}{\e}dxd\mathcal{H}^{N-1}(n).
\end{multline}

We get \eqref{eq:equation104} by equations \eqref{eq:equation28 with l11} and \eqref{eq:equation1044}.
\end{proof}

\subsection{Some observations about jumps of functions in $B^r_{q,\infty}=B^{r,q}$}
\begin{lemma}(Besov Spaces Embed in Fractional Sobolev Spaces)
\label{lem:Besov spaces lie in fractional Sobolev}

Let $0<r<s<1$, $q\in[1,\infty)$. Then,
\begin{equation}
B^s_{q,\infty}(\R^N,\R^d)\subset W^{r,q}_{\text{loc}}(\R^N,\R^d).
\end{equation}
\end{lemma}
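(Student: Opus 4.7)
The plan is to reduce the inclusion to a single Gagliardo-type integral estimate on a compact set and then split that integral according to the size of the displacement. By the definition of the local fractional Sobolev space it suffices to show, given $u\in B^s_{q,\infty}(\R^N,\R^d)$ and any compact $K\subset\R^N$, that $u\in L^q(K,\R^d)$ (which is immediate since, by Definition \ref{def:definition of Besov space}, $u\in L^q(\R^N,\R^d)$) and that
\[
\int_K\int_K\frac{|u(x)-u(y)|^q}{|x-y|^{N+rq}}\,dy\,dx<\infty.
\]
To bound this, I will first enlarge the inner domain from $K$ to $\R^N$ and perform the change of variable $h=y-x$, then swap the order of integration to obtain
\[
\int_K\int_K\frac{|u(x)-u(y)|^q}{|x-y|^{N+rq}}\,dy\,dx\;\le\;\int_{\R^N}\frac{1}{|h|^{N+rq}}\Big(\int_K|u(x)-u(x+h)|^q\,dx\Big)\,dh.
\]

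Next I will split the outer integral into the regions $\{|h|\le 1\}$ and $\{|h|>1\}$. On $\{|h|\le 1\}$, I extend the inner integral over all of $\R^N$ and apply the very definition of the Besov seminorm to get $\int_{\R^N}|u(x)-u(x+h)|^q\,dx\le [u]^q_{B^{s}_{q,\infty}(\R^N,\R^d)}|h|^{sq}$; the remaining radial integral $\int_{|h|\le 1}|h|^{sq-N-rq}\,dh$ is finite precisely because $s>r$, so the singularity at the origin is integrable. On $\{|h|>1\}$, I use the elementary estimate $|u(x)-u(x+h)|^q\le 2^{q-1}(|u(x)|^q+|u(x+h)|^q)$ together with the fact that $u\in L^q(\R^N,\R^d)$, which gives a uniform bound $\int_K|u(x)-u(x+h)|^q\,dx\le 2^q\|u\|^q_{L^q(\R^N,\R^d)}$; the remaining integral $\int_{|h|>1}|h|^{-N-rq}\,dh$ is finite because $rq>0$.

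Adding the two contributions yields a bound of the shape
\[
[u]^q_{W^{r,q}(K,\R^d)}\le C_{N,r,s,q}\Big([u]^q_{B^{s}_{q,\infty}(\R^N,\R^d)}+\|u\|^q_{L^q(\R^N,\R^d)}\Big),
\]
which is exactly what is needed for local membership in $W^{r,q}$. I expect no real obstacle; the only thing to watch is that both radial integrals converge, and this is guaranteed respectively by $s-r>0$ (near the origin) and by $r>0$ (at infinity), both of which hold by hypothesis.
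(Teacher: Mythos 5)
Your proof is correct and follows essentially the same route as the paper: split the Gagliardo double integral over $K$ at separation $|x-y|=1$, bound the near-diagonal piece via the Besov seminorm (where $s>r$ makes the radial integral converge at the origin), and bound the far piece via the $L^q$ norm of $u$. The only cosmetic difference is that the paper keeps the outer integration over $K\times K$ on the far piece and uses $|x-y|^{-N-rq}\le 1$ together with $\mathcal{L}^N(K)<\infty$, whereas you extend the inner domain to $\R^N$ throughout and instead use the convergence of $\int_{|h|>1}|h|^{-N-rq}\,dh$, which relies on $r>0$; both are valid.
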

\begin{proof}
Let $u\in B^s_{q,\infty}(\R^N,\R^d)$ and $K\subset \R^N$ be a compact set. We have by additivity of integral
\begin{multline}
\label{eq:decomposition of Gagliardo expression}
\int_{K}\int_{K}\frac{|u(x)-u(y)|^q}{|x-y|^{rq+N}}dydx
\\
=\int_{K}\left(\int_{K\cap B_1(x)}\frac{|u(x)-u(y)|^q}{|x-y|^{rq+N}}dy\right)dx+\int_{K}\left(\int_{K\setminus B_1(x)}\frac{|u(x)-u(y)|^q}{|x-y|^{rq+N}}dy\right)dx.
\end{multline}
By Change of variable formula, Fubini's theorem, definition of the Besov seminorm, polar coordinates and the assumption $u\in B^s_{q,\infty}(\R^N,\R^d)$, we have that
\begin{multline}
\label{eq:estimate for Gagliardo expression1}
\int_{K}\left(\int_{K\cap B_1(x)}\frac{|u(x)-u(y)|^q}{|x-y|^{rq+N}}dy\right)dx=\int_{K}\left(\int_{B_1(x)}\chi_{K}(y)\frac{|u(x)-u(y)|^q}{|x-y|^{rq+N}}dy\right)dx
\\
=\int_{K}\left(\int_{B_1(0)}\chi_K(x+z)\frac{|u(x)-u(x+z)|^q}{|z|^{rq+N}}dz\right)dx
=\int_{B_1(0)}\left(\int_{K}\chi_K(x+z)\frac{|u(x)-u(x+z)|^q}{|z|^{rq+N}}dx\right)dz
\\
=\int_{B_1(0)}|z|^{sq-rq-N}\left(\int_{K}\chi_K(x+z)\frac{|u(x)-u(x+z)|^q}{|z|^{sq}}dx\right)dz
\\
\leq [u]^q_{B^s_{q,\infty}(\R^N,\R^d)}\int_{B_1(0)}|z|^{sq-rq-N}dz
=[u]^q_{B^s_{q,\infty}(\R^N,\R^d)}\frac{\mathcal{H}^{N-1}(S^{N-1})}{(s-r)q}<\infty.
\end{multline}
By monotonicity of integral, the convexity of the function $r\longmapsto r^q,r\in[0,\infty),$ and $u\in L^q(K,\R^d)$ we obtain that
\begin{multline}
\label{eq:estimate for Gagliardo expression2}
\int_{K}\left(\int_{K\setminus B_1(x)}\frac{|u(x)-u(y)|^q}{|x-y|^{rq+N}}dy\right)dx\leq \int_{K}\left(\int_{K\setminus B_1(x)}|u(x)-u(y)|^qdy\right)dx
\\
\leq \int_{K}\left(\int_{K}|u(x)-u(y)|^qdy\right)dx\leq \int_{K}\left(2^{q-1}\int_{K}|u(x)|^q+|u(y)|^qdy\right)dx
\\
=\int_{K}\left(2^{q-1}|u(x)|^q\mathcal{L}^N(K)+2^{q-1}\|u\|^q_{L^q(K,\R^d)}\right)dx=2^q\|u\|^q_{L^q(K,\R^d)}\mathcal{L}^N(K)<\infty.
\end{multline}
Thus, we derive from \eqref{eq:decomposition of Gagliardo expression}, \eqref{eq:estimate for Gagliardo expression1} and \eqref{eq:estimate for Gagliardo expression2} that $u\in W^{r,q}_{\text{loc}}(\R^N,\R^d)$.
\end{proof}

\begin{theorem}($\mathcal{H}^{N-1}$-Negligibility of the Jump Set of Fractional Sobolev Functions, Theorem 1.7 in \cite{PA})
\label{thm:negligibility of the jump set for fructional sobolev spaces}

Let $\Omega\subset\R^N$ be an open set, $q\in(1,\infty)$ and $u\in W^{1/q,q}_{\text{loc}}(\Omega,\R^d)$. Then $\mathcal{H}^{N-1}(\mathcal{J}_u)=0$.
\end{theorem}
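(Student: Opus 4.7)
The strategy is proof by contradiction, extracting the desired conclusion from Proposition \ref{prop:Negligibility of the upper infinitesimal $B^{r,q}$-seminorm for Sobolev functions} applied with $r=1/q$: that proposition tells us every $u \in W^{1/q,q}(E,\R^d)$ satisfies $[u]_{B^{1/q,q}(E,\R^d)}=0$, i.e.\ the infinitesimal $B^{1/q,q}$-seminorm vanishes, while I intend to show that a positive-$\mathcal{H}^{N-1}$ jump set forces this same seminorm to be strictly positive on some compact set $K\subset\Omega$, yielding the contradiction.

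\textbf{Step 1 (Localization at a density point).} Assume $\mathcal{H}^{N-1}(\mathcal{J}_u)>0$. By decomposing $\mathcal{J}_u=\bigcup_{m\in\N}\{x\in \mathcal{J}_u:|u^+(x)-u^-(x)|\geq 1/m\}$, I may replace $\mathcal{J}_u$ by a subset on which the jump height is uniformly bounded below by some $\delta_0>0$ and still of positive $\mathcal{H}^{N-1}$-measure. The upper density theorem for $\mathcal{H}^{N-1}$-measurable sets (in $\R^N$) then yields a point $x_0$ and radii $r_k\searrow 0^+$ with
$$\mathcal{H}^{N-1}\bigl(\mathcal{J}_u\cap B_{r_k}(x_0)\bigr)\;\geq\; c_0\, r_k^{N-1}$$
for some $c_0>0$. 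After possibly shrinking $r_k$ so that $\overline{B_{r_k}(x_0)}\subset\Omega$, I fix $K:=\overline{B_{r_K}(x_0)}$ for one such $k$ and record $\mathcal{H}^{N-1}(K\cap\mathcal{J}_u)>0$ together with the uniform lower bound $|u^+-u^-|\geq\delta_0$ on $K\cap\mathcal{J}_u$.

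\textbf{Step 2 (One-sided lower bound by the jump variation).} The main technical step is to prove
$$\limsup_{\e\to 0^+}\int_K\frac{1}{\e^N}\int_{K\cap B_\e(x)}\frac{|u(x)-u(y)|^q}{|x-y|}\,dy\,dx \;\geq\; c\,\delta_0^q\, \mathcal{H}^{N-1}(K\cap\mathcal{J}_u),$$
with $c>0$ dimensional. The argument mirrors the lower-bound half of Lemma \ref{lem: limit for Besov integral in terms of jumps}, but bypasses the $BV$ hypothesis because only a \emph{lower} bound is sought. Concretely, at each jump point $x_*\in K\cap\mathcal{J}_u$ the one-sided approximate limits provide, on half-balls $B_r^\pm(x_*,\nu_u(x_*))$ with $r$ small, ``good sets'' of relative measure $\geq 1-o_r(1)$ on which $|u-u^\pm(x_*)|<\delta_0/4$. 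Cross-slab pairs $(x,y)$ with $x$ in the minus-side good set, $y$ in the plus-side good set, and $|x-y|\leq\e$ satisfy $|u(x)-u(y)|\geq\delta_0/2$; computing the contribution of such pairs to the inner integral and rescaling to $B_1$ yields a scale-invariant positive constant. A Vitali covering of $K\cap\mathcal{J}_u$ by such balls then sums these local contributions to produce the factor $\mathcal{H}^{N-1}(K\cap\mathcal{J}_u)$.

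\textbf{Step 3 (Contradiction) and main obstacle.} Since $u\in W^{1/q,q}_{\text{loc}}(\Omega,\R^d)$ implies $u\in W^{1/q,q}(K,\R^d)$, Proposition \ref{prop:Negligibility of the upper infinitesimal $B^{r,q}$-seminorm for Sobolev functions} gives $[u]_{B^{1/q,q}(K,\R^d)}=0$, i.e.\ the $\limsup$ in Step 2 equals $0$; combined with Step 2 this forces $\mathcal{H}^{N-1}(K\cap\mathcal{J}_u)=0$, contradicting Step 1. The main obstacle lies in Step 2: the classical derivation of jump-integral lower bounds in this paper uses the $BV$ structure to bound the complementary (non-jump) contributions. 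Here, one must instead show that the complementary pairs are \emph{irrelevant for the lower bound} — only the cross-slab pairs in the good sets are used, while no upper control on the remaining part of the integral is needed. The delicate point is the uniform control of constants across the Vitali cover as the scale tends to zero; this should follow from the scale-invariance of the rescaled cross-slab kernel integral combined with $o_r(1)$-smallness of the bad-set fractions supplied by the one-sided approximate limits.
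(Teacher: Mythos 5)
Your overall strategy (showing the jump integral lower-bounds the vanishing $B^{1/q,q}$-seminorm) is the right idea, but you've overlooked that the paper hands you the key inequality for free, which both closes the gap you flag in Step~2 and eliminates the need for Steps~1 and the contradiction framing entirely. The lower bound you try to re-derive is exactly Theorem~\ref{thm:estimate for the jump q-variation} (Theorem~1.3 in \cite{PA}), quoted in the same section: for \emph{every} $u\in L^1_{\text{loc}}(\Omega,\R^d)$, with no $BV$ hypothesis, one has
\begin{equation*}
\left(\tfrac{1}{N}\int_{S^{N-1}}|z_1|\,d\Haus^{N-1}(z)\right)\int_{\mathcal{J}_u}|u^+-u^-|^q\,d\Haus^{N-1}
\leq \liminf_{\e\to 0^+}\int_{\Omega}\int_{\Omega\cap B_{\e}(x)}\frac{1}{\e^N}\frac{|u(x)-u(y)|^q}{|x-y|}\,dy\,dx,
\end{equation*}
which is even stronger than what you need (a $\liminf$ rather than a $\limsup$). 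Once you cite this, the argument becomes direct: fix any open $\Omega_0\subset\subset\Omega$; its closure $K$ is compact in $\Omega$, so $u|_K\in W^{1/q,q}(K,\R^d)$, and Proposition~\ref{prop:Negligibility of the upper infinitesimal $B^{r,q}$-seminorm for Sobolev functions} with $r=1/q$ gives $[u]_{B^{1/q,q}(K,\R^d)}=0$; restricting the inner and outer domains from $K$ to $\Omega_0$ only decreases the integral, so the $\liminf$ over $\Omega_0$ vanishes; Theorem~\ref{thm:estimate for the jump q-variation} applied to $\Omega_0$ then forces $\int_{\mathcal{J}_u\cap\Omega_0}|u^+-u^-|^q\,d\Haus^{N-1}=0$, and since $|u^+-u^-|>0$ on $\mathcal{J}_u$ by Definition~\ref{def:approximate jump points}, this gives $\mathcal{H}^{N-1}(\mathcal{J}_u\cap\Omega_0)=0$; exhausting $\Omega$ by such $\Omega_0$ finishes.

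As for your sketched Step~2: the cross-slab idea is sound in spirit, but what you call the ``main obstacle'' is real and nontrivial — the radii at which the half-ball good sets become fat are point-dependent, the Vitali subfamily is countable, and the scale parameter $\e$ must be sent to zero \emph{after} the sum over infinitely many balls is controlled; one also needs to first pass to a Borel subset of $\mathcal{J}_u$ of finite positive $\mathcal{H}^{N-1}$-measure before invoking the density theorem, since no $\sigma$-finiteness of $\mathcal{J}_u$ is available without $BV$. These details are precisely what the cited Theorem~\ref{thm:estimate for the jump q-variation} packages up, so attempting to re-prove it from scratch here is both unnecessary and underspecified as written.
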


\begin{corollary}($\mathcal{H}^{N-1}$-Negligibility of the Jump Set of $u\in B^r_{q,\infty},rq>1$)

Let $r\in(0,1)$ and $q\in[1,\infty)$ be such that $rq>1$ and $u\in B^r_{q,\infty}(\R^N,\R^d)$. Then $\mathcal{H}^{N-1}(\mathcal{J}_u)=0$.
\end{corollary}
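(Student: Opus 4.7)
The plan is to chain the two results that immediately precede the corollary. First I would note that the hypothesis $rq>1$ together with $r\in(0,1)$ forces $q>1$ and, more usefully, $1/q<r$. This gives room to choose an intermediate smoothness parameter $r'$ with $1/q\le r'<r$; in fact $r'=1/q$ itself satisfies the strict inequality $0<r'<r<1$ needed to invoke Lemma~\ref{lem:Besov spaces lie in fractional Sobolev}.

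Next, applying Lemma~\ref{lem:Besov spaces lie in fractional Sobolev} with the pair $(r',s)=(1/q,\,r)$, one obtains
\begin{equation*}
u\in B^{r}_{q,\infty}(\R^N,\R^d)\subset W^{1/q,q}_{\text{loc}}(\R^N,\R^d).
\end{equation*}
At this point the hypothesis of Theorem~\ref{thm:negligibility of the jump set for fructional sobolev spaces} is met: we have $q\in(1,\infty)$ and $u\in W^{1/q,q}_{\text{loc}}(\R^N,\R^d)$ on the open set $\Omega=\R^N$. The theorem then yields $\mathcal{H}^{N-1}(\mathcal{J}_u)=0$, which is the desired conclusion.

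There is essentially no obstacle here; the statement is a bookkeeping consequence of the sharp embedding $B^{r}_{q,\infty}\hookrightarrow W^{r',q}_{\text{loc}}$ for $r'<r$ combined with the deep jump-negligibility result for $W^{1/q,q}_{\text{loc}}$ quoted from \cite{PA}. The only care required is the algebraic manipulation $rq>1\Longleftrightarrow 1/q<r$, which both guarantees $q>1$ (so that Theorem~\ref{thm:negligibility of the jump set for fructional sobolev spaces} applies) and allows the choice $r'=1/q$ in Lemma~\ref{lem:Besov spaces lie in fractional Sobolev}. No further estimate or construction is needed.
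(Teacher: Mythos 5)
Your proof is correct and follows the same two-step chain as the paper: the embedding $B^{r}_{q,\infty}\subset W^{1/q,q}_{\text{loc}}$ from Lemma~\ref{lem:Besov spaces lie in fractional Sobolev} (valid because $rq>1$ gives $1/q<r$), followed by the jump-negligibility result of Theorem~\ref{thm:negligibility of the jump set for fructional sobolev spaces}. The only addition you make beyond the paper's terser argument is spelling out the elementary observation that $rq>1$ with $r<1$ forces $q>1$, which is indeed needed and correctly noted.
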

\begin{proof}
By Lemma \ref{lem:Besov spaces lie in fractional Sobolev} we have $B^r_{q,\infty}(\R^N,\R^d)\subset W^{1/q,q}_{\text{loc}}(\R^N,\R^d)$, so by Theorem \ref{thm:negligibility of the jump set for fructional sobolev spaces} we get $\mathcal{H}^{N-1}(\mathcal{J}_u)=0$.
\end{proof}

\begin{remark}(Functions in $B^r_{q,\infty},rq\leq 1,$ Have Jumps)
If $r\in(0,1)$, $q\in[1,\infty)$ are such that $rq\leq 1$, then, as was proved in Remark \ref{rem: BV and bounded lies in Besov}, $BV(\R^N,\R^d)\cap L^\infty(\R^N,\R^d)\subset B^r_{q,\infty}(\R^N,\R^d)$. Therefore, for functions $u\in B^r_{q,\infty}(\R^N,\R^d)$, the measure of the jump set with respect to Hausdorff measure, $\mathcal{H}^{N-1}(\mathcal{J}_u)$, can be any value in the interval $[0,\infty]$.
\end{remark}

\section{Open questions}
\begin{question}
\label{qu:existence about limit}
Let $1<q<\infty$ and $u\in B^{1/q}_{q,\infty}(\R^N,\R^d)$. Does the following limit exist?
\begin{equation}
\label{qu:existence of limit I}
\lim_{\e\to 0^+}\int_{S^{N-1}}\int_{\R^N}\frac{|u(x+\e n)-u(x)|^q}{\e}dxd\mathcal{H}^{N-1}(n).
\end{equation}
Note that if the limit
\begin{equation}
\lim_{\e\to 0^+}\int_{\R^N}\frac{|u(x+\e n)-u(x)|^q}{\e}dx
\end{equation}
exists for $\mathcal{H}^{N-1}$-almost every $n\in S^{N-1}$, then the limit in \eqref{qu:existence of limit I} exists by Dominated Convergence Theorem:
Since $u\in B^{1/q}_{q,\infty}(\R^N,\R^d)$, then we get by Definition \ref{def:Besov seminorm} that
\begin{equation}
\sup_{n\in S^{N-1}}\sup_{\e\in(0,\infty)}\int_{\R^N}\frac{|u(x+\e n)-u(x)|^q}{\e}dx\leq \left[u\right]^q_{B^{1/q}_{q,\infty}(\R^N,\R^d)}<\infty,
\end{equation}
so by Dominated Convergence Theorem we have the existence of the limit in \eqref{qu:existence of limit I} and
\begin{multline}
\lim_{\e\to 0^+}\int_{S^{N-1}}\int_{\R^N}\frac{|u(x+\e n)-u(x)|^q}{\e}dxd\mathcal{H}^{N-1}(n)
\\
=\int_{S^{N-1}}\lim_{\e\to 0^+}\int_{\R^N}\frac{|u(x+\e n)-u(x)|^q}{\e}dxd\mathcal{H}^{N-1}(n).
\end{multline}
\end{question}
\begin{question}
Let $1<q<\infty$, $u\in B^{1/q}_{q,\infty}(\R^N,\R^d)$, $\eta\in W^{1,1}(\R^N)$. Does the following inequality hold?
\begin{multline}
\label{qu:inqequality containing Gagliardo constant and the q-jump variation}
\liminf_{\e\to
0^+}\frac{1}{|\ln{\e}|}\left[u_\e\right]^q_{W^{1/q,q}(\R^N,\R^d)}
\\
\geq \left|\int_{\R^N}\eta(z)dz\right|^q\left(\int_{S^{N-1}}|z_1|~d\Haus^{N-1}(z)\right)\int_{\mathcal{J}_u}
\Big|u^+(x)-u^-(x)\Big|^q d\mathcal{H}^{N-1}(x).
\end{multline}
\end{question}
\begin{question}
Let $1<q<\infty$, $u\in L^q(\R^N,\R^d)$. Does the following implication hold?
\begin{equation}
\label{qu:implication for Besov property}
\forall\eta\in W^{1,1}(\R^N),\quad \limsup_{\e\to
0^+}\frac{1}{|\ln{\e}|}\left[u_\e\right]^q_{W^{1/q,q}(\R^N,\R^d)}<\infty\quad\Longrightarrow \quad u\in B^{1/q}_{q,\infty}(\R^N,\R^d).
\end{equation}
\end{question}

\begin{theorem}(Theorem 1.3 in \cite{PA})
\label{thm:estimate for the jump q-variation}
Let $1\leq q<\infty$,
$\Omega\subset \mathbb{R}^N$ be an open set and $u\in L^1_{\text{loc}}(\Omega,\R^d)$. Then,
\begin{multline}
\label{eq:inequality30}
\left(\frac{1}{N}\int_{S^{N-1}}|z_1|~d\Haus^{N-1}(z)\right)\int_{\mathcal{J}_u}|u^+(x)-u^-(x)|^qd\Haus^{N-1}(x)
\\
\leq \liminf_{\e\to 0^+}\int_{\Omega}\left(\int_{\Omega\cap B_{\e}(x)}\frac{1}{\e^N}\frac{|u(x)-u(y)|^q}{|x-y|}dy\right)dx.
\end{multline}
\end{theorem}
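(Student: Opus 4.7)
The plan is to prove this lower bound by localizing at each point of $\mathcal{J}_u$ and comparing the double integral to that of an idealized jump function supported on the approximate tangent hyperplane. Fix $\delta \in (0,1)$. A Vitali covering argument applied to $\mathcal{J}_u$ reduces the problem to proving the local inequality: for $\mathcal{H}^{N-1}$-almost every $x_0 \in \mathcal{J}_u$ one can choose arbitrarily small $r > 0$ with $\overline{B_r(x_0)} \subset \Omega$ such that
\begin{equation*}
\liminf_{\e\to 0^+} \int_{B_r(x_0)}\int_{B_r(x_0)\cap B_\e(x)} \frac{|u(x)-u(y)|^q}{\e^N |x-y|}\, dy\, dx \geq (1-\delta)\, C_N\, |u^+(x_0)-u^-(x_0)|^q\, \mathcal{H}^{N-1}\bigl(\mathcal{J}_u\cap B_r(x_0)\bigr),
\end{equation*}
where $C_N := \frac{1}{N}\int_{S^{N-1}}|z_1|\,d\mathcal{H}^{N-1}(z)$. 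Summing over a disjoint Vitali-type subfamily that covers $\mathcal{H}^{N-1}$-almost all of $\mathcal{J}_u$, discarding the contribution on the complement (which is nonnegative), and then letting $\delta \to 0^+$ will produce the theorem.

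For the local inequality, write $\nu := \nu_u(x_0)$, $m := |u^+(x_0)-u^-(x_0)|$, and for $\eta \in (0,m/4)$ introduce the ``good sets''
\begin{equation*}
E_r^\pm := \bigl\{y \in B_r(x_0) : \pm\nu\cdot(y-x_0) > 0,\ |u(y)-u^\pm(x_0)| < \eta\bigr\}.
\end{equation*}
The definition of approximate one-sided limits guarantees that the complementary density $\mathcal{L}^N(\{y\in B_r(x_0):\pm\nu\cdot(y-x_0)>0\}\setminus E_r^\pm)/r^N$ tends to $0$ as $r\to 0^+$. Restricting the outer and inner integrations to the disjoint good configurations $(x,y)\in E_r^-\times E_r^+$ (and adding the symmetric contribution) only decreases the integral by nonnegativity, and there $|u(x)-u(y)|^q \geq (m-2\eta)^q$. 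Changing variables $y = x+\e z$ with $z\in B_1(0)$ and using Fubini reduces the restricted integral to a positive multiple of
\begin{equation*}
(m-2\eta)^q \int_{B_1(0)} \frac{1}{|z|}\cdot\frac{\mathcal{L}^N\bigl(\{x\in E_r^-:x+\e z\in E_r^+\}\bigr)}{\e}\, dz.
\end{equation*}
For $\e\ll r$ and $z\in B_1(0)$ with $\nu\cdot z$ of definite sign, the inner set is, modulo errors controlled by the density-one property of $E_r^\pm$ and the flatness of $\mathcal{J}_u$ near $x_0$, a slab of width $\e|\nu\cdot z|$ in the $\nu$-direction whose $(N-1)$-transverse area is arbitrarily close to $\mathcal{H}^{N-1}(\mathcal{J}_u\cap B_r(x_0))$.

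Substituting the slab estimate and computing $\int_{B_1(0)}|z_1|/|z|\,dz = \frac{1}{N}\int_{S^{N-1}}|n_1|\,d\mathcal{H}^{N-1}(n)$ via polar coordinates produces the constant $C_N$ and the desired local inequality after choosing $\eta$ and $r$ small enough in terms of $\delta$. The main obstacle I expect is uniformity in $\e$: the slab-area and density-one estimates must hold for some $r$ chosen independently of $\e$, which requires a quantitative control of the rate at which the complementary density vanishes and the rate at which $\mathcal{J}_u\cap B_r(x_0)$ flattens to its tangent hyperplane. Once this uniformity is secured, the disjoint Vitali subfamily whose total $\mathcal{H}^{N-1}$-measure approaches $\mathcal{H}^{N-1}(\mathcal{J}_u)$ and the nonnegativity of the integrand outside the cover complete the argument.
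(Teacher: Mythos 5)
The paper does not actually prove this statement; it cites it as Theorem~1.3 of \cite{PA}, so there is no in-paper proof to compare against. Your blind proposal nevertheless has the right skeleton (good sets from the one-sided approximate limits, slab volume, Vitali covering, and the polar-coordinate identity $\int_{B_1(0)}\frac{|z_1|}{|z|}\,dz=\frac{1}{N}\int_{S^{N-1}}|n_1|\,d\mathcal{H}^{N-1}(n)$ does check out), but the gap you flag at the end is not a technicality that ``quantitative control of the rates'' will resolve --- it is a genuine failure of the order of limits as written. After fixing $r$, the slab-area main term is of order $\e\,|\nu\cdot z|\,r^{N-1}$, which vanishes as $\e\to 0^+$, while the exceptional-set error coming from $H^\pm_r\setminus E^\pm_r$ is controlled only by a quantity $\sigma(r)=o(r^N)$ that is a fixed constant once $r$ is fixed. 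Since the exceptional set can (and generically does) concentrate along the dividing hyperplane, it can intersect the slab $\{-\e(\nu\cdot z)<\nu\cdot(x-x_0)<0\}$ in a set whose measure is comparable to the full slab volume $\e\,|\nu\cdot z|\,r^{N-1}$ for all sufficiently small $\e$. Thus for any fixed $r$, the error overwhelms the main term as $\e\to 0^+$, and the pointwise $\liminf_\e$ estimate you want on $B_r(x_0)$ is not obtained.

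The standard way around this, consistent with the structure used elsewhere in the paper (compare Theorem~\ref{lem: limit for Besov integral in terms of jumps,introduction} and its proof), is to disintegrate first: by Fubini and Fatou,
\begin{equation*}
\liminf_{\e\to 0^+}\int_{\Omega}\int_{\Omega\cap B_\e(x)}\frac{|u(x)-u(y)|^q}{\e^N|x-y|}\,dy\,dx
\geq \int_{B_1(0)}\frac{1}{|z|}\;\liminf_{\e\to 0^+}\int_{\Omega}\chi_\Omega(x+\e z)\frac{|u(x)-u(x+\e z)|^q}{\e}\,dx\,dz,
\end{equation*}
and then one only needs the single-direction bound $\liminf_{h\to 0^+}\int_\Omega\chi_\Omega(x+hn)\frac{|u(x)-u(x+hn)|^q}{h}\,dx\geq \int_{\mathcal{J}_u}|u^+-u^-|^q|\nu_u\cdot n|\,d\mathcal{H}^{N-1}$ for each $n\in S^{N-1}$; the sphere integral then delivers the constant. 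The directional estimate avoids the $r$-versus-$\e$ clash because the only length scale in it is $h$, so the blow-up at a jump point can be done along the $n$-direction via one-dimensional slicing or a genuine $L^q$ blow-up $u(x_0+\e\,\cdot\,)\to u^\pm(x_0)$ plus lower semicontinuity, rather than by measure-of-slab estimates inside a fixed ball. A second, smaller point: since the hypothesis is only $u\in L^1_{\mathrm{loc}}(\Omega,\R^d)$, the implicit use of upper-density and flatness properties of $\mathcal{J}_u$ at $\mathcal{H}^{N-1}$-a.e.\ point needs the (nontrivial) countable $(N-1)$-rectifiability of $\mathcal{J}_u$ for general $L^1_{\mathrm{loc}}$ functions, which is not available from the definition of approximate jump points alone and must be cited or proved separately.
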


\begin{remark}
If the limit in \eqref{qu:existence of limit I} exists, then the answer on the other questions is yes: If the limit in  \eqref{qu:existence of limit I} exists, then we get \eqref{qu:inqequality containing Gagliardo constant and the q-jump variation} from equation \eqref{eq:equation110}, Theorem \ref{thm:estimate for the jump q-variation} and Proposition \ref{prop:calculation of NC_N}; and we get \eqref{qu:implication for Besov property} from equation \eqref{eq:equation110} and Theorem \ref{thm:characterization of Besov functions via double integral}.
\end{remark}

\begin{question}
Assume $r\in(0,1)$, $q\in[1,\infty)$ and $u\in B^r_{q,\infty}(\R^N,\R^d)$. Does the following limit hold?
\begin{equation}
\lim_{l\to \infty}\left(\sup_{h\in \R^N\setminus\{0\}}\int_{\Set{x\in\R^N}[|u(x)|>l]}\frac{|u(x+h)-u(x)|^q}{|h|^{rq}}dx\right)=0.
\end{equation}
\end{question}

\section{Appendix}

\subsection{Aspects of Measure Theory}
\begin{lemma}(Countability of Measurable Sets with Finite Measure)

\label{lem: F is at most countable}
Let $(X,\mathcal{E},\sigma)$ be a measure space, which means that $X$ is a set, $\mathcal{E}$ is a sigma-algebra on $X$ and
 $\sigma:\mathcal{E}\to [0,\infty]$ is a measure. Assume that $E\in \mathcal{E}$ is such that $\sigma(E)<\infty$.
Assume $\{E_\alpha\}_{\alpha\in I}$ is a family of sets, where $I$ is a set of indexes, such that for every $\alpha\in I$, $E_\alpha\subset E$,$E_\alpha\in \mathcal{E}$, and $E_\alpha \cap E_{\alpha'}=\emptyset$ for every different $\alpha,\alpha'\in I$. Define the set
\begin{equation}
F:=\bigg\{\alpha\in I:\sigma(E_\alpha)>0\bigg\}.
\end{equation}
Then, $F$ is at most countable.
\end{lemma}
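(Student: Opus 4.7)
The plan is to decompose $F$ as a countable union of finite sets, which gives at-most countability. For each $n\in\mathbb{N}$, I would define
\begin{equation}
F_n:=\Set{\alpha\in I}[\sigma(E_\alpha)\geq 1/n].
\end{equation}
Since every $\alpha\in F$ satisfies $\sigma(E_\alpha)>0$, the Archimedean property provides some $n$ with $1/n\leq \sigma(E_\alpha)$, so $\alpha\in F_n$. Therefore $F=\bigcup_{n=1}^\infty F_n$, and it suffices to show each $F_n$ is finite.

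To prove finiteness of $F_n$, I would argue directly by bounding the cardinality. For any finite collection of distinct indices $\alpha_1,\dots,\alpha_k\in F_n$, the pairwise disjointness assumption together with finite additivity of $\sigma$ and monotonicity $\bigcup_{i=1}^k E_{\alpha_i}\subset E$ yields
\begin{equation}
\sigma(E)\geq \sigma\left(\bigcup_{i=1}^k E_{\alpha_i}\right)=\sum_{i=1}^k \sigma(E_{\alpha_i})\geq \frac{k}{n}.
\end{equation}
Hence $k\leq n\,\sigma(E)$. Since this bound holds for every finite subcollection of $F_n$, the cardinality of $F_n$ itself satisfies $|F_n|\leq n\,\sigma(E)<\infty$, using $\sigma(E)<\infty$.

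Finally, a countable union of finite sets is at most countable, so $F=\bigcup_{n=1}^\infty F_n$ is at most countable, completing the proof. There is essentially no obstacle here; the only mild subtlety is making sure one bounds $|F_n|$ via finite subcollections rather than assuming the existence of an infinite enumeration inside $F_n$ from the start, but this is handled automatically by the cardinality bound derived above.
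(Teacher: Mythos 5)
Your proof is correct and follows essentially the same strategy as the paper: decompose $F$ into the countable union of sets $F_n$ of indices with measure bounded below by $1/n$, and show each $F_n$ is finite using disjointness, countable (or finite) additivity, and $\sigma(E)<\infty$. The only cosmetic difference is that you derive an explicit cardinality bound $|F_n|\leq n\,\sigma(E)$ via finite subcollections, whereas the paper argues by contradiction with a countably infinite subcollection; both are equally valid.
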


\begin{proof}
Let us decompose $F=\cup_{k\in\N} F_k,F_k:=\left\{\alpha\in I:\sigma(E_\alpha)>\frac{1}{k}\right\}$. For each $k\in \mathbb{N}$ the set $F_k$ is finite. Otherwise, there exists a sequence
$\{\alpha_j\}_{j\in\N}\subset F_k$ of different elements such that
\begin{equation}
\infty>\sigma(E)\geq \sigma\left(\bigcup_{j\in\N} E_{\alpha_j}\right)=\sum_{j\in\N} \sigma(E_{\alpha_j})\geq \sum_{j\in\N}\frac{1}{k}=\infty.
\end{equation}
This contradiction shows that each $F_k$ is a finite set and hence $F$ is at most countable set as a countable union of finite sets.
\end{proof}

\begin{lemma}(The Compact Negligible Boundary Property)
\label{lem:compact negligible boundary property}

Let $(X,d)$ be a locally compact metric space and let $\mu$ be a positive Borel measure on $X$ which is finite on compact sets. Then for every compact set $K\subset X$ there exists a compact set $E\subset X$ such that $K\subset E$ and $\mu(\partial E)=0$.
\end{lemma}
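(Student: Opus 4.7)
The plan is to take $E$ to be a closed $t$-neighborhood $K_t := \{x \in X : d(x,K) \leq t\}$ of $K$ for a carefully chosen small $t > 0$. The key observation is that the level sets $\{d(\cdot,K) = t\}$ for different values of $t$ are pairwise disjoint, so finite measure on a compact ``slab'' combined with the previous lemma (Countability of Measurable Sets with Finite Measure) will force all but countably many of them to be $\mu$-null.

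First I would use local compactness to produce a threshold $t_0 > 0$ such that $K_t$ is compact for every $t \in (0,t_0)$. For each $x \in K$ pick $r_x > 0$ with $\overline{B_{r_x}(x)}$ compact; by compactness of $K$ extract a finite subcover $K \subset \bigcup_{i=1}^m B_{r_{x_i}/2}(x_i)$, and set $t_0 := \tfrac{1}{2}\min_i r_{x_i}$. For any $t \in (0,t_0)$ and any $y \in K_t$ there is $x \in K$ with $d(y,x) \leq t$, and $x \in B_{r_{x_i}/2}(x_i)$ for some $i$, whence $d(y,x_i) \leq t + r_{x_i}/2 \leq r_{x_i}$. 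Thus $K_t \subset \bigcup_i \overline{B_{r_{x_i}}(x_i)}$, so $K_t$ is a closed subset of a compact set and is itself compact.

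Next, consider the disjoint family $\{S_t\}_{t \in (0,t_0)}$ with $S_t := \{x \in X : d(x,K) = t\}$. Each $S_t$ is a Borel subset of the compact set $K_{t_0}$, which has finite $\mu$-measure by hypothesis. Applying the Lemma on Countability of Measurable Sets with Finite Measure to the family $\{S_t\}_{t \in (0,t_0)}$ inside $K_{t_0}$ shows that $\{t \in (0,t_0) : \mu(S_t) > 0\}$ is at most countable, hence we can pick some $t^* \in (0,t_0)$ with $\mu(S_{t^*}) = 0$.

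It remains to verify $\partial K_{t^*} \subset S_{t^*}$. Since the map $x \mapsto d(x,K)$ is continuous, $\{d(\cdot,K) < t^*\}$ is open and contained in $K_{t^*}$, while $\{d(\cdot,K) > t^*\}$ is open and disjoint from $K_{t^*}$. Therefore every boundary point $x$ of $K_{t^*}$ must satisfy $d(x,K) = t^*$, giving $\partial K_{t^*} \subset S_{t^*}$ and hence $\mu(\partial K_{t^*}) \leq \mu(S_{t^*}) = 0$. Setting $E := K_{t^*}$ completes the proof. The only mild obstacle is the compactness step, which is a routine consequence of local compactness plus finite covering; the measure-theoretic heart is the disjoint-family argument, which is exactly the content of the preceding lemma.
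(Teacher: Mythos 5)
Your proof is correct, and it is essentially the paper's strategy seen in a mirror: you parametrize an outward family of closed neighborhoods $K_t=\{d(\cdot,K)\le t\}$ of $K$, while the paper first takes a precompact open set $W\supset K$ and shrinks inward via $W_\e=\{x\in W:d(x,\partial W)\ge \e\}$. In both cases the heart of the matter is the same: the boundaries land in the pairwise disjoint level sets of a $1$-Lipschitz distance function, all contained in a fixed compact (hence finite-$\mu$-measure) set, so the countability lemma forces $\mu$-nullity for all but countably many parameter values. Your version has two small advantages: by working with $d(\cdot,K)$ you avoid the case split the paper needs when $\partial W=\emptyset$ (or equivalently when $W=X$), and your finite-subcover argument for the compactness of $K_{t_0}$ is more explicit than the paper's appeal to the precompact neighborhood. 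One tiny stylistic point: you announce compactness of $K_t$ only for $t\in(0,t_0)$, but you then invoke compactness of $K_{t_0}$ itself as the ambient finite-measure set; your covering estimate actually does yield this (since $t_0+r_{x_i}/2\le r_{x_i}$ still holds and $d(x,x_i)<r_{x_i}/2$ is strict), but it would be cleaner either to state the range as $(0,t_0]$ or to pick any $t_1<t_0$ and run the disjoint-family argument inside $K_{t_1}$.
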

\begin{proof}
Since $K$ is compact and $X$ is locally compact, then there exists an open set $W$ such that $K\subset W$ and $\overline{W}$ is compact, where $\overline{W}$ is the topological closure of $W$. Note that since $\partial W\subset X\setminus W$, then $d\left(\partial W,K\right)\geq d\left(X\setminus W,K\right)$. Since $K$ is compact and $X\setminus W$ is closed and  $K\cap (X\setminus W)=\emptyset$, then $d\left(X\setminus W,K\right)>0$. Therefore, $D:=d\left(\partial W,K\right)>0$. For each $\e\in (0,\infty)$ we define a set
\begin{align}
W_\e:=\bigg\{x\in W:d(x,\partial W)\geq \e\bigg\}.
\end{align}
Note that $K\subset W_\e$ for every $\e\in (0,D)$.
If $\partial W=\emptyset$, then we can choose $E=\overline{W}$, because $E$ is compact and since $\partial E\subset \partial W=\emptyset$, then $\mu(\partial E)=0$. So we can assume that $\partial W\neq\emptyset$. Notice that for a general non-empty set $S\subset X$, the map $f(x):=d(x,S),f:X\to [0,\infty)$ is Lipschitz and so continuous. Thus, the set $W_\e$ is a closed set. Since $W_\e$ is a subset of the compact set $\overline{W}$, then it is compact. For every different $\e,\e'\in (0,D)$ we have $\partial W_\e\cap \partial W_{\e'}=\emptyset$: since $W$ is open and the distance function $f$ is continuous, then $W\cap \big\{x\in X:d(x,\partial W)>\e\big\}$ is an open set, and it is a subset of $W_\e$. Therefore, $\big\{x\in W:d(x,\partial W)>\e\big\}\subset W^{\mathrm{o}}_\e$, where $W^{\mathrm{o}}_\e$ is the topological interior of $W_\e$.  Hence,
\begin{equation}
\partial W_\e=W_\e\setminus W^{\mathrm{o}}_\e\subset W_\e\setminus \big\{x\in W:d(x,\partial W)>\e\big\}=\big\{x\in W:d(x,\partial W)=\e\big\},
\end{equation}
and the sets $\big\{x\in W:d(x,\partial W)=\e\big\}$ are disjoint for different numbers $\e$. Using Lemma \ref{lem: F is at most countable} with the family of sets $\{\partial W_\e\}_{\e\in (0,D)}\subset W$, $\mu(W)<\infty$, we derive the existence of $\e\in (0,D)$ such that $\mu(\partial W_{\e})=0$. We choose $E:=W_\e$.
\end{proof}

\begin{lemma}(The Open Negligible Boundary Property)
\label{lem:the open negligible boundary property}

Let $(X,d)$ be a metric space and let $\mu$ be a finite positive Borel measure on $X$. Let $C\subset X$ be a closed set. Then, there exists a monotone decreasing sequence of open sets $\Omega_k\subset X$ such that for every $k\in\N$ $\mu(\partial \Omega_k)=0$, and $C=\bigcap_{k\in\N}\Omega_k$.
\end{lemma}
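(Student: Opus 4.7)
The plan is to construct each $\Omega_k$ as an open $\e$-neighborhood of $C$ with $\e < 1/k$, chosen so that its boundary is $\mu$-null. The key observation is that the boundaries of these neighborhoods live on the disjoint level sets of the distance function $d(\cdot,C)$, and since $\mu$ is finite, Lemma \ref{lem: F is at most countable} forces all but countably many of them to have zero $\mu$-measure.

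More precisely, define for each $\e>0$
\begin{equation*}
U_\e := \bigl\{x\in X : d(x,C)<\e\bigr\},\qquad L_\e := \bigl\{x\in X : d(x,C)=\e\bigr\}.
\end{equation*}
Since the map $x\mapsto d(x,C)$ is $1$-Lipschitz, $U_\e$ is open and $L_\e$ is closed. The first step will be to verify $\partial U_\e\subset L_\e$: if $x\in\overline{U_\e}$ then continuity gives $d(x,C)\leq\e$, and if $x\notin U_\e$ then $d(x,C)\geq \e$, so equality holds on $\partial U_\e$. The second step applies Lemma \ref{lem: F is at most countable} to the pairwise disjoint family $\{L_\e\}_{\e\in(0,1)}\subset X$ under the finite measure $\mu$, yielding that $F:=\{\e>0 : \mu(L_\e)>0\}$ is at most countable. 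In particular, for every $k\in\N$ the interval $(0,1/k)\setminus F$ is uncountable, so I can pick $\e_k\in (0,1/k)\setminus F$ with $\e_{k+1}<\e_k$, and set $\Omega_k:=U_{\e_k}$.

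For the three required properties: openness and the null-boundary condition $\mu(\partial\Omega_k)\leq \mu(L_{\e_k})=0$ are immediate from the construction; monotone decrease follows because $\e\mapsto U_\e$ is monotone increasing and $\e_{k+1}<\e_k$. For the intersection, $C\subset U_\e$ for every $\e>0$ gives $C\subset\bigcap_k\Omega_k$; conversely, if $x\notin C$, then closedness of $C$ yields $d(x,C)>0$, and since $\e_k\to 0$ we have $\e_k<d(x,C)$ for all large $k$, so $x\notin \Omega_k$.

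There is no real obstacle here: the argument is a direct transcription of the preceding lemma on countability of disjoint positive-measure sets, combined with the elementary fact that the topological boundary of a sublevel set of a continuous function lies in the corresponding level set. The only mild care needed is to choose $\e_k$ strictly decreasing (not merely $<1/k$) so that the sequence $\Omega_k$ is actually monotone, which is automatic since $(0,1/k)\setminus F$ has uncountably many points below any previously chosen $\e_{k-1}$.
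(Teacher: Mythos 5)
Your proof is correct and takes essentially the same approach as the paper: define open $\e$-neighborhoods of $C$, note that their topological boundaries lie in the pairwise-disjoint level sets of $d(\cdot,C)$, invoke Lemma \ref{lem: F is at most countable} to conclude that only countably many of these have positive $\mu$-measure, and extract a decreasing sequence of good radii. You are in fact slightly more explicit than the paper in enforcing that $\e_k$ be strictly decreasing so that $\Omega_k$ is genuinely monotone; the only small point you omit is the degenerate case $C=\emptyset$ (where $d(\cdot,C)$ is not a real-valued function), which the paper disposes of separately by taking $\Omega_k=\emptyset$.
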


\begin{proof}
Define for every $\e\in (0,\infty)$
\begin{equation}
\Omega_\e:=\big\{x\in X:d(x,C)<\e\big\}.
\end{equation}
Assume that $C\neq \emptyset$; if $C=\emptyset$, then we can choose $\Omega_k=\emptyset$. Since the function $x\longmapsto d(x,C)$ is continuous, then $\Omega_\e$ is an open set. We have
\begin{equation}
\partial \Omega_\e=\overline{\Omega}_\e\setminus \Omega_\e\subset\big\{x\in X:d(x,C)\leq\e\big\}\setminus \Omega_\e=\big\{x\in X:d(x,C)=\e\big\}.
\end{equation}
Therefore, for every different $\e_1,\e_2\in(0,\infty)$ we get $\partial \Omega_{\e_1}\cap\partial \Omega_{\e_2}=\emptyset$. Thus, we get by Lemma \ref{lem: F is at most countable} for the family $\{\partial\Omega_\e\}_{\e\in(0,\infty)}$ the existence of an infinitesimal sequence $\e_k\in(0,\infty)$ such that $\mu(\partial\Omega_{\e_k})=0$. Since $C$ is closed we have $C=\bigcap_{k\in\N}\Omega_{\e_k}$.
\end{proof}

\begin{proposition}(Extremal Sets for Essential Infimum and Supremum)
\label{prop:extremal sets for essential infimum and supremum}

Let $X$ be a set and $\mu$ be a positive measure on $X$. Let $f:X\to \R$ be a $\mu$-measurable function. Assume that $K\subset X$ is a set with the following two properties:
\begin{enumerate}
\item $\mu(X\setminus K)=0$;
\item For every $\sigma\in (0,\infty)$ and $x_0\in K$, $\mu\left(\Set{x\in X}[|f(x)-f(x_0)|<\sigma]\right)>0$.
\end{enumerate}
Then,
\begin{equation}
\essinf_{x\in X}f(x)=\inf_{x\in K}f(x),\quad \esssup_{x\in X}f(x)=\sup_{x\in K}f(x).
\end{equation}
We call $K$ an {\it extremal set} for the function $f$.
\end{proposition}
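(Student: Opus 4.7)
The plan is to handle the essential supremum equality in detail; the essential infimum identity then follows either by applying the result to $-f$ or by repeating the argument with the roles of inequalities reversed. Set $M := \esssup_{x \in X} f(x)$ and $N := \sup_{x \in K} f(x)$, both of which are allowed to take the value $+\infty$.

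For the inequality $M \leq N$, I would use only property 1: since $\mu(X \setminus K) = 0$ and $f(x) \leq N$ pointwise on $K$, we have $f \leq N$ $\mu$-almost everywhere on $X$, so by the definition of the essential supremum $M \leq N$. For the reverse inequality $M \geq N$, fix an arbitrary $x_0 \in K$ and an arbitrary $\sigma > 0$. Property 2 furnishes the positivity
\begin{equation*}
\mu\left(\left\{x \in X : |f(x) - f(x_0)| < \sigma\right\}\right) > 0,
\end{equation*}
and on this set $f(x) > f(x_0) - \sigma$. Hence $\mu(\{f > f(x_0) - \sigma\}) > 0$, which forces $M \geq f(x_0) - \sigma$ by the definition of the essential supremum. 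Letting $\sigma \to 0^+$ gives $M \geq f(x_0)$, and taking the supremum over $x_0 \in K$ yields $M \geq N$. If $N = +\infty$, the argument adapts trivially: for each $A \in \R$ pick $x_0 \in K$ with $f(x_0) > A$, apply the same reasoning with $\sigma = 1$ to obtain $M > A - 1$, and conclude $M = +\infty$.

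The essential infimum identity is proved by an identical symmetric argument: property 1 gives $\essinf_X f \geq \inf_K f$ (because $f \geq \inf_K f$ a.e.), while for each $x_0 \in K$ and $\sigma > 0$, property 2 combined with the pointwise bound $f(x) < f(x_0) + \sigma$ on the set $\{|f - f(x_0)| < \sigma\}$ produces $\essinf_X f \leq f(x_0) + \sigma$, so $\essinf_X f \leq f(x_0)$, and passing to the infimum over $K$ closes the argument. There is no serious obstacle in this proof; the only minor point is the bookkeeping for the possibly infinite values of $M$, $N$, and $\inf_K f$, handled by the standard conventions on extended reals.
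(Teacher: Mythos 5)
Your proof is correct, and it takes a genuinely different route from the paper's. The paper first reduces $\essinf_X f$ to $\essinf_K f$ using property 1, then argues by contradiction that for every $\mu$-null set $\Theta \subset K$ one has $\inf_{K\setminus\Theta} f = \inf_K f$: assuming a strict gap, it picks $x_0 \in \Theta$ near the infimum, uses property 2 to produce a witness $y \in K\setminus\Theta$ with $|f(y)-f(x_0)|$ small, and derives a contradiction via the telescoping identity
\begin{equation*}
\inf_{K\setminus\Theta}f - \inf_K f = (f(x_0)-\inf_K f)+(f(y)-f(x_0))+(\inf_{K\setminus\Theta}f-f(y)).
\end{equation*}
Your argument instead establishes both inequalities directly, with no contradiction: property 1 alone gives $\esssup_X f \leq \sup_K f$, and for the reverse you observe that property 2 forces $\mu(\{f > f(x_0)-\sigma\})>0$, hence $\esssup_X f \geq f(x_0)-\sigma$, and pass to the limits in $\sigma$ and $x_0$. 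This is shorter and more transparent, and it uses only the elementary characterization that $\esssup f \geq c$ whenever $\mu(\{f>c\})>0$, rather than unwinding the inf-over-null-sets definition explicitly. One small remark: the separate treatment of $N=+\infty$ is unnecessary, since the conclusion $M \geq f(x_0)$ for every $x_0 \in K$ already yields $M \geq \sup_K f$ regardless of whether that supremum is finite.
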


\begin{proof}
Recall that
\begin{equation}
\essinf_{x\in X}f(x):=\sup_{\Theta\subset X,\mu(\Theta)=0}\left(\inf_{x\in X\setminus \Theta}f(x)\right),\quad \esssup_{x\in X}f(x):=\inf_{\Theta\subset X,\mu(\Theta)=0}\left(\sup_{x\in X\setminus \Theta}f(x)\right).
\end{equation}
By property 1 of $K$, we obtain
\begin{equation}
\essinf_{x\in X}f(x)=\essinf_{x\in K}f(x),\quad \esssup_{x\in X}f(x)=\esssup_{x\in K}f(x).
\end{equation}
Let us consider a set $\Theta \subset K$ such that $\mu(\Theta) = 0$. We aim to show that $\inf_{x\in K\setminus \Theta} f(x) = \inf_{x\in K} f(x)$. By taking the supremum over all such $\Theta$, we obtain $\essinf_{x\in K} f(x) = \inf_{x\in K} f(x)$, and hence $\essinf_{x\in X} f(x) = \inf_{x\in K} f(x)$.

It follows from the definition of infimum that $\inf_{x\in K\setminus \Theta} f(x) \geq \inf_{x\in K} f(x)$. Suppose, by contradiction, that $\inf_{x\in K\setminus \Theta} f(x) > \inf_{x\in K} f(x)$. This implies that $\inf_{x\in K} f(x) = \inf_{x\in \Theta} f(x)$. Otherwise, if $\inf_{x\in K} f(x) < \inf_{x\in \Theta} f(x)$, then $\inf_{x\in K} f(x) = \min\big\{\inf_{x\in \Theta} f(x),\inf_{x\in K\setminus \Theta} f(x)\big\} > \inf_{x\in K} f(x)$, which leads to a contradiction.

Therefore, for any $\e\in (0,\infty)$, there exists $x_0 \in \Theta$ such that $f(x_0) - \inf_{x\in K} f(x) < \frac{\e}{2}$. By properties 1,2 of $K$, there exists $y \in K\setminus \Theta$ such that $|f(y) - f(x_0)| < \frac{\e}{2}$. Hence,
\begin{equation}
0<\inf_{x\in K\setminus \Theta}f(x)-\inf_{x\in K}f(x)=\left(f(x_0)-\inf_{x\in K}f(x)\right)+\left(f(y)-f(x_0)\right)+\left(\inf_{x\in K\setminus \Theta}f(x)-f(y)\right)<\e.
\end{equation}
Since $\e$ is arbitrarily small, we arrive at a contradiction, which proves that $\inf_{x\in K\setminus \Theta} f(x) = \inf_{x\in K} f(x)$. The proof of of formula $\esssup_{x\in X}f(x)=\sup_{x\in K}f(x)$ is similar.
\end{proof}

\begin{corollary}(Existence of Extremal Sets for Lebesgue Functions)
\label{cor:existence of extremal sets for Lebesgue functions}

Let $X$ be a metric space, and let $\mu$ be a Borel measure on $X$ such that $0 < \mu(B_r(x)) < \infty$ for every $r \in (0,\infty)$ and every $x \in X$. Suppose $p \in [1,\infty)$ and $f \in L^p(X)$. Then, there exists a set $K \subset X$ with properties 1 and 2 as outlined in Proposition \ref{prop:extremal sets for essential infimum and supremum}. More precisely, the set of Lebesgue points of $f$ possesses these properties.
\end{corollary}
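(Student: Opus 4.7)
The plan is to take $K$ to be the set of $L^p$-Lebesgue points of $f$, namely those $x_0\in X$ at which
\[
\lim_{r\to 0^+}\frac{1}{\mu(B_r(x_0))}\int_{B_r(x_0)}|f(y)-f(x_0)|^p\,d\mu(y)=0.
\]
Property~1, $\mu(X\setminus K)=0$, is then precisely the Lebesgue differentiation theorem. In every use of this corollary within the paper the ambient space is $(0,\infty)$ equipped with Lebesgue measure, where the classical theorem applies; more generally, one would appeal to the corresponding differentiation result in whatever metric measure setting is implicit in the hypotheses on $\mu$.

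For property~2, I would fix $x_0\in K$ and $\sigma>0$. By the defining property of a Lebesgue point, for all sufficiently small $r>0$,
\[
\frac{1}{\mu(B_r(x_0))}\int_{B_r(x_0)}|f(y)-f(x_0)|^p\,d\mu(y)<\frac{\sigma^p}{2}.
\]
A Chebyshev-type bound then gives
\[
\mu\bigl(\{y\in B_r(x_0):|f(y)-f(x_0)|\geq\sigma\}\bigr)\leq \frac{1}{\sigma^p}\int_{B_r(x_0)}|f(y)-f(x_0)|^p\,d\mu(y)<\tfrac{1}{2}\mu(B_r(x_0)).
\]
Since the standing hypothesis $\mu(B_r(x_0))>0$ forces the complementary subset of $B_r(x_0)$ to have strictly positive $\mu$-measure, and this subset is contained in $\{y\in X:|f(y)-f(x_0)|<\sigma\}$, property~2 follows.

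The only nontrivial ingredient is the invocation of the Lebesgue differentiation theorem; the remainder of the argument is Markov's inequality combined with a set inclusion. Thus the main obstacle, such as it is, lies in the background measure-theoretic framework rather than in the argument itself, and since every concrete application in the paper takes place on $(0,\infty)$ with Lebesgue measure, differentiation is available without any further assumption.
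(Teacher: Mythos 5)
Your proof is correct and follows essentially the same route as the paper: both take $K$ to be the set of $L^p$-Lebesgue points, invoke the Lebesgue differentiation theorem for property~1, and derive property~2 from a Chebyshev bound on $\{|f-f(x_0)|\geq\sigma\}$ combined with $\mu(B_r(x_0))>0$. The only cosmetic difference is that the paper carries a parameter $\alpha\in(0,1)$ where you simply fix $\alpha=\tfrac12$, and your remark about the need for a differentiation theorem in the ambient metric measure space is a fair observation that the paper leaves implicit.
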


\begin{proof}
Since $f\in L^p(X)$, by the Lebesgue Differentiation Theorem, we know that almost every point in $X$ is a Lebesgue point of $f$ with respect to $\mu$. Let us denote this set by $K$. Therefore, we have property 1: $\mu(X\setminus K)=0$. To establish property 2, let $x_0\in K$ and $\alpha\in (0,1)$. Note that for an arbitrary positive number $\sigma$, there exists $R$ such that

\begin{equation}
\int_{B_R(x_0)}|f(x)-f(x_0)|^pd\mu(x)<\alpha\sigma^p\mu\left(B_R(x_0)\right).
\end{equation}
By Chebyshev's inequality
\begin{equation}
\frac{\mu\left(\Set{x\in B_R(x_0)}[|f(x)-f(x_0)|>\sigma]\right)}{\mu(B_R(x_0))}\leq \frac{1}{\sigma^p}\fint_{B_R(x_0)}|f(x)-f(x_0)|^pd\mu(x)<\alpha.
\end{equation}
Since $f$ is $\mu$-measurable, we obtain
\begin{equation}
\frac{\mu\left(\Set{x\in B_R(x_0)}[|f(x)-f(x_0)|>\sigma]\right)}{\mu(B_R(x_0))}+\frac{\mu\left(\Set{x\in B_R(x_0)}[|f(x)-f(x_0)|\leq\sigma]\right)}{\mu(B_R(x_0))}=1.
\end{equation}
Therefore,
\begin{equation}
\frac{\mu\left(\Set{x\in B_R(x_0)}[|f(x)-f(x_0)|\leq\sigma]\right)}{\mu(B_R(x_0))}\geq 1-\alpha,
\end{equation}
and hence,
\begin{equation}
\mu\left(\Set{x\in X}[|f(x)-f(x_0)|\leq\sigma]\right)\geq (1-\alpha)\mu(B_R(x_0))>0.
\end{equation}
\end{proof}

\subsection{Vector Valued Measures and Variation}
\begin{definition}(Vector Valued Measures and Variation)
\label{def:variation of a measure}

Let $X$ be a set and $\mathcal{E}$ be a $\sigma$-algebra on $X$. Let $\mu:\mathcal{E}\to \R^d$ be a {\it measure}, which means that $\mu(\emptyset)=0$ and for any sequence $\{E_j\}_{j\in\N}\subset\mathcal{E}$ of pairwise disjoint sets we have $\mu\left(\bigcup_{j\in\N}E_j\right)  =\sum_{j\in\N}\mu\left(E_j\right)  $. The {\it variation} of $\mu$ is defined to be
\begin{equation}
\|\mu\|(E):=\sup\bigg\{\sum_{j\in\N}\left|\mu(E_j)\right|:E_j\in\mathcal{E} \text{ pairwise disjoint},E=\bigcup_{j\in\N}E_j\bigg\},\quad E\in\mathcal{E}.
\end{equation}
\end{definition}

\begin{lemma}(Variation of Multiplication of a Vector Valued Function with Positive Measure, Proposition 1.23 in \cite{AFP})
\label{lem: variation of multiplication of vector valued function with positive measure}

Let $\mu$ be a positive measure on the measurable space $(X,\mathcal{E})$, $X$ is a set and $\mathcal{E}$ is a $\sigma$-algebra on $X$. Let $f\in L^1(X,\R^N)$. Then, the variation of the $\R^N$-valued measure
\begin{equation}
f\mu(B):=\int_{B}fd\mu,\quad B\in \mathcal{E}
\end{equation}
satisfies
\begin{equation}
\|f\mu\|(B)=\int_{B}|f|d\mu,\quad B\in \mathcal{E}.
\end{equation}
\end{lemma}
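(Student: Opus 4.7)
The plan is to establish the identity by proving the two inequalities separately, handling the easy direction first and then reducing the harder direction to the case of simple functions via an $L^{1}$-approximation argument.

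First I would prove the inequality $\|f\mu\|(B)\leq\int_{B}|f|d\mu$. For any countable partition $B=\bigsqcup_{j\in\N}B_{j}$ with $B_{j}\in\mathcal{E}$, the triangle inequality for Bochner-type integrals gives
\begin{equation}
\sum_{j\in\N}|f\mu(B_{j})|=\sum_{j\in\N}\left|\int_{B_{j}}f\,d\mu\right|\leq\sum_{j\in\N}\int_{B_{j}}|f|\,d\mu=\int_{B}|f|\,d\mu,
\end{equation}
so taking the supremum over partitions yields the desired bound. This direction is completely routine.

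Next, I would establish the reverse inequality. For a simple $\R^{N}$-valued function $s=\sum_{i=1}^{k}c_{i}\chi_{E_{i}}$ with pairwise disjoint $E_{i}\subset X$, the partition $\{E_{i}\cap B\}_{i=1}^{k}\cup\{B\setminus\bigsqcup_{i}E_{i}\}$ is an admissible partition of $B$, and on it
\begin{equation}
\sum_{i=1}^{k}|s\mu(E_{i}\cap B)|=\sum_{i=1}^{k}|c_{i}|\mu(E_{i}\cap B)=\int_{B}|s|\,d\mu,
\end{equation}
so combined with the upper bound one obtains $\|s\mu\|(B)=\int_{B}|s|\,d\mu$ for every simple $s\in L^{1}(X,\R^{N})$.

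For a general $f\in L^{1}(X,\R^{N})$, I would choose a sequence of simple functions $s_{n}\to f$ in $L^{1}(X,\R^{N})$. By countable subadditivity of the variation (which follows directly from the definition) and by the already-proved upper bound applied to $f-s_{n}$, one has
\begin{equation}
\bigl|\|f\mu\|(B)-\|s_{n}\mu\|(B)\bigr|\leq\|(f-s_{n})\mu\|(B)\leq\int_{B}|f-s_{n}|\,d\mu\longrightarrow 0.
\end{equation}
On the other hand, $\left|\int_{B}|f|\,d\mu-\int_{B}|s_{n}|\,d\mu\right|\leq\int_{B}|f-s_{n}|\,d\mu\to 0$. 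Since $\|s_{n}\mu\|(B)=\int_{B}|s_{n}|\,d\mu$ for each $n$, passing to the limit gives $\|f\mu\|(B)=\int_{B}|f|\,d\mu$. The main (mild) technical obstacle is justifying the countable subadditivity of the variation used in the displayed estimate above; this follows by a standard diagonal argument on refining partitions of each of a disjoint decomposition, but it is worth writing out carefully so that the triangle inequality $\|(f_{1}+f_{2})\mu\|(B)\leq\|f_{1}\mu\|(B)+\|f_{2}\mu\|(B)$ is transparent.
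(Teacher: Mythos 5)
Your proof is correct, and worth noting: the paper itself does not supply one, citing Proposition~1.23 of Ambrosio--Fusco--Pallara instead. Your route --- the easy bound $\|f\mu\|(B)\le\int_B|f|\,d\mu$ for arbitrary partitions, exact equality for simple $s$ via the canonical partition, and then $L^1$-approximation plus the reverse triangle inequality for the variation --- is a standard, self-contained alternative. The reference (if I recall correctly) proceeds via the Radon--Nikodym/polar-decomposition apparatus for vector measures, writing $f\mu=\sigma\,|f\mu|$ with $|\sigma|=1$; your argument avoids that machinery entirely and needs nothing beyond the definition of variation and density of simple functions, which is arguably better suited to a self-contained appendix.

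One small correction of terminology. The ingredient you flag as the ``main technical obstacle'' --- the inequality $\|(f_1+f_2)\mu\|(B)\le\|f_1\mu\|(B)+\|f_2\mu\|(B)$, and hence $\bigl|\|f\mu\|(B)-\|s_n\mu\|(B)\bigr|\le\|(f-s_n)\mu\|(B)$ --- does not require a ``diagonal argument on refining partitions,'' and is not what is usually called countable subadditivity (that name typically refers to the behaviour of $\|\nu\|$ on countable unions). It is simply the triangle inequality for the variation of a sum of vector measures, and it follows in one line from the definition: for any countable partition $\{B_j\}$ of $B$,
\begin{equation}
\sum_{j}\bigl|(\nu_1+\nu_2)(B_j)\bigr|\le\sum_{j}\bigl|\nu_1(B_j)\bigr|+\sum_{j}\bigl|\nu_2(B_j)\bigr|\le\|\nu_1\|(B)+\|\nu_2\|(B),
\end{equation}
and one takes the supremum over partitions. (The finiteness needed for the reverse triangle inequality is already guaranteed by your upper bound applied to $f$ and to $s_n$.) With that clarification the write-up is complete.
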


\begin{lemma}(Variation of Multiplication of Scalar Function with Vector Valued Measure)
\label{lem:variation of multiplication of scalar function with vector valued measure}

Let $X$ be a set, $\mathcal{E}$ be a $\sigma$-algebra on $X$ and $\mu:\mathcal{E}\to \R^N$ be a measure. Let $f:X\to \R$ be such that $f\in L^1(X,\|\mu\|)$. Then,
\begin{equation}
\|f\mu\|(E)\leq N^{1/2}|f|\|\mu\|(E),\quad E\in \mathcal{E}.
\end{equation}
\end{lemma}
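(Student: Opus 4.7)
The plan is to reduce the vector-valued setting to the scalar one by expanding $\mu$ into its $N$ coordinate signed measures and invoking Lemma \ref{lem: variation of multiplication of vector valued function with positive measure} on each coordinate. Write $\mu=(\mu_1,\dots,\mu_N)$, where each $\mu_i$ is a signed measure with Jordan total variation $|\mu_i|$; by the Jordan decomposition one has $\mu_i=\sigma_i|\mu_i|$ for a Borel function $\sigma_i:X\to\{-1,+1\}$. Applying Lemma \ref{lem: variation of multiplication of vector valued function with positive measure} in one dimension with the positive measure $|\mu_i|$ and density $f\sigma_i$ then yields $|f\mu_i|(E)=\int_E|f|\,d|\mu_i|$ for each $i$.

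Next, for an arbitrary countable $\mathcal{E}$-partition $\{E_j\}$ of $E$, I would use the elementary inequality $|v|\le\sum_{i=1}^N|v_i|$ in $\R^N$ to bound
\begin{equation*}
\sum_j|f\mu(E_j)|\le\sum_j\sum_{i=1}^N|f\mu_i(E_j)|=\sum_{i=1}^N\sum_j|f\mu_i(E_j)|\le\sum_{i=1}^N|f\mu_i|(E)=\sum_{i=1}^N\int_E|f|\,d|\mu_i|.
\end{equation*}
Taking the supremum over all such partitions yields $\|f\mu\|(E)\le\int_E|f|\,d\nu$, where $\nu:=\sum_{i=1}^N|\mu_i|$ is a positive measure on $(X,\mathcal{E})$.

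The remaining, and truly the only nontrivial, step is to show $\nu\le N^{1/2}\|\mu\|$ as measures. For any $A\in\mathcal{E}$ and any $\mathcal{E}$-partition $\{A_j\}$ of $A$, Cauchy--Schwarz in $\R^N$ gives $\sum_{i=1}^N|\mu_i(A_j)|\le N^{1/2}|\mu(A_j)|$, whence $\sum_{i=1}^N\sum_j|\mu_i(A_j)|\le N^{1/2}\|\mu\|(A)$. To pass from this common-partition inequality to the individual variations $|\mu_i|$, I would invoke the standard refinement argument: given $\varepsilon>0$, for each $i$ choose a partition $P_i$ of $A$ with $\sum_j|\mu_i(P_i^{(j)})|\ge|\mu_i|(A)-\varepsilon/N$ and take their common refinement $Q$. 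Since $\sum_j|\mu_i(A_j)|$ is monotone non-decreasing under refinement, $Q$ simultaneously approximates $|\mu_i|(A)$ for all $i$ within $\varepsilon/N$, so $\sum_i|\mu_i|(A)\le\varepsilon+\sum_{i,j}|\mu_i(Q^{(j)})|\le\varepsilon+N^{1/2}\|\mu\|(A)$. Letting $\varepsilon\to 0$ yields $\nu\le N^{1/2}\|\mu\|$, and combining with the previous step gives $\|f\mu\|(E)\le N^{1/2}\int_E|f|\,d\|\mu\|$. The refinement step is the only delicate point of the proof; everything else is an elementary componentwise manipulation on top of the scalar case already supplied by Lemma \ref{lem: variation of multiplication of vector valued function with positive measure}.
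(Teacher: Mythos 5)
Your proof is correct, but it takes a genuinely different (and somewhat longer) route than the paper's. The paper estimates $|f\mu(E)|$ directly for each fixed $E\in\mathcal{E}$: it uses the componentwise bound $|f\mu_i(E)|=\left|\int_E f\,d\mu_i\right|\le\int_E|f|\,d\|\mu_i\|\le\int_E|f|\,d\|\mu\|$ (the last step by the elementary domination $\|\mu_i\|\le\|\mu\|$), sums in quadrature to get $|f\mu(E)|\le N^{1/2}\int_E|f|\,d\|\mu\|$, and then simply sums this over a partition and takes the supremum. No common-refinement argument is needed, because the estimate is performed set-by-set before the partition sum is formed. You instead first bound the Euclidean norm by the $\ell^1$ norm, push the partition sum inside to reduce to the scalar total variations $\|f\mu_i\|$ via the scalar case of Lemma \ref{lem: variation of multiplication of vector valued function with positive measure}, and are then left with a genuine measure comparison $\sum_{i}\|\mu_i\|\le N^{1/2}\|\mu\|$, which forces you to invoke the simultaneous common-refinement argument to relate individual variations to a single partition. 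Both routes give the same constant $N^{1/2}$. Your route has the side benefit of isolating the reusable measure inequality $\sum_i\|\mu_i\|\le N^{1/2}\|\mu\|$, but it is more work than the paper's pointwise estimate followed by a single supremum; if you want the shortest argument, estimate $|f\mu(E)|$ itself via the $\ell^2$ norm and the bound $\|\mu_i\|\le\|\mu\|$, as the paper does.
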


\begin{proof}
Let us denote $\mu:=(\mu_1,...,\mu_N)$. For every $E\in\mathcal{E}$
\begin{multline}
|f\mu(E)|=\left|(f\mu_1(E),...,f\mu_N(E))\right|=\left(\sum_{i=1}^N(f\mu_i(E))^2\right)^{1/2}
\\
\leq \left(\sum_{i=1}^N(\|f\mu_i\|(E))^2\right)^{1/2}
\leq \left(\sum_{i=1}^N(|f|\|\mu_i\|(E))^2\right)^{1/2}\leq N^{1/2}|f|\|\mu\|(E).
\end{multline}
Therefore,
\begin{multline}
\|f\mu\|(E)=\sup\bigg\{\sum_{j\in\N}|f\mu(E_j)|:E_j\in\mathcal{E} \text{ pairwise disjoint},E=\cup_{j\in\N}E_j\bigg\}\\
\leq N^{1/2}\sup\bigg\{\sum_{j\in\N}|f|\|\mu\|(E_j):E_j\in\mathcal{E} \text{ pairwise disjoint},E=\cup_{j\in\N} E_j\bigg\}
=N^{1/2}|f|\|\mu\|(E).
\end{multline}
\end{proof}

\subsection{{Aspects of Integration on $S^{N-1}$ with respect to $\mathcal{H}^{N-1}$}}
\begin{proposition}
\label{prop:independence of integrals on the sphere}
For every $v_1,v_2\in S^{N-1}$ we have
\begin{equation}
\int_{S^{N-1}}\left|v_1\cdot n\right|d\mathcal{H}^{N-1}(n)=\int_{S^{N-1}}\left|v_2\cdot n\right|d\mathcal{H}^{N-1}(n).
\end{equation}
\end{proposition}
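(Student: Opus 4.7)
The plan is to exploit the rotational invariance of the Hausdorff measure $\mathcal{H}^{N-1}$ on the sphere. Given $v_1,v_2 \in S^{N-1}$, I would first produce an orthogonal matrix $R \in O(N)$ with $Rv_1 = v_2$. Such an $R$ exists: extend $v_1$ and $v_2$ to orthonormal bases $(v_1,e_2,\ldots,e_N)$ and $(v_2,f_2,\ldots,f_N)$ of $\R^N$ respectively, and let $R$ be the linear map sending the first basis to the second. Then $R$ is orthogonal and by construction $Rv_1 = v_2$, which in particular implies $R^T v_2 = v_1$.

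Next I would invoke the fact that orthogonal maps preserve $\mathcal{H}^{N-1}$: since $R$ is an isometry of $\R^N$, one has $\mathcal{H}^{N-1}(R(E)) = \mathcal{H}^{N-1}(E)$ for every Borel set $E \subset \R^N$, and in particular $R$ maps $S^{N-1}$ onto itself and preserves the surface measure $\mathcal{H}^{N-1}\llcorner S^{N-1}$. Consequently, for every Borel function $g: S^{N-1} \to [0,\infty)$ the change of variables formula yields
\begin{equation}
\int_{S^{N-1}} g(n)\, d\mathcal{H}^{N-1}(n) = \int_{S^{N-1}} g(Rm)\, d\mathcal{H}^{N-1}(m).
\end{equation}

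Applying this to $g(n) := |v_2 \cdot n|$ and using that $v_2 \cdot (Rm) = (R^T v_2)\cdot m = v_1 \cdot m$, I would conclude
\begin{equation}
\int_{S^{N-1}} |v_2 \cdot n|\, d\mathcal{H}^{N-1}(n) = \int_{S^{N-1}} |v_1 \cdot m|\, d\mathcal{H}^{N-1}(m),
\end{equation}
which is the desired identity. No part of this argument is genuinely difficult; the only point requiring a citation is the invariance of $\mathcal{H}^{N-1}$ under isometries, a standard property of Hausdorff measure that follows directly from its definition in terms of coverings by sets of bounded diameter.
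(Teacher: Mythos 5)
Your proof is correct and takes essentially the same approach as the paper: both arguments use an orthogonal map sending one of the unit vectors to the other, change variables via the isometry-invariance of $\mathcal{H}^{N-1}$ on $S^{N-1}$, and then use that the isometry preserves inner products.
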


\begin{proof}
Take an isometry $A:\mathbb{R}^N\to \mathbb{R}^N$ such that $A(v_2)=v_1$.
Then,
\begin{equation}
\int_{S^{N-1}}\left|v_1\cdot n\right|d\mathcal{H}^{N-1}(n)=\int_{A^{-1}\left(S^{N-1}\right)}\left|A(v_2)\cdot A(w)\right|d\mathcal{H}^{N-1}(w)=\int_{S^{N-1}}\left|v_2\cdot w\right|d\mathcal{H}^{N-1}(w).
\end{equation}
\end{proof}

\begin{proposition}
\label{prop:calculation of NC_N}
It follows that
\begin{equation}
\label{eq:equation58}
\int_{\R^{N-1}}\frac{2dv}{\big(\sqrt{1+|v|^2}\big)^{N+1}}=\int_{S^{N-1}}|z_1|d\mathcal{H}^{N-1}(z),\quad z=(z_1,...,z_N).
\end{equation}
\end{proposition}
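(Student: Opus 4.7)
The plan is to verify the identity by parametrizing the upper hemisphere $\{z\in S^{N-1}:z_1>0\}$ via a single chart over $\mathbb{R}^{N-1}$, identifying the integrand on the sphere with the one appearing in the Euclidean integral, and then using the symmetry $z_1 \leftrightarrow -z_1$ to account for the factor of $2$.

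Concretely, I would introduce the map $\phi:\mathbb{R}^{N-1}\to S^{N-1}$ defined by
\begin{equation*}
\phi(v):=\frac{(1,v)}{\sqrt{1+|v|^2}},\qquad v\in\mathbb{R}^{N-1}.
\end{equation*}
A direct verification shows that $\phi$ is a smooth bijection onto the open upper hemisphere $\{z\in S^{N-1}:z_1>0\}$ and that $z_1\circ\phi(v)=(1+|v|^2)^{-1/2}$. The area formula then gives
\begin{equation*}
\int_{\{z_1>0\}}|z_1|\,d\mathcal{H}^{N-1}(z)=\int_{\mathbb{R}^{N-1}}\frac{J_\phi(v)}{\sqrt{1+|v|^2}}\,dv,
\end{equation*}
where $J_\phi(v)=\sqrt{\det(D\phi(v)^TD\phi(v))}$.

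The key computation, and the one step requiring some care, is showing that
\begin{equation*}
J_\phi(v)=\frac{1}{(1+|v|^2)^{N/2}}.
\end{equation*}
I would compute $\partial_i\phi(v)$ componentwise and form the Gram matrix
\begin{equation*}
G_{ij}(v)=\partial_i\phi(v)\cdot\partial_j\phi(v)=\frac{(1+|v|^2)\delta_{ij}-v_iv_j}{(1+|v|^2)^2}.
\end{equation*}
The rank-one update $(1+|v|^2)I-vv^T$ on $\mathbb{R}^{N-1}$ has eigenvalue $1$ in the direction of $v$ (with eigenvector $v$) and eigenvalue $1+|v|^2$ on its orthogonal complement, so
\begin{equation*}
\det\bigl((1+|v|^2)I-vv^T\bigr)=(1+|v|^2)^{N-2},
\end{equation*}
and dividing by $(1+|v|^2)^{2(N-1)}$ yields $\det G(v)=(1+|v|^2)^{-N}$. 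Substituting gives
\begin{equation*}
\int_{\{z_1>0\}}|z_1|\,d\mathcal{H}^{N-1}(z)=\int_{\mathbb{R}^{N-1}}\frac{dv}{(\sqrt{1+|v|^2})^{N+1}}.
\end{equation*}

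Finally, the isometry $z\mapsto(-z_1,z_2,\dots,z_N)$ of $S^{N-1}$ leaves $\mathcal{H}^{N-1}$ invariant and exchanges the two open hemispheres $\{z_1>0\}$ and $\{z_1<0\}$, whose union covers $S^{N-1}$ up to an $\mathcal{H}^{N-1}$-null set (the equator $\{z_1=0\}$). Doubling the previous identity therefore yields \eqref{eq:equation58}. The only nontrivial step is the Gram determinant computation; the rest is a routine application of the area formula.
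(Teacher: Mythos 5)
Your proof is correct, and it takes a genuinely different route from the paper's. The paper parametrizes the upper hemisphere as a graph $z_1 = \sqrt{1-|z'|^2}$ over the unit disk $B_1^{N-1}(0)$, reducing the spherical integral to $2\,\mathcal{L}^{N-1}\big(B_1^{N-1}(0)\big)$, and then separately evaluates the Euclidean integral $\int_{\mathbb{R}^{N-1}}(1+|v|^2)^{-(N+1)/2}\,dv$ by polar coordinates together with an integration-by-parts recursion for $A_N:=\int_0^\infty r^{N-2}(1+r^2)^{-(N+1)/2}\,dr$, finally matching the two sides. You instead pull the spherical integral back directly via the central-projection chart $\phi(v)=(1,v)/\sqrt{1+|v|^2}$ and compute the Gram determinant in closed form; the rank-one structure of $(1+|v|^2)I - vv^T$ makes the eigenvalue argument clean and bypasses the recursive integral computation entirely. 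Your approach is shorter and more conceptual, at the cost of the slightly more delicate Jacobian calculation; the paper's approach is more elementary (no Gram matrix) but requires the extra integral identity. One minor point to make explicit: your chart-based argument assumes $N\geq 2$ so that $\mathbb{R}^{N-1}$ is nontrivial and the orthogonal-complement eigenspace of $v$ has dimension $N-2\geq 0$; the degenerate case $N=1$ (where both sides equal $2$) should be noted separately, as the paper does.
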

\begin{proof}
Note that \eqref{eq:equation58} holds for $N=1$. So we can assume that $N>1$. Let $B^{N-1}_1(0)$ be the ball of radius $1$ around the origin in $\R^{N-1}$. Define
\begin{align}
g:B^{N-1}_1(0)\to \R^N,\quad  g(z_2,...,z_N):=\left(f(z_2,...,z_N),z_2,...,z_N\right),\quad f(z_2,...,z_N):=\sqrt{1-\sum_{j=2}^{N}z^2_j}.
\end{align}
The image of $g$ is $S^+:=\big\{z=(z_1,...,z_N)\in S^{N-1}:z_1>0\big\}$. Denote $z=(z_1,z'),z':=(z_2,...,z_N)$.
By the area formula
\begin{multline}
\label{eq:equation67}
\int_{S^{N-1}}|z_1|d\mathcal{H}^{N-1}(z)=2\int_{S^+}|z_1|d\mathcal{H}^{N-1}(z)
=2\int_{B^{N-1}_1(0)}\sqrt{1-\sum_{j=2}^{N}z^2_j}\,\sqrt{1+|\nabla f(z')|^2}\,d\mathcal{L}^{N-1}(z')
\\
=2\int_{B^{N-1}_1(0)}\sqrt{1-\sum_{j=2}^{N}z^2_j}\,\sqrt{1+\frac{1}{1-\sum_{j=2}^{N}z^2_j}\sum_{j=2}^{N}z^2_j}\,d\mathcal{L}^{N-1}(z')
=2\Big(\mathcal{L}^{N-1}\big(B^{N-1}_1(0)\big)\Big).
\end{multline}
In addition, by polar coordinates we obtain for $N\geq 2$
\begin{equation}
\label{eq:equation63}
\int_{\R^{N-1}}\frac{dv}{\big(\sqrt{1+|v|^2}\big)^{N+1}}=\Big(\mathcal{H}^{N-2}(S^{N-2})\Big)\int_{0}^{\infty}\frac{r^{N-2}dr}{\big(\sqrt{1+r^2}\big)^{N+1}}.
\end{equation}
Let us denote
\begin{equation}
\label{eq:equation64}
A_N:=\int_{0}^{\infty}\frac{r^{N-2}dr}{\big(\sqrt{1+r^2}\big)^{N+1}}.
\end{equation}
Assume for the moment that $N>3$. Integration by parts gives
\begin{align}
\label{eq:equation59}
A_N&=\int_{0}^{\infty}r^{N-3}\frac{2r}{2\big(1+r^2\big)^{\frac{N+1}{2}}}dr\nonumber
\\
&=\left(r^{N-3}\frac{1}{(1-N)\big(1+r^2\big)^{\frac{N-1}{2}}}\right)\Big|_{r=0}^{r=\infty}-\int_0^\infty (N-3)r^{N-4}\frac{1}{(1-N)\big(1+r^2\big)^{\frac{N-1}{2}}}dr\nonumber
\\
&=\frac{N-3}{N-1}\int_{0}^{\infty}r^{N-4}\frac{1}{\big(1+r^2\big)^{\frac{N-1}{2}}}dr=\frac{N-3}{N-1}A_{N-2}.
\end{align}
We got a recursive sequence. Note for example
\begin{multline}
A_N=\frac{N-3}{N-1}A_{N-2}=\frac{N-3}{N-1}\frac{N-5}{N-3}A_{N-4}=\frac{N-5}{N-1}A_{N-4}=\frac{N-5}{N-1}\frac{N-7}{N-5}A_{N-6}=\frac{N-7}{N-1}A_{N-6}.
\end{multline}
Therefore, we get from \eqref{eq:equation59} for every natural $m>1$
\begin{equation}
\label{eq:equation60}
A_{2m}=\frac{1}{2m-1}A_{2}\quad\text{and}\quad
A_{2m+1}=\frac{2}{2m}A_{3}\,.
\end{equation}
Let us calculate $A_2,A_3$ separately. Note that
\begin{equation}
\label{eq:equation61}
A_3:=\int_{0}^{\infty}\frac{2rdr}{2\big(1+r^2\big)^{2}}=-\frac{1}{2\big(1+r^2\big)}\bigg|_{r=0}^{r=\infty}=\frac{1}{2}.
\end{equation}
Let us prove that
\begin{equation}
\label{eq:equation62}
A_2:=\int_{0}^{\infty}\frac{1}{\big(\sqrt{1+r^2}\big)^{3}}dr=1.
\end{equation}
Changing variables $r=\frac{z}{2}-\frac{1}{2z}$ in the last integral
gives:
\begin{multline}
A_2=\int_{1}^{\infty}\frac{1}{\bigg(\sqrt{1+\Big(\frac{z}{2}-\frac{1}{2z}\Big)^2}\bigg)^{3}}\Big(\frac{1}{2}+\frac{1}{2z^2}\Big)dz
=\int_{1}^{\infty}\frac{1}{\bigg(\sqrt{1+\frac{1}{4z^2}\Big(z^2-1\Big)^2}\bigg)^{3}}\frac{1}{2z^2}\Big(z^2+1\Big)dz
\\
=\int_{1}^{\infty}\frac{1}{\bigg(\sqrt{4z^2+\Big(z^2-1\Big)^2}\bigg)^{3}}4z\Big(z^2+1\Big)dz
=\int_{1}^{\infty}\frac{1}{\bigg(\sqrt{\Big(z^2+1\Big)^2}\bigg)^{3}}4z\Big(z^2+1\Big)dz
\\
=\int_{1}^{\infty}\frac{4z}{(z^2+1)^{2}}dz
=-\frac{2}{z^2+1}\bigg|_{z=1}^{z=\infty}
=1.
\end{multline}
Therefore, by \eqref{eq:equation60}, \eqref{eq:equation61} and \eqref{eq:equation62}
\begin{equation}
A_{2m}=\frac{1}{2m-1}\quad\text{and}\quad A_{2m+1}=\frac{1}{2m}.
\end{equation}
Thus, for every natural $N>1$
\begin{equation}
\label{eq:equation65}
A_{N}=\frac{1}{N-1}\,.
\end{equation}
Therefore, by \eqref{eq:equation63}, \eqref{eq:equation64}, \eqref{eq:equation65} and polar coordinates we get for every $N>1$
\begin{equation}
\label{eq:equation66}
\int_{\R^{N-1}}\frac{dv}{\big(\sqrt{1+|v|^2}\big)^{N+1}}=\Big(\mathcal{H}^{N-2}(S^{N-2})\Big)\frac{1}{N-1}=\mathcal{L}^{N-1}\big(B^{N-1}_1(0)\big).
\end{equation}
Thus, by \eqref{eq:equation67} and \eqref{eq:equation66} we get \eqref{eq:equation58}.
\end{proof}

\begin{proposition}(Polar coordinates, see 3.4.4 in \cite{EG})\\
\label{prop:polar coordinates}
Let $g\in L^1\left(\R^N,\R^d\right)$. Then
\begin{equation}
\int_{\R^N}g(x)dx=\int_{0}^\infty\left(\int_{\partial B_r(0)}g(z)d\mathcal{H}^{N-1}(z)\right)dr=\int_{0}^\infty r^{N-1}\left(\int_{S^{N-1}}g(rz)d\mathcal{H}^{N-1}(z)\right)dr.
\end{equation}
\end{proposition}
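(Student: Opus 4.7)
The plan is to derive both equalities in a single pass by exploiting the diffeomorphism between $\R^N\setminus\{0\}$ and $(0,\infty)\times S^{N-1}$ given by polar coordinates, reducing the statement to Fubini's theorem on a product space plus a scaling identity for Hausdorff measure on concentric spheres. I would first reduce to the case of a nonnegative integrand by writing $g=g^+-g^-$ componentwise, since both sides are linear in $g$.

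For the first equality, I would invoke the coarea formula for the Lipschitz map $f(x):=|x|$, which satisfies $|\nabla f(x)|=1$ on $\R^N\setminus\{0\}$, a set of full Lebesgue measure. The coarea formula then directly produces
\begin{equation}
\int_{\R^N}g(x)dx=\int_{\R^N}g(x)|\nabla f(x)|dx=\int_0^\infty\left(\int_{f^{-1}(r)}g(z)d\mathcal{H}^{N-1}(z)\right)dr,
\end{equation}
and since $f^{-1}(r)=\partial B_r(0)$ this yields the first equality. The integrability of $g$ and Fubini's theorem in the coarea formula justify that the inner integral makes sense for $\mathcal{L}^1$-almost every $r$.

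For the second equality I would fix $r\in(0,\infty)$ and use the similarity $\Phi_r:S^{N-1}\to\partial B_r(0)$, $\Phi_r(z):=rz$, which has Lipschitz constant $r$ and inverse with Lipschitz constant $1/r$. The scaling behavior of $(N-1)$-dimensional Hausdorff measure under similarities gives $\mathcal{H}^{N-1}(\Phi_r(E))=r^{N-1}\mathcal{H}^{N-1}(E)$ for every Borel $E\subset S^{N-1}$, so the change of variables theorem for Hausdorff measure applied to the pushforward yields
\begin{equation}
\int_{\partial B_r(0)}g(y)d\mathcal{H}^{N-1}(y)=r^{N-1}\int_{S^{N-1}}g(rz)d\mathcal{H}^{N-1}(z).
\end{equation}
Plugging this into the first equality finishes the proof.

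The main obstacle is cosmetic rather than substantive: the origin lies outside the domain of $\nabla f$, but since $\{0\}$ is $\mathcal{L}^N$-null and the inner integral over $\partial B_0(0)=\{0\}$ is $\mathcal{H}^{N-1}$-null, neither side is affected. An alternative route that bypasses the coarea formula entirely is to parametrize $\R^N\setminus\{0\}$ directly by $(r,z)\in(0,\infty)\times S^{N-1}\mapsto rz\in\R^N$, compute the Jacobian of this map to be $r^{N-1}$ via an orthonormal frame adapted to the radial and tangential directions on $\partial B_r(0)$, and then apply Fubini's theorem; this route produces both equalities simultaneously and makes the role of the factor $r^{N-1}$ transparent.
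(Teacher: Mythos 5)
Your proposal is correct and follows the same route as the source the paper cites for this proposition (Evans--Gariepy, Section 3.4.4), namely the coarea formula applied to the Lipschitz map $f(x)=|x|$ with $|\nabla f|\equiv 1$ off the origin, followed by the $r^{N-1}$ scaling of $\mathcal{H}^{N-1}$ under the dilation $z\mapsto rz$. The reduction to nonnegative scalar components, the remark that $\{0\}$ is both $\mathcal{L}^N$-null and has $\mathcal{H}^{N-1}(\partial B_0(0))=0$, and the alternative direct parametrization via $(r,z)\mapsto rz$ are all sound; nothing further is needed.
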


\subsection{Sequences of Real Numbers}
\begin{lemma}(Liminfsup Lemma)
\label{lem:liminfsup lemma}

Let $\{a_k\}_{k=1}^\infty,\{b_k\}_{k=1}^\infty\subset\R$ be bounded sequences. Then,
\begin{equation}
\max\left\{|\liminf_{k\to\infty}a_k-\liminf_{k\to\infty}b_k|,|\limsup_{k\to\infty}a_k-\limsup_{k\to\infty}b_k|\right\}\leq \limsup_{k\to\infty}|a_k-b_k|.
\end{equation}
\end{lemma}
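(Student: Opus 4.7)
The plan is to prove the two inequalities hidden inside the maximum separately, both via the same underlying fact: $\limsup$ is subadditive and $\liminf$ is superadditive on bounded real sequences, and $\limsup(-c_k) = -\liminf(c_k)$.

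First I would handle the $\limsup$ inequality. Writing $a_k = b_k + (a_k-b_k)$ and using subadditivity of $\limsup$ gives
\begin{equation*}
\limsup_{k\to\infty} a_k \leq \limsup_{k\to\infty} b_k + \limsup_{k\to\infty}(a_k-b_k) \leq \limsup_{k\to\infty} b_k + \limsup_{k\to\infty}|a_k-b_k|,
\end{equation*}
and by symmetry (swapping the roles of $a_k$ and $b_k$) the same bound holds for $\limsup b_k - \limsup a_k$. Combining, $|\limsup a_k - \limsup b_k| \le \limsup|a_k-b_k|$.

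Next I would handle the $\liminf$ inequality in the same spirit, but now using superadditivity of $\liminf$ together with $\liminf(a_k-b_k) = -\limsup(b_k-a_k) \geq -\limsup|a_k-b_k|$:
\begin{equation*}
\liminf_{k\to\infty} a_k \geq \liminf_{k\to\infty} b_k + \liminf_{k\to\infty}(a_k-b_k) \geq \liminf_{k\to\infty} b_k - \limsup_{k\to\infty}|a_k-b_k|,
\end{equation*}
and again by symmetry the analogous bound holds with $a,b$ exchanged. Taking the maximum of both bounds finishes the proof.

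There is essentially no obstacle here; the only point to be careful about is ensuring that all the $\limsup$ and $\liminf$ values are finite so that the arithmetic inequalities are legitimate, which is guaranteed by the boundedness of the two sequences (which also makes $|a_k-b_k|$ bounded). Thus the proof is a short direct verification using only sub/superadditivity of $\limsup$/$\liminf$.
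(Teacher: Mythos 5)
Your proof is correct and is essentially the same argument as the paper's; the only cosmetic difference is in the $\liminf$ step, where you invoke superadditivity of $\liminf$ together with $\liminf(c_k) = -\limsup(-c_k)$, while the paper directly uses the mixed inequality $\liminf(a_k+b_k)\le \limsup a_k + \liminf b_k$ — these are equivalent reformulations of the same fact.
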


\begin{proof}
Recall the general inequalities:
\begin{equation}
\label{eq:upper limit of a sum is less or equal to the sum of upper limits}
\limsup_{k\to\infty}(a_k+b_k)\leq \limsup_{k\to\infty}a_k+\limsup_{k\to\infty}b_k,
\end{equation}
\begin{equation}
\label{eq:lower limit of a sum is less or equal to the sum of upper limit and lower limit}
\liminf_{k\to\infty}(a_k+b_k)\leq \limsup_{k\to\infty}a_k+\liminf_{k\to\infty}b_k.
\end{equation}
By \eqref{eq:upper limit of a sum is less or equal to the sum of upper limits} we get
\begin{equation}
\limsup_{k\to\infty}a_k=\limsup_{k\to\infty}(a_k-b_k+b_k)\leq \limsup_{k\to\infty}(a_k-b_k)+\limsup_{k\to\infty}b_k.
\end{equation}
Changing the roles of $a_k$ and $b_k$, we get
\begin{equation}
\left|\limsup_{k\to\infty}a_k-\limsup_{k\to\infty}b_k\right|\leq \limsup_{k\to\infty}|a_k-b_k|.
\end{equation}
By \eqref{eq:lower limit of a sum is less or equal to the sum of upper limit and lower limit} we get
\begin{equation}
\liminf_{k\to\infty}a_k=\liminf_{k\to\infty}(a_k-b_k+b_k)\leq \limsup_{k\to\infty}(a_k-b_k)+\liminf_{k\to\infty}b_k.
\end{equation}
Changing the roles of $a_k$ and $b_k$, we get
\begin{equation}
\left|\liminf_{k\to\infty}a_k-\liminf_{k\to\infty}b_k\right|\leq \limsup_{k\to\infty}|a_k-b_k|.
\end{equation}
\end{proof}

\subsection{The Truncated Family}
\begin{definition}(Truncated Family)
\label{def:truncated family}

Let $E\subset\R^N$ be a set and let $u:E\to \R^d,u=(u^1,...,u^d)$ be a function. For every $1\leq i\leq d,i\in\N$, $l\in[0,\infty)$ and $x\in E$ we define $u^i_l(x):=l\wedge(-l\vee u^i(x))$, where $a\wedge b:=\min\{a,b\}$, $a\vee b:=\max\{a,b\}$, for $a,b\in\R$; and we define $u_l(x):=(u^1_l(x),...,u^d_l(x))$. We call the family of functions $\{u_l\}_{l\in[0,\infty)}$ the {\it truncated family obtained by $u$}.
\end{definition}

\begin{proposition}(Properties of the Truncated Family)
\label{prop:properties of the truncated family}

Let $E\subset\R^N$ be a set and let $u:E\to \R^d,u=(u^1,...,u^d)$ be a function. Let $\{u_l\}_{l\in[0,\infty)}$ be the truncated family obtained by $u$. Then,
\\
1. $\lim_{l\to\infty}u_l(x)=u(x),\quad \forall x\in E$;
\\
2. For every $x,y\in E$ and $l,m\in[0,\infty),l\leq m,$ we have $|u_l(x)-u_l(y)|\leq |u_{m}(x)-u_{m}(y)|\leq |u(x)-u(y)|$;
\\
3. For every $x,y\in E$, the family $\{|u_l(x)-u_l(y)|\}_{l\in[0,\infty)}$ is monotone increasing to $|u(x)-u(y)|$.
\end{proposition}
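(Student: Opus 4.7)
The plan is to reduce all three assertions to a single scalar inequality for the one-dimensional truncation map $T_l(t) := l \wedge (-l \vee t)$ on $\R$: for every $s \le t$ in $\R$ and every $0 \le l \le m$,
\begin{equation*}
0 \;\le\; T_l(t) - T_l(s) \;\le\; T_m(t) - T_m(s) \;\le\; t - s.
\end{equation*}
The cleanest justification is to observe that $T_l$ is absolutely continuous with weak derivative $T_l' = \chi_{(-l,l)}$ almost everywhere, so $T_l(t) - T_l(s) = \int_s^t \chi_{(-l,l)}(r) \, dr$, and the chain of inequalities then reflects the pointwise ordering $0 \le \chi_{(-l,l)} \le \chi_{(-m,m)} \le 1$. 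A three-case analysis on whether each of $s,t$ lies in $[-l,l]$ would give the same conclusion without any reference to derivatives.

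Assertion 1 is then immediate from the definition: for fixed $x \in E$ and each coordinate $i$, any $l > |u^i(x)|$ gives $u_l^i(x) = u^i(x)$, so $u_l^i(x) \to u^i(x)$ componentwise as $l \to \infty$, and hence $u_l(x) \to u(x)$ in $\R^d$. For assertion 2, I would apply the scalar inequality coordinate-by-coordinate with $s_i := u^i(x) \wedge u^i(y)$ and $t_i := u^i(x) \vee u^i(y)$, using monotonicity of $T_l$ to write $|T_l(u^i(x)) - T_l(u^i(y))| = T_l(t_i) - T_l(s_i)$; this yields
\begin{equation*}
|u_l^i(x) - u_l^i(y)|^2 \;\le\; |u_m^i(x) - u_m^i(y)|^2 \;\le\; |u^i(x) - u^i(y)|^2
\end{equation*}
for each $i$, and summing over $i = 1,\ldots,d$ produces the desired chain for the Euclidean norm on $\R^d$.

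Assertion 3 then combines the previous two: assertion 2 gives monotone increase of the family $\{|u_l(x) - u_l(y)|\}_{l\in[0,\infty)}$, while assertion 1 together with continuity of $|\cdot|$ on $\R^d$ gives $|u_l(x) - u_l(y)| \to |u(x) - u(y)|$ as $l \to \infty$. I do not foresee any real obstacle; the only point requiring mild care is the symmetrization from the scalar inequality (stated for ordered pairs) to the absolute-value version, resolved by monotonicity of $T_l$.
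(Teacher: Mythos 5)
Your proof is correct and follows essentially the same route as the paper: reduce to the one-dimensional truncation $T_l$, establish the scalar monotonicity inequality, apply it coordinate-by-coordinate, and sum squares to pass to the Euclidean norm on $\R^d$. The only difference is that the paper simply asserts the scalar inequality $|x_l - y_l| \le |x_m - y_m|$ without justification, whereas you supply a clean proof of it via the representation $T_l(t) - T_l(s) = \int_s^t \chi_{(-l,l)}(r)\,dr$ and the pointwise ordering of the indicators — a nice addition, but not a different method.
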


\begin{proof}
For $x\in \mathbb{R},l\in[0,\infty)$ we define $x_l:=l\wedge(-l\vee x)$. Notice that for every $x,y\in \mathbb{R}$ and $l,m\in[0,\infty),l<m,$ we have
$|x_l-y_l|\leq |x_{m}-y_{m}|$. For a point $x=(x^1,...,x^d)\in \mathbb{R}^d$, and $l\in[0,\infty)$ we define $x_l:=(x_l^1,...,x_l^d)$. Similarly, we have for every $x,y\in \mathbb{R}^d$ and $l,m\in[0,\infty),l<m,$ the inequality $|x_l-y_l|\leq |x_{m}-y_{m}|$. Notice also that for $x\in \mathbb{R}^d$, the family $\{x_l\}_{l\in[0,\infty)}$ has the property $\lim_{l\to \infty}x_l=x$. In particular, for every $x,y\in \mathbb{R}^d$, the family $\{|x_l-y_l|\}_{l\in[0,\infty)}$ is monotone increasing to $|x-y|$. Therefore, we get items 1,2 and 3 by choosing the points $u(x),u(y)$ in place of the points $x,y$.
\end{proof}

\subsection{Approximate Continuity and Differentiability of $L^1_{\text{loc}}$-functions}
\begin{definition}(Approximate Limit)
\label{def:approximate limit}

Let $\Omega\subset\mathbb{R}^N$ be an
open set and $u\in L^1_{\text{loc}}(\Omega,\mathbb{R}^d)$. We say that $u$
has approximate limit at $x\in \Omega$ if and only if there exists
$z\in \mathbb{R}^d$ such that
\begin{equation}
\label{eq:approximate limit} \lim_{\rho\to
0^+}\fint_{B_\rho(x)}|u(y)-z|dy=0.
\end{equation}
The set $\mathcal{S}_u$ of points where this property does not hold
is called the {\it approximate discontinuity set}. For any $x\in \Omega$
the point $z$, uniquely determined by $\eqref{eq:approximate
limit}$, is called the {\it approximate limit of $u$ at $x$} and denoted by
$\tilde{u}(x)$.
\end{definition}

\begin{definition}(Approximate Jump Points)
\label{def:approximate jump points}

Let
$\Omega\subset\mathbb{R}^N$ be an open set, $u\in
L^1_{\text{loc}}(\Omega,\mathbb{R}^d)$ and $x\in \Omega$. We say that $x$
is an {\it approximate jump point} of $u$ if and only if there
exist different $a,b\in \mathbb{R}^d$ and $\nu\in S^{N-1}$ such
that
\begin{equation}
\label{eq:one-sided approximate limit}
\lim_{\rho\to
0^+}\frac{1}{\rho^N}\left(\int_{B^+_\rho(x,\nu)}|u(z)-a|dz+\int_{B^-_\rho(x,\nu)}|u(z)-b|dz\right)=0,
\end{equation}
where
\begin{equation}
B^+_\rho(x,\nu):=\big\{y\in B_\rho(x):(y-x)\cdot
\nu>0\big\},\quad B^-_\rho(x,\nu):=\big\{y\in
B_\rho(x):(y-x)\cdot \nu<0\big\}.
\end{equation}
The triple $(a,b,\nu)$, uniquely determined by $\eqref{eq:one-sided
approximate limit}$ up to a permutation of $(a,b)$ and the change
of sign of $\nu$, is denoted by $(u^+(x),u^-(x),\nu_u(x))$. The set
of approximate jump points is denoted by $\mathcal{J}_u$. Note that $\mathcal{J}_u\subset\mathcal{S}_u$.
\end{definition}

\begin{definition}(Approximate Differentiability, definition 3.70 in \cite{AFP})
\label{def:approximate differentiability points}

Let $\Omega\subset\R^N$ be an open set and let $u\in L^1_{\text{loc}}(\Omega,\R^d)$. Let $x\in\Omega\setminus \mathcal{S}_u$. We say that $u$ is approximately differentiable at $x$ if there exists a $d\times N$ matrix $L$ such that
\begin{equation}
\label{eq:approximate differentiability}
\lim_{\rho\to 0^+}\fint_{B_\rho(x)}\frac{|u(y)-\tilde{u}(x)-L(y-x)|}{\rho}dy=0.
\end{equation}
If $u$ is approximately differentiable at $x$, the matrix $L$, uniquely determined by \eqref{eq:approximate differentiability}, is called the {\it approximate differential} of $u$ at $x$ and denoted by $\nabla u(x)$. The set of approximate differentiability points of $u$ is denoted by $\mathcal{D}_u$.
\end{definition}

\begin{proposition}(Properties of Approximate Differential, Proposition 3.71 in \cite{AFP})
\label{prop:properties of approximate differential}

Let $\Omega\subset\R^N$ be an open set and let $u\in L^1_{\text{loc}}(\Omega,\R^d)$. Then, $\mathcal{D}_u$ is a Borel set and the map $\nabla u:\mathcal{D}_u\to \R^{dN}$ is a Borel map.
\end{proposition}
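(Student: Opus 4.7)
The plan is to prove both claims by explicit Borel calculus, exploiting uniqueness of the approximate differential and a coercivity estimate for the matrix seminorm $M \mapsto \fint_{B_1(0)}|Mz|\,dz$. First I would verify that $\Omega\setminus\mathcal{S}_u$ is Borel and $\tilde u$ is Borel measurable on it: for each fixed $\rho>0$ the map $x \mapsto \fint_{B_\rho(x)}u\,dy$ is continuous (by continuity of translations in $L^1_{\text{loc}}$), so the set where the limit as $\rho\to 0^+$ exists is Borel, and $\tilde u$ there is a pointwise limit along a countable sequence of such Borel functions.

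Next, on $(\Omega\setminus\mathcal{S}_u)\times(0,\infty)\times\R^{dN}$ introduce the jointly Borel quantity
$$F(x,\rho,L) := \fint_{B_\rho(x)} \frac{|u(y)-\tilde u(x) - L(y-x)|}{\rho}\,dy.$$
After the substitution $y=x+\rho z$, $F$ is continuous in $\rho$ for each fixed $(x,L)$ and Lipschitz in $L$ with dimensional constant. The crucial coercivity input is
$$c_N|L-L'| \leq \fint_{B_1(0)}|(L-L')z|\,dz \leq C_N|L-L'|,$$
valid because $M \mapsto \fint_{B_1(0)}|Mz|\,dz$ is a norm on $\R^{dN}$ equivalent to the Euclidean one; by the reverse triangle inequality this gives $F(x,\rho,L)+F(x,\rho,L') \geq c_N|L-L'|$ for every admissible $\rho>0$.

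For the first assertion I would prove the equivalence
$$x\in\mathcal{D}_u \iff x\in\Omega\setminus\mathcal{S}_u\text{ and }H(x):=\inf_{L\in\mathbb{Q}^{dN}}\limsup_{\rho\in\mathbb{Q},\,\rho\to 0^+}F(x,\rho,L)=0.$$
The forward direction follows from Lipschitz dependence of $F$ on $L$ and density of $\mathbb{Q}^{dN}$. For the converse, given rationals $L_n$ with $\limsup_\rho F(x,\rho,L_n)\to 0$, the lower bound $F(x,\rho,L_n)+F(x,\rho,L_m) \geq c_N|L_n-L_m|$, evaluated at any $\rho$ small enough that both terms are small, forces $(L_n)$ to be Cauchy; its limit $L \in \R^{dN}$ then satisfies $\limsup_\rho F(x,\rho,L)=0$ by Lipschitz transfer, so $L=\nabla u(x)$. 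Since $H$ is Borel (countable infimum/supremum of jointly Borel expressions, with rational $\rho$ legitimate by continuity of $F$ in $\rho$), $\mathcal{D}_u$ is Borel.

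Finally, for Borel measurability of $\nabla u$, fix an enumeration $\{L^{(i)}\}_{i\geq 1}$ of $\mathbb{Q}^{dN}$ and set on $\mathcal{D}_u$
$$\ell_k(x) := L^{(j(k,x))},\qquad j(k,x):=\min\bigl\{i\in\N : \limsup_{\rho\in\mathbb{Q},\,\rho\to 0^+}F(x,\rho,L^{(i)}) < 1/k\bigr\}.$$
Each $\ell_k$ is Borel on $\mathcal{D}_u$, and the Cauchy argument above shows $\ell_k(x)\to\nabla u(x)$ pointwise on $\mathcal{D}_u$, so $\nabla u$ is Borel as a pointwise limit of Borel maps. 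The main technical obstacle is precisely the Cauchy extraction in the characterization of $\mathcal{D}_u$: one must verify that no \emph{a priori} boundedness of $|L_n|$ is assumed and yet the sequence converges, which is exactly what the coercivity of the matrix seminorm supplies; once this is in place, the rest is bookkeeping with countable operations on Borel functions.
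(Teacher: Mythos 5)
The paper itself gives no proof of this statement; it is cited verbatim as Proposition 3.71 of Ambrosio--Fusco--Pallara, so there is no in-paper argument to compare against. Your proof is correct and is essentially the standard one. The core points all check out: after the substitution $y=x+\rho z$ the functional $F(x,\rho,L)=\fint_{B_1(0)}\rho^{-1}|u(x+\rho z)-\tilde u(x)-\rho Lz|\,dz$ is jointly Borel in $(x,L)$ for each fixed $\rho$ (continuity of translations in $L^1_{\mathrm{loc}}$ together with Borel measurability of $\tilde u$ from Proposition \ref{prop:properties of approximate limits}), continuous in $\rho$, $C_N$-Lipschitz in $L$ uniformly in $\rho$, and satisfies the coercive lower bound $F(x,\rho,L)+F(x,\rho,L')\ge c_N|L-L'|$ because $M\mapsto\fint_{B_1(0)}|Mz|\,dz$ is a norm on a finite-dimensional space. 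Restricting $\rho$ to rationals is legitimate by the continuity in $\rho$, so $H$ is Borel, and the equivalence $x\in\mathcal{D}_u\iff x\in\Omega\setminus\mathcal{S}_u\ \text{and}\ H(x)=0$ is correctly established in both directions, with the Cauchy extraction supplying both the hard direction and, via the selection functions $\ell_k$, the Borel measurability of $\nabla u$ as a pointwise limit. The only small imprecision is in your first paragraph: the set where $\lim_{\rho\to 0^+}\fint_{B_\rho(x)}u\,dy$ exists is strictly larger than $\Omega\setminus\mathcal{S}_u$, which also demands that the mean oscillation $\fint_{B_\rho(x)}|u-z|\,dy$ tends to zero; but since you are anyway invoking the already-cited Borel measurability of $\tilde u$ and $\mathcal{S}_u$, this is cosmetic and does not affect the argument.
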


\begin{proposition}(Locality Properties of Approximate Differential, Proposition 3.73 in \cite{AFP})

Let $\Omega\subset\R^N$ be an open set, $u,v\in L^1_{\text{loc}}(\Omega,\R^d)$. If $x\in\mathcal{D}_u\cap \mathcal{D}_v$ and the set $\{u=v\}$ has density $1$ at $x$, then $\nabla u(x)=\nabla v(x)$. In particular, $\nabla u(x)=\nabla v(x)$ for $\mathcal{L}^N$-almost every $x\in \{u=v\}\cap \mathcal{D}_u\cap \mathcal{D}_v$.
\end{proposition}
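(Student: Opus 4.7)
The plan is to leverage the definitions of approximate limit (Definition~\ref{def:approximate limit}) and approximate differential (Definition~\ref{def:approximate differentiability points}) directly, using the density-$1$ hypothesis on $E := \{u = v\}$ to transfer pointwise identities on $E$ to averaged estimates on $B_\rho(x)$.

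First I would show $\tilde u(x) = \tilde v(x)$. Since $x \in \mathcal{D}_u \cap \mathcal{D}_v$, I can write $u(y) = \tilde u(x) + \nabla u(x)(y-x) + r_u(y)$ and $v(y) = \tilde v(x) + \nabla v(x)(y-x) + r_v(y)$ with remainders satisfying $\fint_{B_\rho(x)}|r_u|\, dy = o(\rho)$ and the analogous estimate for $r_v$. On $E$ one has $u = v$, hence
\begin{equation*}
\tilde u(x) - \tilde v(x) = \bigl(\nabla v(x) - \nabla u(x)\bigr)(y - x) + r_v(y) - r_u(y), \quad y \in E.
\end{equation*}
Integrating the modulus over $B_\rho(x)\cap E$ and dividing by $\mathcal{L}^N(B_\rho(x)\cap E)$, which is comparable to $\mathcal{L}^N(B_\rho(x))$ by the density-$1$ hypothesis, the right-hand side is bounded by $|\nabla v(x) - \nabla u(x)|\rho + o(\rho)$ and tends to $0$, forcing $\tilde u(x) = \tilde v(x)$.

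Writing $L := \nabla v(x) - \nabla u(x)$, the same identity reduces on $E$ to $L(y - x) = r_v(y) - r_u(y)$. Consequently,
\begin{equation*}
\frac{1}{\rho\, \mathcal{L}^N(B_\rho(x))}\int_{B_\rho(x)\cap E}\bigl|L(y-x)\bigr|\, dy \;\le\; \frac{1}{\rho}\fint_{B_\rho(x)}\bigl(|r_u| + |r_v|\bigr)\, dy \longrightarrow 0.
\end{equation*}
Since $|L(y-x)|/\rho \le |L|$ on $B_\rho(x)$ and $\mathcal{L}^N(B_\rho(x)\setminus E)/\mathcal{L}^N(B_\rho(x)) \to 0$, the left-hand side above differs from $\frac{1}{\rho}\fint_{B_\rho(x)}|L(y-x)|\, dy$ by a vanishing error. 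The affine change of variable $z = (y-x)/\rho$ rewrites this last average as the $\rho$-independent quantity $\fint_{B_1(0)}|Lz|\, dz$, which is strictly positive unless $L = 0$. This forces $\nabla u(x) = \nabla v(x)$.

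For the \emph{in particular} assertion, I would apply the Lebesgue density theorem to the $\mathcal{L}^N$-measurable set $\{u = v\}$: almost every point of this set is a density-$1$ point of itself, so the first conclusion applies $\mathcal{L}^N$-almost everywhere on $\{u = v\}\cap \mathcal{D}_u \cap \mathcal{D}_v$. The main (minor) obstacle is the density-$1$ bookkeeping: one must carefully justify the replacement of the integral over $B_\rho(x)\cap E$ by the integral over $B_\rho(x)$, which is enabled by the uniform bound $|L(y-x)|/\rho \le |L|$ available on the whole ball.
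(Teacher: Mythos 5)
The paper does not prove this proposition; it is cited verbatim as Proposition~3.73 in \cite{AFP}, so there is no in-paper proof to compare against. Your argument is correct and is essentially the standard proof: expand $u,v$ via the approximate-differentiability remainders, exploit that they coincide on the density-$1$ set $E=\{u=v\}$ to first conclude $\tilde u(x)=\tilde v(x)$ and then $L(y-x)=r_u(y)-r_v(y)$ on $E$, use the bound $|L(y-x)|/\rho\le|L|$ together with $\mathcal{L}^N(B_\rho(x)\setminus E)/\mathcal{L}^N(B_\rho(x))\to 0$ to pass from the average over $B_\rho(x)\cap E$ to the average over $B_\rho(x)$, and finally invoke the scale-invariance identity
\begin{equation*}
\frac{1}{\rho}\fint_{B_\rho(x)}|L(y-x)|\,dy=\fint_{B_1(0)}|Lz|\,dz
\end{equation*}
to force $L=0$; the Lebesgue density theorem applied to the measurable set $\{u=v\}$ then gives the ``in particular'' clause. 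Each step is justified and the density-$1$ bookkeeping you flag is handled correctly, so there is no gap.
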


\begin{proposition}(Properties of Approximate Limits, Proposition 3.64 in \cite{AFP})
\label{prop:properties of approximate limits}

Let $\Omega\subset\mathbb{R}^N$ be an open set and $u\in L^1_{\text{loc}}(\Omega,\mathbb{R}^d)$.
\\
$(a)$ $\mathcal{S}_u$ is a Borel set, $\mathcal{L}^N(\mathcal{S}_u)=0$ and $\tilde{u}:\Omega\setminus \mathcal{S}_u\to \mathbb{R}^d$ is a Borel function, coinciding $\mathcal{L}^N-$almost everywhere in $\Omega\setminus \mathcal{S}_u$ with $u$;
\\
$(b)$ if $f:\mathbb{R}^d\to \mathbb{R}^p$ is a Lipschitz map and $v=f\circ u$, then $\mathcal{S}_v\subset \mathcal{S}_u$ and $\tilde{v}(x)=f(\tilde{u}(x))$ for any $x\in \Omega \setminus \mathcal{S}_u$.
\end{proposition}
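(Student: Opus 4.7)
The plan is to prove both assertions by classical real-variable arguments: part (a) is a combination of the Lebesgue differentiation theorem (for the measure-zero statement) and a Borel-measurability bookkeeping, while part (b) is a one-line Lipschitz estimate. Throughout I would fix the open set $\Omega\subset\R^N$ and $u\in L^1_{\mathrm{loc}}(\Omega,\R^d)$.

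For assertion (a), I would single out the natural candidate for the approximate limit at each $x\in\Omega$, namely
\begin{equation}
L(x):=\lim_{\rho\to 0^+}\fint_{B_\rho(x)}u(y)\,dy,
\end{equation}
whenever this limit exists in $\R^d$. The crucial observation is that for each fixed $\rho>0$ the map $x\mapsto\fint_{B_\rho(x)}u(y)\,dy$ is continuous on $\{x\in\Omega:B_\rho(x)\subset\Omega\}$ by continuity of translations in $L^1$. I would then argue that the set
$E_1:=\{x\in\Omega: L(x)\ \text{exists in }\R^d\}$
is Borel (being the set where a family of continuous functions satisfies a Cauchy condition, which reduces to a countable condition along rational radii) and that $L$ is Borel on $E_1$. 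Similarly,
\begin{equation}
E_2:=\Bigl\{x\in E_1\ :\ \lim_{\rho\to 0^+}\fint_{B_\rho(x)}|u(y)-L(x)|\,dy=0\Bigr\}
\end{equation}
is a Borel subset of $\Omega$. The uniqueness of the approximate limit, together with the observation that if $\tilde u(x)$ exists then it must equal $L(x)$, would give $\Omega\setminus\mathcal{S}_u=E_2$ and $\tilde u=L$ on $E_2$, which yields both the Borel structure of $\mathcal{S}_u$ and the Borel measurability of $\tilde u$. Finally, Lebesgue's differentiation theorem says that at $\mathcal{L}^N$-a.e.\ $x\in\Omega$ both $L(x)=u(x)$ and $x\in E_2$; this gives $\mathcal{L}^N(\mathcal{S}_u)=0$ and the a.e.\ coincidence of $\tilde u$ with $u$ on $\Omega\setminus\mathcal{S}_u$.

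For assertion (b) I would argue directly: letting $K$ be a Lipschitz constant of $f$ and fixing any $x\in\Omega\setminus\mathcal{S}_u$, the estimate
\begin{equation}
\fint_{B_\rho(x)}\bigl|v(y)-f(\tilde u(x))\bigr|\,dy
=\fint_{B_\rho(x)}\bigl|f(u(y))-f(\tilde u(x))\bigr|\,dy
\leq K\fint_{B_\rho(x)}|u(y)-\tilde u(x)|\,dy
\end{equation}
holds for every sufficiently small $\rho$, and the right-hand side tends to $0$ as $\rho\to 0^+$ by the defining property \eqref{eq:approximate limit} of $\tilde u(x)$. This immediately gives $x\in\Omega\setminus\mathcal{S}_v$ together with $\tilde v(x)=f(\tilde u(x))$, so $\mathcal{S}_v\subset\mathcal{S}_u$. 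The main (and really only) obstacle in the whole proof is the Borel measurability portion of (a): a naive reading of the definition of $\mathcal{S}_u$ involves an uncountable existential quantifier over $z\in\R^d$, and the key trick that makes everything routine is to decouple the existence of $\tilde u(x)$ into a Cauchy condition on the vector-valued averages (producing the candidate $L(x)$) followed by a separate $L^1$-oscillation vanishing condition around $L(x)$; both of these can be expressed by countable Borel manipulations of continuous-in-$x$ averaging functionals. Everything else is a direct application of Lebesgue differentiation and the triangle inequality.
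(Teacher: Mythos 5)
Your argument is correct and is essentially the standard proof: the paper itself does not prove this proposition but cites it directly as Proposition 3.64 of Ambrosio--Fusco--Pallara, and your reconstruction follows the same route used there. Two small points of care worth keeping in mind if you write this out in full: first, the reduction of the Cauchy condition for $E_1$ to countably many rational radii requires noting that $\rho\mapsto\fint_{B_\rho(x)}u\,dy$ is continuous in $\rho$, which is straightforward since both $\int_{B_\rho(x)}u\,dy$ and $|B_\rho|$ are continuous in $\rho$; second, the oscillation functional defining $E_2$ is not literally a continuous function of $x$ because $L(x)$ sits inside the integrand, so the correct statement is that $(x,c)\mapsto\fint_{B_\rho(x)}|u(y)-c|\,dy$ is jointly continuous and composing with the Borel map $x\mapsto(x,L(x))$ yields a Borel function of $x$, which then feeds into the same countable limit argument.
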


\begin{proposition}(Properties of One-Sided Approximate Limits, Proposition 3.69 in \cite{AFP})
\label{prop:properties of one-sided approximate limits}

Let $\Omega\subset\mathbb{R}^N$ be an open set and $u\in L^1_{\text{loc}}(\Omega,\mathbb{R}^d)$.
\\
(a) The set $\mathcal{J}_u$ is a Borel subset of $\mathcal{S}_u$ and there exist Borel functions
\begin{equation}
\left(u^+,u^-,\nu_u\right):\mathcal{J}_u\to \R^d\times\R^d\times S^{N-1}
\end{equation}
such that for every $x\in \mathcal{J}_u$ we have
\begin{equation}
\lim_{\rho\to 0^+}\fint_{B^+_\rho(x,\nu_u(x))}|u(y)-u^+(x)|dy=0,\quad \lim_{\rho\to 0^+}\fint_{B^-_\rho(x,\nu_u(x))}|u(y)-u^-(x)|dy=0.
\end{equation}
\\
(b) if $f:\mathbb{R}^d\to \mathbb{R}^p$ is a Lipschitz map, $v=f\circ u$ and $x\in \mathcal{J}_u$, then $x\in \mathcal{J}_v$ if and only if $f(u^+(x))\neq f(u^-(x))$, and in this case
\begin{equation}
\left(v^+(x),v^-(x),\nu_{v}(x)\right)=\left(f(u^+(x)),f(u^-(x)),\nu_{u}(x)\right).
\end{equation}
Otherwise, $x\notin \mathcal{S}_v$ and $\tilde{v}(x)=f(u^+(x))=f(u^-(x))$.
\end{proposition}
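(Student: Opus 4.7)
The plan is to treat the two assertions separately, since assertion (a) is a measurability selection problem and assertion (b) is a direct Lipschitz-composition computation.

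For assertion (a), I would first establish that $\mathcal{J}_u$ is Borel by writing its defining property as a countable logical combination of Borel conditions. Concretely, for each pair of rational $a,b \in \mathbb{Q}^d$ with $a \neq b$, each $\nu$ ranging over a countable dense subset $\{\nu_k\}_{k \in \mathbb{N}} \subset S^{N-1}$, and each rational $\varepsilon > 0$, the set of $x$ for which the one-sided averages in \eqref{eq:one-sided approximate limit} are bounded by $\varepsilon$ for all sufficiently small rational $\rho$ is Borel (the averages depend continuously on $x$ and monotonically in the relevant parameters, so one can restrict everything to rationals). Taking appropriate countable intersections and unions one exhibits $\mathcal{J}_u$ as a Borel set. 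For the Borel measurability of the triple $(u^+,u^-,\nu_u)$, the difficulty is that Definition \ref{def:approximate jump points} determines the triple only up to the permutation $(a,b,\nu) \leftrightarrow (b,a,-\nu)$. To obtain a canonical Borel selection, I would order the pairs $(a,b,\nu)$ lexicographically on $\mathbb{R}^d \times \mathbb{R}^d \times S^{N-1}$ (for instance, requiring $\nu$ to lie in the open half-sphere $\{\nu_1 > 0\}$, extending to the boundary by lexicographic ordering of the remaining coordinates) and define $(u^+(x),u^-(x),\nu_u(x))$ to be the unique such ordered triple. The Borel measurability of these three functions then follows from the Borel description of $\mathcal{J}_u$ together with the Borel character of the lexicographic selection rule.

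For assertion (b), suppose $f : \mathbb{R}^d \to \mathbb{R}^p$ is Lipschitz with constant $L$, let $v = f \circ u$, and fix $x \in \mathcal{J}_u$ with triple $(u^+(x),u^-(x),\nu_u(x))$. The key estimate is
\begin{equation}
\fint_{B^+_\rho(x,\nu_u(x))} |v(y) - f(u^+(x))| \, dy \leq L \fint_{B^+_\rho(x,\nu_u(x))} |u(y) - u^+(x)| \, dy,
\end{equation}
with the analogous inequality on the negative half-ball. By Proposition \ref{prop:properties of one-sided approximate limits}(a) the right-hand sides vanish as $\rho \to 0^+$, so $v$ admits one-sided asymptotic values $f(u^+(x))$ and $f(u^-(x))$ along $\nu_u(x)$. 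If $f(u^+(x)) \neq f(u^-(x))$, then comparing with \eqref{eq:one-sided approximate limit} and invoking the uniqueness clause in Definition \ref{def:approximate jump points} identifies $(v^+(x),v^-(x),\nu_v(x)) = (f(u^+(x)),f(u^-(x)),\nu_u(x))$ up to the allowed permutation. If instead $f(u^+(x)) = f(u^-(x)) = z$, then summing the two half-ball estimates gives
\begin{equation}
\fint_{B_\rho(x)} |v(y) - z| \, dy \to 0 \quad \text{as } \rho \to 0^+,
\end{equation}
which by Definition \ref{def:approximate limit} means $x \notin \mathcal{S}_v$ and $\tilde v(x) = z$; in particular $x \notin \mathcal{J}_v$.

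The main obstacle is the measurable selection in assertion (a): ensuring that the orientation ambiguity in $\nu_u$ is resolved in a genuinely Borel way requires a careful tie-breaking rule at points where the natural selector is degenerate (e.g.\ $\nu_u$ tangent to the hyperplane $\nu_1 = 0$). This is handled by a hierarchical lexicographic rule on coordinates, combined with the observation that the set of $x$ where a given tie occurs is itself Borel, so splitting $\mathcal{J}_u$ into countably many Borel pieces on which a uniform selector works suffices. The rest of the argument is a routine combination of dominated convergence, the Lipschitz estimate above, and the already-established uniqueness of one-sided approximate limits.
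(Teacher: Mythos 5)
This proposition is not proved in the paper; it is imported verbatim as Proposition 3.69 of Ambrosio--Fusco--Pallara \cite{AFP} and used as a black box, so there is no in-paper argument to compare against. Assessed on its own merits, your treatment of assertion (b) is correct and is the standard argument: the pointwise Lipschitz bound pushes the one-sided half-ball averages for $u$ to those for $v$, the uniqueness clause of Definition \ref{def:approximate jump points} identifies the triple, and when $f(u^+(x))=f(u^-(x))$ adding the two half-ball estimates shows $x\notin\mathcal{S}_v$ and $\tilde{v}(x)=f(u^\pm(x))$.

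Your argument for assertion (a), however, contains a genuine gap in the Borel-measurability of $\mathcal{J}_u$. The proposed countable description --- fixing rational $(a,b)$, $\nu$ in a countable dense subset of $S^{N-1}$, rational $\varepsilon$, and asking that the two half-ball averages be less than $\varepsilon$ for all sufficiently small $\rho$ --- does \emph{not} recover $\mathcal{J}_u$, because the half-ball averages do not tend to zero as $\rho\to 0^+$ for any direction $\nu$ that only approximates the true normal $\nu_u(x)$. Concretely, take $N=2$, $u=\chi_H$ with $H=\{z:z\cdot\nu_0>0\}$, $\nu_0$ an irrational direction, and $x=0$. Then for any $\nu\neq\nu_0$ in your countable set, by scaling $\fint_{B^+_\rho(0,\nu)}|u-1|\,dy$ is independent of $\rho$ and equal to the angular fraction of $B^+_1(0,\nu)$ lying in $H^c$, a strictly positive constant depending only on the angle between $\nu$ and $\nu_0$. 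Hence $0$ belongs to none of your sets for small $\varepsilon$, even though $0\in\mathcal{J}_u$. The obstruction is structural: the approximation in $\nu$ and the passage $\rho\to 0^+$ must be coupled (one needs $\nu\to\nu_0$ as $\rho\to 0^+$), which a fixed countable union over directions cannot encode; the invoked ``monotonicity in the relevant parameters'' does not hold. The lexicographic tie-breaking rule for selecting $(u^+,u^-,\nu_u)$ is a reasonable device once the Borel structure of $\mathcal{J}_u$ is established, but it cannot repair this earlier step. A correct treatment (as in \cite{AFP}) requires a more careful Borel characterization that handles this $(\nu,\rho)$ coupling.
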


\begin{proposition}(Truncation and Jumps)
\label{prop:truncation and jumps}

Let $\Omega\subset\R^N$ be an open set, and let $u\in L^1_{\text{loc}}(\Omega,\R^d)$. For each $l\in[0,\infty)$, let us define the $l$-truncated function by $T_l:\R^d\to \R^d,T_l(x):=x_l$, where $x_l$ is defined as in the proof of Proposition \ref{prop:properties of the truncated family}. Then we have the following assertions:
\\
1. $T_l$ is a Lipschitz map;
\\
2. The jumps set of $u$ can be decomposed in terms of the jump sets of $T_l\circ u$ through the formula:
\begin{equation}
\mathcal{J}_u=\bigcup_{l\in[0,\infty)}\mathcal{J}_{T_l\circ u}\cap \mathcal{J}_u;
\end{equation}
3. For every $l,m\in[0,\infty)$ such that $l\leq m$ we have the following monotonicity property:
\begin{equation}
\mathcal{J}_{T_l\circ u}\cap \mathcal{J}_u\subset \mathcal{J}_{T_m\circ u}\cap \mathcal{J}_u.
\end{equation}
\end{proposition}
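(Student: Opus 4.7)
The plan is to reduce all three assertions to algebraic facts about the scalar truncation $t \mapsto l \wedge (-l \vee t)$ combined with the Lipschitz-pushforward behavior of jumps (Proposition \ref{prop:properties of one-sided approximate limits}(b)).

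For assertion 1, I would observe that for each coordinate the map $t \mapsto l \wedge (-l \vee t)$ is $1$-Lipschitz on $\R$ as a composition of $\min$ and $\max$ with constants, hence $T_l$ acting coordinatewise on $\R^d$ is Lipschitz (with constant $1$ in the $\ell^\infty$-norm, or $\sqrt{d}$ in the Euclidean norm). This is immediate and requires no further argument.

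For assertion 2, the inclusion $\bigcup_{l\in[0,\infty)}(\mathcal{J}_{T_l\circ u}\cap \mathcal{J}_u)\subset \mathcal{J}_u$ is trivial. For the reverse inclusion, given $x\in \mathcal{J}_u$, the one-sided traces $u^+(x),u^-(x)\in\R^d$ are well-defined and distinct. Choose any $l_0 \ge \max\{|u^+(x)|,|u^-(x)|\}$ componentwise; then $T_{l_0}$ acts as the identity on both $u^+(x)$ and $u^-(x)$, so $T_{l_0}(u^+(x))\neq T_{l_0}(u^-(x))$. Since $T_{l_0}$ is Lipschitz by assertion 1, Proposition \ref{prop:properties of one-sided approximate limits}(b) applies and gives $x\in \mathcal{J}_{T_{l_0}\circ u}$.

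For assertion 3, the key algebraic observation is the factorization $T_l = T_l\circ T_m$ whenever $l\le m$, which follows because truncating first to $[-m,m]^d$ and then to $[-l,l]^d$ yields the same result as truncating directly to the smaller cube. Let $x\in \mathcal{J}_{T_l\circ u}\cap \mathcal{J}_u$. By Proposition \ref{prop:properties of one-sided approximate limits}(b) applied to the Lipschitz map $T_l$, we have $T_l(u^+(x))\neq T_l(u^-(x))$. If $T_m(u^+(x))$ and $T_m(u^-(x))$ were equal, applying $T_l$ to both would give $T_l(u^+(x))=T_l\circ T_m(u^+(x))=T_l\circ T_m(u^-(x))=T_l(u^-(x))$, a contradiction. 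Hence $T_m(u^+(x))\neq T_m(u^-(x))$, and invoking Proposition \ref{prop:properties of one-sided approximate limits}(b) once more (for the Lipschitz map $T_m$) yields $x\in \mathcal{J}_{T_m\circ u}\cap \mathcal{J}_u$.

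None of the three steps presents a substantial obstacle, since the analytic content (existence of one-sided traces and their behavior under composition with a Lipschitz map) is entirely contained in Proposition \ref{prop:properties of one-sided approximate limits}; the only thing to verify beyond that is the elementary factorization $T_l=T_l\circ T_m$ for $l\le m$, which is the sole step requiring more than a single line.
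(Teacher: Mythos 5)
Your proof is correct and follows the paper's line of attack closely: assertion 1 from coordinatewise $1$-Lipschitzness of the scalar truncation, assertion 2 by choosing $l$ large enough that $T_l$ fixes $u^\pm(x)$ and then invoking the Lipschitz-jump characterization in Proposition \ref{prop:properties of one-sided approximate limits}(b), and assertion 3 via the factorization $T_l=T_l\circ T_m$ for $l\le m$, which is the crux in both your argument and the paper's. The only divergence is in how assertion 3 concludes: you keep the contradiction at the level of the one-sided traces (if $T_m(u^+(x))=T_m(u^-(x))$ then applying $T_l$ forces $T_l(u^+(x))=T_l(u^-(x))$, contradicting Proposition \ref{prop:properties of one-sided approximate limits}(b)), whereas the paper detours through the approximate discontinuity sets, using Proposition \ref{prop:properties of approximate limits}(b) to obtain $\mathcal{S}_{T_l\circ u}\subset\mathcal{S}_{T_m\circ u}$ and contradicting $x\in\mathcal{J}_{T_l\circ u}\subset\mathcal{S}_{T_l\circ u}$. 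Your version is a bit more direct and avoids importing the second proposition on $\mathcal{S}$-sets; both exploit exactly the same identity $T_l\circ(T_m\circ u)=T_l\circ u$. One cosmetic point: in assertion 2, ``$l_0\ge\max\{|u^+(x)|,|u^-(x)|\}$ componentwise'' reads oddly since $|\cdot|$ is a scalar; what you mean (and what suffices) is $l_0\ge\max_i\max\{|u^+_i(x)|,|u^-_i(x)|\}$, so that $T_{l_0}$ fixes both traces.
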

\begin{proof}
1. For each $l\in[0,\infty)$, by Proposition $\ref{prop:properties of the truncated family}$ we get that the map $T_l:\mathbb{R}^d\to \mathbb{R}^d,T_l(x):=x_l$, is Lipschitz.
\\
2. For $u\in L^1_{\text{loc}}(\Omega,\R^d)$, where $\Omega\subset\mathbb{R}^N$ is an open set, and $x\in \mathcal{J}_u$, we know by Proposition $\ref{prop:properties of one-sided approximate limits}$ that $x\in \mathcal{J}_{T_l\circ u}$ if and only if $T_l(u^+(x))\neq T_l(u^-(x))$, and in this case
\begin{equation}
\left((T_l\circ u)^+(x),(T_l\circ u)^-(x),\nu_{T_l\circ u}(x)\right)=\left(T_l(u^+(x)),T_l(u^-(x)),\nu_{u}(x)\right);
\end{equation}
and if $T_l(u^+(x))=T_l(u^-(x))$, then $x\notin \mathcal{S}_{T_l\circ u}$. Thus, since for every $x\in \mathcal{J}_u$ there exists a big enough $l\in[0,\infty)$ such that $T_l(u^+(x))=u^+(x)\neq u^-(x)=T_l(u^-(x))$, we have
\begin{equation}
\mathcal{J}_u=\bigcup_{l\in[0,\infty)} \mathcal{J}_{T_l\circ u}\cap \mathcal{J}_u.
\end{equation}
3. we have for every $l,m\in[0,\infty),l\leq m,$ that $\mathcal{J}_{T_l\circ u}\cap \mathcal{J}_u\subset \mathcal{J}_{T_m\circ u}\cap \mathcal{J}_u$: If
$x\in \mathcal{J}_{T_l\circ u}\cap \mathcal{J}_u$, then $T_l(u^+(x))\neq T_l(u^-(x))$ and so $T_{m}(u^+(x))\neq T_{m}(u^-(x))$. If not, then  $T_{m}(u^+(x))=T_{m}(u^-(x))$ and then $x\notin \mathcal{S}_{T_m\circ u}$, and since $T_l\circ \left(T_m\circ u\right)=T_l\circ u$, then, by part (b) of Proposition
$\ref{prop:properties of approximate limits}$ with $T_l$ in place of $f$ and $T_m\circ u$ in place of $u$, we obtain $\mathcal{S}_{T_l\circ u}\subset \mathcal{S}_{T_m\circ u}$ and so
$x\notin \mathcal{S}_{T_l\circ u}$. It is a contradiction since $x\in \mathcal{J}_{T_l\circ u}\subset \mathcal{S}_{T_l\circ u}$. From $T_{m}(u^+(x))\neq T_{m}(u^-(x))$ and $x\in \mathcal{J}_u$ we get $x\in \mathcal{J}_{T_m\circ u}\cap \mathcal{J}_u$.
\end{proof}

\begin{lemma}(Lower Semi-Continuity for Jump-Integral with respect to the Truncated Family)
Let $\Omega\subset\R^N$ be an open set, $u\in L^1_{\text{loc}}(\Omega,\R^d)$, $h:\R^N\to \R$ be a non-negative, $\mathcal{H}^{N-1}$-measurable function and $F:\R\to\R$ be a non-negative continuous function. Then,
\begin{align}
\label{eq:lower semi-continuity for q-jump variation w.r.t truncated family}
\liminf_{l\to \infty}\int_{\mathcal{J}_{u_{l}}}F\left(|(u_{l})^+(x)-(u_{l})^-(x)|\right)h(x)d\Haus^{N-1}(x)\geq \int_{\mathcal{J}_u}F\left(|u^+(x)-u^-(x)|\right)h(x)d\Haus^{N-1}(x),
\end{align}
where $\{u_l\}_{l\in[0,\infty)}$ is the truncated family obtained by $u$.
\end{lemma}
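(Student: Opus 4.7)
The strategy is to restrict the domain of integration on the left-hand side to the subset $\mathcal{J}_{u_l}\cap \mathcal{J}_u$, on which the one-sided limits of $u_l$ can be expressed explicitly in terms of those of $u$, and then to apply Fatou's lemma. Since $F\geq 0$ and $h\geq 0$, for every $l\in[0,\infty)$ we have
\begin{equation*}
\int_{\mathcal{J}_{u_l}}F\bigl(|(u_l)^+-(u_l)^-|\bigr)h\,d\Haus^{N-1}\geq \int_{\mathcal{J}_{u_l}\cap \mathcal{J}_u}F\bigl(|(u_l)^+-(u_l)^-|\bigr)h\,d\Haus^{N-1}.
\end{equation*}
Since $u_l=T_l\circ u$ with $T_l$ Lipschitz (item 1 of Proposition \ref{prop:truncation and jumps}), Proposition \ref{prop:properties of one-sided approximate limits}(b) yields $(u_l)^{\pm}(x)=T_l(u^{\pm}(x))$ for every $x\in \mathcal{J}_{u_l}\cap \mathcal{J}_u$, and the integrand becomes $F(|T_l(u^+(x))-T_l(u^-(x))|)h(x)$.

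Next, I will exploit the monotone structure given by items 2 and 3 of Proposition \ref{prop:truncation and jumps}: the sets $\mathcal{J}_{u_l}\cap \mathcal{J}_u$ are non-decreasing in $l$ and $\mathcal{J}_u=\bigcup_{l\in[0,\infty)}\mathcal{J}_{u_l}\cap \mathcal{J}_u$. Consequently, for every $x\in \mathcal{J}_u$ there exists $l_0=l_0(x)\geq \max\{\|u^+(x)\|_\infty,\|u^-(x)\|_\infty\}$ such that $x\in \mathcal{J}_{u_l}\cap \mathcal{J}_u$ and $T_l(u^{\pm}(x))=u^{\pm}(x)$ for every $l\geq l_0$. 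Therefore, setting
\begin{equation*}
g_l(x):=\chi_{\mathcal{J}_{u_l}\cap \mathcal{J}_u}(x)\,F\bigl(|T_l(u^+(x))-T_l(u^-(x))|\bigr)h(x),
\end{equation*}
the sequence $g_l$ consists of non-negative $\Haus^{N-1}$-measurable functions on $\mathcal{J}_u$ that converge pointwise on $\mathcal{J}_u$ to $F(|u^+(x)-u^-(x)|)h(x)$ (continuity of $F$ is used here). Fatou's lemma applied to $g_l$ with respect to $\Haus^{N-1}\llcorner \mathcal{J}_u$ then gives
\begin{equation*}
\liminf_{l\to\infty}\int_{\mathcal{J}_u}g_l\,d\Haus^{N-1}\geq \int_{\mathcal{J}_u}F\bigl(|u^+-u^-|\bigr)h\,d\Haus^{N-1},
\end{equation*}
and chaining this with the initial restriction yields the claimed inequality \eqref{eq:lower semi-continuity for q-jump variation w.r.t truncated family}.

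The argument is essentially mechanical, so no serious obstacle is anticipated; the only step that requires care is verifying the pointwise convergence of $g_l$ on $\mathcal{J}_u$, which hinges on combining the identification $(u_l)^{\pm}=T_l(u^{\pm})$ from Proposition \ref{prop:properties of one-sided approximate limits}(b) with the monotone exhaustion $\mathcal{J}_u=\bigcup_l \mathcal{J}_{u_l}\cap \mathcal{J}_u$ from Proposition \ref{prop:truncation and jumps}, together with the elementary fact that the coordinate-wise truncation $T_l$ stabilizes at the identity on any bounded set as soon as $l$ exceeds its $\ell^\infty$-diameter.
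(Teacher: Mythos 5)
Your proof is correct and follows essentially the same route as the paper: restrict to $\mathcal{J}_{u_l}\cap\mathcal{J}_u$, rewrite the integrand via $(u_l)^{\pm}=T_l(u^{\pm})$ using Proposition \ref{prop:properties of one-sided approximate limits}(b), observe pointwise convergence of $\chi_{\mathcal{J}_{u_l}\cap\mathcal{J}_u}F(|T_l(u^+)-T_l(u^-)|)h$ to $\chi_{\mathcal{J}_u}F(|u^+-u^-|)h$ via Propositions \ref{prop:truncation and jumps} and \ref{prop:properties of the truncated family}, and conclude with Fatou's lemma. Your explicit remark that continuity of $F$ is what justifies the pointwise limit, and that the integrand actually stabilizes once $l$ exceeds $\max\{|u^+(x)|,|u^-(x)|\}$, is a useful clarification that the paper leaves implicit.
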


\begin{proof}
By Proposition $\ref{prop:truncation and jumps}$ we obtain
\begin{multline}
\label{eq:lower semi-continuity for q-jump variation w.r.t truncated family1}
\int_{\mathcal{J}_{u_l}}F\left(|(u_l)^+(x)-(u_l)^-(x)|\right)h(x)d\Haus^{N-1}(x)\geq \int_{\mathcal{J}_{u_l}\cap \mathcal{J}_u}F\left(|(u_l)^+(x)-(u_l)^-(x)|\right)h(x)d\Haus^{N-1}(x)
\\
=\int_{\mathcal{J}_{u_l}\cap \mathcal{J}_u}F\left(|(u^+(x))_l-(u^-(x))_l|\right)h(x)d\Haus^{N-1}(x)
\\
=\int_{\mathcal{J}_u}\chi_{\mathcal{J}_{u_l}\cap \mathcal{J}_u}(x)F\left(|(u^+(x))_l-(u^-(x))_l|\right)h(x)d\Haus^{N-1}(x).
\end{multline}
By Proposition
$\ref{prop:truncation and jumps}$  we have
$\lim_{l\to \infty}\chi_{\mathcal{J}_{u_l}\cap \mathcal{J}_u}(x)=\chi_{\mathcal{J}_u}(x),\forall x\in \mathcal{J}_u$, and by Proposition $\ref{prop:properties of the truncated family}$ we have $\lim_{l\to \infty}|(u^+(x))_l-(u^-(x))_l|=|u^+(x)-u^-(x)|,\forall x\in \mathcal{J}_u$.
Taking the lower limit as
$l\to \infty$ on both sides of \eqref{eq:lower semi-continuity for q-jump variation w.r.t truncated family1} and using Fatou's lemma we obtain \eqref{eq:lower semi-continuity for q-jump variation w.r.t truncated family}.
\end{proof}

\subsection{Aspects of $BV$-Functions}
\begin{definition}(Definition of $BV$ Functions)
Let $\Omega\subset\R^N$ be an open set. We say that $u\in BV(\Omega,\R^d)$ if and only if $u\in L^1(\Omega,\R^d)$ and there exists an $d\times N$ matrix valued measure $\mu:\mathcal{B}(\Omega)\footnote{Borel sigma algebra}\to \R^{d\times N}$ such that for every $\varphi \in C^\infty_c(\Omega)$ it follows that
\begin{equation}
\label{eq:integration by parts for BV}
\int_{\Omega}u(x)\nabla \varphi(x)dx=-\int_{\Omega}\varphi(x)d\mu(x).
\end{equation}
In this case we denote $\mu:=Du$. In formula \eqref{eq:integration by parts for BV} we think about $u$ as a column vector $u=(u_1,...,u_d)^T$ and $\nabla \varphi=(\partial_1 \varphi,...,\partial_N \varphi)$.
\end{definition}
\begin{lemma}(Continuity for Jump-Integral with respect to the Truncated Family)
\label{lem:limit of jump variation of truncated family}

Let $\Omega\subset\R^N$ be an open set, $u\in BV_{\text{loc}}(\Omega,\R^d)$. Let $h:\R^N\to \R$ be a non-negative $\mathcal{H}^{N-1}$-measurable function, and $F:\R\to \R$ be a non-negative, monotone increasing function. Let $\{u_l\}_{l\in[0,\infty)}$ be the truncated family obtained by $u$. Then,
\begin{enumerate}
\item
\begin{align}
\label{eq:limit of jump variation of truncated family}
\lim_{l\to \infty}\int_{\mathcal{J}_{u_{l}}}F\left(|(u_{l})^+(x)-(u_{l})^-(x)|\right)h(x)d\Haus^{N-1}(x)= \int_{\mathcal{J}_u}F\left(|u^+(x)-u^-(x)|\right)h(x)d\Haus^{N-1}(x).
\end{align}

\item For every $n\in\R^N$
\begin{multline}
\label{eq:limit of jump variation of truncated family with the jump normal}
\lim_{l\to \infty}\int_{\mathcal{J}_{u_{l}}}F\left(|(u_{l})^+(x)-(u_{l})^-(x)|\right)|\nu_{u_{l}}(x)\cdot n|h(x)d\Haus^{N-1}(x)
\\
=\int_{\mathcal{J}_u}F\left(|u^+(x)-u^-(x)|\right)|\nu_{u}(x)\cdot n|h(x)d\Haus^{N-1}(x).
\end{multline}
\end{enumerate}
\end{lemma}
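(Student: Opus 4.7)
The plan is to apply monotone convergence after rewriting each left-hand integral as an integral over $\mathcal{J}_u$. I will focus on assertion 2; assertion 1 is handled identically by omitting the factor $|\nu_{u_l}\cdot n|$. First I would show that, up to $\mathcal{H}^{N-1}$-null sets, $\mathcal{J}_{u_l}\subset \mathcal{J}_u$: since $T_l:\mathbb{R}^d\to\mathbb{R}^d$ is Lipschitz, Proposition \ref{prop:properties of approximate limits}(b) gives $\mathcal{S}_{u_l}\subset \mathcal{S}_u$, whence $\mathcal{J}_{u_l}\subset \mathcal{S}_u$; and because $u\in BV_{\text{loc}}(\Omega,\mathbb{R}^d)$, the Federer--Vol'pert theorem (Theorem 3.78 in \cite{AFP}) yields $\mathcal{H}^{N-1}(\mathcal{S}_u\setminus\mathcal{J}_u)=0$, so $\mathcal{H}^{N-1}(\mathcal{J}_{u_l}\setminus\mathcal{J}_u)=0$. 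Therefore, replacing $\mathcal{J}_{u_l}$ by $\mathcal{J}_{u_l}\cap\mathcal{J}_u$ does not affect the integrals on the left.

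Next, on $\mathcal{J}_{u_l}\cap\mathcal{J}_u$, Proposition \ref{prop:properties of one-sided approximate limits}(b) applied with $f=T_l$ and $v=u_l$ gives $((u_l)^+(x),(u_l)^-(x),\nu_{u_l}(x)) = (T_l(u^+(x)),T_l(u^-(x)),\nu_u(x))$ up to the standard sign/permutation ambiguity, which in particular implies $|\nu_{u_l}(x)\cdot n|=|\nu_u(x)\cdot n|$. Setting $G_l(x):=\chi_{\mathcal{J}_{u_l}\cap\mathcal{J}_u}(x)\,F(|T_l(u^+(x))-T_l(u^-(x))|)\,|\nu_u(x)\cdot n|\,h(x)$, the left-hand side of \eqref{eq:limit of jump variation of truncated family with the jump normal} becomes $\int_{\mathcal{J}_u} G_l\,d\mathcal{H}^{N-1}$.

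The key observation is that $G_l(x)$ is monotone non-decreasing in $l$: by Proposition \ref{prop:truncation and jumps}(3) the indicators $\chi_{\mathcal{J}_{u_l}\cap\mathcal{J}_u}$ are monotone increasing in $l$, by Proposition \ref{prop:properties of the truncated family}(3) the quantities $|T_l(u^+(x))-T_l(u^-(x))|$ are monotone increasing with limit $|u^+(x)-u^-(x)|$, and $F$ is monotone increasing and non-negative. Moreover, for every fixed $x\in\mathcal{J}_u$, once $l$ exceeds $\max\{\|u^+(x)\|_\infty,\|u^-(x)\|_\infty\}$, one has $T_l(u^\pm(x))=u^\pm(x)$ and, by the decomposition in Proposition \ref{prop:truncation and jumps}(2), $x\in\mathcal{J}_{u_l}\cap\mathcal{J}_u$; hence $G_l(x)$ stabilizes at $F(|u^+(x)-u^-(x)|)\,|\nu_u(x)\cdot n|\,h(x)$. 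The Monotone Convergence Theorem then yields \eqref{eq:limit of jump variation of truncated family with the jump normal}, and the same reasoning with $h$ in place of $|\nu_u\cdot n|\,h$ gives \eqref{eq:limit of jump variation of truncated family}.

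The main obstacle is the $\mathcal{H}^{N-1}$-negligibility step $\mathcal{H}^{N-1}(\mathcal{J}_{u_l}\setminus\mathcal{J}_u)=0$: this is the only place where the hypothesis $u\in BV_{\text{loc}}$ is essential, and without it the truncated jump integrals could in principle pick up contributions from the Cantor-type part of $\mathcal{S}_u\setminus\mathcal{J}_u$ that are absent on the right-hand side. Note that the previous lemma (lower semi-continuity) can be viewed as an alternative route for one inequality, but the monotone structure above yields both inequalities simultaneously and does not require continuity of $F$.
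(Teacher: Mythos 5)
Your proof is correct and takes essentially the same route as the paper: establish $\mathcal{H}^{N-1}(\mathcal{J}_{u_l}\setminus\mathcal{J}_u)=0$ via $\mathcal{S}_{u_l}\subset\mathcal{S}_u$ and the Federer--Vol'pert theorem for $u$, rewrite the integral over $\mathcal{J}_u$ with the indicator $\chi_{\mathcal{J}_{u_l}\cap\mathcal{J}_u}$ and $(u_l)^\pm = T_l(u^\pm)$, $\nu_{u_l}=\nu_u$ from Proposition \ref{prop:properties of one-sided approximate limits}(b), and pass to the limit by monotone convergence using the monotonicity from Propositions \ref{prop:properties of the truncated family} and \ref{prop:truncation and jumps}. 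The only cosmetic difference is that you prove assertion 2 directly and obtain assertion 1 as the special case without the $|\nu_u\cdot n|$ factor, whereas the paper proves assertion 1 first and then derives assertion 2 by replacing $h$ with $|\nu_u\cdot n|\chi_{\mathcal{J}_u}h$.
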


\begin{proof}
Let us prove assertion 1. Since $u\in BV_{\text{loc}}(\Omega,\R^d)$, then for every $l\in [0,\infty)$ we have by chain rule for $BV$-functions (refer to Theorem \ref{thm:chain rule in BV}) that $u_l\in BV_{\text{loc}}(\Omega,\R^d)$, and by Federer-Vol'pert theorem (refer to Theorem \ref{thm: Federer-Vol'pert}) we have $\mathcal{H}^{N-1}\left(\mathcal{S}_{u}\setminus \mathcal{J}_u\right)=\mathcal{H}^{N-1}\left(\mathcal{S}_{u_l}\setminus \mathcal{J}_{u_l}\right)=0$. Therefore,
$\mathcal{H}^{N-1}\left(\mathcal{J}_{u_l}\setminus \mathcal{J}_u\right)=\mathcal{H}^{N-1}\left(\mathcal{S}_{u_l}\setminus \mathcal{S}_u\right)=0$, because
$\mathcal{S}_{u_l}\subset \mathcal{S}_u$. Therefore, by item $(b)$ of Proposition \ref{prop:properties of one-sided approximate limits} we get
\begin{multline}
\label{eq:expression for jump variation of truncated family}
\int_{\mathcal{J}_{u_l}}F\left(|(u_l)^+(x)-(u_l)^-(x)|\right)h(x)d\Haus^{N-1}(x)=\int_{\mathcal{J}_{u_l}\cap \mathcal{J}_u}F\left(|(u_l)^+(x)-(u_l)^-(x)|\right)h(x)d\Haus^{N-1}(x)
\\
+\int_{\mathcal{J}_{u_l}\setminus \mathcal{J}_u}F\left(|(u_l)^+(x)-(u_l)^-(x)|\right)h(x)d\Haus^{N-1}(x)
=\int_{\mathcal{J}_{u_l}\cap \mathcal{J}_u}F\left(|(u^+(x))_l-(u^-(x))_l|\right)h(x)d\Haus^{N-1}(x)
\\
=\int_{\mathcal{J}_u}\chi_{\mathcal{J}_{u_l}\cap \mathcal{J}_u}(x)F\left(|(u^+(x))_l-(u^-(x))_l|\right)h(x)d\Haus^{N-1}(x).
\end{multline}
By Proposition \ref{prop:properties of the truncated family}, Proposition \ref{prop:truncation and jumps} and monotone convergence theorem we get \eqref{eq:limit of jump variation of truncated family} by taking the limit as $l\to\infty$ on both sides of \eqref{eq:expression for jump variation of truncated family}.
\\
For assertion 2, note that, by item $(b)$ of Proposition \ref{prop:properties of one-sided approximate limits} we get
\begin{multline}
\int_{\mathcal{J}_{u_{l}}}F\left(|(u_{l})^+(x)-(u_{l})^-(x)|\right)|\nu_{u_{l}}(x)\cdot n|h(x)d\Haus^{N-1}(x)
\\
=\int_{\mathcal{J}_{u_{l}}\cap \mathcal{J}_{u}}F\left(|(u_{l})^+(x)-(u_{l})^-(x)|\right)|\nu_{u_{l}}(x)\cdot n|h(x)d\Haus^{N-1}(x)
\\
+\int_{\mathcal{J}_{u_{l}}\setminus \mathcal{J}_{u}}F\left(|(u_{l})^+(x)-(u_{l})^-(x)|\right)|\nu_{u_{l}}(x)\cdot n|h(x)d\Haus^{N-1}(x)
\\
=\int_{\mathcal{J}_{u_{l}}}F\left(|(u_{l})^+(x)-(u_{l})^-(x)|\right)|\nu_{u}(x)\cdot n|\chi_{\mathcal{J}_{u}}(x)h(x)d\Haus^{N-1}(x).
\end{multline}
Using item 1 with $|\nu_{u}(x)\cdot n|\chi_{\mathcal{J}_{u}}(x)h(x)$ in place of $h(x)$, we conclude \eqref{eq:limit of jump variation of truncated family with the jump normal}.
\end{proof}

\begin{theorem}(Calder{\'o}n-Zygmund, Theorem 3.83 in \cite{AFP})
\label{thm:Calderon-Zygmund}

Let $\Omega\subset\R^N$ be an open set. Any function $u\in BV(\Omega,\R^d)$ is approximately differentiable at $\mathcal{L}^N$-almost every point of $\Omega$. Moreover, the approximate differential $\nabla u$ is the density of the absolutely continuous part of $Du$ with respect to $\mathcal{L}^N$, in particular $\nabla u\in L^1(\Omega,\R^{d\times N})$.
\end{theorem}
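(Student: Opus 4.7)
The statement is the classical Calder\'on-Zygmund theorem for $BV$ functions (cited from \cite{AFP}), so the proof is standard; below is how I would reconstruct it.

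The plan is to combine the Lebesgue decomposition of $Du$ with a Sobolev-Poincar\'e inequality on balls. First, by the Radon-Nikodym theorem for vector-valued measures (using the machinery behind Lemma \ref{lem: variation of multiplication of vector valued function with positive measure}), write
\begin{equation*}
Du = \nabla u \cdot \mathcal{L}^N + D^s u, \qquad \nabla u \in L^1_{\mathrm{loc}}(\Omega,\R^{d\times N}),\ \ D^s u \perp \mathcal{L}^N.
\end{equation*}
This defines a candidate $\nabla u$; the task is to show it is in fact the approximate differential of $u$ at $\mathcal{L}^N$-almost every point.

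Next, I would isolate a set $E\subset\Omega$ of full $\mathcal{L}^N$-measure consisting of points $x_0$ satisfying simultaneously: (a) $x_0$ is a Lebesgue point of $u$, so $\tilde u(x_0):=\lim_{r\to 0^+}\fint_{B_r(x_0)} u\,dy$ exists and equals the approximate limit in the sense of Definition \ref{def:approximate limit}; (b) $x_0$ is a Lebesgue point of $\nabla u$, giving $\fint_{B_r(x_0)}|\nabla u(y)-\nabla u(x_0)|\,dy\to 0$; (c) $|D^s u|(B_r(x_0))/r^N\to 0$, which holds a.e.~because $D^s u$ is singular (Besicovitch/Lebesgue differentiation applied to $|D^s u|$ and $\mathcal{L}^N$). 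All three sets have full measure by Proposition \ref{prop:properties of approximate limits} and standard differentiation theory, hence so does $E$.

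Fix $x_0\in E$, set $L:=\nabla u(x_0)$, and define $w(y):=u(y)-L(y-x_0)\in BV_{\mathrm{loc}}$, so that $Dw=Du-L\,\mathcal{L}^N$ and by symmetry of $B_r(x_0)$ one has $w_{x_0,r}:=\fint_{B_r(x_0)}w\,dy=u_{x_0,r}$. The key analytic input is the Sobolev-Poincar\'e inequality for $BV$ on balls,
\begin{equation*}
\Big(\fint_{B_r(x_0)}|w-w_{x_0,r}|^{N/(N-1)}\,dy\Big)^{(N-1)/N} \le C_N\, r\,\frac{|Dw|(B_r(x_0))}{r^N},
\end{equation*}
obtained by smooth approximation from the classical inequality; dividing by $r$ and using Jensen's inequality on the left and the triangle inequality
\begin{equation*}
\frac{|Dw|(B_r(x_0))}{r^N}\ \le\ C\fint_{B_r(x_0)}|\nabla u(y)-L|\,dy+ \frac{|D^s u|(B_r(x_0))}{r^N}
\end{equation*}
on the right, properties (b) and (c) force
\begin{equation*}
\frac{1}{r}\fint_{B_r(x_0)}|u(y)-u_{x_0,r}-L(y-x_0)|\,dy \longrightarrow 0.
\end{equation*}

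To conclude, I must replace $u_{x_0,r}$ by $\tilde u(x_0)$ at rate $o(r)$. The main technical point (and the part that is most easily missed) is precisely this: from the previous display, a comparison of consecutive dyadic scales gives $|u_{x_0,2r}-u_{x_0,r}|=o(r)$ uniformly as $r\to 0^+$, and summing a geometric series yields $|u_{x_0,r}-\tilde u(x_0)|=o(r)$. Adding this to the previous display via the triangle inequality furnishes
\begin{equation*}
\lim_{r\to 0^+}\fint_{B_r(x_0)}\frac{|u(y)-\tilde u(x_0)-\nabla u(x_0)(y-x_0)|}{r}\,dy=0,
\end{equation*}
which is exactly Definition \ref{def:approximate differentiability points} with approximate differential $\nabla u(x_0)$; since $E$ has full measure, this completes the proof. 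The subtle step is the Sobolev-Poincar\'e inequality together with the dyadic telescoping; everything else is measure-theoretic bookkeeping.
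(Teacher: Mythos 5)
The paper does not prove this theorem; it is cited from \cite{AFP} (Theorem 3.83). Your reconstruction --- Radon--Nikodym decomposition of $Du$, selection of points where $u$ and $\nabla u$ are Lebesgue points and $|D^su|(B_r)/r^N\to 0$, then the BV Sobolev--Poincar\'e inequality on balls followed by the dyadic telescoping to replace $u_{x_0,r}$ by $\tilde u(x_0)$ --- is precisely the standard proof one finds in \cite{AFP}, and it is correct.
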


\begin{theorem}(Federer-Vol'pert Theorem, Theorem 3.78 in \cite{AFP})
\label{thm: Federer-Vol'pert}

Let $\Omega\subset\R^N$ be an open set, and $u\in
BV_{\text{loc}}(\Omega,\R^d)$. Then, the jump set $\mathcal{J}_u$ is
countably $(N-1)-$rectifiable set, oriented with the jump vector
$\nu_u(x)$, and moreover, we have
$\mathcal{H}^{N-1}\big(\mathcal{S}_u\setminus \mathcal{J}_u\big)=0$. In particular, $\mathcal{S}_u$ is $\sigma$-finite with respect to $\mathcal{H}^{N-1}$.
\end{theorem}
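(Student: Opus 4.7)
The plan is to follow the classical Federer--Vol'pert argument as presented in \cite{AFP}, routed through De Giorgi's structure theorem for sets of finite perimeter and the coarea formula for $BV$ functions. First I would reduce to the scalar case $d=1$: each component $u^i$ of $u=(u^1,\ldots,u^d)$ lies in $BV_{\text{loc}}(\Omega)$, and one has $\mathcal{S}_u = \bigcup_{i=1}^d \mathcal{S}_{u^i}$, which propagates $\sigma$-finiteness and the $\mathcal{H}^{N-1}$-negligibility statement from the scalar components. For the jump set and the identification of $\nu_u$, I would then verify component-wise that the jump normals align $\mathcal{H}^{N-1}$-a.e.\ on the overlap of the $\mathcal{J}_{u^i}$.

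For scalar $u \in BV_{\text{loc}}(\Omega)$, the main tool is the coarea formula
\begin{equation*}
\|Du\|(B) = \int_{-\infty}^{\infty} \|D\chi_{E_t}\|(B)\,dt, \qquad E_t := \{u>t\},
\end{equation*}
which forces $E_t$ to have locally finite perimeter for $\mathcal{L}^1$-a.e.\ $t$. For each such $t$, De Giorgi's theorem provides a countably $(N-1)$-rectifiable reduced boundary $\partial^* E_t$ with $\|D\chi_{E_t}\| = \mathcal{H}^{N-1}\llcorner \partial^* E_t$. I would fix a countable dense set $T=\{t_k\}\subset\R$ of such good values and show, via a density-ratio argument rooted in Definition~\ref{def:approximate limit}, that $\mathcal{S}_u$ is contained, up to an $\mathcal{H}^{N-1}$-null set, in $\bigcup_k \partial^* E_{t_k}$: if two distinct approximate values at $x$ are separated by some $t_k$, then $x$ must sit in the essential boundary of $E_{t_k}$, which by Federer's theorem coincides $\mathcal{H}^{N-1}$-a.e.\ with $\partial^* E_{t_k}$. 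This already yields $\sigma$-finiteness of $\mathcal{S}_u$ and countable $(N-1)$-rectifiability of any of its subsets, in particular of $\mathcal{J}_u$.

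The main obstacle will be proving $\mathcal{H}^{N-1}(\mathcal{S}_u\setminus\mathcal{J}_u)=0$ together with the existence of a single unit normal $\nu_u(x)$ on $\mathcal{J}_u$. At $\mathcal{H}^{N-1}$-a.e.\ $x\in\mathcal{S}_u$ the rectifiability above provides a unique approximate tangent hyperplane; the delicate point is showing that the De Giorgi normals $\nu_{E_{t_k}}(x)$ coincide (up to sign) for every $t_k$ witnessing $x\in\partial^* E_{t_k}$, so that the orientation is unambiguous. This alignment follows from the disintegration of $\|Du\|$ provided by the coarea formula: any mismatch in normals would force an excess of $\|Du\|$-mass incompatible with the bound $\|Du\|(B_\rho(x))=O(\rho^{N-1})$ that holds $\mathcal{H}^{N-1}$-a.e.\ on $\mathcal{S}_u$ by Besicovitch differentiation. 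A blow-up argument would then show that the rescalings $u(x+\rho\,\cdot\,)$ converge in $L^1_{\text{loc}}$ to a two-valued step function jumping across the common hyperplane, which is exactly the statement that $x\in\mathcal{J}_u$ with $\nu_u(x)$ equal to that common normal.
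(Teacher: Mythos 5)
The paper does not prove this statement; it simply cites it as Theorem~3.78 in \cite{AFP}, so there is no in-paper proof to compare against. Your sketch follows the classical De Giorgi / Federer--Vol'pert route from that reference: coarea to reduce to sets of locally finite perimeter $E_t=\{u>t\}$, De Giorgi's structure theorem for $\partial^*E_t$, Federer's theorem identifying the essential boundary with $\partial^*E_t$ up to $\mathcal{H}^{N-1}$-null sets, Besicovitch-type upper density bounds for $\|Du\|$, and a blow-up to identify the jump structure and the normal. This is indeed the structure of the argument in \cite{AFP}, \S 3.7, so the outline is correct and the approach is the standard one.

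Two points in your write-up are a little loose and are exactly where the detailed proof spends its effort. First, your justification that $\mathcal{S}_u$ is contained (up to an $\mathcal{H}^{N-1}$-null set) in $\bigcup_k\partial^*E_{t_k}$ argues only from points admitting ``two distinct approximate values''; but $\mathcal{S}_u$ is simply the set where the approximate limit fails, which is a priori larger. The actual inclusion is obtained by the contrapositive: if $x$ has density $0$ or $1$ with respect to every $E_{t_k}$, then monotonicity of $t\mapsto E_t$ together with density of $\{t_k\}$ forces a well-defined approximate limit at $x$ (away from an $\mathcal{L}^N$-null set where $|u|$ is approximately infinite). Second, in the reduction to scalar components, $\mathcal{S}_u=\bigcup_i\mathcal{S}_{u^i}$ is immediate, but no such set identity holds for $\mathcal{J}_u$; as you anticipate, one needs the separate lemma (in \cite{AFP} it is its own proposition) that on the overlaps $\mathcal{J}_{u^i}\cap\mathcal{J}_{u^j}$ the scalar jump normals agree, up to sign, $\mathcal{H}^{N-1}$-a.e. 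Neither of these is a gap in the intended argument, only in the brevity of the sketch.
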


\begin{lemma}(Variation Inequality)
\label{lem:variation inequality}

Let $\Omega\subset\R^N$ be an open set and $u\in BV(\Omega,\R^d)$. Let $E\subset\Omega$ be an $\mathcal{L}^N$-measurable set and let $h\in \R^N\setminus \{0\}$. Assume that $\dist(E,\partial \Omega)>|h|$. Then,
\begin{equation}
\label{eq:equation56}
\int_{E}\frac{|u(x+h)-u(x)|}{|h|}dx\leq \|Du\|(\Omega).
\end{equation}
In particular, if $\Omega=\R^N$, then
\begin{equation}
\label{eq:equation57}
\sup_{h\in \R^N\setminus\{0\}}\int_{\R^N}\frac{|u(x+h)-u(x)|}{|h|}dx\leq \|Du\|(\R^N).
\end{equation}
\end{lemma}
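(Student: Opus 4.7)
The plan is to reduce to the $C^1$ case by mollification, apply the fundamental theorem of calculus along the segment $[x,x+h]$, and then pass to the limit in the regularization parameter. The distance hypothesis $\dist(E,\partial\Omega)>|h|$ will be used exactly to keep the translate $E+th$ inside $\Omega$ for every $t\in[0,1]$, which makes the change of variables legitimate on $\Omega$.

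First I would prove the inequality for smooth $v\in C^1(\Omega,\R^d)$: for each $x\in E$ the segment $\{x+th:t\in[0,1]\}$ lies in $\Omega$, so the fundamental theorem and chain rule give $|v(x+h)-v(x)|\leq |h|\int_0^1|\nabla v(x+th)|\,dt$; integrating over $E$, exchanging the order by Fubini, and performing the translation $y=x+th$ (legitimate because $E+th\subset\Omega$) then yields $\int_E|v(x+h)-v(x)|\,dx\leq |h|\int_\Omega|\nabla v(y)|\,dy$. For the general case $u\in BV(\Omega,\R^d)$ I would pick a nonnegative symmetric mollifier $\eta\in C^\infty_c(B_1(0))$ with $\int\eta=1$, set $v_\epsilon:=u*\eta_{(\epsilon)}$, and take $\epsilon$ so small that $\dist(E,\partial\Omega)>|h|+\epsilon$. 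Then $v_\epsilon$ is smooth on $\Omega_\epsilon:=\{y\in\Omega:\dist(y,\partial\Omega)>\epsilon\}$ with $\dist(E,\partial\Omega_\epsilon)>|h|$, so the smooth estimate applied to $v_\epsilon$ on $\Omega_\epsilon$ yields
\begin{equation*}
\int_E|v_\epsilon(x+h)-v_\epsilon(x)|\,dx\leq |h|\int_{\Omega_\epsilon}|\nabla v_\epsilon(y)|\,dy.
\end{equation*}

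The key technical point, and in my view the main (though routine) obstacle, is the bound $\int_{\Omega_\epsilon}|\nabla v_\epsilon|\,dy\leq \|Du\|(\Omega)$. I would obtain it by identifying $\nabla v_\epsilon(y)=\int_\Omega\eta_{(\epsilon)}(y-z)\,d(Du)(z)$ on $\Omega_\epsilon$, namely the convolution of the vector-valued measure $Du$ with the mollifier, and then estimating
\begin{equation*}
\int_{\Omega_\epsilon}|\nabla v_\epsilon(y)|\,dy\leq \int_\Omega\left(\int_{\Omega_\epsilon}\eta_{(\epsilon)}(y-z)\,dy\right)d\|Du\|(z)\leq \|Du\|(\Omega)
\end{equation*}
by Fubini together with $\int_{\R^N}\eta_{(\epsilon)}=1$. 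Finally, since $v_\epsilon\to u$ in $L^1_{\text{loc}}(\Omega,\R^d)$, I would first establish the inequality on bounded truncations $E_R:=E\cap B_R(0)$ (which satisfy $E_R\subset\subset\Omega$ and $E_R+h\subset\subset\Omega$) by extracting an $\mathcal{L}^N$-a.e. convergent subsequence and applying Fatou's lemma, and then let $R\to\infty$ by monotone convergence to obtain $\int_E|u(x+h)-u(x)|\,dx\leq |h|\,\|Du\|(\Omega)$. The particular case $\Omega=\R^N$ follows immediately, since $\partial\R^N=\emptyset$ and hence the distance hypothesis is trivial for every $h\neq 0$ and every $\mathcal{L}^N$-measurable $E\subset\R^N$.
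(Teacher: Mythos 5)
Your proof is correct and follows essentially the same route as the paper: reduce to a $C^1$ approximant, apply the fundamental theorem of calculus along the segment $[x,x+h]$, use Fubini and a translation to land the gradient integral on $\Omega$, and pass to the limit by Fatou. The only difference in mechanism is that the paper invokes the standard strict smooth approximation theorem for $BV(\Omega,\R^d)$ (a sequence $u_k\in C^1(\Omega,\R^d)$ with $u_k\to u$ a.e.\ and $\|Du_k\|(\Omega)\to \|Du\|(\Omega)$) and then needs no further estimate on the gradients, whereas you carry out the mollification by hand and substitute the one-sided bound $\int_{\Omega_\epsilon}|\nabla v_\epsilon|\,dy\le\|Du\|(\Omega)$ obtained from $\nabla v_\epsilon=\eta_{(\epsilon)}*Du$ and Fubini. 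Your version is more self-contained (it does not appeal to strict approximation), at the cost of the extra bookkeeping with $\Omega_\epsilon$ and the truncation $E_R$ in the final limit; both are standard and correct.
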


\begin{proof}
Let $\{u_k\}_{k=1}^\infty \subset C^1(\Omega,\R^d)$ be a sequence of functions which converges to $u$ $\mathcal{L}^N$-almost everywhere and $\lim_{k\to \infty}\|Du_k\|(\Omega)=\|Du\|(\Omega)$. Then, for every $k\in \N$, by the fundamental theorem of calculus and Fubini's theorem we get
\begin{multline}
\int_{E}\frac{|u_k(x+h)-u_k(x)|}{|h|}dx
=\int_{E}\frac{|\int_0^1\nabla u_k(x+th)\cdot h dt|}{|h|}dx\leq \int_0^1\int_{E}|\nabla u_k(x+th)|dxdt
\\
=\int_0^1\int_{E+th}|\nabla u_k(y)|dydt
\leq \|Du_k\|(\Omega).
\end{multline}
Taking the lower limit as $k\to \infty$  and using Fatou's Lemma we get \eqref{eq:equation56}. To get \eqref{eq:equation57} note that for every $h\in \R^N$, $\dist(\R^N,\emptyset)=\infty>|h|$.
\end{proof}

\subsection{Negligibility of Sets with respect to $\|Du\|$}
\begin{definition}(Measure-theoretic Boundary)
\label{def:measure-theoretic boundary}

Let $E\subset\R^N$ be a set. We write $x\in\partial^*E$ if and only if the following two inequalities hold:
\begin{align}
\limsup_{\e\to 0^+}\frac{\mathcal{L}^N\left(B_\e(x)\cap E\right)}{\mathcal{L}^N\left(B_\e(x)\right)}>0,\quad
\limsup_{\e\to 0^+}\frac{\mathcal{L}^N\left(B_\e(x)\cap \left(\R^N\setminus E\right)\right)}{\mathcal{L}^N\left(B_\e(x)\right)}>0.
\end{align}
Equivalently, $x\in\partial^*E$ if and only if $E$ and its complement $\R^N\setminus E$ do not have density $0$ at $x$; if and only if the set $E$ does not have density neither $0$ nor $1$. In other words, if we denote by $E^0$ the set of points at which $E$ has density $0$ and by $E^1$ the set of points at which $E$ has density $1$, namely
\begin{align}
E^0:=\bigg\{x\in\R^N:\lim_{\e\to 0^+}\frac{\mathcal{L}^N\left(B_\e(x)\cap E\right)}{\mathcal{L}^N\left(B_\e(x)\right)}=0\bigg\},\quad
E^1:=\bigg\{x\in\R^N:\lim_{\e\to 0^+}\frac{\mathcal{L}^N\left(B_\e(x)\cap E\right)}{\mathcal{L}^N\left(B_\e(x)\right)}=1\bigg\},
\end{align}
then $x\in\partial^*E$ if and only if $x\notin E^0\cup E^1$. We call $\partial^*E$ the {\it measure-theoretic boundary of the set $E$}.
\end{definition}

\begin{theorem}(The Co-Area Formula for $BV$-Functions, see equation (3.63) in \cite{AFP})
\label{thm:co-area formula for $BV$-functions}

Let $\Omega\subset\R^N$ be an open set, and let $u\in BV(\Omega)$. Then, for every Borel set $B\subset\Omega$
\begin{equation}
\|Du\|(B)=\int_{\R}\mathcal{H}^{N-1}\left(B\cap \partial^*\{u>t\}\right)d\mathcal{H}^{1}(t).
\end{equation}
\end{theorem}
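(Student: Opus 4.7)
The plan is to prove the coarea identity for $BV$ functions in three stages: first establish it for smooth functions via Federer's classical coarea theorem, then obtain one inequality ($\leq$ on the right) for general $u \in BV(\Omega)$ by smooth approximation combined with lower semicontinuity of the perimeter, and finally obtain the reverse inequality by a duality argument against test vector fields. All of this is carried out on open $B$ first, and then extended to Borel sets by showing both sides of the identity define Borel measures on $\Omega$.

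First I would treat the smooth case: if $u \in C^\infty(\Omega) \cap BV(\Omega)$, the classical Federer coarea formula gives $\int_B |\nabla u|\, dx = \int_\R \mathcal{H}^{N-1}(B \cap \{u = t\})\, dt$ for every Borel $B$. By Sard's theorem, for $\mathcal{L}^1$-a.e.\ $t$ the level set $\{u = t\}$ is a smooth hypersurface and (up to an $\mathcal{H}^{N-1}$-null set) coincides with $\partial^* \{u > t\}$ in the sense of Definition \ref{def:measure-theoretic boundary}. Since for smooth $u$ we have $\|Du\|(B) = \int_B |\nabla u|\, dx$, this proves the formula when $u$ is smooth.

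For general $u \in BV(\Omega)$, pick a sequence $u_k \in C^\infty(\Omega) \cap BV(\Omega)$ with $u_k \to u$ in $L^1(\Omega)$ and $\|D u_k\|(\Omega) \to \|Du\|(\Omega)$ (for instance, mollification together with a diagonal argument). By Fubini, $\chi_{\{u_k > t\}} \to \chi_{\{u > t\}}$ in $L^1_{\mathrm{loc}}$ for $\mathcal{L}^1$-a.e.\ $t \in \R$; combined with the lower semicontinuity of the perimeter on open sets $A$ this gives $\mathcal{H}^{N-1}(A \cap \partial^*\{u > t\}) \leq \liminf_k \mathcal{H}^{N-1}(A \cap \partial^*\{u_k > t\})$, and Fatou's lemma together with the smooth case yields
\[
\int_\R \mathcal{H}^{N-1}(A \cap \partial^*\{u > t\})\, dt \;\leq\; \liminf_{k \to \infty} \|Du_k\|(A) \;\leq\; \|Du\|(A).
\]
For the reverse inequality I would argue by duality: for any $\varphi \in C^1_c(A, \R^N)$ with $|\varphi| \leq 1$, Fubini (with respect to $t$ and $x$) gives
\[
\int_\Omega u\, \mathrm{div}\,\varphi\, dx \;=\; \int_\R \int_{\{u>t\}} \mathrm{div}\,\varphi\, dx\, dt \;=\; -\int_\R \int_{\partial^*\{u>t\}} \varphi \cdot \nu_{\{u>t\}}\, d\mathcal{H}^{N-1}\, dt,
\]
where the inner equality is the Gauss--Green formula for sets of finite perimeter. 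Bounding $|\varphi \cdot \nu| \leq 1$ and taking the supremum over $\varphi$ recovers $\|Du\|(A)$ on the left, yielding the opposite inequality.

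The main obstacle will be the last step: one needs (i) measurability in $t$ of $t \mapsto \mathcal{H}^{N-1}(B \cap \partial^*\{u > t\})$, which requires a careful slicing/Fubini argument, and (ii) the upgrade from open $A$ to arbitrary Borel $B$. For (ii) I would check that both $B \mapsto \|Du\|(B)$ and $B \mapsto \int_\R \mathcal{H}^{N-1}(B \cap \partial^*\{u > t\})\, dt$ are Borel measures on $\Omega$ (the latter being a superposition of the Radon measures $P(\{u>t\}, \cdot)$); once they agree on open sets and are both finite on relatively compact ones, outer regularity (Carathéodory extension) forces them to agree on every Borel set $B \subset \Omega$.
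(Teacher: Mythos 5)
The paper does not give a proof of this theorem; it is quoted directly from \cite{AFP} (equation (3.63) there). So the comparison is with the standard textbook argument rather than with anything original in the present paper.

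Your outline has the right overall architecture for the coarea formula in $BV$: smooth case from Federer's classical coarea formula together with Sard's theorem and the identification of regular level sets with $\partial^*\{u>t\}$; one inequality by strict smooth approximation and Fatou; the reverse inequality by the layer--cake plus Gauss--Green duality; and passage from open to Borel sets via the Radon-measure structure of both sides. However, there is a genuine error in your second step. The inequality $\liminf_{k\to\infty}\|Du_k\|(A)\leq \|Du\|(A)$ for an \emph{arbitrary} open $A\subset\Omega$ is false. From $u_k\to u$ in $L^1$ and $\|Du_k\|(\Omega)\to\|Du\|(\Omega)$ one deduces weak-$*$ convergence of the variation measures $\|Du_k\|\rightharpoonup\|Du\|$, which yields the \emph{opposite} inequality $\|Du\|(A)\leq\liminf_k\|Du_k\|(A)$ on open sets (and $\limsup_k\|Du_k\|(K)\leq\|Du\|(K)$ on compacts), not what you wrote. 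A concrete counterexample: take $\Omega=(-1,1)$, $u=\chi_{(0,1)}$, $A=(-1,0)$. Then $\|Du\|(A)=0$, but for a symmetric mollification $u_\e=u*\rho_\e$ one has $\|Du_\e\|(A)\to\tfrac12$, so $\liminf_\e\|Du_\e\|(A)=\tfrac12>0=\|Du\|(A)$.

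The fix keeps your ingredients but rearranges the order. Prove the ``$\leq$'' inequality only for $A=\Omega$, where strict convergence gives $\liminf_k\|Du_k\|(\Omega)=\|Du\|(\Omega)$; this also supplies, as a byproduct, the measurability of $t\mapsto\mathcal{H}^{N-1}(\Omega\cap\partial^*\{u>t\})$ and the fact that $\{u>t\}$ has finite perimeter in $\Omega$ for $\mathcal{L}^1$-a.e.\ $t$. Then prove the ``$\geq$'' inequality on every open $A\subset\Omega$ by the duality argument; this step must come second, because the Gauss--Green formula applied to $\int_{\{u>t\}\cap A}\Div\varphi\,dx$ requires $\{u>t\}$ to have locally finite perimeter, which you only know after the ``$\leq$'' step. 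Finally, both sides of the identity define nonnegative Borel measures on $\Omega$ (the right-hand side being a superposition of the perimeter measures $P(\{u>t\},\cdot)$). Since the right-hand side dominates $\|Du\|$ on open sets (by the ``$\geq$'' inequality and outer regularity, hence on all Borel sets), and the two measures have the same finite total mass on $\Omega$ (by the ``$\leq$'' inequality for $A=\Omega$ combined with the reverse), they must coincide on every Borel set $B\subset\Omega$. This closes the gap without requiring the false pointwise inequality for proper open subsets.
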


\begin{proposition}(Variation-Negligibility of Sets with $\mathcal{H}^1$-Negligible Images)
\label{prop:variation-negligibility of sets with H1-negligible images}
Let $\Omega\subset \R^N$ be an open set  and $u\in BV(\Omega,\R^d)$. Let $B\subset\Omega\setminus \mathcal{S}_u$ be a Borel set such that $\mathcal{H}^1\left(\tilde{u}(B)\right)=0$. Then, $\|Du\|(B)=0$.
\end{proposition}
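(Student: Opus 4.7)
\medskip

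\noindent\textbf{Proof proposal.} The plan is to reduce to the scalar case and apply the Co-Area formula for $BV$-functions (Theorem~\ref{thm:co-area formula for $BV$-functions}). Write $u=(u_1,\dots,u_d)$ and note that each coordinate function $u_i$ lies in $BV(\Omega)$ with $\|Du_i\|\le\|Du\|$, so the variation $\|Du\|$ is dominated by $\sum_{i=1}^d\|Du_i\|$ (this follows from estimating the Frobenius norm of the matrix $Du(E_j)$ by the sum of the Euclidean norms of its rows). Moreover, since the coordinate projection $\pi_i:\R^d\to\R$ is $1$-Lipschitz, item $(b)$ of Proposition~\ref{prop:properties of approximate limits} yields $\mathcal{S}_{u_i}\subset\mathcal{S}_u$ and $\widetilde{u_i}(x)=\pi_i(\widetilde{u}(x))$ on $\Omega\setminus\mathcal{S}_u$, whence
\begin{equation*}
\mathcal{H}^1\bigl(\widetilde{u_i}(B)\bigr)=\mathcal{H}^1\bigl(\pi_i(\widetilde{u}(B))\bigr)\le\mathcal{H}^1\bigl(\widetilde{u}(B)\bigr)=0.
\end{equation*}
Therefore it suffices to prove the statement for each scalar component $u_i$, i.e.\ under the standing assumption that $d=1$.

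Assume from now on that $u\in BV(\Omega)$ is scalar-valued. The central observation is the inclusion
\begin{equation*}
B\cap\partial^*\{u>t\}\subset\{x\in B:\widetilde{u}(x)=t\}=\widetilde{u}^{-1}(\{t\})\cap B,\qquad t\in\R.
\end{equation*}
Granting this, for every $t\notin\widetilde{u}(B)$ the set $B\cap\partial^*\{u>t\}$ is empty, and the Co-Area formula (Theorem~\ref{thm:co-area formula for $BV$-functions}) gives
\begin{equation*}
\|Du\|(B)=\int_{\R}\mathcal{H}^{N-1}\bigl(B\cap\partial^*\{u>t\}\bigr)\,d\mathcal{H}^1(t)=\int_{\widetilde{u}(B)}\mathcal{H}^{N-1}\bigl(B\cap\partial^*\{u>t\}\bigr)\,d\mathcal{H}^1(t)=0,
\end{equation*}
since $\mathcal{H}^1(\widetilde{u}(B))=0$ by hypothesis.

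It remains to verify the inclusion displayed above, which is the main technical point. Fix $x\in B$ and $t\in\R$ with $t\ne\widetilde{u}(x)$; I will show that $x\notin\partial^*\{u>t\}$. Assume $\widetilde{u}(x)>t$ (the opposite case is symmetric) and set $\delta:=\widetilde{u}(x)-t>0$. For $y\in B_\rho(x)\cap\{u\le t\}$ we have $|u(y)-\widetilde{u}(x)|\ge\delta$, so
\begin{equation*}
\delta\cdot\frac{\mathcal{L}^N\bigl(B_\rho(x)\cap\{u\le t\}\bigr)}{\mathcal{L}^N(B_\rho(x))}\le\fint_{B_\rho(x)}|u(y)-\widetilde{u}(x)|\,dy\xrightarrow[\rho\to 0^+]{}0,
\end{equation*}
because $x\in B\subset\Omega\setminus\mathcal{S}_u$ (Definition~\ref{def:approximate limit}). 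Hence the set $\{u>t\}$ has density $1$ at $x$, i.e.\ $x\in\{u>t\}^1$, so $x\notin\partial^*\{u>t\}$ in the sense of Definition~\ref{def:measure-theoretic boundary}. The main obstacle is precisely this density computation; once it is in place the rest of the argument is a direct application of the co-area formula and the coordinate reduction.
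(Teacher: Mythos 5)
Your proposal is correct and follows essentially the same route as the paper's own argument: reduce to the scalar case via coordinate projections and Proposition~\ref{prop:properties of approximate limits}(b), show that $B\cap\partial^*\{u>t\}$ can only contain points where $\tilde u = t$ (via the same Chebyshev-type density estimate exploiting $B\subset\Omega\setminus\mathcal{S}_u$), and conclude by the co-area formula. The only cosmetic difference is that you perform the vector-to-scalar reduction at the start rather than at the end.
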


\begin{proof}
Assume first that $d=1$. Let us first prove that for every $t\in \R$ we have
\begin{equation}
\label{eq:image of measure-theoretic boundaries under approximate limits}
\tilde{u}\big(\left(\Omega\setminus \mathcal{S}_u\right)\cap \partial^*\big\{z\in \Omega:u(z)>t\big\}\big)\subset\{t\}.
\end{equation}
It means that the approximate limit $\tilde{u}$ takes the measure-theoretic boundaries of super-level sets $\partial^*\big\{z\in \Omega:u(z)>t\big\}$, which are outside $\mathcal{S}_u$, to the corresponding points $t$. We use the short notation $\{u>t\}:=\big\{z\in \Omega:u(z)>t\big\}$, as well as for similar sets. Assume that $z_0\in \left(\Omega\setminus \mathcal{S}_u\right)\cap \partial^*\{u>t\}$. Therefore, if $\tilde{u}(z_0)<t$, then for every $\e\in(0,\infty)$ we have by Chebyshev's inequality
\begin{multline}
\label{eq:expression for density}
\frac{\mathcal{L}^N\left(B_\e(z_0)\cap \{u>t\}\right)}{\mathcal{L}^N\left(B_\e(z_0)\right)}=\frac{\mathcal{L}^N\left(B_\e(z_0)\cap \{u-\tilde{u}(z_0)>t-\tilde{u}(z_0)\}\right)}{\mathcal{L}^N\left(B_\e(z_0)\right)}
\\
\leq \frac{1}{t-\tilde{u}(z_0)}\fint_{B_\e(z_0)}|u(x)-\tilde{u}(z_0)|dx.
\end{multline}
Since $z_0\in \Omega\setminus \mathcal{S}_u$, then we get from \eqref{eq:expression for density} that the density of $\{u>t\}$ at $z_0$ is zero, which contradicts the assumption that $z_0\in \partial^*\{u>t\}$. Similarly, if $\tilde{u}(z_0)>t$, then for every $\e\in(0,\infty)$ we have by Chebyshev's inequality
\begin{multline}
\label{eq:expression for density1}
\frac{\mathcal{L}^N\left(B_\e(z_0)\cap \{u\leq t\}\right)}{\mathcal{L}^N\left(B_\e(z_0)\right)}=\frac{\mathcal{L}^N\left(B_\e(z_0)\cap \{\tilde{u}(z_0)-u\geq \tilde{u}(z_0)-t\}\right)}{\mathcal{L}^N\left(B_\e(z_0)\right)}
\\
\leq \frac{1}{\tilde{u}(z_0)-t}\fint_{B_\e(z_0)}|u(x)-\tilde{u}(z_0)|dx.
\end{multline}
Since $z_0\in \Omega\setminus \mathcal{S}_u$, then we get from \eqref{eq:expression for density1} that the density of $\{u\leq t\}$ at $z_0$ is zero, which contradicts the assumption that $z_0\in \partial^*\{u>t\}$. We conclude that $\tilde{u}(z_0)=t$, which proves \eqref{eq:image of measure-theoretic boundaries under approximate limits}.

By \eqref{eq:image of measure-theoretic boundaries under approximate limits} we get that, if $t\notin \tilde{u}(B)$, then $B\cap \partial^*\{u>t\}=\emptyset$. We get from the co-area formula (Theorem \ref{thm:co-area formula for $BV$-functions}) and the assumption $\mathcal{H}^1\left(\tilde{u}(B)\right)=0$ that
\begin{equation}
\|Du\|(B)=\int_{\R}\mathcal{H}^{N-1}\left(B\cap \partial^*\{u>t\}\right)d\mathcal{H}^{1}(t)=\int_{\tilde{u}(B)}\mathcal{H}^{N-1}\left(B\cap \partial^*\{u>t\}\right)d\mathcal{H}^{1}(t)=0.
\end{equation}
In the general case, $d\in\N$, let us denote $u=(u_1,...,u_d)$. Notice that for every natural $1\leq j\leq d$ we have $\mathcal{S}_{u_j}\subset \mathcal{S}_{u}$, and for $x\in \mathcal{S}_{u}$ we have by uniqueness of approximate limit $\tilde{(u_j)}(x)=\left(\tilde{u}\right)_j(x)$. Therefore,
\begin{equation}
B\subset \Omega\setminus\mathcal{S}_{u}\subset \Omega\setminus\mathcal{S}_{u_j},\quad \mathcal{H}^1(\tilde{(u_j)}(B))=\mathcal{H}^1(\left(\tilde{u}\right)_j(B))=\mathcal{H}^1(P_j(\tilde{u}(B)))\leq \mathcal{H}^1(\tilde{u}(B))=0.
\end{equation}
Here $P_j:\R^d\to \R$ is the projection on the $j$-th coordinate which is a Lipschitz function. Therefore,
\begin{equation}
\|Du\|(B)\leq \sum_{j=1}^d\|Du_j\|(B)=0.
\end{equation}
\end{proof}

\begin{proposition}(Properties of Cantor Part $D^cu$, Proposition 3.92 in \cite{AFP})
\label{prop:Properties of $D^cu$}

Let $\Omega\subset\R^N$ be an open set, and let $u\in BV(\Omega,\R^d)$. Then, the Cantor part $D^cu$ (see Definition \ref{def:jump and Cantor parts}) of the distributional derivative $Du$ vanishes on sets which are $\sigma$-finite with respect to $\mathcal{H}^{N-1}$ and on sets of the form $\tilde{u}^{-1}(E)$ with $E\subset\R^d$, $\mathcal{H}^1(E)=0$.
\end{proposition}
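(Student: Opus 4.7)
The proposition asserts two independent vanishing properties of $|D^cu|$; I would treat them in turn.

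\emph{Step 1: sets of the form $\tilde u^{-1}(E)$ with $\mathcal H^1(E)=0$.} This is an immediate corollary of Proposition \ref{prop:variation-negligibility of sets with H1-negligible images}. Writing $B:=\tilde u^{-1}(E)$, the very definition of $\tilde u$ forces $B\subset\Omega\setminus\mathcal S_u$, and clearly $\tilde u(B)\subset E$, hence $\mathcal H^1(\tilde u(B))\le\mathcal H^1(E)=0$. That proposition then yields $\|Du\|(B)=0$, and since $|D^cu|$ is one summand in the Lebesgue decomposition $Du=D^au+D^ju+D^cu$, one has $|D^cu|(B)\le\|Du\|(B)=0$.

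\emph{Step 2: sets which are $\sigma$-finite with respect to $\mathcal H^{N-1}$.} By countable additivity of $|D^cu|$, reduce at once to the case $\mathcal H^{N-1}(B)<\infty$. Decompose $B=(B\cap\mathcal S_u)\cup(B\setminus\mathcal S_u)$. On the first piece, recall that by construction $D^cu$ is the restriction of the singular part $D^su$ to $\Omega\setminus\mathcal S_u$ (the portion of $D^su$ living on $\mathcal S_u$ being precisely the jump part $D^ju$), so $|D^cu|(B\cap\mathcal S_u)\le|D^cu|(\mathcal S_u)=0$. For the remaining piece $B_0:=B\setminus\mathcal S_u$, set $B_0\subset\Omega\setminus\mathcal S_u$ with $\mathcal H^{N-1}(B_0)<\infty$; the task is to show $|D^cu|(B_0)=0$. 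My plan is a density argument: I would show that the upper $(N-1)$-density
\begin{equation*}
\Theta^{\ast,N-1}(|D^cu|,x):=\limsup_{r\to 0^+}\frac{|D^cu|(B_r(x))}{\omega_{N-1}\,r^{N-1}}
\end{equation*}
equals $+\infty$ at $|D^cu|$-a.e. $x\in\Omega\setminus\mathcal S_u$. Granting this, the standard comparison theorem between Radon measures and Hausdorff measures (the Borel–regularity version of Besicovitch's derivation theorem) forces $|D^cu|(B_0)=0$ for every Borel $B_0$ with $\mathcal H^{N-1}(B_0)<\infty$.

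\emph{Step 3: proving the density blow-up.} I would argue by contradiction. If $\Theta^{\ast,N-1}(|D^cu|,\cdot)$ were finite on a Borel set $F$ with $|D^cu|(F)>0$, then $|D^cu|\llcorner F$ would be absolutely continuous with respect to $\mathcal H^{N-1}$. Combined with the polar decomposition $D^cu=\sigma_u\,|D^cu|$ (or, in the vector case, Alberti's rank-one theorem giving $\sigma_u=\eta\otimes\nu$), this would force the support of $D^cu\llcorner F$ to be countably $(N-1)$-rectifiable, hence contained, up to an $\mathcal H^{N-1}$-null set, in the discontinuity set $\mathcal S_u$ (by the rectifiable structure of $\mathcal J_u$ in Federer–Vol'pert, Theorem \ref{thm: Federer-Vol'pert}). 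This contradicts the definition of $D^cu$ as the part of $D^su$ supported in $\Omega\setminus\mathcal S_u$.

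\emph{Main obstacle.} Step 1 is purely a bookkeeping application of Proposition \ref{prop:variation-negligibility of sets with H1-negligible images}, and Step 2 is essentially reduced to a single density statement, but that density statement in Step 3 is the real work: it is a genuinely delicate property of the Cantor part reflecting the fact that $D^cu$ has ``dimension strictly greater than $N-1$''. With the tools collected in the excerpt one can at least handle the scalar case by slicing via the coarea formula (Theorem \ref{thm:co-area formula for $BV$-functions}), reducing to the classical fact that a one-dimensional Cantor-type Stieltjes measure has no atoms; the vector case requires the additional rank-one structure of $D^cu$ to pass to components and then slice.
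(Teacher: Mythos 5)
Your Step~1 (the $\tilde u^{-1}(E)$ part) is correct and matches the paper's available tools: by Borel regularity of $\mathcal H^1$ one may enlarge $E$ to a Borel $\mathcal H^1$-null set, then $\tilde u^{-1}(E)\subset\Omega\setminus\mathcal S_u$ is Borel with $\mathcal H^1$-null image under $\tilde u$, and Proposition~\ref{prop:variation-negligibility of sets with H1-negligible images} gives $\|Du\|(\tilde u^{-1}(E))=0$, which is in fact stronger than what the statement asks. Your reduction in Step~2 to $B_0\subset\Omega\setminus\mathcal S_u$ with $\mathcal H^{N-1}(B_0)<\infty$ is also correct, and the density-blow-up strategy is a legitimate way to attack the problem.

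The gap is in Step~3. From ``$\Theta^{*,N-1}(|D^cu|,\cdot)<\infty$ on $F$ with $|D^cu|(F)>0$'' you correctly conclude $|D^cu|\llcorner F\ll\mathcal H^{N-1}$, but the next inference -- that the polar decomposition or Alberti's rank-one theorem ``forces'' the support of $|D^cu|\llcorner F$ to be countably $(N-1)$-rectifiable -- is not valid. A measure that is absolutely continuous with respect to $\mathcal H^{N-1}$, even with a rank-one polar vector, can perfectly well be carried by a purely $(N-1)$-unrectifiable set of finite $\mathcal H^{N-1}$ measure. Passing from finite upper density to rectifiability is precisely the content of Preiss's density theorem (or Besicovitch--Marstrand--Mattila-type criteria), a deep result that you are invoking tacitly. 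Even granting rectifiability, the subsequent step -- that a rectifiable set carrying a piece of $Du$ must be contained, up to $\mathcal H^{N-1}$-null sets, in $\mathcal S_u$ -- is also not a consequence of the Federer--Vol'pert theorem as stated in the paper; Theorem~\ref{thm: Federer-Vol'pert} says $\mathcal S_u$ is rectifiable, not that every rectifiable carrier of $Du$ sits inside $\mathcal S_u$.

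The coarea route you gesture at in your closing paragraph is actually the clean way through, and your pessimism about the vector case there is misplaced. For scalar $u$ and Borel $B\subset\Omega\setminus\mathcal S_u$ with $\mathcal H^{N-1}(B)<\infty$: since $\mathcal H^{N-1}(B)<\infty$ forces $\mathcal L^N(B)=0$, and $B\cap\mathcal J_u=\emptyset$, one has $|D^cu|(B)=\|Du\|(B)$, and the coarea formula (Theorem~\ref{thm:co-area formula for $BV$-functions}) together with the inclusion $(\Omega\setminus\mathcal S_u)\cap\partial^*\{u>t\}\subset\tilde u^{-1}(\{t\})$ (established in the proof of Proposition~\ref{prop:variation-negligibility of sets with H1-negligible images}) gives
\begin{equation*}
\|Du\|(B)=\int_{\mathbb R}\mathcal H^{N-1}\bigl(B\cap\partial^*\{u>t\}\bigr)\,d\mathcal H^1(t)\leq\int_{\mathbb R}\mathcal H^{N-1}\bigl(B\cap\tilde u^{-1}(\{t\})\bigr)\,d\mathcal H^1(t).
\end{equation*}
The sets $B\cap\tilde u^{-1}(\{t\})$ are pairwise disjoint subsets of a set of finite $\mathcal H^{N-1}$ measure, so by Lemma~\ref{lem: F is at most countable} only countably many $t$ can contribute positive $\mathcal H^{N-1}$ measure, whence the integral vanishes. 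For $u=(u^1,\dots,u^d)$ one simply observes that $\mathcal S_{u^i}\subset\mathcal S_u$, so $(D^cu)_i$ is a restriction of $D^cu^i$, giving $|D^cu|\leq\sum_i|D^cu^i|$ and reducing to the scalar case componentwise -- no rank-one theorem is needed.
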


\begin{remark}(Variation of Cantor Part Vanishes on $\mathcal{H}^{N-1}$ $\sigma$-Finite Sets)
\label{rem:variation of Cantor part vanishes on sigma finite sets w.r.t Hausdorff measure}

Since $D^cu$ vanishes on sets which are $\sigma$-finite with respect to $\mathcal{H}^{N-1}$, and any subset of such a set is also $\sigma$-finite with respect to $\mathcal{H}^{N-1}$, then the variation $\|D^cu\|$ vanishes on sets which are $\sigma$-finite with respect to $\mathcal{H}^{N-1}$ (recall that a variation of a vector valued measure $\mu$ vanishes on a set if and only if $\mu$ vanishes on every subset of the set).
\end{remark}

\subsection{Decomposition of $Du$ and the Chain Rule for $BV$-Functions}
\begin{definition}(Jump and Cantor Parts)
\label{def:jump and Cantor parts}

Let $\Omega\subset\R^N$ be an open set, and let $u\in BV(\Omega,\R^d)$. Let $Du=D^au+D^su$ be the decomposition of the distributional derivative $Du$ of $u$ into the absolutely continuous and  singular parts with respect to $\mathcal{L}^N$. We define the jump part and the Cantor part of $Du$, respectively, to be the following measures:
\begin{equation}
D^ju:=D^su\llcorner\mathcal{J}_u,\quad D^cu:=D^su\llcorner\left(\Omega\setminus \mathcal{S}_u\right).
\end{equation}
\end{definition}

\begin{theorem}(Decomposition of $Du$ into the Absolutely Continuous, Jump and Cantor Parts)
\label{thm:decomposition of Du into three parts}

Let $\Omega\subset\R^N$ be an open set, and let $u\in BV(\Omega,\R^d)$. Then,
\begin{equation}
Du=D^au+D^ju+D^cu,
\end{equation}
where $D^au,D^ju,D^cu$ are defined in Definition \ref{def:jump and Cantor parts}. They have the following properties:
\\
1. $D^au,D^ju,D^cu$ are finite Radon measures in $\Omega$ (it means that they are measures from $\mathcal{B}(\Omega)$, the Borel $\sigma$-algebra, into $\R^{d\times N}$, the set of all matrices of size $d\times N$ with entries from $\R$);
\\
2. They are orthogonal to each other;
\\
3. It follows that:
\begin{equation}
D^au=\nabla u\mathcal{L}^N,\quad D^ju=\left(u^+-u^-\right)\otimes\nu_u\mathcal{H}^{N-1}\llcorner\mathcal{J}_u,
\end{equation}
where for points $a=(a_1,...,a_d)\in\R^d,b=(b_1,...,b_N)\in\R^N$ we define $a\otimes b$ to be the $d\times N$ matrix given by $(a\otimes b)_{ij}:=a_ib_j$.
\\
4. We have
\begin{equation}
\|Du\|=|\nabla u|\mathcal{L}^N+\left|u^+-u^-\right|\mathcal{H}^{N-1}\llcorner\mathcal{J}_u+\|D^cu\|.
\end{equation}
\end{theorem}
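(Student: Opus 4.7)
The strategy is a Lebesgue-type decomposition of the vector-valued measure $Du$, followed by identification of each constituent piece via the tools provided earlier in the paper.

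First, I would apply the Lebesgue decomposition componentwise to the finite $\R^{d\times N}$-valued Radon measure $Du$ with respect to $\mathcal{L}^N$, producing $Du = D^au + D^su$ with $D^au \ll \mathcal{L}^N$ and $D^su \perp \mathcal{L}^N$; both are finite Radon measures because $Du$ is. By the Calder\'on--Zygmund theorem (Theorem \ref{thm:Calderon-Zygmund}), $u$ is approximately differentiable $\mathcal{L}^N$-a.e., and the Radon--Nikodym density of $D^au$ with respect to $\mathcal{L}^N$ is precisely the approximate differential $\nabla u \in L^1(\Omega,\R^{d\times N})$, giving $D^au = \nabla u\,\mathcal{L}^N$.

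Next, I would invoke the Federer--Vol'pert theorem (Theorem \ref{thm: Federer-Vol'pert}): $\mathcal{S}_u$ is $\sigma$-finite with respect to $\mathcal{H}^{N-1}$, the jump set $\mathcal{J}_u$ is countably $(N-1)$-rectifiable oriented by $\nu_u$, and $\mathcal{H}^{N-1}(\mathcal{S}_u \setminus \mathcal{J}_u) = 0$. The classical full version of that theorem (a blow-up analysis at $\mathcal{H}^{N-1}$-a.e.\ rectifiable jump point) additionally supplies
\begin{equation*}
D^su \llcorner \mathcal{J}_u \;=\; (u^+ - u^-)\otimes \nu_u \,\mathcal{H}^{N-1}\llcorner \mathcal{J}_u,
\end{equation*}
which is exactly $D^ju$ as defined in Definition \ref{def:jump and Cantor parts}. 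To see that $D^cu := D^su\llcorner(\Omega\setminus \mathcal{S}_u)$ then completes the identity $D^su = D^ju + D^cu$, it remains to check $\|D^su\|(\mathcal{S}_u\setminus\mathcal{J}_u) = 0$. The jump contribution vanishes on this set since $D^ju$ is supported on $\mathcal{J}_u$; for the remainder, Proposition \ref{prop:Properties of $D^cu$} (cf.\ Remark \ref{rem:variation of Cantor part vanishes on sigma finite sets w.r.t Hausdorff measure}) tells us that $\|D^cu\|$ vanishes on any $\mathcal{H}^{N-1}$-$\sigma$-finite set, and $\mathcal{S}_u\setminus\mathcal{J}_u$ is such a set. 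Mutual singularity of the three pieces is then immediate: $D^au \perp D^su$ by construction, while $D^ju$ and $D^cu$ are concentrated on the disjoint Borel sets $\mathcal{J}_u \subset \mathcal{S}_u$ and $\Omega \setminus \mathcal{S}_u$.

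Because of this orthogonality the total variation is additive, $\|Du\| = \|D^au\| + \|D^ju\| + \|D^cu\|$, and I would finish by computing each piece via Lemma \ref{lem: variation of multiplication of vector valued function with positive measure}: applied to $D^au = \nabla u\,\mathcal{L}^N$ it yields $\|D^au\| = |\nabla u|\,\mathcal{L}^N$; applied to the vector field $(u^+-u^-)\otimes \nu_u$ against the positive measure $\mathcal{H}^{N-1}\llcorner\mathcal{J}_u$ it yields
\begin{equation*}
\|D^ju\| \;=\; \bigl|(u^+-u^-)\otimes\nu_u\bigr|\,\mathcal{H}^{N-1}\llcorner\mathcal{J}_u \;=\; |u^+-u^-|\,\mathcal{H}^{N-1}\llcorner\mathcal{J}_u,
\end{equation*}
using the Frobenius identity $|a\otimes b| = |a||b|$ and $|\nu_u|=1$. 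The main obstacle is the identification $D^su\llcorner\mathcal{J}_u = (u^+-u^-)\otimes\nu_u\,\mathcal{H}^{N-1}\llcorner\mathcal{J}_u$, which is the substantive analytic content of Federer--Vol'pert and is invoked here as a black box; a secondary delicate point is the non-concentration of $D^cu$ on $\mathcal{H}^{N-1}$-$\sigma$-finite sets from Proposition \ref{prop:Properties of $D^cu$}, which closes the gap between the two natural definitions of the Cantor part.
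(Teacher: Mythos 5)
The paper itself does not prove this theorem: after its statement it simply remarks ``One can find proofs for the assertions of Theorem \ref{thm:decomposition of Du into three parts} in section 3.9 in \cite{AFP}.'' Your sketch is essentially a correct recapitulation of the AFP development, and the overall architecture (Lebesgue decomposition, Calder\'on--Zygmund for $D^au$, the blow-up/trace result for $D^ju$, rectifiability and $\mathcal{H}^{N-1}(\mathcal{S}_u\setminus\mathcal{J}_u)=0$, then orthogonality and the Frobenius computation $|a\otimes b|=|a||b|$) is right, so there is nothing to ``compare against'' here other than AFP itself. Two remarks are worth making.

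First, be aware that the version of the Federer--Vol'pert theorem actually stated in the paper (Theorem \ref{thm: Federer-Vol'pert}) only gives rectifiability of $\mathcal{J}_u$ and $\mathcal{H}^{N-1}(\mathcal{S}_u\setminus\mathcal{J}_u)=0$; it does \emph{not} include the identity
\begin{equation*}
Du\llcorner\mathcal{J}_u=(u^+-u^-)\otimes\nu_u\,\mathcal{H}^{N-1}\llcorner\mathcal{J}_u,
\end{equation*}
which is the genuine analytic heart of the theorem and is a separate result in AFP (the blow-up/one-dimensional sections argument). You correctly flag this as a black box, which is consistent with what the paper itself does, but within the paper's toolbox the identity is not available.

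Second, there is a circularity in your argument that $\|D^su\|(\mathcal{S}_u\setminus\mathcal{J}_u)=0$. You invoke Proposition \ref{prop:Properties of $D^cu$} and Remark \ref{rem:variation of Cantor part vanishes on sigma finite sets w.r.t Hausdorff measure}, but these are \emph{consequences} of the decomposition you are trying to establish, not tools for proving it; moreover $D^cu$ is by definition restricted to $\Omega\setminus\mathcal{S}_u$, so its vanishing on $\mathcal{S}_u\setminus\mathcal{J}_u$ is a tautology and carries no information about $D^su$. Similarly, appealing to the fact that ``$D^ju$ is supported on $\mathcal{J}_u$'' presupposes $D^su=D^ju+D^cu$. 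The correct argument uses the co-area formula (Theorem \ref{thm:co-area formula for $BV$-functions}), componentwise: for a Borel set $B$ with $\mathcal{H}^{N-1}(B)=0$ one has, for each scalar component $u^i$,
\begin{equation*}
\|Du^i\|(B)=\int_{\R}\mathcal{H}^{N-1}\bigl(B\cap\partial^*\{u^i>t\}\bigr)\,d\mathcal{H}^1(t)=0,
\end{equation*}
hence $\|Du\|(B)\le\sum_i\|Du^i\|(B)=0$, and a fortiori $\|D^su\|(B)=0$. Applying this with $B=\mathcal{S}_u\setminus\mathcal{J}_u$, which is $\mathcal{H}^{N-1}$-null by Federer--Vol'pert, gives $D^su\llcorner\mathcal{S}_u=D^su\llcorner\mathcal{J}_u$ and closes the decomposition $D^su=D^ju+D^cu$. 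With that repair your outline is sound.
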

One can find proofs for the assertions of Theorem \ref{thm:decomposition of Du into three parts} in section 3.9 in \cite{AFP}.

\begin{theorem}(Chain Rule in $BV$, Theorem 3.99 in \cite{AFP})
\label{thm:chain rule in BV}

Let $\Omega\subset\R^N$ be an open set. Let $u\in BV(\Omega,\R)$ and let $f:\R\to\R$ be a Lipschitz function satisfying $f(0)=0$ if $\mathcal{L}^N(\Omega)=\infty$. Then, $v:=f\circ u$ belongs to $BV(\Omega,\R)$ and
\begin{equation}
Dv=f'(u)\nabla u\mathcal{L}^N+\left(f(u^+)-f(u^-)\right)\nu_u\mathcal{H}^{N-1}\llcorner\mathcal{J}_u+f'(\tilde{u})D^c u.
\end{equation}
\end{theorem}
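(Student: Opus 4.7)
\textbf{Proof Proposal for Theorem \ref{thm:chain rule in BV} (Chain Rule in $BV$).}

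The plan is to first verify that $v := f\circ u$ lies in $BV(\Omega,\R)$, and then identify the three parts of $Dv$ separately using the decomposition from Theorem \ref{thm:decomposition of Du into three parts}. Membership in $BV$ is immediate: since $f$ is Lipschitz with constant $L$ (and $f(0)=0$ in the unbounded case so that $v\in L^1$), we have $|f(u(x+h))-f(u(x))|\leq L|u(x+h)-u(x)|$, so by Lemma \ref{lem:variation inequality} (plus a standard lower-semicontinuity/duality argument) $v\in BV(\Omega,\R)$ with $\|Dv\|\leq L\|Du\|$. By Theorem \ref{thm:decomposition of Du into three parts} we may therefore write $Dv=D^av+D^jv+D^cv$ and we must identify each piece.

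First I would handle the absolutely continuous part. By Calder\'on--Zygmund (Theorem \ref{thm:Calderon-Zygmund}), $u$ is approximately differentiable $\mathcal{L}^N$-a.e., and by Rademacher's theorem $f$ is differentiable $\mathcal{L}^1$-a.e.; the composition is approximately differentiable $\mathcal{L}^N$-a.e.\ with $\nabla v(x)=f'(u(x))\nabla u(x)$. The only subtlety is that on $\{\nabla u\neq 0\}$ one must argue that $f$ is differentiable at $u(x)$ for a.e.\ such $x$, which follows from a Lusin-type/area-formula argument (or directly from the fact that $u$ has the $N$-property on the set of approximate differentiability points with nonzero gradient); on $\{\nabla u=0\}$ the Lipschitz bound gives $\nabla v=0$. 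Thus $D^av=f'(u)\nabla u\,\mathcal{L}^N$.

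The jump part is the cleanest. Since $\mathcal{S}_v\subset\mathcal{S}_u$ by Proposition \ref{prop:properties of approximate limits}(b), and at every jump point of $u$ Proposition \ref{prop:properties of one-sided approximate limits}(b) tells us exactly when $x\in\mathcal{J}_v$ (namely iff $f(u^+(x))\neq f(u^-(x))$) and pins down $(v^\pm,\nu_v)$, one obtains
\[
D^jv = (f(u^+)-f(u^-))\,\nu_u\,\mathcal{H}^{N-1}\llcorner\mathcal{J}_u,
\]
where terms with $f(u^+)=f(u^-)$ vanish automatically. (The Federer--Vol'pert Theorem \ref{thm: Federer-Vol'pert} guarantees $\mathcal{H}^{N-1}(\mathcal{S}_u\setminus\mathcal{J}_u)=0$, so the singular part supported on $\mathcal{S}_u$ is indeed concentrated on $\mathcal{J}_u$.)

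The main obstacle is the Cantor part: I must prove $D^cv=f'(\tilde u)\,D^cu$. The strategy is approximation. Choose $f_n\in C^1(\R)$ with $f_n\to f$ uniformly on compact sets, $f_n'\to f'$ $\mathcal{L}^1$-a.e., and $\|f_n'\|_\infty\leq L$ (e.g., mollifications of $f$). For each $n$, for a smooth $u$ the classical chain rule reads $Dv_n=f_n'(u)Du$; extending this to $u\in BV$ via mollification of $u$ and passage to the limit gives the formula $Dv_n = f_n'(\tilde u)D^au + (f_n(u^+)-f_n(u^-))\nu_u\mathcal{H}^{N-1}\llcorner\mathcal{J}_u + f_n'(\tilde u)D^cu$ for each $n$. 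To take $n\to\infty$ on the Cantor piece I will use two facts: (i) $\tilde u$ is defined $\|D^cu\|$-a.e.\ because $D^cu$ vanishes on $\mathcal{S}_u$ by Definition \ref{def:jump and Cantor parts}; (ii) by Proposition \ref{prop:variation-negligibility of sets with H1-negligible images} (or directly Proposition \ref{prop:Properties of $D^cu$}), $\|D^cu\|$ vanishes on $\tilde u^{-1}(E)$ for every $\mathcal{L}^1$-null $E$, and in particular on $\tilde u^{-1}(N)$ where $N$ is the set where $f_n'\not\to f'$. Hence $f_n'(\tilde u)\to f'(\tilde u)$ $\|D^cu\|$-a.e., and dominated convergence together with the uniform bound $\|f_n'\|_\infty\leq L$ lets us pass to the limit. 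Uniqueness of the decomposition then forces $D^cv=f'(\tilde u)D^cu$. The delicate point throughout is precisely the $\|D^cu\|$-null behavior of $\tilde u^{-1}(N)$: without Proposition \ref{prop:Properties of $D^cu$} the composition $f'(\tilde u)$ would be meaningless against the Cantor measure.
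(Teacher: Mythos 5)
The paper does not prove this result: Theorem~\ref{thm:chain rule in BV} is simply quoted from Ambrosio--Fusco--Pallara (Theorem~3.99 in \cite{AFP}), so there is no internal proof to compare against. Judged on its own merits, your outline matches the standard textbook strategy and correctly identifies the key supporting facts, in particular that $\|D^cu\|$ vanishes on $\tilde u^{-1}(E)$ for $\mathcal{H}^1$-null $E$ (Proposition~\ref{prop:Properties of $D^cu$}), which is precisely what makes $f'(\tilde u)$ well defined and allows the dominated-convergence passage from $C^1$ approximants $f_n$ back to Lipschitz $f$ on the Cantor part. The jump-part identification via Propositions~\ref{prop:properties of approximate limits} and~\ref{prop:properties of one-sided approximate limits} together with Federer--Vol'pert is also correct.

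There is, however, one genuine gap: you assert the chain rule for $f_n\in C^1$ and general $u\in BV$ by saying ``mollification of $u$ and passage to the limit gives the formula,'' but this is precisely the heart of the theorem and is not justified. When you mollify $u$, the measures $D\big(f_n(u*\rho_\e)\big)=f_n'(u*\rho_\e)\nabla(u*\rho_\e)\,\mathcal{L}^N$ are purely absolutely continuous, and the difficulty is to identify their weak-$*$ limit as $f_n'(\tilde u)\,D^au+\big(f_n(u^+)-f_n(u^-)\big)\nu_u\mathcal{H}^{N-1}\llcorner\mathcal{J}_u+f_n'(\tilde u)\,D^cu$. One cannot simply multiply weak limits: $f_n'(u*\rho_\e)$ and $\nabla(u*\rho_\e)$ are strongly correlated, and $\nabla(u*\rho_\e)\,\mathcal{L}^N\rightharpoonup Du$ only weak-$*$. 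In particular, showing that the Cantor piece emerges with density $f_n'(\tilde u)$ requires a Besicovitch-differentiation argument identifying $\tfrac{dDv}{d\|Du\|}$ at $\|D^au+D^cu\|$-a.e.\ point, and showing the jump piece emerges as $f_n(u^+)-f_n(u^-)$ (rather than, say, $f_n'(\cdot)(u^+-u^-)$) requires a one-dimensional blow-up analysis at $\mathcal{H}^{N-1}$-a.e.\ point of $\mathcal{J}_u$. Without some version of these arguments --- or an explicit citation of the $C^1$ case (Vol'pert's theorem, Theorem~3.96 in \cite{AFP}) --- the proposal reduces the Lipschitz chain rule to an unproved $C^1$ chain rule of comparable depth. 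The reduction from Lipschitz to $C^1$ is correct; the reduction from $C^1$ to the classical case is where the substance of the proof lies and is missing.
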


\begin{remark}(Well-Definedness of Compositions in Chain Rule for $BV$-Functions)
\label{rem:Well-definedness of compositions in Chain rule for $BV$-functions}
In this remark we would like to explain why $f'\circ u\in L^1(\Omega,|\nabla u|\mathcal{L}^N)$ and $f'\circ\tilde{u}\in L^1(\Omega,\|D^cu\|)$.
Let $\Omega\subset\R^N$ be an open set, and let $u\in BV(\Omega,\R)$. Let $f:\R\to \R$ be a Lipschitz function.
\\
1. By Rademacher's theorem there exists a Borel set $\Theta\subset\R$ such that $f$ is differentiable at every $x\in\R\setminus\Theta$ and $\mathcal{H}^1(\Theta)=0$.
\\
2. Since the approximate limit $\tilde{u}:\Omega\setminus\mathcal{S}_u\to \R$ is a Borel function, then $\tilde{u}^{-1}(\Theta)\subset \Omega\setminus\mathcal{S}_u$ is a Borel set.
\\
3. Therefore, we get by Proposition \ref{prop:variation-negligibility of sets with H1-negligible images} that $\|Du\|(\tilde{u}^{-1}(\Theta))=0$.
\\
4. Since $f$ is Lipschitz, then its $\mathcal{L}^1$-almost everywhere derivative $f':\R\setminus\Theta\to \R$ is a Borel function. Therefore, the composition $f'\circ\tilde{u}: \Omega\setminus\left(\mathcal{S}_u\cup \tilde{u}^{-1}(\Theta)\right) \to \R$ is a Borel function.
\\
5. Since by Remark \ref{rem:variation of Cantor part vanishes on sigma finite sets w.r.t Hausdorff measure} we have $\|D^cu\|(\mathcal{S}_u)=0$, then $f'\circ\tilde{u}$ is defined almost everywhere in $\Omega$ with respect to the measure $\|D^cu\|$. Since $\|D^cu\|$ is a Borel measure, then $f'\circ\tilde{u}$ is a measurable function with respect to the measure $\|D^cu\|$. Since $f$ is Lipschitz and $u\in BV(\Omega,\R)$, then
\begin{equation}
\int_{\Omega}|f'(\tilde{u}(x))|d\|D^cu\|(x)\leq \|f'\|_{L^\infty(\R)}\|D^cu\|(\Omega)\leq \|f'\|_{L^\infty(\R)}\|Du\|(\Omega)<\infty.
\end{equation}
Therefore, $f'\circ\tilde{u}\in L^1(\Omega,\|D^cu\|)$.
\\
6. Without loss of generality assume that the $\mathcal{L}^N$-almost everywhere defined function $u$ is defined on all of $\Omega$ and it is a Borel function. Let us denote by
\begin{equation}
E:=\mathcal{S}_u\cup \big\{x\in\Omega\setminus\mathcal{S}_u:u(x)\neq\tilde{u}(x)\big\}\cup \tilde{u}^{-1}(\Theta)\cup \left(\Omega\setminus \mathcal{D}_u\right).
\end{equation}
The function $f'\circ u:\Omega\setminus E\to\R$ is a Borel function because it is a restriction of the Borel function $f'\circ\tilde{u}$ to the Borel set $\Omega\setminus E$. Hence, it is a measurable function with respect to the Borel measure $|\nabla u|\mathcal{L}^N$. The function $f'\circ u$ is defined almost everywhere in $\Omega$ with respect to the measure $|\nabla u|\mathcal{L}^N$: by item $(a)$ of Proposition \ref{prop:properties of approximate limits} and Theorem \ref{thm:Calderon-Zygmund} we get that $|\nabla u|\mathcal{L}^N(E)=0$. Since $f$ is Lipschitz and $u\in BV(\Omega,\R)$, then
\begin{equation}
\int_{\Omega}|f'(u(x))|d|\nabla u|\mathcal{L}^N(x)\leq \|f'\|_{L^\infty(\R)}|\nabla u|\mathcal{L}^N(\Omega)\leq \|f'\|_{L^\infty(\R)}\|Du\|(\Omega)<\infty.
\end{equation}
Therefore, $f'\circ u\in L^1(\Omega,|\nabla u|\mathcal{L}^N)$.
\end{remark}

\subsection{Convergence of the Truncated Family in the Space $BV$}
\begin{lemma}(Convergence of the Truncated Family in Lebesgue Spaces)
\label{lem:convergence of the truncated in Lebesgue space}

Let $p\in(0,\infty)$, $E\subset\R^N$ be an $\mathcal{L}^N$-measurable set and $u\in L^p(E,\R^d)$. Then,
\begin{equation}
\label{eq:convergence of the truncated in Lebesgue space}
\lim_{l\to\infty}\int_{E}|u(z)-u_l(z)|^pd\mathcal{L}^N(z)=0,
\end{equation}
where $\{u_l\}_{l\in[0,\infty)}$ is the truncated family obtained by $u$.
\end{lemma}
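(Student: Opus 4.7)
The plan is to apply the Dominated Convergence Theorem. Two ingredients are needed: pointwise convergence of the integrand $|u(z)-u_l(z)|^p$ to zero, and an $L^1$ dominating function independent of $l$.

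First I would invoke item~1 of Proposition~\ref{prop:properties of the truncated family}, which gives $\lim_{l\to\infty}u_l(z)=u(z)$ for every $z\in E$. Continuity of $t\mapsto t^p$ on $[0,\infty)$ then yields $|u(z)-u_l(z)|^p \to 0$ pointwise on $E$.

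For the domination, I would note that for each component $1\le i\le d$ and each $l\in[0,\infty)$ one has $|u^i_l(z)|=|l\wedge(-l\vee u^i(z))|\le |u^i(z)|$, hence $|u_l(z)|\le |u(z)|$ for every $z\in E$. By the triangle inequality $|u(z)-u_l(z)|\le 2|u(z)|$, so $|u(z)-u_l(z)|^p\le 2^p|u(z)|^p$, which belongs to $L^1(E)$ since $u\in L^p(E,\R^d)$. The Dominated Convergence Theorem then delivers \eqref{eq:convergence of the truncated in Lebesgue space}.

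No serious obstacle is expected; the only mild care is that $p$ may be less than $1$, but the argument above does not use $p\ge 1$ anywhere (neither the pointwise bound nor Dominated Convergence requires it), so the proof is uniform in $p\in(0,\infty)$.
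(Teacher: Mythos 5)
Your proof is correct and coincides with the paper's: both invoke pointwise convergence $u_l(z)\to u(z)$ together with the bound $|u(z)-u_l(z)|\le 2|u(z)|$ and conclude by the Dominated Convergence Theorem. Your remark that the argument is uniform in $p\in(0,\infty)$ is a nice clarification but adds nothing beyond what the paper's proof already implicitly uses.
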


\begin{proof}
Since for $\mathcal{L}^N$-almost every $z\in E$ $\lim_{l\to\infty}|u(z)-u_l(z)|=0$, $|u(z)-u_l(z)|\leq 2|u(z)|$, and $u\in L^p(E,\R^d)$, then we get \eqref{eq:convergence of the truncated in Lebesgue space} from Dominated Convergence Theorem.
\end{proof}

\begin{lemma}(Convergence of the Truncated Family in $BV$)
\label{lem:Convergence of the truncated family in BV}

Let $\Omega\subset\R^N$ be an open set and $u\in BV(\Omega,\R^d)$. Let $\{u_l\}_{l\in [0,\infty)}$ be the truncated family obtained by $u$. Then, for every $l\in [0,\infty)$ we have $u_l\in BV(\Omega,\R^d)$, and
\begin{equation}
\label{eq:convergence of truncated family in variation}
\lim_{l\to \infty}\|D(u-u_l)\|(\Omega)=0.
\end{equation}
In particular, $u_l$ converges to $u$ as $l\to \infty$ in the norm of the space $BV(\Omega,\R^d)$, which means that $\lim_{l\to \infty}\left(\|D(u-u_l)\|(\Omega)+\|u-u_l\|_{L^1(\Omega,\R^d)}\right)=0$.
\end{lemma}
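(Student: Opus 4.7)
The plan is first to reduce the problem to scalar-valued components and then apply the chain rule for $BV$-functions. Each scalar truncation $T_l(t):=l\wedge(-l\vee t)$ is $1$-Lipschitz with $T_l(0)=0$, so Theorem~\ref{thm:chain rule in BV} applied to each coordinate gives $u_l\in BV(\O,\R^d)$ for every $l\in[0,\infty)$. For the $\R^{d\times N}$-valued measure $D(u-u_l)$ I have the componentwise bound $\|D(u-u_l)\|(\O)\le\sum_{i=1}^d\|D(u^i-T_l\circ u^i)\|(\O)$, so it suffices to show $\|D(v-T_l\circ v)\|(\O)\to 0$ for a generic scalar $v\in BV(\O,\R)$.

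For the scalar claim I would set $g_l(t):=t-T_l(t)$, which is $1$-Lipschitz with $g_l(0)=0$ and $g_l'(t)=\chi_{\{|t|>l\}}$ off the two-point set $\{\pm l\}$. Applying Theorem~\ref{thm:chain rule in BV} to $g_l\circ v$ and invoking the mutual singularity of $D^a$, $D^j$, $D^c$ from Theorem~\ref{thm:decomposition of Du into three parts} yields
\begin{multline*}
\|D(v-T_l\circ v)\|(\O)=\int_{\{|v|>l\}}|\nabla v|\,d\Leb^N\\
+\int_{\mathcal{J}_v}\bigl|g_l(v^+)-g_l(v^-)\bigr|\,d\HN+\int_{\{|\tilde v|>l\}}d\|D^cv\|.
\end{multline*}

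Each of the three summands is then sent to zero by the dominated convergence theorem. The first integrand is majorized by $|\nabla v|\in L^1(\Leb^N)$ and vanishes pointwise $\Leb^N$-a.e.\ since $v$ is $\Leb^N$-a.e.\ finite. For the second, the $1$-Lipschitz property of $g_l$ with $g_l(0)=0$ supplies the $\HN\llcorner\mathcal{J}_v$-integrable majorant $|v^+-v^-|$ (by item~4 of Theorem~\ref{thm:decomposition of Du into three parts}), while $g_l(v^\pm)\to 0$ pointwise on $\mathcal{J}_v$ because $v^\pm$ are finite there. For the third, Remark~\ref{rem:variation of Cantor part vanishes on sigma finite sets w.r.t Hausdorff measure} combined with Theorem~\ref{thm: Federer-Vol'pert} gives $\|D^cv\|(\mathcal{S}_v)=0$, so $\tilde v$ is defined and finite $\|D^cv\|$-a.e.; the indicator $\chi_{\{|\tilde v|>l\}}$ then tends to $0$ pointwise $\|D^cv\|$-a.e.\ and is dominated by the finite measure $\|D^cv\|(\O)$.

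Summing these vanishing limits over the $d$ coordinates produces $\|D(u-u_l)\|(\O)\to 0$, and combining this with Lemma~\ref{lem:convergence of the truncated in Lebesgue space} applied at $p=1$ delivers the convergence of $u_l$ to $u$ in the full $BV(\O,\R^d)$-norm. I expect the only real obstacle to be bookkeeping, namely verifying the applicability of the chain rule for $g_l$ at the non-differentiability points $t=\pm l$; this is handled by Remark~\ref{rem:Well-definedness of compositions in Chain rule for $BV$-functions} together with the $\Leb^1$-negligibility of $\{\pm l\}$, so no genuine analytic difficulty arises.
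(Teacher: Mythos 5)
Your proof is correct and takes a mildly different route from the paper's. You apply the chain rule (Theorem~\ref{thm:chain rule in BV}) once, directly to $g_l(t):=t-T_l(t)$, and then read off $\|D(v-v_l)\|$ as an exact sum of three integrals, using mutual singularity of the absolutely continuous, jump and Cantor pieces. The paper instead applies the chain rule to $T_l\circ u$, subtracts $Du$ termwise, and bounds $\|D(u-u_l)\|$ via the triangle inequality for the total variation, invoking \rlemma{lem: variation of multiplication of vector valued function with positive measure} for the $\Leb^N$ and $\HN\llcorner\mathcal{J}_u$ pieces and \rlemma{lem:variation of multiplication of scalar function with vector valued measure} (which introduces an inessential $N^{1/2}$ factor) for the Cantor piece. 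Your version thus avoids both the subtraction bookkeeping and the $N^{1/2}$ loss, at the cost of having to justify that the variation of a sum of mutually singular measures equals the sum of variations -- which is a standard fact and follows from the Hahn/polar decomposition. One detail worth stating explicitly in a final write-up: the chain-rule decomposition of $D(g_l\circ v)$ is not \emph{a priori} the canonical $D^a/D^j/D^c$ decomposition of $g_l\circ v$, but mutual singularity still holds because the three pieces are respectively absolutely continuous with respect to $\Leb^N$, concentrated on $\mathcal{J}_v$ (an $\Leb^N$-null set), and concentrated on $\O\setminus\mathcal{S}_v$ (disjoint from $\mathcal{J}_v$). Also, for the Cantor term one needs $\|D^cv\|(\{|\tilde v|=l\})=0$, which is exactly Proposition~\ref{prop:variation-negligibility of sets with H1-negligible images} (and is where the paper spends effort too), so your remark about the non-differentiability points $\pm l$ being harmless is the right instinct and indeed rests on that proposition rather than on mere $\Leb^1$-negligibility of $\{\pm l\}$. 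Apart from that, your dominated-convergence arguments for the three terms are precisely those of the paper.
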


\begin{proof}
Assume first that $u\in BV(\Omega,\R)$. By Theorem \ref{thm:decomposition of Du into three parts}, we can decompose the distributional derivative $Du$ into the sum of the absolutely continuous part, the jump part and the Cantor part:
\begin{equation}
\label{eq:decomposition of Du}
Du=\nabla u \mathcal{L}^N+(u^+-u^-)\nu_u\mathcal{H}^{N-1}\llcorner \mathcal{J}_u+D^c u.
\end{equation}
For each $l\in[0,\infty)$ let us define a function $f_l:\R\to \R,f_l(z):=l\wedge\left(-l\vee z\right)$. By Proposition \ref{prop:properties of the truncated family} we have that $f_l$ is a Lipschitz function and by the definition of the truncated family (Definition \ref{def:truncated family}) we have $u_l=f_l\circ u$. By the chain rule for $BV$-functions (refer to Theorem \ref{thm:chain rule in BV}) we have $u_l\in BV(\Omega,\R)$ and
\begin{equation}
\label{eq:decomposition of Dul}
Du_l=f'_l(u)\nabla u \mathcal{L}^N+(f_l(u^+)-f_l(u^-))\nu_u\mathcal{H}^{N-1}\llcorner \mathcal{J}_u+f'_l(\tilde{u})D^c u.
\end{equation}
By \eqref{eq:decomposition of Du}, \eqref{eq:decomposition of Dul} and Remark \ref{rem:Well-definedness of compositions in Chain rule for $BV$-functions} we have
\begin{multline}
\label{eq:difference of Dul-Du}
D(u-u_l)=\left(1-f'_l(u)\right)\nabla u\mathcal{L}^N
\\
+\left((u^+-u^-)-(f_l(u^+)-f_l(u^-))\right)\nu_u\mathcal{H}^{N-1}\llcorner \mathcal{J}_u
+(1-f'_l(\tilde{u}))D^c u.
\end{multline}
By Lemma \ref{lem: variation of multiplication of vector valued function with positive measure} we get
\begin{equation}
\label{eq:variation of the absolutely continuous part of D(u-ul)}
\|\left(1-f'_l(u)\right)\nabla u\mathcal{L}^N\|(\Omega)=\int_{\Omega}\left|1-f'_l(u(x))\right|\left|\nabla u(x)\right|d\mathcal{L}^N(x)
\end{equation}
and
\begin{multline}
\label{eq:variation of the jump part of D(u-ul)}
\|\left((u^+-u^-)-(f_l(u^+)-f_l(u^-))\right)\nu_u\mathcal{H}^{N-1}\llcorner \mathcal{J}_u\|(\Omega)
\\
=\int_{\mathcal{J}_u}\left|(u^+(x)-u^-(x))-(f_l(u^+(x))-f_l(u^-(x)))\right|d\mathcal{H}^{N-1}(x).
\end{multline}
Note that for getting \eqref{eq:variation of the jump part of D(u-ul)} we use that $|\nu_u|=1$ (refer to Definition \ref{def:approximate jump points}).
By Lemma \ref{lem:variation of multiplication of scalar function with vector valued measure} we get
\begin{align}
\label{eq:variation of the Cantor part of D(u-ul)}
\|(1-f'_l(\tilde{u}))D^c u\|(\Omega)\leq N^{1/2}\int_{\Omega}\left|1-f'_l(\tilde{u}(x))\right|d\|D^cu\|(x),
\end{align}
where $\|\cdot\|$ stands for the variation (refer to Definition \ref{def:variation of a measure}). Therefore, we get by \eqref{eq:difference of Dul-Du}, the triangle inequality of the variation, \eqref{eq:variation of the absolutely continuous part of D(u-ul)},\eqref{eq:variation of the jump part of D(u-ul)} and \eqref{eq:variation of the Cantor part of D(u-ul)} that
\begin{multline}
\label{eq:decomposition of variation of u-ul}
\|D(u-u_l)\|(\Omega)\leq \int_{\Omega}\left|1-f'_l(u(x))\right|\left|\nabla u(x)\right|d\mathcal{L}^N(x)
\\
+\int_{\mathcal{J}_u}\left|(u^+(x)-u^-(x))-(f_l(u^+(x))-f_l(u^-(x)))\right|d\mathcal{H}^{N-1}(x)
\\
+N^{1/2}\int_{\Omega}\left|1-f'_l(\tilde{u}(x))\right|d\|D^cu\|(x).
\end{multline}
For every $l\in[0,\infty)$, from item $(a)$ of Proposition \ref{prop:properties of approximate limits} we get
\begin{equation}
\label{eq:negligibility of level sets w.r.t absolutely continuous part}
|\nabla u|\mathcal{L}^N\left(\big\{x\in\Omega:|u(x)|=l\big\}\right)=|\nabla u|\mathcal{L}^N\left(\big\{x\in\Omega\setminus\mathcal{S}_u:|\tilde{u}(x)|=l\big\}\right),
\end{equation}
and from Proposition \ref{prop:variation-negligibility of sets with H1-negligible images} we have
\begin{equation}
\label{eq:negligibility of level sets w.r.t Cantor part}
\|D^cu\|\left(\big\{x\in\Omega\setminus \mathcal{S}_u:|\tilde{u}(x)|=l\big\}\right)=|\nabla u|\mathcal{L}^N\left(\big\{x\in\Omega\setminus\mathcal{S}_u:|\tilde{u}(x)|=l\big\}\right)=0.
\end{equation}
Note that for getting \eqref{eq:negligibility of level sets w.r.t Cantor part} we use the assumption that $u$ is a scalar function in order to get that
\begin{equation}
\mathcal{H}^1\left(\tilde{u}\left(E_l\right)\right)\leq \mathcal{H}^1(\{l,-l\})=0, \quad E_l:=\big\{x\in\Omega\setminus \mathcal{S}_u:|\tilde{u}(x)|=l\big\}.
\end{equation}
For every $l\in(0,\infty)$ we have
\begin{equation}
\label{eq:calculation of derivative of fl}
f_l'(z):=
\begin{cases}
1,& if\quad  |z|<l\\
0,& if\quad |z|>l\\
\end{cases}.
\end{equation}
By \eqref{eq:negligibility of level sets w.r.t absolutely continuous part} and \eqref{eq:calculation of derivative of fl} we get for every $l\in(0,\infty)$ that
\begin{multline}
\label{eq:calculation for the varation absolutely continuous part}
\int_{\Omega}\left|1-f'_l(u(x))\right|\left|\nabla u(x)\right|d\mathcal{L}^N(x)=
\int_{\Omega}\left|1-f'_l(u(x))\right|d\left|\nabla u\right|\mathcal{L}^N(x)
\\
=\int_{\big\{x\in\Omega:|u(x)|>l\big\}}\left|1-f'_l(u(x))\right|d\left|\nabla u\right|\mathcal{L}^N(x)
+\int_{\big\{x\in\Omega:|u(x)|=l\big\}}\left|1-f'_l(u(x))\right|d\left|\nabla u\right|\mathcal{L}^N(x)
\\
+\int_{\big\{x\in\Omega:|u(x)|<l\big\}}\left|1-f'_l(u(x))\right|d\left|\nabla u\right|\mathcal{L}^N(x)
=\int_{\big\{x\in\Omega:|u(x)|>l\big\}}d\left|\nabla u\right|\mathcal{L}^N(x).
\end{multline}
By Calder{\'o}n-Zygmund theorem (refer to Theorem \ref{thm:Calderon-Zygmund}), we have $\nabla u\in L^1(\Omega,\R^N)$. Therefore, we get by \eqref{eq:calculation for the varation absolutely continuous part} and the decreasing monotonicity of the measure $\left|\nabla u\right|\mathcal{L}^N$ that
\begin{equation}
\label{eq:abloutely continuous part converges to zero}
\lim_{l\to\infty}\int_{\Omega}\left|1-f'_l(u(x))\right|\left|\nabla u(x)\right|d\mathcal{L}^N(x)=0.
\end{equation}
Since $u\in BV(\Omega,\R)$, then we get from Federer-Vol'pert Theorem (refer to Theorem \ref{thm: Federer-Vol'pert}) that $\mathcal{S}_u$ is $\sigma$-finite with respect to $\mathcal{H}^{N-1}$. Thus, by Proposition \ref{prop:Properties of $D^cu$} and Remark \ref{rem:variation of Cantor part vanishes on sigma finite sets w.r.t Hausdorff measure} we have $\|D^cu\|(\mathcal{S}_u)=0$. Therefore, by \eqref{eq:negligibility of level sets w.r.t Cantor part} and \eqref{eq:calculation of derivative of fl} we obtain
\begin{multline}
\label{eq:calculation for the varation Cantor part}
\int_{\Omega}\left|1-f'_l(\tilde{u}(x))\right|d\|D^cu\|(x)=\int_{\Omega\setminus\mathcal{S}_u}\left|1-f'_l(\tilde{u}(x))\right|d\|D^cu\|(x)
\\
=\int_{\big\{x\in\Omega\setminus\mathcal{S}_u:|\tilde{u}(x)|>l\big\}}\left|1-f'_l(\tilde{u}(x))\right|d\|D^cu\|(x)
+\int_{\big\{x\in\Omega\setminus\mathcal{S}_u:|\tilde{u}(x)|=l\big\}}\left|1-f'_l(\tilde{u}(x))\right|d\|D^cu\|(x)
\\
+\int_{\big\{x\in\Omega\setminus\mathcal{S}_u:|\tilde{u}(x)|<l\big\}}\left|1-f'_l(\tilde{u}(x))\right|d\|D^cu\|(x)=\int_{\big\{x\in\Omega\setminus\mathcal{S}_u:|\tilde{u}(x)|>l\big\}}1d\|D^cu\|(x).
\end{multline}
Note that since $\mathcal{S}_u$ is a Borel set in $\Omega$ and $\tilde{u}:\Omega\setminus \mathcal{S}_u\to \R$ is a Borel function (refer to Proposition \ref{prop:properties of approximate limits}), then the sets $\big\{x\in\Omega\setminus\mathcal{S}_u:|\tilde{u}(x)|>l\big\}$, $\big\{x\in\Omega\setminus\mathcal{S}_u:|\tilde{u}(x)|<l\big\}$ and $\big\{x\in\Omega\setminus\mathcal{S}_u:|\tilde{u}(x)|=l\big\}$ are Borel sets in $\Omega$, so they are measurable with respect to the measure $\|D^cu\|$, because $\|D^cu\|$ is a Borel measure (refer to Theorem \ref{thm:decomposition of Du into three parts}). Since $\|D^cu\|$ is a finite Borel measure in $\Omega$, then we get by \eqref{eq:calculation for the varation Cantor part} and the decreasing  monotonicity of the measure $\|D^cu\|$ that
\begin{multline}
\label{eq:Cantor part converges to zero}
\lim_{l\to \infty}\int_{\Omega}\left|1-f'_l(\tilde{u}(x))\right|d\|D^cu\|(x)=\lim_{l\to\infty}\|D^cu\|\left(\big\{x\in\Omega\setminus\mathcal{S}_u:|\tilde{u}(x)|>l\big\}\right)
\\
=\|D^cu\|\left(\bigcap_{l\in\N}\big\{x\in\Omega\setminus\mathcal{S}_u:|\tilde{u}(x)|>l\big\}\right)=\|D^cu\|(\emptyset)=0.
\end{multline}
At last, by Proposition \ref{prop:properties of the truncated family} we get
\begin{align}
&\lim_{l\to\infty}\left|(u^+(x)-u^-(x))-(f_l(u^+(x))-f_l(u^-(x)))\right|=0,\quad x\in\mathcal{J}_u;
\\
&\left|(u^+(x)-u^-(x))-(f_l(u^+(x))-f_l(u^-(x)))\right|\leq 2\left|u^+(x)-u^-(x)\right|,\quad x\in\mathcal{J}_u.
\end{align}
By Theorem \ref{thm:decomposition of Du into three parts} we get
\begin{equation}
\left|u^+-u^-\right|\in L^1(\mathcal{J}_u,\mathcal{H}^{N-1}).
\end{equation}
Therefore, Dominated Convergence Theorem gives
\begin{equation}
\label{eq:jump part converges to zero}
\lim_{l\to\infty}\int_{\mathcal{J}_u}\left|(u^+(x)-u^-(x))-(f_l(u^+(x))-f_l(u^-(x)))\right|d\mathcal{H}^{N-1}(x)=0.
\end{equation}
Equation \eqref{eq:convergence of truncated family in variation} follows from \eqref{eq:decomposition of variation of u-ul}, \eqref{eq:abloutely continuous part converges to zero}, \eqref{eq:Cantor part converges to zero} and \eqref{eq:jump part converges to zero} in case $u\in BV(\Omega,\R)$. The general case, $u\in BV(\Omega,\R^d)$,  follows from the inequality
\begin{equation}
\label{eq:inequality for coordinate functions of BV}
\max_{1\leq i\leq d,i\in\N}\|Du^i\|(\Omega)\leq \|Du\|(\Omega)\leq \sum_{i=1}^d\|Du^i\|(\Omega),
\end{equation}
where $u=(u^1,...,u^d)$. Indeed, note that by the definition of the truncated family, Definition \ref{def:truncated family}, it follows that $(u_l)^i=(u^i)_l$ for every natural $1\leq i\leq d$ and $l\in[0,\infty)$. Therefore, we get
\begin{equation}
\label{eq:estimate for variation of u-ul}
\|D(u-u_l)\|(\Omega)\leq \sum_{i=1}^d\|D(u-u_l)^i\|(\Omega)=\sum_{i=1}^d\|D(u^i-(u^i)_l)\|(\Omega).
\end{equation}
Therefore, since for every natural $1\leq i\leq d$ we have that $u^i\in L^1(\Omega,\R)$, then we get by \eqref{eq:inequality for coordinate functions of BV} that $u^i\in BV(\Omega,\R)$. Therefore, we obtain \eqref{eq:convergence of truncated family in variation} from \eqref{eq:estimate for variation of u-ul} taking the limit as $l$ goes to infinity. The convergence of $u_l$ to $u$ as $l\to\infty$ in the norm of the space $BV(\Omega,\R^d)$ follows from Lemma \ref{lem:convergence of the truncated in Lebesgue space} with $p=1$ and \eqref{eq:convergence of truncated family in variation}.
\end{proof}

\vskip 0.3cm

\end{document}